\theoremstyle{plain}
\newtheorem{thm}{Theorem}[section] 
\newtheorem{lemma}[thm]{Lemma}
\newtheorem{cor}[thm]{Corollary}
\newtheorem{prop}[thm]{Proposition}
\newtheorem{thmintro}{Theorem}
\newtheorem{corintro}[thmintro]{Corollary}
\newtheorem{claim}[thm]{Claim}
\newtheorem{conjecture}[thmintro]{Conjecture}
\theoremstyle{definition}
\newtheorem{defn}[thm]{Definition}
\newtheorem{rem}[thm]{Remark}
\newtheorem{questintro}[thmintro]{Question}
\newtheorem{notation}[thm]{Notation}
\newtheorem{ax}{Axiom}
\newenvironment{claimproof}{\begin{proof}}{\end{proof}}
\newcommand{\Z}{\mathbb{Z}}
\newcommand{\R}{\mathbb{R}}
\newcommand{\Cay}[2]{\operatorname{Cay}\left(#1,#2\right)}
\newcommand{\frakS}{\mathfrak S}
\newcommand{\diam}{\mathrm{diam}}
\newcommand{\C}{\mathcal C}
\newcommand{\ov}[1]{\overline{#1}}
\newcommand{\U}{\mathcal{U}}
\newcommand{\dist}{\mathrm{d}}
\newcommand{\nest}{\sqsubseteq}
\newcommand{\propnest}{\sqsubsetneq}
\newcommand{\orth}{\bot}
\newcommand{\transverse}{\pitchfork}
\newcommand{\link}{\operatorname{Lk}}
\newcommand{\neb}{\mathcal N}
\newcommand{\cuco}[1]{{\mathcal #1}}
\newcommand{\Stab}[2]{\operatorname{Stab}_{#1}(#2)}
\newcommand{\duaug}[2]{{#1}^{+{#2}}}
\newcommand{\Sat}{\mathrm{Sat}}
\newcommand{\Proj}{\operatorname{Proj}}
\newcommand{\W}{\mathcal{W}}
\newcommand{\gate}{\mathfrak{g}}
\newcommand{\p}{\mathfrak{p}}
\newcommand{\h}{\widehat}
\newcommand{\MCG}{\mathcal{MCG}^\pm}
\newcommand{\Aut}[1]{\operatorname{Aut}\left(#1\right)}
\newcommand{\Id}{\text{Id}}
\newcommand{\Squid}{\operatorname{Cone}}
\newcommand{\tsh}[1]{\left\{\kern-.7ex\left\{#1\right\}\kern-.7ex\right\}}
\title[Short HHG I]{Short hierarchically hyperbolic groups I:\\
uncountably many coarse median structures}
\author[G. Mangioni]{Giorgio Mangioni}
    \address{(Giorgio Mangioni) Maxwell Institute and Department of Mathematics,\\ Heriot-Watt University,\\ Edinburgh, UK\\ Orcid: 0000-0003-2868-5032}
    \email{gm2070@hw.ac.uk}
\begin{document}

\begin{abstract}
We prove that the mapping class group of a sphere with five punctures admits uncountably many coarsely equivariant coarse median structures. The same is shown for right-angled Artin groups whose defining graphs are connected, triangle- and square-free, and have at least three vertices. Remarkably, in the latter case, the coarse median structures we produce are not induced by cocompact cubulations. 

To obtain the above results, we develop the theory of short hierarchically hyperbolic groups (HHG), which also include Artin groups of large and hyperbolic type, graph manifold groups, and extensions of Veech groups. We develop tools to modify their hierarchical structure, including using quasimorphisms to construct quasilines that serve as coordinate spaces, and this is where the abundance of coarse median structures comes from. These techniques are of independent interest, and are used in a follow-up paper with Alessandro Sisto to study quotients of short HHG. 

In the process, we also clarify a proof of Hagen, Martin, and Sisto on hierarchical hyperbolicity of Artin groups of large and hyperbolic type. 
\end{abstract}

\maketitle

\epigraph{And though she be but little,\\she is fierce.}{William Shakespeare, \textit{Helena}}

\setcounter{tocdepth}{1}
\tableofcontents
\section*{Introduction}
Hierarchically hyperbolic spaces (HHS for short) and groups (HHG), introduced by Behrstock, Hagen, and Sisto in \cite{HHS_I}, provide a common framework for the study of mapping class groups of finite-type surfaces, several Coxeter and Artin groups, most CAT(0) cubical groups, and many others. Essentially, a group $G$ is hierarchically hyperbolic if there exists a family $\{\C U\}$ of hyperbolic “coordinate spaces”, together with $G$-equivariant projections $\pi_U\colon G\to \C U$. The coordinate spaces are also arranged in a partial ordering with a unique maximal element (indeed, a hierarchy), and the number of layers of the hierarchy measures how far the group is from being hyperbolic (see Section \ref{sec:HHG/S} for background on HHS/G).

\subsection*{Short HHG}
This paper is the first in a series of two, in which we introduce the family of \emph{short HHG} (see Section~\ref{defn:short_HHG}). Roughly, a hierarchically hyperbolic group $G$ is short if:
\begin{itemize}
    \item $G$ acts on a triangle- and square-free simplicial graph $\ov X$, which we call the \emph{support graph}, with finitely many orbits of edges;
    \item each vertex stabiliser $\Stab{G}{v}$ for the action on $\ov X$ has a preferred cyclic normal subgroup $Z_v$, called the \emph{cyclic direction}, and the quotient $\Stab{G}{v}/Z_v$ is hyperbolic;
    \item The HHG structure “witnesses” the cyclic directions, in the sense that for every vertex $v\in\ov X$ there exists a coordinate space $\C\ell_v$ on which $Z_v$ acts geometrically. 
\end{itemize} The name \emph{short} comes from the fact that the hierarchy only has three layers, with the quasilines $\{\C\ell_v\}_{v\in\ov X}$ at the bottom (see Remark~\ref{rem:unbounded_dom_short_hhg}). Among others, this class of groups includes:
\begin{enumerate}[label=(\alph*)]
    \item \label{item:listfirst} The mapping class group of a five-punctured sphere (whose short structure is explained in Subsection~\ref{subsec:mcg});
    \item Right-angled Artin groups whose defining graph is connected, triangle- and square-free  (Proposition~\ref{prop:raag_short});
    \item Artin groups of large and hyperbolic type (Subsection~\ref{subsec:example_artin});
    \item Fundamental groups of non-geometric graph manifolds, and more generally of admissible graph of groups in the sense of \cite{croke-Kleiner} (Subsection~\ref{subsec:example_pi1});
    \item Certain extensions of Veech subgroups of mapping class groups, namely those considered in \cite{veech,bongiovanni2024extensions} (Subsection~\ref{subsec:example_veech});
    \item \label{item:listlast}  Relative hyperbolic groups whose peripherals are $\Z$-central extensions of hyperbolic groups (Proposition~\ref{prop:hyp_rel_Z-central_is_short}).
\end{enumerate}
Unlike most known examples of HHG, the hierarchical structure of short HHG can be easily modified, as we shall detail later, and this makes them the perfect playground where one can test to which extent a certain property depends on the choice of the hierarchical structure.

\subsection*{Application to coarse median structures}
To illustrate this principle, recall that, for example, by \cite[Theorem 7.3]{HHS_II}, a hierarchically hyperbolic space $Z$ admits a coarse median $\mu$, obtained as follows: for any three points $x,y,z\in Z$ and any coordinate space $\C U$, the projection of $\mu(x,y,z)$ to $\C U$ is the coarse centre of any triangle with vertices $\{\pi_U(x), \pi_U(y),\pi_U(z)\}$. Two coarse median $\mu_1$ and $\mu_2$ are equivalent if
$$\sup_{x,y,z\in G} \dist_Z(\mu_{\lambda_1}(x,y,z),\mu_{\lambda_2}(x,y,z))<+\infty,$$
and a coarse median structure is an equivalence class of coarse medians. 

It is natural to ask if a given group admits a unique coarse median structure. The question is further motivated by the fact that coarse median preserving automorphisms enjoy better properties than general automorphisms \cite{Elia:coarse_med_preserving}, and all automorphisms are clearly coarse median preserving in a group with a unique coarse median structure.

We prove that changing the HHG structure of a short HHG can result in different coarse median structures:

\begin{thmintro}[see Theorem~\ref{thm:coarse_median}]\label{thmintro_coarsemedian} Let $G$ be a short HHG. Suppose that, for some vertex $v\in\ov X^{(0)}$, the stabiliser $\Stab{G}{v}$ is a $\Z$-extension of a non-elementarily hyperbolic group. Then $G$ admits a continuum of coarsely $G$-equivariant coarse median structures.
\end{thmintro}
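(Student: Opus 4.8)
Write $H_v:=\Stab{G}{v}/Z_v$ (non-elementarily hyperbolic) and, for $g\in\Stab{G}{v}$, let $\bar g\in H_v$ denote its image. The plan is to assign to each homogeneous quasimorphism $q\colon H_v\to\R$ a coarsely $G$-equivariant coarse median $\mu_q$ on $G$, in such a way that $q_1\neq q_2$ forces $\mu_{q_1}\not\sim\mu_{q_2}$. Granting this, pick a nonzero homogeneous quasimorphism $q_0$ on $H_v$ — one exists because $H_v$ is non-elementarily hyperbolic (its space of homogeneous quasimorphisms even has dimension continuum, by Epstein--Fujiwara) — and note that $(t-t')q_0$ is a nonzero homogeneous quasimorphism, hence unbounded, for $t\neq t'$; then $\{\mu_{tq_0}\}_{t\in\R}$ is a continuum of pairwise inequivalent coarsely $G$-equivariant coarse median structures. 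This is optimal, since the countable group $G$ supports at most continuum many coarse medians.

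The medians $\mu_q$ would come from the structure-modification machinery built earlier in the paper. Given $q$, I would produce a short HHG structure $\mathcal H_q$ on $G$ that agrees with the original one except that each bottom quasiline $\C\ell_w$, for $w$ in the $G$-orbit of $v$, gets replaced by a ``$q$-twisted'' quasiline $\C\ell^q_w$, with the following features I would need: $\C\ell^q_v$ has the same underlying space as $\C\ell_v$; on $\Stab{G}{v}$ the new projection is $\psi_q(g):=\pi_{\ell_v}(g)+q(\bar g)$, which is still a quasimorphism — that is exactly what allows the HHS axioms to survive the twist; the domain $S_v$ that supports $v$, its hyperbolic coordinate space $\C S_v$ (quasi-isometric to $H_v$) and the projection $\pi_{S_v}\colon G\to\C S_v$ are untouched; $\Stab{G}{v}$ remains hierarchically quasiconvex; and the whole thing is $G$-equivariant, so that $\mu_q$ is coarsely $G$-equivariant. (Equivariance is where homogeneity of $q$ enters: it makes $q$ invariant under the inner action of $\Stab{G}{v}$ on $H_v$.)

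It then remains to distinguish $\mu_{q_1}$ and $\mu_{q_2}$ when $q:=q_1-q_2\neq 0$. Fix $u\in H_v$ with $q(u)=:\tau>0$ (possible since $q$ is nonzero homogeneous; $u$ then has infinite order). Since $Z_v$ acts geometrically on $\C\ell_v$, the map $g\mapsto(\pi_{\ell_v}(g),\bar g)$ is coarsely onto $\C\ell_v\times\C S_v\simeq\R\times H_v$, so for each $n$ I would choose $x_n,y_n,z_n\in\Stab{G}{v}$ with $\bar x_n=u^n$, $\bar y_n=\bar z_n=1$ and $\pi_{\ell_v}$-coordinates (in $\C\ell_v\simeq\R$) about $-nq_1(u)$, $2n\tau$, $-2n\tau$. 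Set $m_i:=\mu_{q_i}(x_n,y_n,z_n)$. As a median of points near the hierarchically quasiconvex $\Stab{G}{v}$, each $m_i$ is uniformly close to $\Stab{G}{v}$; as $\pi_{S_v}$ is unchanged and (like every coordinate projection) coarsely median-preserving, $\pi_{S_v}(m_1)\approx\pi_{S_v}(m_2)\approx\mu_{H_v}(u^n,1,1)\approx 1$; and as $\C\ell^{q_i}_v$ is a quasiline, the $\C\ell^{q_i}_v$-coordinate of $m_i$ is, up to bounded error, the median in $\R$ of $\{\psi_{q_i}(x_n),\psi_{q_i}(y_n),\psi_{q_i}(z_n)\}$. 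A direct computation gives this triple as $\{0,2n\tau,-2n\tau\}$ for $i=1$ and $\{-n\tau,2n\tau,-2n\tau\}$ for $i=2$, with medians $0$ and $-n\tau$. Translating back into the untwisted $\C\ell_v$ through $\pi_{\ell_v}(g)=\psi_{q_i}(g)-q_i(\bar g)$ (valid on $\Stab{G}{v}$) and using $\pi_{S_v}(m_i)\approx 1$, I obtain $\pi_{\ell_v}(m_1)=O(1)$ but $\pi_{\ell_v}(m_2)=-n\tau+O(1)$. The distance formula for the original HHG structure then gives $\dist_G(m_1,m_2)\gtrsim n\tau$, whence $\sup_{x,y,z}\dist_G(\mu_{q_1}(x,y,z),\mu_{q_2}(x,y,z))=+\infty$ and $\mu_{q_1}\not\sim\mu_{q_2}$.

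\textbf{Main obstacle.} Beyond the construction of $\mathcal H_q$ itself (which is the content of the earlier sections), the delicate point is this last comparison: $\C\ell^{q_1}_v$ and $\C\ell^{q_2}_v$ are literally different coordinate spaces, so the discrepancy between the two medians must be turned into a genuine lower bound for $\dist_G$ inside a single fixed structure, and there the correction $q_i(\pi_{S_v}(m_i))$ is dangerous — a quick experiment shows that if the $H_v$-triangle $\{\bar x_n,\bar y_n,\bar z_n\}$ were thin (e.g.\ with vertices on a common axis) it would cancel the gain exactly. The fix is to hold the $H_v$-coordinates of the test points essentially constant ($\bar y_n=\bar z_n=1$, keeping $\pi_{S_v}(m_i)$ bounded) and to spread the points apart purely in the quasiline direction; this draws simultaneously on the presence of the quasiline $\C\ell_v$ and on an infinite-order element of $H_v$ with nonzero $q$-value — precisely what the hypothesis on $\Stab{G}{v}$ supplies.
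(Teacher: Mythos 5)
Your overall strategy — perturb the bottom-level quasimorphism $\phi_v$ by adding a scalar family $\lambda\psi\circ\p_v$ pulled back from $H_v$, rebuild the short HHG structure, then detect inequivalence of the resulting medians by pushing test triples into the quasiline $\C\ell_v$ — is essentially the same as the paper's, and your test triple $(x_n,y_n,z_n)$ and the bookkeeping with $\psi_{q_i}$ and the $q_i(\bar m_i)$ correction is a correct (if slightly more elaborate) variant of the paper's computation with $1,z^k,g^l$. However, there is a genuine gap in the step where you claim that being a quasimorphism ``is exactly what allows the HHS axioms to survive the twist.'' That is not enough: Definition~\ref{defn:squid_material}.\eqref{squid_material:quasimorphisms} additionally requires $\phi_v$ to \emph{vanish on $Z_w\cap E_v$ for every $w\in\link_{\ov X}(v)$}. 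This is what makes the adjacent cyclic directions $Z_w$ act with bounded orbits on the new quasiline $L_v^q$ (Remark~\ref{rem:bounded_orbits_Lv}), which in turn is used to prove that $\C(\Delta)\hookrightarrow Y_\Delta$ is a quasi-isometric embedding for triangle-type $\Delta$ (Lemma~\ref{lem:qi_embedding_triangle_case}) and to verify Axiom~\eqref{short_axiom:cobounded}. If you simply take ``a nonzero homogeneous quasimorphism $q_0$ on $H_v$'' without imposing this vanishing, there is no reason $\phi_v+\lambda q_0\circ\p_v$ still vanishes on $Z_w\cap E_v$, and Theorem~\ref{thm:squidification} cannot be applied. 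The paper handles this with a dedicated Claim: in a non-elementary hyperbolic group one can choose an unbounded homogeneous quasimorphism vanishing on any prescribed finite family of elements (here the $\p_v$-images of the finitely many conjugacy classes of adjacent cyclic directions). You need exactly that existence statement to select $q_0$; abundance of quasimorphisms à la Epstein--Fujiwara alone does not supply it without a short argument.

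A smaller inaccuracy: $\C S_v=\C\U_v$ is quasi-isometric to $\h{H_v}$ (the cone-off of $H_v$ along the adjacent cyclic directions; see Lemma~\ref{lem:hyp_link_edge_type}), not to $H_v$ itself. So ``$\pi_{S_v}(m_i)\approx 1$'' does not directly bound $q_i(\bar m_i)$. The conclusion you want — that $\bar m_i$ is $O(1)$ in $H_v$, hence $q_i(\bar m_i)=O(1)$ — is still true, but one has to observe that because $y_n,z_n\in Z_v$, the projections of all three test points (hence of $m_i$) to every $\C\ell_w$ with $w\in\link_{\ov X}(v)$ are also uniformly bounded, so $m_i$ is uniformly close to a fixed parallel copy of $F_{\ell_v}$, and $H_v$ acts geometrically on $F_{\U_v}$. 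With that patch and the vanishing condition on $q_0$, your argument goes through.
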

This is in stark contrast with the case of hyperbolic groups, which admit a unique coarse median structure by \cite{NWZ}; the latter result was recently generalised to products of ``bushy'' hyperbolic spaces \cite{FS_pantsgraph}.

By inspection of the short HHG structures of the examples \ref{item:listfirst}-\ref{item:listlast} above, Theorem~\ref{thmintro_coarsemedian} applies to the following collection of short HHG:

\begin{corintro}[see Corollary \ref{cor:coarsemedian_for_all}]\label{corintro_med}
    Let $G$ be either:
    \begin{enumerate}[label=(\alph*)]
        \item\label{item:mcg} the mapping class group of a sphere with five punctures;
        \item \label{item:raag} a RAAG on a connected, triangle- and square-free graph with at least three vertices;
        \item \label{item:eltag} an Artin group of large and hyperbolic type, whose defining graph is not discrete;
        \item the fundamental group of an admissible graph of groups;
        \item an extension of a Veech group as in \cite{veech,bongiovanni2024extensions};
        \item hyperbolic relative to $\Z$-central extensions of hyperbolic groups, of which at least one is non-elementary.
        \end{enumerate}
    Then $G$ admits a continuum of coarsely $G$-equivariant coarse median structures.
\end{corintro}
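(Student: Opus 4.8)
The plan is to deduce the corollary directly from Theorem~\ref{thmintro_coarsemedian}. By the discussion in the introduction, each of the six classes of groups in the statement consists of short HHG: case~\ref{item:mcg} is treated in Subsection~\ref{subsec:mcg}, case~\ref{item:raag} is Proposition~\ref{prop:raag_short}, cases~\ref{item:eltag}, (d) and (e) are treated in Subsections~\ref{subsec:example_artin}, \ref{subsec:example_pi1} and \ref{subsec:example_veech} respectively, and case (f) is Proposition~\ref{prop:hyp_rel_Z-central_is_short}. So, fixing such a $G$ with support graph $\ov X$, it suffices by Theorem~\ref{thmintro_coarsemedian} to exhibit one vertex $v\in\ov X^{(0)}$ whose stabiliser $\Stab{G}{v}$ is a $\Z$-extension of a non-elementarily hyperbolic group, and I would verify this case by case, reading off the relevant vertex stabiliser and its cyclic direction $Z_v$ from the corresponding short structure.

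For case~\ref{item:mcg}, the support graph is the curve graph of $S_{0,5}$, which is isomorphic to the Petersen graph and so has girth $5$, confirming that it is triangle- and square-free; for any curve $\gamma$ the cyclic direction $Z_v$ is generated by (a root of) the Dehn twist $T_\gamma$, and $\Stab{G}{v}/Z_v$ is a finite extension of $\mathrm{Mod}(S_{0,4})$, which is virtually free of rank two, hence non-elementarily hyperbolic, so any vertex works. For case~\ref{item:raag}, a connected graph $\Gamma$ with at least three vertices has a vertex $w$ of valence $\geq 2$; since $\Gamma$ is triangle-free, $\mathrm{lk}(w)$ is a discrete set of size $\geq 2$, so the star subgroup $A_{\mathrm{star}(w)}\cong\langle w\rangle\times A_{\mathrm{lk}(w)}$ --- which is (a conjugate of) a vertex stabiliser, with $Z_v=\langle w\rangle$ --- is a central $\Z$-extension of the non-abelian free group $A_{\mathrm{lk}(w)}$. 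For case~\ref{item:eltag}, a non-discrete defining graph has an edge $e$, whose label $m_e$ is at least $3$ by the large-type hypothesis; the dihedral Artin subgroup $A_e\cong\langle u,v\mid u^2=v^{m_e}\rangle$ occurs as (a conjugate of) a vertex stabiliser with $Z_v$ its infinite cyclic centre, and $A_e/Z(A_e)\cong\Z/2\ast\Z/m_e$ is non-elementarily hyperbolic because $m_e\geq 3$. For case (d), every vertex group $G_v$ of an admissible graph of groups has, by the definition of admissibility in \cite{croke-Kleiner}, infinite cyclic centre and non-elementarily hyperbolic quotient $G_v/Z(G_v)$, and the $G_v$ are exactly the vertex stabilisers. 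For case (e) the short structure of Subsection~\ref{subsec:example_veech} exhibits a vertex stabiliser of the required form. Finally, for case (f) the hypothesis singles out a peripheral subgroup $P$ with $P/Z(P)$ non-elementarily hyperbolic, and (a conjugate of) $P$ is a vertex stabiliser of $\ov X$ with $Z_v=Z(P)$. In each case Theorem~\ref{thmintro_coarsemedian} now applies, proving the corollary.

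The bulk of the work is bookkeeping, since the genuine content already lies in Theorem~\ref{thmintro_coarsemedian}; the delicate part is that in each example one must correctly identify the relevant vertex stabiliser and its cyclic direction inside the short structure and then check that the hyperbolic quotient is genuinely non-elementary rather than merely hyperbolic --- which is precisely where the hypotheses ``at least three vertices'' (case~\ref{item:raag}), ``not discrete'' (case~\ref{item:eltag}) and ``at least one non-elementary'' (case (f)) are used. I expect the least transparent verifications to be case~\ref{item:mcg}, which requires identifying the quotient of a curve stabiliser with a finite extension of $\mathrm{Mod}(S_{0,4})$ and recalling that the latter is virtually $F_2$, and case (e), where the short structure of Veech-group extensions takes the most effort to unwind.
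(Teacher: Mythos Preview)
Your approach is correct and matches the paper's: the corollary is deduced from Theorem~\ref{thmintro_coarsemedian} by verifying, in each of the six short HHG structures described in Sections~\ref{sec:example} and~\ref{sec:new_examples}, that some vertex stabiliser has infinite cyclic direction with non-elementary hyperbolic quotient. The paper's proof is in fact nothing more than the single sentence ``by inspection of the short HHG structures in our example Sections~\ref{sec:example} and~\ref{sec:new_examples}'', so your explicit case-by-case check is exactly what that inspection amounts to.

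One factual slip: the curve graph of $S_{0,5}$ is \emph{not} the Petersen graph --- it is an infinite graph (on which $\MCG(S_{0,5})$ acts cocompactly). Your inference ``girth $5$, hence triangle- and square-free'' is therefore unjustified as written; the correct reason, given in Subsection~\ref{subsec:mcg}, is that on $S_{0,5}$ any two disjoint curves fill the surface up to a pair of pants, so no three (resp.\ four) curves can be pairwise disjoint (resp.\ form a $4$-cycle). This does not affect the validity of your argument, since you only need the triangle- and square-free property, which holds.
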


Let us spell out some consequences of Corollary~\ref{corintro_med}. Firstly, Item~\ref{item:mcg} gives:
\begin{thmintro}\label{thmintro_mcg}
    The mapping class group of the five-punctured sphere admits a continuum of coarsely equivariant coarse median structures.
\end{thmintro}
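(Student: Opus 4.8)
The plan is to obtain Theorem~\ref{thmintro_mcg} as the instance of Corollary~\ref{corintro_med} corresponding to Item~\ref{item:mcg}; concretely, it suffices to exhibit $G=\mathrm{MCG}(S_{0,5})$ as a short HHG to which Theorem~\ref{thm:coarse_median} applies, i.e.\ one with a vertex of the support graph whose stabiliser is a $\Z$-extension of a non-elementarily hyperbolic group. First I would recall the short HHG structure on $G$ set up in Subsection~\ref{subsec:mcg}: the support graph $\ov X$ is the curve graph of $S_{0,5}$ (vertices being isotopy classes of essential simple closed curves, edges recording disjointness), on which $G$ acts cocompactly since curves and disjoint pairs of curves fall into finitely many topological types; it is triangle-free because a pants decomposition of $S_{0,5}$ uses only two curves, and square-free by the low-complexity combinatorics carried out in Subsection~\ref{subsec:mcg}. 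For a vertex $\gamma\in\ov X^{(0)}$ the cyclic direction is $Z_\gamma=\langle T_\gamma\rangle$, generated by the Dehn twist about $\gamma$, with associated quasiline $\C\ell_\gamma$ the annular curve complex of $\gamma$ (on which $T_\gamma$ acts by a unit translation); the single intermediate layer of coordinate spaces consists of the (hyperbolic) curve complexes of the four-punctured-sphere subsurfaces of $S_{0,5}$, so that the hierarchy has exactly the required three layers.

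The one point that genuinely needs checking in order to invoke Theorem~\ref{thm:coarse_median} is the structure of a single vertex stabiliser. Fix a curve $\gamma$; it separates $S_{0,5}$ into a thrice-punctured sphere, which carries no essential curve, and a four-punctured sphere $\Sigma$. Cutting along $\gamma$ and capping the two resulting boundary circles with once-punctured disks yields a short exact sequence
$$1\longrightarrow \langle T_\gamma\rangle\longrightarrow \Stab{G}{\gamma}\longrightarrow \mathrm{MCG}(S_{0,3})\times\mathrm{MCG}(\Sigma)\longrightarrow 1,$$
in which the kernel is infinite cyclic (Dehn twists have infinite order) and central, since no mapping class of $S_{0,5}$ exchanges the two non-homeomorphic sides of $\gamma$. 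Because $\mathrm{MCG}(S_{0,3})$ is finite and $\mathrm{MCG}(\Sigma)=\mathrm{MCG}(S_{0,4})$ contains a non-abelian free group of finite index, the quotient $\Stab{G}{\gamma}/Z_\gamma$ is virtually a non-abelian free group, hence non-elementarily hyperbolic. Thus $\Stab{G}{\gamma}$ is a $\Z$-extension of a non-elementarily hyperbolic group, so Theorem~\ref{thm:coarse_median} applies at $\gamma$ and produces a continuum of coarsely $G$-equivariant coarse median structures on $G$; these are a fortiori coarsely equivariant, which is the assertion.

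The main obstacle is not in this deduction but in its two inputs, both established upstream. The first is the verification, in Subsection~\ref{subsec:mcg}, that $\mathrm{MCG}(S_{0,5})$ really is a short HHG --- the substantive content being that its hierarchy collapses to exactly three layers (top curve graph, a single row of four-punctured-sphere curve complexes, and a bottom row of annular quasilines), together with the triangle- and square-freeness of the curve graph. The second is Theorem~\ref{thm:coarse_median} itself, whose proof is where the continuum actually comes from: one modifies the HHG structure by replacing the coordinate space $\C\ell_\gamma$ with quasilines built from homogeneous quasimorphisms of the $\Z$-extension $\Stab{G}{\gamma}$, and then uses that the space of such quasimorphisms is infinite-dimensional --- a consequence of the quotient $\Stab{G}{\gamma}/Z_\gamma$ being non-elementarily hyperbolic --- to arrange that uncountably many of the resulting coarse medians are pairwise inequivalent.
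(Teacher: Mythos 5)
Your proposal follows exactly the paper's route: invoke Theorem~\ref{thm:coarse_median} (which is Theorem~\ref{thmintro_coarsemedian}) for the short HHG structure on $\MCG(S_{0,5})$ recalled in Subsection~\ref{subsec:mcg}, after checking that the curve-stabiliser hypothesis holds; this is precisely how Theorem~\ref{thmintro_mcg} is deduced as instance~\ref{item:mcg} of Corollary~\ref{corintro_med}.

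Two small slips are worth flagging, neither of which is a gap. First, since $\MCG$ denotes the \emph{extended} mapping class group $\MCG^\pm$ throughout, the Dehn twist $T_\gamma$ is \emph{not} central in $\Stab{G}{\gamma}$: an orientation-reversing homeomorphism preserving $\gamma$ conjugates $T_\gamma$ to $T_\gamma^{-1}$, and the non-homeomorphy of the two sides of $\gamma$ is irrelevant to this. Fortunately, Axiom~\ref{short_axiom:extension} and the hypothesis of Theorem~\ref{thm:coarse_median} only ask that $Z_v$ be cyclic and normal in $\Stab{G}{v}$; centrality is only needed in a finite-index subgroup $E_v$, and the proof of Proposition~\ref{prop:short_is_squid} arranges this. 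Second, the map you write $\Stab{G}{\gamma}\to\mathrm{MCG}(S_{0,3})\times\mathrm{MCG}(\Sigma)$ from cutting along $\gamma$ and capping is not surjective in general (mapping classes of the capped pieces that move the new capping puncture need not lift), so the displayed sequence is not quite short exact; the image is of finite index, which still gives that $\Stab{G}{\gamma}/\langle T_\gamma\rangle$ is virtually a non-abelian free group, matching the paper's description (``a finite-index subgroup of $\MCG(S_{0,4})$''). With those caveats, the verification is correct and the appeal to Theorem~\ref{thm:coarse_median} goes through.
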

This disproves that mapping class groups have a unique coarse median structure coming from subsurface projections, a common belief supported by personal communications with Elia Fioravanti and Alessandro Sisto. Even more surprisingly, Fioravanti and Sisto recently proved that another HHS naturally associated to a finite-type surface, its \emph{pants graph}, has a unique coarse median structure if the complexity of the surface is sufficiently small, and conjectured the same holds in general \cite{FS_pantsgraph}. The difference between the HHS structure of the pants graph and that of the mapping class group is that the latter also includes certain quasilines, namely annular curve graphs, so we roughly exploit the fact that a product of quasilines admits uncountably many coarse median structures (see e.g. \cite[Remark 4.8]{FS_pantsgraph}).

\par\medskip
Moving to Item~\ref{item:raag}, recall that a RAAG acts geometrically on a CAT(0) cube complex, so it inherits a coarse median structure. However, the family of possible cubulations is countable, while we produce uncountably many coarse median structures. Hence we get:

\begin{thmintro}\label{thmintro_raag}
    Let $G_\Lambda$ be a RAAG whose defining graph is connected, triangle- and square-free, and has at least three vertices. Then $G_\Lambda$ admits a continuum of coarsely equivariant coarse median structures, which are not induced by cocompact cubulations.
\end{thmintro}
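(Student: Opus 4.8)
The plan is to combine Theorem~\ref{thmintro_coarsemedian} with a counting argument, weighing the coarse median structures it produces against the far scarcer structures that come from cube complexes. To see that Theorem~\ref{thmintro_coarsemedian} applies to $G_\Lambda$, recall that $G_\Lambda$ is a short HHG by Proposition~\ref{prop:raag_short}, and that in its short structure the stabiliser of the support-graph vertex associated with a standard generator $v$ is the centraliser $\Stab{G_\Lambda}{v}=\langle v\rangle\times G_{\link_{\Lambda}(v)}$, with cyclic direction $Z_v=\langle v\rangle$. Since $\Lambda$ is triangle-free, $\link_{\Lambda}(v)$ spans no edges, so $G_{\link_{\Lambda}(v)}$ is a free group; and since $\Lambda$ is connected with at least three vertices, some generator $v$ has $\Lambda$-degree at least $2$, whence $G_{\link_{\Lambda}(v)}$ is a nonabelian free group, in particular non-elementarily hyperbolic. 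For that $v$, the stabiliser $\Stab{G_\Lambda}{v}\cong\Z\times G_{\link_{\Lambda}(v)}$ is a $\Z$-extension of a non-elementarily hyperbolic group, so Theorem~\ref{thmintro_coarsemedian} (equivalently, Corollary~\ref{corintro_med}\ref{item:raag}) yields a family $\{[\mu_i]\}_{i\in I}$ of pairwise inequivalent, coarsely $G_\Lambda$-equivariant coarse median structures on $G_\Lambda$ with $|I|$ the cardinality of the continuum.

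I would then recall how a cube complex enters. If $G_\Lambda$ acts properly and cocompactly on a CAT(0) cube complex $X$ (such actions exist, e.g. on the universal cover of the Salvetti complex), then $X$ with its combinatorial ($\ell^1$) metric is a median metric space on which $G_\Lambda$ acts by isometries preserving the median, and any orbit map $G_\Lambda\to X$ is a $G_\Lambda$-equivariant quasi-isometry by the \v{S}varc--Milnor lemma. Transporting the median of $X$ through a coarse inverse of an orbit map therefore yields a coarse median on $G_\Lambda$ whose equivalence class $[\mu_X]$ is independent of all choices and is coarsely $G_\Lambda$-equivariant; in particular it lies among the coarse median structures considered above, so it is meaningful to ask whether $[\mu_X]$ coincides with some $[\mu_i]$.

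The crucial point is that only countably many classes $[\mu_X]$ arise, and for this it is enough that $G_\Lambda$ admits only countably many cocompact cubulations up to $G_\Lambda$-equivariant isomorphism. Since $G_\Lambda$ is torsion-free, properness forces any such action to be free, indeed without inversions, so the quotient $Y:=X/G_\Lambda$ is a compact non-positively curved cube complex carrying an isomorphism $\pi_1 Y\cong G_\Lambda$ (provided by the covering $X\to Y$, since $X$ is contractible), from which the cubulation is recovered as the deck action on $\widetilde Y$ transported along this isomorphism. Now there are only countably many isomorphism types of compact non-positively curved cube complexes $Y$, and for each of them only countably many isomorphisms $\pi_1 Y\cong G_\Lambda$, because any such isomorphism is determined by the images of a finite generating set of $\pi_1 Y$ inside the countable group $G_\Lambda$. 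Hence there are only countably many cocompact cubulations up to $G_\Lambda$-equivariant isomorphism, and a fortiori only countably many classes $[\mu_X]$.

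Finally, since $\{[\mu_i]\}_{i\in I}$ has the cardinality of the continuum while only countably many coarse median structures on $G_\Lambda$ are induced by cocompact cubulations, a continuum of the $[\mu_i]$ are not induced by any cocompact cubulation, which is precisely the assertion of the theorem. I expect the one genuine difficulty to be the countability step: one must be careful that a cocompact cubulation is faithfully recorded by the finite combinatorial datum $(Y,\ \pi_1 Y\cong G_\Lambda)$ with only the (countable) ambiguity of $\mathrm{Aut}(G_\Lambda)$ remaining, and to check that replacing ``$G_\Lambda$-equivariant isomorphism'' by any coarser equivalence on cubulations can only merge the classes $[\mu_X]$, hence cannot inflate their number. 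The other steps rely only on standard facts together with the theory already in place.
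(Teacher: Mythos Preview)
Your argument is correct and matches the paper's approach: obtain a continuum of coarsely equivariant coarse median structures from Corollary~\ref{corintro_med}\ref{item:raag} (equivalently, verify the hypothesis of Theorem~\ref{thmintro_coarsemedian} at a vertex of $\Lambda$-degree at least two, exactly as you do), and then observe that only countably many coarse median structures can arise from cocompact cubulations. The paper asserts the countability step without details; your justification via the compact quotient complex and countably many markings is the intended one.
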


The Theorem applies in particular to RAAGs whose defining graphs are trees, of which there are infinitely many quasi-isometry classes by \cite{behrstock_neumann:graph_manifold_raag}. Each such RAAG admits countably many cubical coarse median structures by \cite[Theorem 3.19]{CoarseCubicalRig}, so the countable bound on cubulations can actually be achieved. On the other side of the spectrum, the same \cite[Theorem 3.19]{CoarseCubicalRig} states the uniqueness of a cubical coarse median structure for RAAGs on triangle- and square-free graphs without leaves, which are covered by our Theorem~\ref{thmintro_raag}. Surprisingly, this shows that groups with a unique cubical coarse median structure can still have plenty of unexpected coarse medians.

Furthermore, recall that, by e.g. \cite{raag_into_racg}, the RAAG with defining graph $\Lambda$ is commensurable to the right-angled Coxeter group with defining graph $\widetilde{\Lambda}$, obtained by doubling each vertex $a\in \Lambda^{(0)}$ to a pair $a_1,a_2$ and declaring $a_i$ to be adjacent to $b_j$ if and only if $a,b$ span an edge in $\Lambda$. Hence, non-equivalent coarse medians on a RAAG induce non-equivalent coarse medians on the associated RACG, though the latter might not be coarsely equivariant any more. As all RACGs are cocompactly cubulated \cite{Niblo_reeves}, we again get the following:

\begin{thmintro}\label{thmintro_racg}
    Let $\Lambda$ be a connected, triangle- and square-free graph on at least three vertices, and let $\widetilde{\Lambda}$ be its doubling. The right-angled Coxeter group $W_{\widetilde{\Lambda}}$ admits a continuum of coarse median structures, which do not come from cocompact cubulations.
\end{thmintro}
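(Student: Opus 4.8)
The plan is to deduce this from the right-angled Artin group case, Theorem~\ref{thmintro_raag}, by transporting coarse median structures across the commensurability between $G_\Lambda$ and $W_{\widetilde\Lambda}$ and then discarding the countably many cubical ones. By \cite{raag_into_racg}, $G_\Lambda$ is isomorphic to a finite-index subgroup of $W_{\widetilde\Lambda}$; in particular the two groups are quasi-isometric. Since $\Lambda$ is connected, triangle- and square-free with at least three vertices, Theorem~\ref{thmintro_raag} provides a family $\{\mu_t\}_{t\in T}$ of pairwise non-equivalent coarse median structures on $G_\Lambda$, with $|T|$ the cardinality of the continuum.

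The first point to establish is that the set of coarse median structures on a finitely generated group, with its word metric, is a quasi-isometry invariant: a quasi-isometry $f\colon X\to Y$ with quasi-inverse $g$ sends a coarse median $\mu$ on $X$ to $f\circ\mu\circ(g\times g\times g)$ on $Y$, and these assignments descend to mutually inverse bijections on the sets of coarse median structures. The verification is the routine check that $g\circ f$ lies at bounded distance from $\mathrm{id}_X$ and that coarse median maps are uniformly coarsely Lipschitz, so that pre- and post-composition alters a coarse median only up to equivalence; this is the content of the remark, made just before the statement, that non-equivalent coarse medians on a RAAG induce non-equivalent coarse medians on the associated RACG. Concretely, one extends each $\mu_t$ from $G_\Lambda$ to $W_{\widetilde\Lambda}$ by precomposing with a coarse retraction $W_{\widetilde\Lambda}\to G_\Lambda$, using that $G_\Lambda$ is coarsely dense in $W_{\widetilde\Lambda}$ to see that non-equivalence is preserved. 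Either way, $\{\mu_t\}_{t\in T}$ is carried to a family $\{\widehat\mu_t\}_{t\in T}$ of pairwise non-equivalent coarse median structures on $W_{\widetilde\Lambda}$; these need not be coarsely $W_{\widetilde\Lambda}$-equivariant, which is consistent with the statement making no equivariance claim.

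Finally, I would rule out cubical origin by a cardinality argument, just as for $G_\Lambda$ in Theorem~\ref{thmintro_raag}: each cocompact cubulation of $W_{\widetilde\Lambda}$ gives rise to a single coarse median structure, namely that of the median operation on the cube complex, while $W_{\widetilde\Lambda}$ admits only countably many cocompact cubulations up to equivariant isomorphism — the same countability fact used on the RAAG side, which in the case at hand can also be obtained by restricting a cubulation to the finite-index subgroup $G_\Lambda$. Since $W_{\widetilde\Lambda}$ is itself cocompactly cubulated \cite{Niblo_reeves}, at least one such structure exists, but only countably many altogether; as $|T|$ is uncountable, a continuum of the $\widehat\mu_t$ do not come from cocompact cubulations, which is exactly the assertion of the theorem.

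The step I expect to be the main obstacle is the verification that the transport of coarse median structures along the commensurability is injective on equivalence classes: this combines the coarse density of the finite-index subgroup $G_\Lambda$ with the uniform coarse-Lipschitz behaviour of coarse median maps, and must be handled carefully precisely because coarse $W_{\widetilde\Lambda}$-equivariance is genuinely lost in the passage from $G_\Lambda$, so — unlike for $G_\Lambda$ itself — one cannot shortcut through an equivariant uniqueness or rigidity statement. The other point needing a precise reference is the countability of cocompact cubulations of $W_{\widetilde\Lambda}$, which is standard in the cubical-rigidity literature (cf. \cite{CoarseCubicalRig}).
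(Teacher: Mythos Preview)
Your proposal is correct and follows essentially the same approach as the paper: transport the continuum of coarse median structures from $G_\Lambda$ to $W_{\widetilde\Lambda}$ along the commensurability from \cite{raag_into_racg}, then invoke the countability of cocompact cubulations to conclude that a continuum of them are not cubical. The paper's own argument is the brief paragraph preceding the statement, and your write-up simply makes explicit the quasi-isometry invariance of coarse median structures and the cardinality step that the paper leaves implicit.
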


Finally, recall that, by \cite[Theorem C]{haettel_cocompactly_artin}, an extra-large type Artin group whose defining graph is an even star acts cocompactly on a CAT(0) cube complex (these are actually the only examples of cocompactly cubulated  two-dimensional Artin groups, together with $\Z$, dihedral, and free products of the previous, see \cite[Theorem 1.1]{C(0)CC_Artin}). Then again Corollary~\ref{corintro_med}.\ref{item:eltag} implies:
\begin{thmintro}\label{thmintro_artin}
    Let $\Gamma$ be a star with even labels, all greater than or equal to $4$. The Artin group $A_\Gamma$ admits a continuum of coarsely equivariant coarse median structures, which are not induced by cocompact cubulations.
\end{thmintro}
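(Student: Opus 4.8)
The plan is to deduce Theorem~\ref{thmintro_artin} from Corollary~\ref{corintro_med}.\ref{item:eltag} together with a cubulation obstruction, in complete analogy with the proof of Theorem~\ref{thmintro_raag}. The first step is to verify that such an $A_\Gamma$ is covered by Corollary~\ref{corintro_med}.\ref{item:eltag}: a star is a tree, hence triangle-free, and all its labels are $\geq 4\geq 3$, so $A_\Gamma$ is an Artin group of large and hyperbolic type; moreover $\Gamma$ carries at least one label, hence has an edge, and so is not discrete (if $\Gamma$ is a single vertex then $A_\Gamma\cong\Z$ and there is nothing to prove, so we may assume otherwise). Thus $A_\Gamma$ satisfies the hypotheses of Corollary~\ref{corintro_med}.\ref{item:eltag}, equivalently of Corollary~\ref{cor:coarsemedian_for_all}: its proof merely checks that some vertex stabiliser of the short HHG structure of Subsection~\ref{subsec:example_artin} is a $\Z$-central extension of a non-elementary hyperbolic group and then invokes Theorem~\ref{thm:coarse_median}. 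We therefore obtain that $A_\Gamma$ admits a continuum of coarsely $A_\Gamma$-equivariant coarse median structures.

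It remains to see that at most countably many coarse median structures on $A_\Gamma$ arise from cocompact cubulations, so that after discarding those we are still left with a continuum of structures, now certifiably not cubical. Since $\Gamma$ is an even star with all labels $\geq 4$, the group $A_\Gamma$ is an extra-large type Artin group on an even star, so by \cite[Theorem~C]{haettel_cocompactly_artin} (cf.\ also \cite[Theorem~1.1]{C(0)CC_Artin}) it acts geometrically on a CAT(0) cube complex $X$, and transporting the cubical median along a quasi-isometry $A_\Gamma\to X^{(0)}$ endows $A_\Gamma$ with a coarse median structure; the remaining task is to bound the set of structures obtained in this way. Here one argues exactly as for RAAGs: a geometric action on a CAT(0) cube complex forces the complex to be finite-dimensional and uniformly locally finite (cocompactness together with finiteness of cell stabilisers), hence to be the development of a compact non-positively-curved complex of groups with finite local groups over a finite cube complex; as there are only countably many such pieces of finite combinatorial data, $A_\Gamma$ admits only countably many cocompact cubulations up to equivariant isomorphism, and hence only countably many induced coarse median structures. (For RAAGs this countability --- indeed much more --- is recorded in \cite[Theorem~3.19]{CoarseCubicalRig}.) Subtracting this countable set from the continuum provided by Corollary~\ref{corintro_med}.\ref{item:eltag} leaves a continuum of coarsely equivariant coarse median structures on $A_\Gamma$, none of which is induced by a cocompact cubulation.

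The mathematical content is entirely inherited from Theorem~\ref{thm:coarse_median}, and hence from the quasimorphism-to-quasiline construction applied to the relevant $\Z$-central vertex stabiliser; relative to that, the two hypothesis checks above are immediate, and the only step requiring any care is the countability of cocompact cubulations. That point, while elementary, should be phrased so that the coarse median structures ``induced by cocompact cubulations'' genuinely form a set --- one fixes, for instance, the quotient complex of groups together with a development isomorphism --- and one should note that there is no circularity: Haettel's cube complex and the coarse medians of Theorem~\ref{thm:coarse_median} are produced by independent means, and the argument only ever compares the cardinalities (countable versus continuum) of two a priori unrelated sets of equivalence classes of coarse medians.
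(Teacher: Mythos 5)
Your proof is correct and follows essentially the same route as the paper: apply Corollary~\ref{corintro_med}.\ref{item:eltag} to get a continuum of coarsely equivariant coarse median structures, recall from \cite[Theorem~C]{haettel_cocompactly_artin} that $A_\Gamma$ is cocompactly cubulated so that cubical coarse median structures exist, and then observe that the latter form a countable set, so a continuum must remain after discarding them. The one place where you add genuine content is the countability step. The paper asserts countability of cubulations without argument in the RAAG case and then writes ``then again Corollary~B implies'' for the Artin case, implicitly reusing the same cardinality bound; the reference \cite[Theorem~3.19]{CoarseCubicalRig} that it does cite is RAAG-specific, so it does not literally cover Artin groups. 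Your development-of-a-finite-complex-of-finite-groups argument plugs this gap cleanly and uniformly. Two small remarks: you should include the identification $\pi_1^{orb}\to A_\Gamma$ explicitly in the data being counted (as you hint at in the parenthetical), since the same finite quotient complex of groups with two different markings can yield non-equivariantly-isomorphic cubulations, and countability of $\operatorname{Hom}(\pi_1^{orb},A_\Gamma)$ for a finitely generated $\pi_1^{orb}$ is what closes this; and the degenerate case you set aside (a single vertex, $A_\Gamma\cong\Z$) is also the case where the theorem's conclusion genuinely fails, so it is worth noting that ``star with even labels $\geq 4$'' should be read as having at least one edge rather than as a vacuous edge-case to be dismissed as ``nothing to prove.''
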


\subsection*{Tweaking the hierarchy}
The source of flexibility of the hierarchical structure of a short HHG, which gives rise to the abundance of coarse median structures, is twofold.
\begin{enumerate}
\item Given (almost) any $g\in G$, one can find a short HHG structure where some power of $g$ generates a cyclic direction, by introducing new vertices inside $\ov X$ which are stabilised by $g$. This can be done, for example, for any element that acts loxodromically on the top-level coordinate space (think of a pseudo-Anosov element in the mapping class group of the five-holed sphere); or for (almost) any element which fixes a vertex and acts loxodromically on its link (in the same analogy, think of a partial pseudo-Anosov element supported on a subsurface).
\item\label{item:qline_from_qmorph} Given any homogeneous quasimorphism $\phi\colon\Stab{G}{v}\to\R$ which is unbounded on $Z_v$, one can find a generating set $\tau$ for $\Stab{G}{v}$ such that $\phi\colon\Cay{\Stab{G}{v}}{\tau}\to\R$ is a quasi-isometry (this is done by mimicking some arguments from \cite{ABO}). Then, under suitable conditions (see Theorem \ref{thm:squidification}), one can construct a hierarchical structure in which the bottom level quasiline $\C\ell_v$ coincides with $\Cay{\Stab{G}{v}}{\tau}$. The idea of using quasimorphisms to generate coordinate spaces was already involved in how the hierarchical structure for large hyperbolic type Artin groups \cite{ELTAG_HHS} and for graph manifold groups \cite{HRSS_3manifold} were built, so our procedure can be seen as a generalisation of both. 
\end{enumerate}

The quasimorphisms we feed into the construction from Item~\eqref{item:qline_from_qmorph} come from Corollary~\ref{cor:finding_quasimorphisms}, which roughly states that, given a central extension $0\to \Z\to G\to H\to 1$, where $H$ is hyperbolic, and a finite collection of elements $g_1,\ldots, g_n\in G$ satisfying a certain assumption, there exists a homogeneous quasimorphism $\psi\colon G\to \Z$ which is the identity on $\Z$ and is trivial on $g_i$ for every ${i=1,\ldots, n}$. This result should be compared to \cite[Lemma 4.4]{ELTAG_HHS}, whose proof, however, contains a gap. In Remark~\ref{rem:error_in_ELTAG} we point out the issue and explain how to circumvent it using our Corollary~\ref{cor:finding_quasimorphisms}, to ensure that all results from \cite{ELTAG_HHS} still hold.

\subsection*{Taking quotients by central directions}
In joint work with Alessandro Sisto \cite{short_HHG:II}, the above tools are pushed further to develop a “Dehn Filling” procedure for short HHG, similar in spirit to the relatively Dehn filling theorem \cite{Osin_Dehn_Fill, GM}. This is then used to study residual properties of short HHG. For example, by constructing suitable relatively hyperbolic quotients, we are able to show that most Artin group of large and hyperbolic type are \emph{Hopfian} (every self-epimorphism is an isomorphism); the result could be upgraded to residual finiteness, provided that certain hyperbolic groups are residually finite. Secondly, we prove that most quotients of the five-holed sphere mapping class group are hierarchically hyperbolic, addressing \cite[Question 3]{rigidity_mcg_mod_dt}.

\subsection*{Future directions}
We expect that our Theorem~\ref{thmintro_mcg} can be generalised, namely:
\begin{conjecture}
    The mapping class group of a non-sporadic finite-type surface has at least two coarse median structures.
\end{conjecture}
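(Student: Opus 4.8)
The plan is to localise the short-HHG machinery at a single annular coordinate space of $\mathrm{Mod}(S)$. As $S$ is non-sporadic, $\xi(S)=3g-3+n\geq 2$, so we may fix a proper essential subsurface $W\subseteq S$ of complexity one whose topological boundary is a single (necessarily separating) simple closed curve $\alpha$: take $W\cong S_{1,1}$ when $g\geq 1$, and $W\cong S_{0,4}$ three of whose marked points are punctures of $S$ when $g=0$; such a $W$ exists because $S$ is non-sporadic. In either case $H_W:=\mathrm{Mod}(W,\partial W)$ is a central extension $1\to\langle T_\alpha\rangle\to H_W\to Q_W\to 1$ with $Q_W$ virtually free and non-elementary, so $H_W$ is a short HHG (Proposition~\ref{prop:hyp_rel_Z-central_is_short}) satisfying the hypothesis of Theorem~\ref{thm:coarse_median}. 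Since $H^2(F;\Z)=0$ for $F$ free, the extension splits over a finite-index subgroup of $Q_W$, and inducing the resulting homomorphism yields a homogeneous quasimorphism $\phi\colon H_W\to\R$ with $\phi(T_\alpha)\neq0$ (for $W\cong S_{1,1}$ one may take the Rademacher quasimorphism on $H_W\cong B_3$). It is essential that $\phi$ lives on $H_W$ rather than on $\mathrm{Mod}(S)$: no element of $H_W$ conjugates $T_\alpha$ to $T_\alpha^{-1}$, since conjugators fix $\partial W$ pointwise, whereas in $\mathrm{Mod}(S)$ a Dehn twist is typically conjugate to its inverse and hence annihilated by every homogeneous quasimorphism.

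When $\xi(S)=2$ the group $\mathrm{Mod}(S)$ is already a short HHG — its domains form three layers, $S$ on top, complexity-one subsurfaces in the middle, annuli at the bottom — and the conclusion is immediate from Theorem~\ref{thm:coarse_median} applied to the vertex carrying the cyclic direction $\langle T_\alpha\rangle$ (this reproves Theorem~\ref{thmintro_mcg} and covers $S_{1,2}$). So assume $\xi(S)\geq 3$. The essential step is a \emph{relative squidification}: inside the Behrstock--Hagen--Sisto hierarchical structure on $\mathrm{Mod}(S)$, replace the annular coordinate space $\C\alpha$ by the quasiline $\Cay{H_W}{\tau_\phi}$ built from $\phi$ as in Theorem~\ref{thm:squidification}, keep every other coordinate space as is, and rebuild the projections $\pi_Y$ and the consistency relations for all domains $Y$ with $\alpha\nest Y$. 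This produces a new HHS structure on $\mathrm{Mod}(S)$, hence a coarse median $\mu_\phi$; it will not be coarsely $\mathrm{Mod}(S)$-equivariant, which is consistent with the conjecture (it asks only for two coarse median structures, not equivariant ones), since the choice of $W$ has broken the symmetry.

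To see $[\mu_\phi]\neq[\mu]$, where $\mu$ is the subsurface-projection coarse median, restrict both to the undistorted, quasi-isometrically embedded, hierarchically quasiconvex subgroup $H_W\leq\mathrm{Mod}(S)$; the relative squidification we performed restricts there to the $\phi$-squidification of $H_W$, so $\mu|_{H_W}$ and $\mu_\phi|_{H_W}$ are two of the coarse medians on $H_W$ produced by Theorem~\ref{thm:coarse_median}. That continuum consists of pairwise inequivalent structures, so for all but at most one $\phi$ we have $[\mu_\phi|_{H_W}]\neq[\mu|_{H_W}]$, and since $H_W$ is undistorted this forces $\sup_{x,y,z}\dist(\mu(x,y,z),\mu_\phi(x,y,z))=\infty$ already in $\mathrm{Mod}(S)$. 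Hence $\mathrm{Mod}(S)$ has at least two coarse median structures.

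I expect the genuine obstacle to be the relative squidification. In an honest short HHG the domains above a cyclic direction form essentially one layer, and that is precisely what keeps the recomputation of projections and consistency inequalities in Theorem~\ref{thm:squidification} under control; in $\mathrm{Mod}(S)$ the poset of subsurfaces $Y$ containing $\alpha$ is unbounded, and one must thread $\phi$ — defined only on $H_W$ — coherently through the central direction of $\mathrm{Mod}(Y,\alpha)$ for every such $Y$, that is, verify the compatibility of a whole family of relative bounded Euler classes. A possibly cleaner route is to carry out the $\phi$-modification inside the hierarchically quasiconvex subgroup $\Stab{\mathrm{Mod}(S)}{\alpha}$ — coarsely a central product $H_W\times_{\langle T_\alpha\rangle}\mathrm{Mod}(S\setminus W,\partial)$ in which the modification touches only the $H_W$-factor — and then to graft the resulting coarse median onto the ambient subsurface-projection median; the difficulty then becomes a general statement about extending coarse medians along hierarchically quasiconvex subgroups, alleviated by the fact that the modification is invisible outside the $\Stab{\mathrm{Mod}(S)}{\alpha}$-orbit of finitely many domains.
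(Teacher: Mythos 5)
This statement is a \emph{conjecture} in the paper, not a theorem: the paper does not prove it, and in the ``Future directions'' subsection it only offers the strategy of finding homogeneous quasimorphisms on curve stabilisers vanishing on adjacent Dehn twist flats and using them to replace annular curve graphs, warning that ``the construction of the HHG structure with the given quasilines would be more fickle for surfaces of higher complexity.'' Your proposal is precisely that strategy, and you correctly and honestly localise the gap in the same place the paper does: the ``relative squidification,'' i.e.\ rebuilding projections and re-verifying the HHS consistency inequalities after replacing a single bottom-level coordinate space, when the poset of domains above $\alpha$ is no longer just two layers. Since that step is carried out in the paper only for genuine short HHG --- where Lemma~\ref{lem:strong_bgi_for_short}, the gate description in Definition~\ref{defn:squid_material}.\eqref{squid_material:gates}, and the realisation map $f$ in Definition~\ref{defn:realisation} all rely on the three-layer shape of the hierarchy --- nothing in the paper can be cited to finish your argument, and you have not replaced it. So this is not a proof; it is a correct reconstruction of the paper's proposed plan with the obstruction correctly identified, but the obstruction remains.

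Two smaller points. First, your claim that $\mathrm{Mod}(S_{1,2})$ ``is already a short HHG'' with the same three-layer structure needs an argument: while $\mathcal{C}(S_{1,2})\cong\mathcal{C}(S_{0,5})$ by Luo's theorem, Axiom~\eqref{short_axiom:extension} also requires each curve stabiliser to be cyclic-by-hyperbolic in a $\mathrm{Mod}(S_{1,2})$-equivariant way with the twist acting trivially on the link, and the paper's Subsection~\ref{subsec:mcg} only verifies this for $S_{0,5}$; the verification for $S_{1,2}$ is plausible but is not in the paper. Second, your reduction to $H_W$ at the end would, if the relative squidification existed, give only finitely many \emph{coarsely equivariant} inequivalences, since $\mu_\phi$ by your own account is not $\mathrm{Mod}(S)$-equivariant; the conjecture as stated asks only for two coarse median structures, so this is consistent, but it is worth noting that you would not be matching the strength of Theorem~\ref{thm:coarse_median}, which produces uncountably many equivariant ones.
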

In the spirit of our proof of Theorem~\ref{thmintro_coarsemedian}, a possible approach would be to find suitable homogeneous quasimorphisms on curve stabilisers which vanish on certain Dehn Twist flats, and then use them to produce the quasilines that should replace annular curve graphs. Most likely, this unified strategy would not require the whole machinery of this article, which we decided to include nonetheless in view of its applications to the second paper in this series. We also expect that the construction of the HHG structure with the given quasilines would be more fickle for surfaces of higher complexity.

\par\medskip
In Theorem~\ref{thmintro_racg}, the coarse median structures we produce are most likely not coarsely equivariant, so we ask:
\begin{questintro}
    Does there exist a RACG admitting a \emph{coarsely equivariant} coarse median structure which is not induced by a cocompact cubulation?
\end{questintro}
This is related to \cite[Question 4]{Elia:coarse_med_preserving}, which asked if RACGs admit finitely many equivariant coarse median structures. In this direction, future work of Behrstock, {\c{C}}i{\c{c}}eksiz, and Falgas-Ravry shows that generic RACGs admit a unique cubical coarse median structure \cite{BehrstockCiceksizFalgasRavry:connectivity}.

\subsection*{Overview of the paper}
Section~\ref{sec:HHG/S} contains the background on hierarchically hyperbolic spaces and groups. There we also provide a description of product regions in a combinatorial HHS (Lemma~\ref{lem:PR_in_CHHS}), a tool of independent interest which, to the best of the author's knowledge, was missing in the literature. 

In Section~\ref{sec:short} we define short HHG and present the main examples, along with some of their properties. Section~\ref{sec:squid} develops the machinery to build a short HHG structure for a group. The input of this procedure is what we call \emph{blowup materials}, and include a collection of quasimorphisms which are then used to construct the bottom level quasilines (see Definition~\ref{defn:squid_material}). 

In Section~\ref{sec:short_to_squid} we notice that short HHG admit blowup materials, which can then be modified to introduce new cyclic directions, as explained in Section~\ref{sec:short_to_squid}. In Section~\ref{sec:new_examples} we also produce new examples of short HHG by constructing suitable blowup materials.

In Section~\ref{sec:mcg_coarsemedian} we prove Theorem~\ref{thmintro_coarsemedian}, regarding the existence of uncountably many coarse median structures (see Theorem~\ref{thm:coarse_median}). As in the Introduction, Theorems~\ref{thmintro_mcg} to~\ref{thmintro_artin} are then deduced from Corollary~\ref{corintro_med}, which is Corollary~\ref{cor:coarsemedian_for_all} below.

Finally, in Appendix~\ref{sec:quasicocycles} we prove how to extract certain quasimorphisms from central extensions (see Corollary~\ref{cor:finding_quasimorphisms}). We also fill a gap in a proof from \cite{ELTAG_HHS} (see Remark~\ref{rem:error_in_ELTAG}).

\subsection*{Acknowledgements}
I am grateful to my supervisor Alessandro Sisto, for the constant and active support during the development of the project. A special thanks also to Elia Fioravanti and Jason Behrstock, who reviewed an earlier draft of this document and suggested the applications to Artin and Coxeter groups.

\section{Background}\label{sec:HHG/S}

\subsection{What is a HHS}\label{subsec:axioms}
We recall from~\cite{HHS_II} the definition of a hierarchically hyperbolic space.
\begin{defn}[HHS]\label{defn:HHS}
The quasigeodesic space  $( \cuco Z,\dist_{\cuco Z})$ is a \emph{hierarchically hyperbolic space} if there exists $E\geq1$, called the \emph{HHS constant}, an index set $\frakS$, whose elements will be referred to as \emph{domains}, and a set $\{\C  U\mid U\in\frakS\}$ of $E$--hyperbolic spaces $(\C  U,\dist_U)$, called \emph{coordinate spaces},  such that the following conditions are satisfied:
\begin{enumerate}
\item\textbf{(Projections.)}\label{item:dfs_curve_complexes}
There is a set $\{\pi_U:  \cuco Z\rightarrow 2^{\C  U}\mid U\in\frakS\}$ of \emph{projections} mapping points in $ \cuco Z$ to sets of diameter bounded by $E$ in the various $\C  U\in\frakS$. Moreover, for all $U\in\frakS$, the coarse map $\pi_U$ is $(E,E)$--coarsely Lipschitz and $\pi_U( \cuco Z)$ is $E$--quasiconvex in $\C  U$.

\item \textbf{(Nesting.)} \label{item:dfs_nesting}
$\frakS$ is equipped with a partial order $\nest$, and either $\frakS=\emptyset$ or $\frakS$ contains a unique $\nest$--maximal element, denoted by $S$. When $V\nest U$, we say $V$ is \emph{nested} in $U$. For each $U\in\frakS$, we denote by $\frakS_U$ the set of $V\in\frakS$ such that $V\nest U$. Moreover, for all $U,V\in\frakS$ with $V\propnest U$ there is a specified subset $\rho^V_U\subset\C  U$ with $\diam_{\C  U}(\rho^V_U)\leq E$. There is also a \emph{projection} $\rho^U_V: \C U\rightarrow 2^{\C V}$. (The similarity in notation is justified by viewing $\rho^V_U$ as a coarsely constant map $\C V\rightarrow 2^{\C  U}$.)
 
\item \textbf{(Orthogonality.)} \label{item:dfs_orthogonal}
$\frakS$ has a symmetric and anti-reflexive relation called \emph{orthogonality}: we write $U\orth V$ when $U,V$ are orthogonal. Also, whenever $V\nest U$ and $U\orth W$, we require that $V\orth W$. We require that for each $T\in\frakS$ and each $U\in\frakS_T$ such that $\{V\in\frakS_T\mid V\orth U\}\neq\emptyset$, there exists $W\in\frakS_T-\{T\}$, which we call a \emph{container} for $U$ inside $T$, so that whenever $V\orth U$ and $V\nest T$, we have $V\nest W$. Finally, if $U \orth V$, then $U,V$ are not $\nest$--comparable.

\item \textbf{(Transversality and consistency.)}\label{item:dfs_transversal}
If $U,V\in\frakS$ are not orthogonal and neither is nested in the other, then we say $U,V$ are \emph{transverse}, denoted $U\transverse V$. In this case there are sets $\rho^V_U\subseteq\C U$ and $\rho^U_V\subseteq\C  V$, each of diameter at most $E$ and satisfying:
$$\min\left\{\dist_{U}(\pi_U(z),\rho^V_U),\dist_{V}(\pi_V(z),\rho^U_V)\right\}\leq E$$
for all $z\in  \cuco Z$.

For $U,V\in\frakS$ satisfying $V\nest U$ and for all $z\in \cuco Z$, we have: 
$$\min\left\{\dist_{U}(\pi_U(z),\rho^V_U),\diam_{\C V}(\pi_V(z)\cup\rho^U_V(\pi_U(z)))\right\}\leq E.$$ 
 
The preceding two inequalities are the \emph{consistency inequalities} for points in $ \cuco Z$.
 
Finally, if $U\nest V$, then $\dist_W(\rho^U_W,\rho^V_W)\leq E$ whenever $W\in\frakS$ satisfies either $V\propnest W$ or $V\transverse W$ and $W\not\bot U$.
 
\item \textbf{(Finite complexity.)} \label{item:dfs_complexity}
There exists $n\geq0$, the \emph{complexity} of $ \cuco Z$ (with respect to $\frakS$), so that any set of pairwise--$\nest$--comparable elements has cardinality at most $n$.
  
\item \textbf{(Large links.)} \label{item:dfs_large_link_lemma}
Let $U\in\frakS$, let $z,z'\in \cuco Z$, and let $N=E\dist_{U}(\pi_U(z),\pi_U(z'))+E$. Then there exists $\{T_i\}_{i=1,\dots,\lfloor N\rfloor}\subseteq\frakS_U- \{U\}$ such that, for any domain $T\in\mathfrak S_U-\{U\}$, either $T\in\frakS_{T_i}$ for some $i$, or $\dist_{T}(\pi_T(z),\pi_T(z'))<E$.  Also, $\dist_{U}(\pi_U(z),\rho^{T_i}_U)\leq N$ for each $i$.

\item \textbf{(Bounded geodesic image.)}\label{item:dfs:bounded_geodesic_image}
For all $U\in\frakS$, all $V\in\frakS_U- \{U\}$, and all geodesics $\gamma$ of $\C  U$, either $\diam_{\C  V}(\rho^U_V(\gamma))\leq E$ or $\gamma\cap\neb_E(\rho^V_U)\neq\emptyset$.
 
\item \textbf{(Partial realisation.)} \label{item:dfs_partial_realisation}
Let $\{V_j\}$ be a family of pairwise orthogonal elements of $\frakS$, and let $p_j\in \pi_{V_j}( \cuco Z)\subseteq \C  V_j$. Then there exists $z\in  \cuco Z$, which we call a \emph{partial realisation point} for the family, so that:
\begin{itemize}
\item $\dist_{V_j}(z,p_j)\leq E$ for all $j$,
\item for each $j$ and 
each $V\in\frakS$ with $V_j\nest V$, we have 
$\dist_{V}(z,\rho^{V_j}_V)\leq E$, and
\item for each $j$ and 
each $V\in\frakS$ with $V_j\transverse V$, we have $\dist_V(z,\rho^{V_j}_V)\leq E$.
\end{itemize}

\item\textbf{(Uniqueness.)} For each $\kappa\geq 0$, there exists
$\theta_u=\theta_u(\kappa)$ such that if $x,y\in \cuco Z$ and
$\dist_{ \cuco Z}(x,y)\geq\theta_u$, then there exists $V\in\frakS$ such
that $\dist_V(x,y)\geq \kappa$.\label{item:dfs_uniqueness}
\end{enumerate}
We often refer to $\frakS$, together with the nesting and orthogonality relations, and the projections as a \emph{hierarchically hyperbolic structure} for the space $ \cuco Z$, denoted by $(\cuco Z,\frakS)$.
\end{defn}

\begin{rem}[Normalisation]\label{rem:normalise}
    As argued in \cite[Remark 1.3]{HHS_II}, it is always possible to assume that the HHS structure is \emph{normalised}, that is, for every $U\in \frakS$ the projection $\pi_U:\,\cuco Z\to \C  U$ is uniformly coarsely surjective.
\end{rem}

\begin{notation}\label{notation:suppress_pi}
Where it will not cause confusion, given $U\in\frakS$, we will often suppress the projection map $\pi_U$ when writing distances in $\C  U$, i.e., given $x,y\in \cuco Z$ and $p\in\C  U$  we write $\dist_U(x,y)$ for $\dist_U(\pi_U(x),\pi_U(y))$ and $\dist_U(x,p)$ for $\dist_U(\pi_U(x),p)$. Note that when we measure distance between a pair of sets (typically both of bounded diameter) we are taking the minimum distance between the two sets. Given $A\subseteq  \cuco Z$ and $U\in\frakS$ we set $$\pi_{U}(A)=\bigcup_{a\in A}\pi_{U}(a).$$
\end{notation}

\subsubsection{Factors, consistency and hierarchical quasiconvexity}
\begin{defn}[Consistent tuple]\label{defn:consistent_tuple}
Let $\kappa\geq1$ and let $(b_U)_{U\in\frakS}\in\prod_{U\in\frakS}2^{\C   U}$ be a tuple such that for each $U\in\frakS$, the $U$--coordinate  $b_U$ has diameter $\leq\kappa$.  Then $(b_U)_{U\in\frakS}$ is \emph{$\kappa$--consistent} if for all $V,W\in\frakS$, we have $$\min\{\dist_V(b_V,\rho^W_V),\dist_W(b_W,\rho^V_W)\}\leq\kappa$$
whenever $V\transverse W$ and 
$$\min\{\dist_W(b_W,\rho^V_W),\diam_V(b_V\cup\rho^W_V(b_W))\}\leq\kappa$$
whenever $V\propnest W$.
\end{defn}

The following is \cite[Theorem~3.1]{HHS_II}:

\begin{thm}[Realisation]\label{thm:realisation}
Let $(\cuco Z,\frakS)$ be a hierarchically hyperbolic space. Then for each $\kappa\geq1$, there exists $\theta=\theta(\kappa)$ so that,  for any $\kappa$--consistent tuple $(b_U)_{U\in\frakS}$, there exists $x\in\cuco Z$ such that $\dist_U(x,b_U)\leq\theta$ for all $U\in\frakS$.
\end{thm}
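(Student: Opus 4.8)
The goal is to recover a point $x\in\cuco Z$ from a $\kappa$-consistent tuple $(b_U)_{U\in\frakS}$, up to bounded error in every coordinate. My plan is to argue by induction on the complexity $n$ of $(\cuco Z,\frakS)$, producing the point one coordinate ``layer'' at a time while controlling how the errors accumulate. The base case $n=1$ is essentially trivial: there is only the maximal domain $S$, and by normalisation (Remark~\ref{rem:normalise}) any point whose $\pi_S$-image is coarsely $b_S$ works. For the inductive step, the first task is to identify which domains are ``relevant'', i.e. where the tuple differs appreciably from the projection of some fixed basepoint; the large links axiom \eqref{item:dfs_large_link_lemma} will be the key tool here, as it lets me cover all such domains by boundedly many proper sub-domains $T_1,\dots,T_k$ together with the information that $\pi_{T_i}$ is not too far from $\rho^{T_i}_S$ in $\C S$.

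First I would fix a basepoint and a geodesic (or quasigeodesic) in $\C S$ from $\pi_S(\text{basepoint})$ toward $b_S$, and choose a point $y_0$ realising $b_S$ coarsely in the top coordinate. Then, for each domain $T$ that is relevant but not nested into any of the $T_i$, the large links axiom forces $\dist_T(y_0, b_T)$ to be uniformly bounded, so $y_0$ already handles those coordinates. The remaining relevant domains all lie in some $\frakS_{T_i}$, each of which carries its own HHS structure of strictly smaller complexity, so I can apply the inductive hypothesis to the restricted tuple $(b_U)_{U\in\frakS_{T_i}}$ — after checking it is still (uniformly) consistent — to get points $y_i$ realising those coordinates. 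The second task is then to \emph{merge} $y_0$ with the $y_i$: I would do this by a gate-type argument, replacing $y_0$ with its coarse closest point in a suitable product region / hierarchically quasiconvex hull associated to $T_i$, one $i$ at a time, and use the consistency inequalities \eqref{item:dfs_transversal} together with bounded geodesic image \eqref{item:dfs:bounded_geodesic_image} to verify that each merge step changes coordinates outside $\frakS_{T_i}$ by only a bounded amount.

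The main obstacle I expect is precisely this merging step and the bookkeeping of constants: one must ensure that correcting the coordinates inside $\frakS_{T_i}$ does not destroy the coordinates in $\frakS_{T_j}$ for $j\neq i$, nor the top-level coordinate $b_S$. Transversality of the $T_i$ is not guaranteed, so the product-region structure one would like to gate onto is not simply a product; the fix is to process the $T_i$ in an order compatible with nesting and to use the consistency relation between $\rho^{T_i}_S$, $\rho^{T_j}_S$, and the geodesic in $\C S$ to keep the $y_i$ from interfering. Throughout, every application of an axiom introduces a new constant depending only on $\kappa$, $E$, and $n$, so the final $\theta=\theta(\kappa)$ is obtained by unwinding the induction; I would track this dependence schematically rather than optimally. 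Once $x$ is produced, a last short verification using the uniqueness axiom \eqref{item:dfs_uniqueness} confirms that $x$ is coarsely well-defined, though the statement as given only asserts existence, so that remark is optional.
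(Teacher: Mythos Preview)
The paper does not prove this theorem: it is quoted verbatim as \cite[Theorem~3.1]{HHS_II} and used as a black box, so there is no in-paper argument to compare your proposal against. Your outline is broadly the strategy of the original proof in \cite{HHS_II}---induction on complexity, using large links to locate the finitely many sub-domains where work remains, realising the restricted tuples there, and assembling via gates/partial realisation while tracking errors through the consistency and bounded-geodesic-image axioms---so as a high-level plan it is on the right track. Be aware, though, that what you have written is a sketch rather than a proof: the merging step (ensuring that adjusting coordinates in $\frakS_{T_i}$ does not spoil those in $\frakS_{T_j}$) is the genuinely delicate part, and your proposal acknowledges but does not resolve it. If you intend to actually write this up, you would need the hierarchy-path machinery or an equivalent device from \cite{HHS_II} to make that step precise.
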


\noindent Observe that the uniqueness axiom (Definition~\eqref{item:dfs_uniqueness}) implies that the \emph{realisation point} $x$ for $(b_U)_{U\in\frakS}$, provided by Theorem~\ref{thm:realisation}, is coarsely unique.

\begin{defn}[Product regions and factors]\label{defn:factor}
Fix a constant $\kappa\ge E$. For any domain $U$, let $F_U$ be the set of $\kappa$-consistent tuples for $U$, that is, all tuples $(b_V)_{V\in\frakS_U}$ that satisfy the consistency inequalities. Similarly, one can define $E_U$ as the set of $\kappa$-consistent tuples of the form $(b_V)_{V\orth U}$.

Now let $P_U=F_U\times E_U$, which we call the \emph{product region} associated to $U$. By the realisation Theorem~\ref{thm:realisation} there is a coarsely well-defined map $\phi\colon P_U \to \cuco Z$, constructed as follows:
a pair $(y,z)=\left((y_V)_{V\nest U},(z_V)_{V\orth U} \right)\in F_U\times E_U$ is first extended to a full tuple $(x_V)_{V\in\frakS}$, defined as follows:
$$x_V=\begin{cases}
    y_V \mbox{ if }V\nest U;\\
    z_V \mbox{ if }V\orth U;\\
    \rho^U_V \mbox{ otherwise}.
\end{cases}$$
Such a tuple is $\kappa'$-consistent, for some $\kappa'$ depending on $\kappa$ and $E$, and therefore admits a realisation point. Then one sets $\phi(y,z)$ as this realisation point.

We can metrise $\phi(P_U)$, which we will still denote by $P_U$, by endowing it with the subspace metric. Similarly, if we fix $e\in E_U$, the image of the \emph{factor} $F_U\times\{e\}$, which we will still denote by $F_U$ when the dependence on $e$ is irrelevant, can be seen as a sub-HHS of $\cuco Z$ with domain set $\frakS_U=\{V\in\frakS | V\nest U\}$. Two parallel copies $F_U\times\{e\}$ and $F_U\times\{e'\}$ are quasi-isometric (see e.g. \cite[Section 2.2]{DHScorrection} for more details), thus the metric structure on $F_U$ is well-defined up to quasi-isometry. 
\end{defn}

\begin{defn}[Hierarchical quasiconvexity and gates]\label{defn:hqc}
A subset $\cuco Y\subseteq \cuco Z$ is \emph{hierarchically quasiconvex} if there exists a function $t\colon[0,+\infty)\to [0,+\infty)$ such that the
following hold:
\begin{itemize}
    \item For all $U\in \frakS$, the projection $\pi_U(\cuco Y)$ is a $t(0)$–quasiconvex subspace of $\C U$.
    \item \textbf{Realization property}:
    For all $\kappa\ge 0$ and every point $x\in \cuco Z$ for which $\dist_U(\pi_U(x), \pi_U(\cuco Y))\le \kappa$ for all $U\in\frakS$, we have that $\dist_{\cuco Z}(x, \cuco Y)\le t(\kappa)$.
\end{itemize}
Whenever $\cuco Y$ is hierarchically quasiconvex, there exists a coarsely Lipschitz, coarse retraction $\gate_{\cuco Y} \colon \cuco Z\to \cuco Y$, called the \emph{gate} on $\cuco Y$, such that for every $x\in \cuco Z$ and every $W\in \frakS$, we have that
$ \pi_W(\gate_{\cuco Y}(z))$ uniformly coarsely coincides with the projection of $\pi_W(x)$ onto the quasiconvex set $\pi_W(\cuco Y)$. See \cite[Lemma 5.5]{HHS_II} for further details. 
\end{defn}

\begin{rem}\label{rem:factors_are_hqc}
    Product regions and factors are hierarchically quasiconvex (see \cite[Construction 5.10]{HHS_II}). Furthermore, by inspection of how $F_U$ embeds inside $\cuco Z$, one gets the following explicit description of the gate $\gate_{F_U}$: for every $z\in \cuco{Z}$, $\gate_{F_U}(z)$ has coarsely the same projection as $z$ to every $\C W$ such that $W\nest U$, and coarsely coincides with $\rho^{U}_W$ otherwise. Similarly, the gate  $\gate_{P_U}(z)$ has coarsely the same projection as $z$ to every $\C W$ whenever $W$ is either nested in or orthogonal to $U$, and projects uniformly close to $\rho^{U}_W$ otherwise.
\end{rem}

\begin{rem}[Dependence of $P_U$ on $\kappa$]\label{rem:dependence_of_PU_on_k}
    Let $P_U^E$ (resp. $P_U^\kappa$) be the product region for $U\in\frakS$ constructed with $E$-consistent tuples (resp. with $\kappa$-consistent tuples, for some $\kappa\ge E$). Clearly $P_U^E\subseteq P_U^\kappa$. Conversely, given a point $p\in P_U^\kappa$, notice that $p$ and its gate $\gate_{P_U^E}(p)$ have coarsely the same projection to every coordinate space (this is because $\pi_W(P_U^E)$ is coarsely dense in $\C W$ whenever $W\nest U$ or $W\orth U$, and coarsely coincides with $\rho^U_W$ otherwise). Therefore, the uniqueness axiom~\eqref{item:dfs_uniqueness} gives that $p$ and its gate are uniformly close, which in turn means that $P_U^E$ and $ P_U^\kappa$ are within finite Hausdorff distance. Thus we can talk about \emph{the} product region associated to $U$ as any subspace within finite Hausdorff distance from $P_U^E$. 
\end{rem}

\subsection{Hierarchically hyperbolic groups}
\begin{defn}[Automorphism]\label{defn:auto}
Let $({\cuco Z},\frakS)$ be a HHS. An \emph{automorphism} consists of a map $g:\,{\cuco Z}\to {\cuco Z}$, a bijection $g^\sharp:\, \frakS\to \frakS$ preserving nesting and orthogonality, and, for each $U\in\frakS$, an isometry $g^\diamond(U):\,\C  U\to \C (g^\sharp(U))$ for which the following two diagrams commute for all $U,V\in\frakS$ such that $U\propnest V$ or $U\transverse V$:
$$\begin{tikzcd}
{\cuco Z}\ar{r}{g}\ar{d}{\pi_U}&{\cuco Z}\ar{d}{\pi_{g^\sharp (U)}}\\
\C  U\ar{r}{g^\diamond (U)}&\C  (g^\sharp (U))\\
\end{tikzcd}$$
and
$$\begin{tikzcd}
\C  U\ar{r}{g^\diamond (U)}\ar{d}{\rho^U_V}&\C  (g^\sharp (U))\ar{d}{\rho^{g^\sharp (U)}_{g^\sharp (V)}}\\
\C  V\ar{r}{g^\diamond (V)}&\C  (g^\sharp (V))\\
\end{tikzcd}$$
Whenever it will not cause ambiguity, we will abuse notation by dropping the superscripts and just calling all maps $g$.
\end{defn}
The above definition of an automorphism is equivalent to the original formulation, which is \cite[Definition 1.20]{HHS_II} (see the discussion in \cite[Section 2.1]{DHScorrection} for further explanations).

We say that two automorphisms $g,g'$ are \emph{equivalent} if $g^\sharp=(g')^\sharp$ and $g^\diamond(U)=(g')^\diamond(U)$ for each $U\in\frakS$.  Given an automorphism $g$, a quasi-inverse $\ov{g}$ for $g$ is an automorphism with $\ov{g}^\sharp=(g^\sharp)^{-1}$ and such that, for every $U\in \frakS$, $\ov{g}^\diamond(U)=g^\diamond(U)^{-1}$. Since the composition of two automorphisms is an automorphism, the set of equivalence classes of automorphisms forms a group, denoted $\text{Aut}(\frakS)$.

\begin{defn}\label{defn:action_on_hhs}
    A finitely generated group $G$ \emph{acts} on a HHS $({\cuco Z},\frakS)$ by automorphisms if there is a group homomorphism $G\to \text{Aut}(\frakS)$.
\end{defn}

\begin{defn}[HHG]\label{defn:HHG}
A finitely generated group $G$ is \emph{hierarchically hyperbolic} if there exists a hierarchically hyperbolic space $({\cuco Z},\frakS)$ and an action $G\rightarrow\text{Aut}(\frakS)$ so that the uniform quasi-action of $G$ on ${\cuco Z}$ is \emph{geometric} (by which we will always mean \emph{metrically proper} and \emph{cobounded}), and $\frakS$ contains finitely many $G$--orbits. Then we can equip $G$ with a HHS structure, whose domains and coordinate spaces are the same as the ones for ${\cuco Z}$, and whose projections are obtained by precomposing the projections for $({\cuco Z},\frakS)$ with a $G$-equivariant quasi-isometry $G\to {\cuco Z}$ given by the Milnor- \v{S}varc lemma. We call such a structure a \emph{HHG structure} for $G$.
\end{defn}

Now let $U\in \frakS$ be a domain of a HHG, and let $P_U$ be the associated product region, which we see as a collection of $E$-consistent tuples, as explained in Remark~\ref{rem:dependence_of_PU_on_k}. Then $\Stab{G}{U}$ acts on $P_U$, as it permutes such tuples. Furthermore, when we see $P_U$ as a subspace of $\cuco Z$ and we endow it with the subspace metric, the action is metrically proper, since $G$ acts metrically properly on the whole $\cuco Z$.

\begin{defn}[Cobounded product regions]\label{defn:cobounded_stab}
    A HHG $(G,\frakS)$ has \emph{cobounded product regions} if, for every $U\in \frakS$, its stabiliser $\Stab{G}{U}$ acts coboundedly on the corresponding product region $P_{U}$ (and therefore geometrically).
\end{defn}
The above property is satisfied by all naturally occurring examples of HHG, to the point that some people argue it should be part of the definition of a HHG.

\begin{rem}[Main coordinate space in a HHS/G]\label{rem:top_guy_HHG}
    In a normalised HHS $\cuco Z$, the main coordinate space $\C S$ is quasi-isometric to the \emph{factored space} $\h{Z}$, obtained from $\cuco Z$ by adding an edge between any two points belonging to the same proper product regions (this is \cite[Corollary 2.9]{hhs_asdim}). Now, suppose that $G$ is a normalised HHG with cobounded product regions, in the sense of Definition~\ref{defn:cobounded_stab}, and fix a finite generating set $T$ for $G$ and a collection $U_1,\ldots, U_r$ of orbit representative for the $G$-action on $\frakS-\{S\}$. Then the above observation implies that $\C S$ is quasi isometric to 
    $$\Cay{G}{T\cup \{\Stab{G}{U_i}\}_{i=1,\ldots, r}}.$$
\end{rem}

\subsection{Combinatorial HHS}\label{sec:what_is_CHHS} 
In this Section we recall the definition of a combinatorial HHS and its hierarchically hyperbolic structure, as first introduced in \cite{BHMS}. 

\begin{defn}[Induced subgraph]
    Let $X$ be a simplicial graph. Given a subset $S\subseteq X^{(0)}$ of the set of vertices of $X$, the subgraph \emph{spanned} by $S$ is the complete subgraph of $X$ with vertex set $S$.
\end{defn}

\begin{defn}[Join, link, star]\label{defn:join_link_star}
Given disjoint simplices $\Delta,\Delta'$ of $X$, we let $\Delta\star\Delta'$ denote the simplex spanned by $\Delta^{(0)}\cup\Delta'^{(0)}$, if it exists. 

For each simplex $\Delta$, the \emph{link} $\link(\Delta)$ is the union of 
all simplices $\Sigma$ of $X$ such that $\Sigma\cap\Delta=\emptyset$ and $\Sigma\star\Delta$ is a simplex of $X$.  Observe that $\link(\Delta)=\emptyset$ if and only if $\Delta$ is a maximal simplex. 

The \emph{star} of $\Delta$ is $\operatorname{Star}(\Delta)\coloneq \link(\Delta)\star\Delta$, i.e. the union of all simplices of $X$ that contain $\Delta$.
\end{defn}

\begin{defn}[$X$--graph, $\W$--augmented graph]\label{defn:X_graph}
An \emph{$X$--graph} is a graph $\W$ whose vertex set is the set of all maximal simplices of $X$.

For a simplicial graph $X$ and an $X$--graph $\W$, the \emph{$\W$--augmented graph} $\duaug{X}{\W}$ is the graph defined as follows:
\begin{itemize}
     \item the $0$--skeleton of $\duaug{X}{\W}$ is $X^{(0)}$;
     \item if $v,w\in X^{(0)}$ are adjacent in $X$, then they are adjacent in $\duaug{X}{\W}$; 	
     \item if two vertices in $\W$ are adjacent, then we consider $\sigma,\rho$, the associated maximal simplices of $X$, and in $\duaug{X}{\W}$ we connect each vertex of $\sigma$ to each vertex of $\rho$.
\end{itemize}
We equip $\W$ with the usual path-metric, in which each edge has unit length, and do the same for $\duaug{X}{\W}$.
\end{defn}

\begin{defn}[Equivalence between simplices, saturation]\label{defn:simplex_equivalence}
For $\Delta,\Delta'$ simplices of $X$, we write $\Delta\sim\Delta'$ to mean $\link(\Delta)=\link(\Delta')$. We denote the $\sim$--equivalence class of $\Delta$ by $[\Delta]$.
Let $\Sat(\Delta)$ denote the set of vertices $v\in X$ for which there exists a simplex $\Delta'$ of $X$ such that $v\in\Delta'$ and $\Delta'\sim\Delta$, i.e.
$$\Sat(\Delta)=\left(\bigcup_{\Delta'\in[\Delta]}\Delta'\right)^{(0)}.$$
We denote by $\frakS$ the set of $\sim$--classes of non-maximal simplices in $X$.
\end{defn}

\begin{defn}[Complement, link subgraph]\label{defn:complement}
Let $\W$ be an $X$--graph.  For each simplex $\Delta$ of $X$, let $Y_\Delta$ be the subgraph of $\duaug{X}{\W}$ induced by the set $(\duaug{X}{\W})^{(0)}-\Sat(\Delta)$ of vertices. Let $\C  (\Delta)$ be the induced subgraph of $Y_\Delta$ spanned by $\link(\Delta)^{(0)}$, which we call the \emph{augmented link} of $\Delta$.  Note that $\C  (\Delta)=\C  (\Delta')$ whenever $\Delta\sim\Delta'$. (We emphasise that we are taking links in $X$, not in $\duaug{X}{\W}$, and then considering the subgraphs of $Y_\Delta$ induced by those links.)
\end{defn}

\begin{defn}[Combinatorial HHS]\label{defn:combinatorial_HHS}
A \emph{combinatorial HHS} $(X,\W)$ consists of a simplicial graph $X$ and an $X$--graph $\W$ satisfying the following conditions:
\begin{enumerate}
    \item \label{item:chhs_flag} There exists $n\in\mathbb N$, called the \emph{complexity} of $X$, such that any chain $\link(\Delta_1)\subsetneq\dots\subsetneq\link(\Delta_i)$, where each $\Delta_j$ is a simplex of $X$, has length at most $n$;
    \item \label{item:chhs_delta} There is a constant $\delta$ so that for each non-maximal simplex $\Delta$, the subgraph $\C  (\Delta)$ is $\delta$--hyperbolic and $(\delta,\delta)$--quasi-isometrically embedded in $Y_\Delta$, where $Y_\Delta$ is as in Definition~\ref{defn:complement};
    \item \label{item:chhs_join} Whenever $\Delta$ and $\Sigma$ are non-maximal simplices for which there exists a non-maximal simplex $\Gamma$ such that $\link(\Gamma)\subseteq\link(\Delta)\cap \link(\Sigma)$, and $\diam(\C   (\Gamma))\geq \delta$, then there exists a simplex $\Pi$ which extends $\Sigma$ such that $\link(\Pi)\subseteq \link(\Delta)$, and all $\Gamma$ as above satisfy $\link(\Gamma)\subseteq\link(\Pi)$;
    \item \label{item:C_0=C} If $v,w$ are distinct non-adjacent vertices of $\link(\Delta)$, for some simplex $\Delta$ of $X$, contained in $\W$-adjacent maximal simplices, then they are contained in $\W$-adjacent simplices of the form $\Delta\star\Sigma$.
\end{enumerate}
\end{defn}

\begin{defn}[Nesting, orthogonality, transversality, complexity]\label{defn:nest_orth}
Let $X$ be a simplicial graph.  Let $\Delta,\Delta'$ be non-maximal simplices of $X$.  Then:
\begin{itemize}
     \item $[\Delta]\nest[\Delta']$ if $\link(\Delta)\subseteq\link(\Delta')$;
     \item $[\Delta]\orth[\Delta']$ if $\link(\Delta')\subseteq \link(\link(\Delta))$.
\end{itemize}
If $[\Delta]$ and $[\Delta']$ are neither $\orth$--related nor $\nest$--related, we write 
$[\Delta]\transverse[\Delta']$.
\end{defn}

\begin{defn}[Projections]\label{defn:projections}
Let $(X,\W,\delta,n)$ be a combinatorial HHS.  

Fix $[\Delta]\in\frakS$ and define a map $\pi_{[\Delta]}:\W\to 2^{\C  ([\Delta])}$ as follows. Let
$$p:Y_\Delta\to2^{\C  ([\Delta])}$$
be the coarse closest point projection, i.e. 
$$p(x)=\{y\in\C  ([\Delta]):\dist_{Y_\Delta}(x,y)\le\dist_{Y_\Delta}(x,\C  ([\Delta]))+1\}.$$

Suppose that $w$ is a vertex of $\W$, so $w$ corresponds to a unique simplex $\Sigma_w$ of $X$. Now, \cite[Lemma 1.15]{BHMS} states that the intersection $\Sigma_w\cap Y_\Delta$ is non-empty and has diameter at most $1$. Define $$\pi_{[\Delta]}(w)=p(\Sigma_w\cap Y_\Delta).$$

We have thus defined  $\pi_{[\Delta]}:\W^{(0)}\to 2^{\C  ([\Delta])}$. If $v,w\in \W$ are joined by an edge $e$ of $\W$, then $\Sigma_v,\Sigma_w$ are joined by edges in $\duaug{X}{\W}$, and we let
$$\pi_{[\Delta]}(e)=\pi_{[\Delta]}(v)\cup\pi_{[\Delta]}(w).$$

Now let $[\Delta],[\Delta']\in\frakS$ satisfy $[\Delta]\transverse[\Delta']$ or $[\Delta']\propnest [\Delta]$, and set $$\rho^{[\Delta']}_{[\Delta]}=p(\Sat(\Delta')\cap Y_\Delta).$$

Finally, if $[\Delta]\propnest [\Delta']$, let $\rho^{[\Delta']}_{[\Delta]}:\C  ([\Delta'])\to \C  ([\Delta])$ be defined as follows.  On $\C  ([\Delta'])\cap Y_\Delta$, it is the restriction of $p$ to $\C  ([\Delta'])\cap Y_\Delta$. Otherwise, it takes the value $\emptyset$.
\end{defn}

We are finally ready to state the main theorem of \cite{BHMS}:
\begin{thm}[HHS structures for $X$--graphs]\label{thm:hhs_links}
Let $(X,\W)$ be a combinatorial HHS. Then $\W$ is a hierarchically hyperbolic space with the structure defined above.

Moreover, let $G$ be a group acting on $X$ with finitely many orbits of subcomplexes of the form $\link(\Delta)$, where $\Delta$ is a simplex of $X$. Suppose moreover that the action on maximal simplices of $X$ extends to an action on $\W$, which is metrically proper and cobounded. Then $G$ is a HHG.
\end{thm}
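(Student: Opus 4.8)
The plan is to verify, one axiom at a time, the nine conditions of Definition~\ref{defn:HHS} for the data described just above the theorem: $\frakS$ is the poset of $\sim$-classes of non-maximal simplices of $X$ with $\nest,\orth,\transverse$ as in Definition~\ref{defn:nest_orth}, the coordinate space of $[\Delta]$ is the augmented link $\C([\Delta])$, and the projections $\pi_{[\Delta]}$ and maps $\rho$ are those of Definition~\ref{defn:projections}. Several axioms are bookkeeping. The $\nest$-maximal domain is the class $[\emptyset]$ of the empty simplex, whose link is all of $X$ and hence contains every other link; that $\nest,\orth,\transverse$ are mutually exclusive, that $V\nest U$ and $U\orth W$ force $V\orth W$, and that $\orth$-related domains are $\nest$-incomparable all follow from the set-theoretic inclusions $\link(\Delta)\subseteq\link(\Delta')$ and $\link(\Delta')\subseteq\link(\link(\Delta))$ defining the relations; and \textbf{finite complexity} (Axiom~\eqref{item:dfs_complexity}) is exactly condition~\eqref{item:chhs_flag}. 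The bounded-diameter and coarse-Lipschitz content of the \textbf{projection} axiom~\eqref{item:dfs_curve_complexes} is checked directly: $\Sigma_w\cap Y_\Delta$ is non-empty of diameter $\le 1$ by \cite[Lemma 1.15]{BHMS}, and if $v,w$ are $\W$-adjacent then all vertices of $\Sigma_v$ are joined to all vertices of $\Sigma_w$ in $\duaug{X}{\W}$, which bounds $\dist_{\C([\Delta])}(\pi_{[\Delta]}(v),\pi_{[\Delta]}(w))$ after applying $p$; the quasiconvexity (and coarse surjectivity) of $\pi_{[\Delta]}(\W)$ in $\C([\Delta])$ is where condition~\eqref{item:C_0=C} enters, as it forces $\C([\Delta])$ to faithfully record the combinatorics of the blown-up link.

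The geometric heart of the proof is \textbf{bounded geodesic image} (Axiom~\eqref{item:dfs:bounded_geodesic_image}) and \textbf{large links} (Axiom~\eqref{item:dfs_large_link_lemma}); these are the only places genuinely using hyperbolicity and the quasi-isometric embedding $\C(\Delta)\hookrightarrow Y_\Delta$ from condition~\eqref{item:chhs_delta}. For bounded geodesic image, take $[\Delta']\propnest[\Delta]$ and a geodesic $\gamma$ of $\C([\Delta])$ disjoint from a neighbourhood of $\rho^{[\Delta']}_{[\Delta]}=p(\Sat(\Delta')\cap Y_\Delta)$: any vertex of $\gamma$ lying in $\Sat(\Delta')$ would be fixed by $p$ and hence would lie in $\rho^{[\Delta']}_{[\Delta]}$, so in fact $\gamma\subseteq Y_{\Delta'}$ and $\rho^{[\Delta]}_{[\Delta']}(\gamma)$ is computed by the closest-point projection $Y_{\Delta'}\to\C([\Delta'])$; one then shows this image is bounded because, $\C([\Delta'])$ being hyperbolic and quasi-isometrically embedded in $Y_{\Delta'}$, a geodesic of $\C([\Delta])$ whose projection made macroscopic progress in $\C([\Delta'])$ would be forced to enter a neighbourhood of $\Sat(\Delta')$, contradicting the hypothesis. \textbf{Large links} is the same interaction run in reverse: along a $\W$-geodesic from $w$ to $w'$, a large value of $\dist_{\C([\Delta])}(w,w')$ is converted into a bounded list of strictly nested links $\link(\Delta_i)\subsetneq\link(\Delta)$ through which the path must deviate, each deviation detected by the path passing close to the corresponding saturation set, so the domains $T_i=[\Delta_i]$ and the estimate on $\dist_{\C([\Delta])}(w,\rho^{T_i}_{[\Delta]})$ are read off the geometry of a single geodesic in $\C([\Delta])$.

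\textbf{Transversality and consistency} (Axiom~\eqref{item:dfs_transversal}) is a case analysis on a vertex $w\in\W$: either $\Sigma_w$ meets $Y_\Delta$ away from $\Sat(\Delta')$, in which case $\pi_{[\Delta]}(w)$ genuinely equals $p(\Sigma_w\cap Y_\Delta)$ and is unconstrained by $\rho^{[\Delta']}_{[\Delta]}$, or $\Sigma_w\subseteq\Sat(\Delta')$, which pins $\pi_{[\Delta']}(w)$ near $\rho^{[\Delta]}_{[\Delta']}$; the two cannot fail simultaneously, giving the consistency inequality, and the final clause of~\eqref{item:dfs_transversal} reduces to the inclusion $\Sat(\Delta)\subseteq\Sat(\Delta')$ when $[\Delta]\nest[\Delta']$. \textbf{Partial realisation} (Axiom~\eqref{item:dfs_partial_realisation}) and the \emph{container} clause of \textbf{orthogonality} (Axiom~\eqref{item:dfs_orthogonal}) are exactly where condition~\eqref{item:chhs_join} is indispensable: it promotes the set-theoretic intersections of iterated links to data realised by an honest simplex $\Pi$ that simultaneously extends chosen representatives of the pairwise-orthogonal $[\Delta_j]$ (resp. realises the container, whose link is $\link(\link(\Delta))\cap\link(\Delta'')$), and a maximal simplex extending $\Pi$ whose position in each $\C([\Delta_j])$ matches the prescribed $p_j$ is then a partial realisation point. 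Finally, \textbf{uniqueness} (Axiom~\eqref{item:dfs_uniqueness}) I would obtain by induction on the complexity: if $w,w'\in\W$ have uniformly close projections to every coordinate space, identify a $\nest$-minimal domain where they can still differ, use condition~\eqref{item:C_0=C} to join them within the associated product region by a path of controlled length, and recurse on the strictly smaller subgraphs $Y_\Delta$.

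For the second statement, an action of $G$ on $X$ with finitely many orbits of subcomplexes $\link(\Delta)$ permutes $\sim$-classes and hence acts on $\frakS$ with finitely many orbits; since $\C([\Delta])$, $Y_\Delta$, the saturations and all $\rho$-maps are defined purely from the simplicial data of $(X,\W)$, this action induces isometries $\C([\Delta])\to\C(g[\Delta])$ compatible with the $\pi$'s and $\rho$'s, that is a homomorphism $G\to\mathrm{Aut}(\frakS)$; and the standing hypothesis that the induced action on $\W$ is metrically proper and cobounded says precisely that the quasi-action of $G$ on $\W$ is geometric. Definition~\ref{defn:HHG} then yields that $G$ is a HHG. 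The step I expect to be the main obstacle is the pair \textbf{bounded geodesic image}/\textbf{large links}: these are the only axioms demanding genuine control of the hyperbolic geometry of the augmented links, and the difficulty lies entirely in tracking, with uniform constants, how a geodesic living in one augmented link $\C([\Delta])$ sits inside a \emph{different} augmented graph $Y_{\Delta'}$ and interacts with the saturation set $\Sat(\Delta')$ there --- conditions~\eqref{item:chhs_delta}, \eqref{item:chhs_join} and~\eqref{item:C_0=C} are exactly the hypotheses engineered to make this interaction controllable.
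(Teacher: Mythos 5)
This theorem is not proved in the paper at all; it is stated verbatim as ``the main theorem of \cite{BHMS}'' and used as a black box throughout. There is therefore no ``paper's own proof'' against which to compare your argument. What you have written is a plausible high-level reconstruction of the architecture of the BHMS proof, and it correctly identifies the logical roles of the four combinatorial-HHS conditions: finite complexity is immediate from condition~\eqref{item:chhs_flag}, the projection estimates come from \cite[Lemma 1.15]{BHMS} together with condition~\eqref{item:C_0=C}, the bounded geodesic image and large links axioms are the geometric core and ride on the quasi-isometric embedding $\C(\Delta)\hookrightarrow Y_\Delta$ of condition~\eqref{item:chhs_delta}, and partial realisation and the container axiom lean on condition~\eqref{item:chhs_join}. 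The second statement is handled essentially as you say: the hypotheses about orbits of links, together with the fact that all projections and $\rho$-maps are defined intrinsically from the simplicial data, give a homomorphism $G\to\mathrm{Aut}(\frakS)$, and properness plus coboundedness then match Definition~\ref{defn:HHG}.

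Two caveats worth flagging, though they do not constitute gaps in the paper (which simply cites the result). First, your treatment of the ``bookkeeping'' items is lighter than BHMS can afford to be: in particular, showing that $\orth$-related domains are $\nest$-incomparable, and more generally that $\nest,\orth,\transverse$ partition pairs of distinct domains, is not a purely set-theoretic triviality from the definitions via links --- one has to use hyperbolicity/diameter bounds to rule out degenerate overlaps, and this is one of the fiddlier passages in \cite{BHMS}. Second, your proposed proof of uniqueness by induction on complexity is a reasonable strategy but is not obviously how BHMS structure the argument; if you were to actually execute this step you would need to argue carefully that ``joining within a product region'' produces a uniformly bounded move in $\W$, which again feeds back into the geometric conditions~\eqref{item:chhs_delta} and~\eqref{item:C_0=C}. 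As a blind sketch of a theorem the paper itself treats as external, this is about as faithful as one could expect.
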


\begin{defn}
We will say that a group $G$ satisfying the assumptions of Theorem~\ref{thm:hhs_links} is a \emph{combinatorial} HHG.
\end{defn}

\begin{rem}
    Notice that, by how the projections $\pi_{[\Delta]}$ are defined, a combinatorial HHS is always normalised, as in Remark~\ref{rem:normalise}.
\end{rem}

We also recall that combinatorial HHS satisfy the following strengthening of the bounded geodesic image axiom, which forces a geodesic with large projection to a subdomain to pass through the $\rho$-point (and not just close to it):
\begin{lemma}[Strong BGI]\label{lem:strong_bgi}
    Let $(X,\W)$ be a combinatorial HHS. Then $\W$ satisfies the \emph{strong Bounded Geodesic Image} axiom: there exists $M>0$ such that, for every $[\Sigma]\subseteq[\Delta]$ and every $x,y\in \C(\Delta)$, if $\dist_{\C(\Sigma)}\left(\rho^{[\Delta]}_{[\Sigma]}(x),\rho^{[\Delta]}_{[\Sigma]}(y)\right)\ge M$, then every geodesic $[x,y]\subseteq\C(\Delta)$ has a vertex in $\Sat(\Sigma)$.
\end{lemma}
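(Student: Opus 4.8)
The plan is to establish the contrapositive: if $\gamma$ is a geodesic of $\C(\Delta)$ from $x$ to $y$ having no vertex in $\Sat(\Sigma)$, then $\dist_{\C(\Sigma)}\left(\rho^{[\Delta]}_{[\Sigma]}(x),\rho^{[\Delta]}_{[\Sigma]}(y)\right)$ is bounded above by a constant $M$ depending only on the combinatorial HHS data, and then to take this $M$ in the statement. Two immediate reductions. Note first that $x,y\notin\Sat(\Sigma)$ automatically, since they are vertices of $\gamma$. Since moreover every vertex of $\gamma$ lies in $\link(\Delta)$ and, by hypothesis, outside $\Sat(\Sigma)$, and consecutive vertices of $\gamma$ span edges of $\duaug{X}{\W}$, the path $\gamma$ lies inside $Y_\Sigma$; hence $\rho^{[\Delta]}_{[\Sigma]}$ is defined along all of $\gamma$ and restricts there to the coarse closest point projection $p\colon Y_\Sigma\to\C(\Sigma)$, so it suffices to bound $\dist_{\C(\Sigma)}(p(x),p(y))$. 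Second, if $\Sat(\Sigma)\cap Y_\Delta=\emptyset$ then $\rho^{[\Sigma]}_{[\Delta]}=\emptyset$, and the bounded geodesic image axiom --- which holds because $\W$ is an HHS by Theorem~\ref{thm:hhs_links} --- already forces $\diam_{\C(\Sigma)}(\rho^{[\Delta]}_{[\Sigma]}(\gamma))\le E$; so we may assume $\Sat(\Sigma)\cap Y_\Delta\neq\emptyset$ and fix $u_0$ in it.

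The geometric input is the elementary observation that any vertex of $\Sat(\Sigma)$ lies in a simplex $\Sigma'$ with $\link(\Sigma')=\link(\Sigma)$, hence is adjacent in $X$ to every vertex of $\link(\Sigma)$. Applied to $u_0$, together with $\link(\Sigma)\subseteq\link(\Delta)\subseteq Y_\Delta$, this shows $\link(\Sigma)$ has diameter at most $2$ in $Y_\Delta$; since $\C(\Delta)$ is $(\delta,\delta)$--quasi-isometrically embedded in $Y_\Delta$, the set $\link(\Sigma)$ thus has uniformly bounded diameter in $\C(\Delta)$. The same observation shows that each point of $\rho^{[\Sigma]}_{[\Delta]}$ is the image, under the coarse closest point projection $Y_\Delta\to\C(\Delta)$, of a vertex at $Y_\Delta$--distance at most $1$ from $\link(\Sigma)$, so $\rho^{[\Sigma]}_{[\Delta]}$ lies uniformly $\C(\Delta)$--close to $\link(\Sigma)$. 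Consequently $\neb_E(\rho^{[\Sigma]}_{[\Delta]})$ is contained in a uniformly $\C(\Delta)$--bounded set $\B\subseteq\C(\Delta)$ which contains $\link(\Sigma)$.

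Now run the bounded geodesic image axiom on $\gamma$: either $\dist_{\C(\Sigma)}(p(x),p(y))\le E$, in which case we are done, or $\gamma$ meets $\neb_E(\rho^{[\Sigma]}_{[\Delta]})\subseteq\B$. In the latter case, let $a$ and $b$ be the first and last vertices of $\gamma$ lying in $\B$. Since $\B$ has uniformly bounded $\C(\Delta)$--diameter and $\gamma$ is a $\C(\Delta)$--geodesic, the subsegment $\gamma'=[a,b]$ of $\gamma$ has uniformly bounded length; and applying the bounded geodesic image axiom to the two complementary subsegments of $\gamma$ --- which, after discarding the two edges adjacent to $\gamma'$, avoid $\neb_E(\rho^{[\Sigma]}_{[\Delta]})$ --- shows that $p$ moves a distance at most $E$ along each of them. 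It then remains to bound the displacement of $p$ along the short path $\gamma'$ together with its two adjacent edges, all of which lie in $Y_\Sigma$; summing the three contributions yields $\dist_{\C(\Sigma)}(p(x),p(y))\le M$ for a uniform $M$, as required.

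I expect this last estimate to be the main obstacle. Since $p$ is merely a closest point projection onto a subgraph that is only quasi-isometrically embedded (not hyperbolically quasiconvex) in the non-hyperbolic space $Y_\Sigma$, it need not be coarsely Lipschitz, so a priori a short $Y_\Sigma$--path could move $p$ an unbounded amount. Ruling this out requires combinatorial-HHS-specific information about the endpoints $a,b$: because they lie in $\B$, hence near $\link(\Sigma)$, one wants to conclude that they are in fact $Y_\Sigma$--close to $\C(\Sigma)$ --- equivalently, that $\gamma$ cannot progress an unbounded amount toward $\C(\Sigma)$ inside $Y_\Sigma$ without passing through $\Sat(\Sigma)$ --- and then the quasi-isometric embedding of $\C(\Sigma)$ in $Y_\Sigma$ closes the gap. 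Making this precise, using the fine combinatorial structure of $\duaug{X}{\W}$ and the $\W$--augmentation, is the technical heart of the argument.
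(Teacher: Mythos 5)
The paper's own proof is a one-line citation to \cite{BHMS} (combining Theorem~4.9 and Lemma~5.2 there), so there is no detailed argument in the text itself to compare against; the substance of the lemma is imported from the earlier reference. Your attempt at a self-contained argument has a genuine gap, which—to your credit—you correctly identify in your final paragraph.

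Your reductions are sound. The geodesic $\gamma$ does lie in $Y_\Sigma$, so $\rho^{[\Delta]}_{[\Sigma]}$ restricts along it to the closest-point projection $p\colon Y_\Sigma\to\C(\Sigma)$; if $\Sat(\Sigma)\cap Y_\Delta=\emptyset$ then $\rho^{[\Sigma]}_{[\Delta]}=\emptyset$ and BGI alone suffices; every vertex of $\Sat(\Sigma)$ is $X$-adjacent to all of $\link(\Sigma)$, so $\link(\Sigma)$ has diameter at most $2$ in $Y_\Delta$ and hence bounded diameter in $\C(\Delta)$; and decomposing $\gamma$ into two outer subsegments avoiding $\neb_E(\rho^{[\Sigma]}_{[\Delta]})$ plus a bounded middle piece $\gamma'$ is the right move. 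But the last step—bounding the displacement of $p$ along $\gamma'$—cannot be closed by these metric estimates. You have bounded $\C(\Delta)$-distances, hence (by the $(\delta,\delta)$-embedding) bounded $Y_\Delta$-distances, from the endpoints $a,b$ to $\link(\Sigma)$. However, $p$ is a closest-point projection taken inside $Y_\Sigma$, and a short $Y_\Delta$-path from $a$ to $\link(\Sigma)$ may go through $\Sat(\Sigma)$ (e.g.\ through your vertex $u_0$, which by construction is at $Y_\Delta$-distance $1$ from everything in $\link(\Sigma)$ but is deleted from $Y_\Sigma$). So bounded $Y_\Delta$-distance to $\C(\Sigma)$ does not give bounded $Y_\Sigma$-distance to $\C(\Sigma)$, and without that you cannot invoke the QI-embedding of $\C(\Sigma)$ in $Y_\Sigma$. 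Since $Y_\Sigma$ is not assumed hyperbolic and $\C(\Sigma)$ is only QI-embedded (not hyperbolically quasiconvex), $p$ carries no a priori Lipschitz control on a short path in $Y_\Sigma$.

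Closing this requires a genuinely combinatorial argument about maximal simplices and $\W$-adjacency—essentially what \cite[Lemma~5.2]{BHMS} provides—rather than a purely metric estimate. The honest options are to cite that lemma directly (as the paper does) or to reconstruct its proof, which inspects how $\duaug{X}{\W}$-geodesics that cross from near $\link(\Sigma)$ to far from it must interact with $\Sat(\Sigma)$.
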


\begin{proof}
This follows by combining \cite[Theorem 4.9 and Lemma 5.2]{BHMS}. 
\end{proof}

\subsubsection{Product regions in a combinatorial HHS}
Here we describe the structure of product regions in a CHHS. We shall assume the following mild property, satisfied by most known examples, which should be thought of as a weak analogue of the \emph{wedge} property from \cite{BerlaiRobbio}.
\begin{defn}\label{defn:simp_cont} A combinatorial HHS $(X,\mathcal{W})$ has \emph{weak simplicial containers} if for every simplex $\Delta\subseteq X$ there exist two (possibly empty) simplices $\Theta, \Psi\subseteq X$ such that $$\link(\link(\Delta))=\link(\Theta)\star\Psi.$$
\end{defn}

\begin{lemma}\label{lem:PR_in_CHHS}
    Let $(X,\W)$ be a combinatorial HHS with weak simplicial containers, and let $\Delta$ be a non-maximal simplex of $X$. Then the product region $P_{[\Delta]}$, associated to the domain $[\Delta]$, uniformly coarsely coincides with the set of maximal simplices of $X$ of the form $\Sigma=\Pi_1\star\Pi_2\star\Pi_3$, where
    \begin{itemize}
        \item $\Pi_1$ is any simplex which is maximal inside $\link(\Delta)$;
        \item $\Pi_2$ is any simplex which is maximal inside $\link(\link(\Delta))$;
        \item $\Pi_1$ is any simplex which is maximal inside $\link(\Pi_1\star\Pi_2)$.
    \end{itemize}
\end{lemma}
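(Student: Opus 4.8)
The plan is to compute the product region $P_{[\Delta]}$ from the description of factors and product regions in Definition~\ref{defn:factor} combined with Remark~\ref{rem:factors_are_hqc}, translated into the combinatorial language. Recall that $P_{[\Delta]} = F_{[\Delta]} \times E_{[\Delta]}$, where $F_{[\Delta]}$ is a sub-HHS with domain set $\frakS_{[\Delta]} = \{[\Sigma] : [\Sigma] \nest [\Delta]\}$ and $E_{[\Delta]}$ collects the coordinates orthogonal to $[\Delta]$. By the dictionary for nesting and orthogonality in Definition~\ref{defn:nest_orth}, a class $[\Sigma]$ is nested in $[\Delta]$ exactly when $\link(\Sigma) \subseteq \link(\Delta)$, and the domains orthogonal to $[\Delta]$ are those $[\Sigma]$ with $\link(\Sigma) \subseteq \link(\link(\Delta))$. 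So the ``factor direction'' of $P_{[\Delta]}$ should be coordinatised by maximal simplices sitting inside $\link(\Delta)$, and the ``orthogonal direction'' by maximal simplices inside $\link(\link(\Delta))$.

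The first step is to make the identification of $F_{[\Delta]}$ precise: I would show that the sub-HHS $F_{[\Delta]}$, with domain set $\frakS_{[\Delta]}$, is itself (coarsely) a combinatorial HHS whose underlying simplicial graph is $\link(\Delta)$ (with the induced $\W$-structure), using Theorem~\ref{thm:hhs_links} and the observation that links taken inside $\link(\Delta)$ agree with links in $X$ for the relevant simplices. Consequently, points of $F_{[\Delta]}$ correspond to maximal simplices of $\link(\Delta)$, i.e. simplices $\Pi_1$ maximal inside $\link(\Delta)$. The weak simplicial containers hypothesis enters to handle the orthogonal complement cleanly: writing $\link(\link(\Delta)) = \link(\Theta)\star\Psi$, the domains orthogonal to $[\Delta]$ are governed by $\link(\Theta)$, and the $\Psi$-part contributes a bounded (simplex) factor that must be filled in once $\Pi_1$ and $\Pi_2$ are chosen — this is why the maximal simplex $\Sigma$ factors as $\Pi_1\star\Pi_2\star\Pi_3$ rather than just $\Pi_1\star\Pi_2$. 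Here $\Pi_2$ ranges over maximal simplices of $\link(\link(\Delta))$ and $\Pi_3$ completes $\Pi_1\star\Pi_2$ to a maximal simplex of $X$ (note the typo in the statement: the third bullet should read ``$\Pi_3$ is any simplex maximal inside $\link(\Pi_1\star\Pi_2)$'').

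The technical heart is the two-way coarse containment. For one direction, given such a $\Sigma = \Pi_1\star\Pi_2\star\Pi_3$, I would exhibit a consistent tuple supported on $\frakS_{[\Delta]} \cup \{[\Sigma'] : [\Sigma']\orth[\Delta]\}$ whose coordinates are read off from $\Pi_1$ (in the nested domains) and $\Pi_2$ (in the orthogonal domains), check it is uniformly consistent, and verify via Remark~\ref{rem:factors_are_hqc} that its realisation point is uniformly close to the vertex $\Sigma$ of $\W$ — the key point being that $\pi_{[\Delta']}(\Sigma)$ agrees with what the gate formula predicts. For the converse, starting from an arbitrary point $p \in P_{[\Delta]}$, I would gate it to the (coarsely dense) set of such simplices and use the uniqueness axiom~\eqref{item:dfs_uniqueness} — exactly as in Remark~\ref{rem:dependence_of_PU_on_k} — to conclude $p$ is uniformly close to some $\Sigma$ of the stated form, since $p$ and $\Sigma$ have coarsely equal projections to every coordinate space: to domains nested in or orthogonal to $[\Delta]$ by construction, and to all other domains because both project uniformly close to the corresponding $\rho^{[\Delta]}_{(\cdot)}$.

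\textbf{Main obstacle.} I expect the delicate step to be controlling the projections of the candidate simplex $\Sigma$ to domains $[T]$ that are \emph{transverse} to $[\Delta]$, i.e. verifying that $\pi_{[T]}(\Sigma)$ lands uniformly close to $\rho^{[\Delta]}_{[T]} = p(\Sat(\Delta)\cap Y_T)$, and dually that the ``filler'' simplices $\Pi_2,\Pi_3$ genuinely do not see any domain not orthogonal to or nested in $[\Delta]$. This requires unwinding Definition~\ref{defn:projections} together with the saturation bookkeeping and is where the weak simplicial containers hypothesis and the container axiom~\eqref{item:dfs_orthogonal} (ensuring the orthogonal complement is itself captured by a single domain-like object) do the real work; the strong BGI Lemma~\ref{lem:strong_bgi} may be needed to pin down the geodesic behaviour. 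Everything else is a matter of carefully transcribing the abstract gate description of Remark~\ref{rem:factors_are_hqc} into the combinatorial model.
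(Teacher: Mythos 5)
Your plan is essentially the same as the paper's, and you correctly identify both the typo in the third bullet ($\Pi_3$, not $\Pi_1$) and the main technical difficulty. For the inclusion of the stated simplices into $P_{[\Delta]}$, the paper handles the projections to transverse domains — exactly the ``main obstacle'' you flag — by citing the partial realization argument in \cite[Section~5.2]{BHMS}, rather than reproving it from the saturation bookkeeping and strong BGI; you correctly locate the difficulty but would have to rediscover that argument. For the reverse inclusion, the paper's route is slightly cleaner than yours: instead of defining a gate onto the set of stated simplices and invoking uniqueness (which, as written, is mildly circular — you presuppose that set is coarsely dense before you've shown a $\Sigma$ with the right coordinates exists), it takes a pair $(b,c)\in F_{[\Delta]}\times E_{[\Delta]}$ and directly realizes $b$ by a $\Pi_1$ maximal in $\link(\Delta)$ via the Realization Theorem applied to the sub-CHHS $(\link(\Delta),\W^\Delta)$, then uses the weak-simplicial-containers decomposition $\link(\link(\Delta))=\link(\Theta)\star\Psi$ to observe that any $[\Delta'']\orth[\Delta]$ with $\link(\Delta'')\cap\Psi\neq\emptyset$ has $\C(\Delta'')$ of diameter at most $2$, so the orthogonal tuple $c$ is realized inside $\link(\Theta)$ by some $\Pi_2'$, and one sets $\Pi_2=\Pi_2'\star\Psi$. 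This direct realization avoids the need to establish hierarchical quasiconvexity of the candidate set separately. Your proposal is correct in outline but would need this constructive step (or an equivalent) to close the gap in the converse direction.
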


\begin{proof}
Let $\Sigma=\Pi_1\star\Pi_2\star\Pi_3$ be any maximal simplex as in the statement. Arguing as in \cite[Section 5.2, \textbf{partial realization}]{BHMS}, one can prove that the projection of $\Sigma$ to any domain $[\Delta']$ which is neither nested into nor orthogonal to $[\Delta]$ is uniformly close to the projection $\rho^{[\Delta]}_{[\Delta']}$. In other words, $\Sigma$ is uniformly close to some point in $P_{[\Delta]}$.
\par\medskip
Conversely, pick a pair $$((b_{[\Delta']})_{[\Delta']\nest [\Delta]}, (c_{[\Delta'']})_{[\Delta'']\orth [\Delta]})\in F_{[\Delta]} \times E_{[\Delta]}=P_{[\Delta]},$$ and we want to show that it is realised by some $\Sigma$ as in the statement. Notice that $F_{[\Delta]}$ is the set of $E$-consistent tuples of the combinatorial HHS $(\link(\Delta), \W^\Delta)$, where two maximal simplices $\Phi,\Phi'\subset\link(\Delta)$ are $\W^\Delta$-adjacent if and only if $\Phi\star \Delta$ and $\Phi'\star \Delta$ are $\W$-adjacent (see \cite[Theorem 4.9]{BHMS}). Therefore, by the realisation Theorem~\ref{thm:realisation}, applied to $(\link(\Delta), \W^\Delta)$, $(b_{[\Delta']})_{[\Delta']\nest [\Delta]}$ must be realised by some simplex $\Pi_1$ which is maximal in $\link(\Delta)$. 

Similarly, $(c_{[\Delta'']})_{[\Delta'']\orth [\Delta]}$ is a $E$-consistent tuple of $E_{[\Delta]}$. Now, $[\Delta'']\orth [\Delta]$ means that $\link(\Delta'')\subseteq \link(\link(\Delta))=\link(\Theta)\star\Psi$. If $\link(\Delta'')\cap\Psi\neq\emptyset$ then $\link(\Delta'')$ is a join, and therefore $\C(\Delta'')$ has diameter at most $2$. Hence  $(c_{[\Delta'']})_{[\Delta'']\orth [\Delta]}$ and its restriction to $F_{[\Theta]}$ have the same realisation points, and as above the restriction to $F_{[\Theta]}$ is realised by some simplex $\Pi'_2$ which is maximal in $\link(\Theta)$. Then let $\Pi_2=\Pi_2'\star\Psi$, and complete $\Pi_1\star\Pi_2$ to a maximal simplex $\Sigma=\Pi_1\star\Pi_2\star\Pi_3$. By construction $\Sigma$ realises $(b_{[\Delta']}, c_{[\Delta'']})$, and we are done.
\end{proof}

\section{Short hierarchically hyperbolic groups}\label{sec:short}
\subsection{Blowup graphs}\label{subsec:blowup}
\begin{defn}[Blowup graph]\label{defn:blowup} Let $\ov X$ be a simplicial graph, whose vertices are labelled by graphs $\{L_v\}_{v\in\ov{X}^{(0)}}$.
The \emph{blowup} of $\ov{X}$, with respect to the collection $\{L_v\}$, is the graph $X$ obtained from $\ov{X}$ by replacing every vertex $v$ with the \emph{cone} $\Squid(v)=v * (L_v)^{(0)}$. Two cones $\Squid(v)$ and $\Squid(w)$ span a join in $X$ if and only if $v,w$ are adjacent in $\ov{X}$, and are disjoint otherwise. 
\end{defn}

We can (and will) identify $\ov X$ with its image under the embedding $\ov X\hookrightarrow X$, mapping each vertex $v\in \ov x^{(0)}$ to the apex of $\Squid(v)$. This map has a Lipschitz retraction $p\colon X\to \ov{X}$, collapsing every $\Squid(v)$ to $v$.

\begin{figure}[htp]
    \centering
    \includegraphics[width=\textwidth, alt={The cones of two adjacent vertices span a join}]{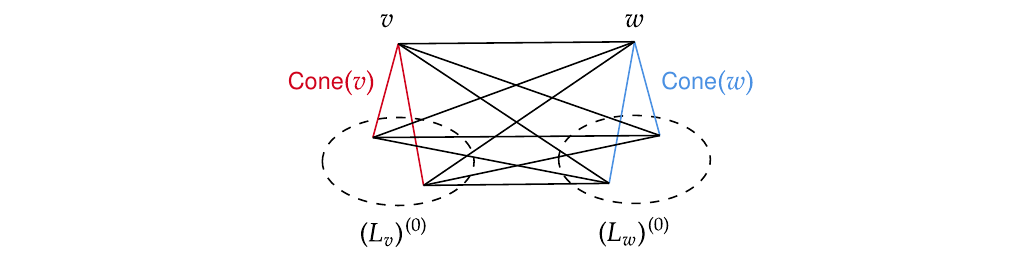}
    \caption{The blowup of two adjacent vertices of $\ov{X}$.}
    \label{fig:blowup}
\end{figure}

For every simplex $\Delta\subseteq X$, let $\ov\Delta=p(\Delta)$, which we call the \emph{support} of $\Delta$, and for every $v\in\ov\Delta$ let $\Delta_v=\Delta\cap \Squid(v)$. When describing a simplex $\Delta$, we shall put vertices belonging to the same $\Delta_v$ in parentheses: for example, if the vertices of $\Delta$ are $\{v,w,x\}$, where $v,w\in\ov X^{(0)}$ and $x\in (L_v)^{(0)}$, then we denote $\Delta$ by $\{(v,x),(w)\}$. 

A careful inspection of the construction yields the following:
\begin{lemma}[Decomposition of links]\label{lem:decomposition_of_links}
Let $\Delta$ be a simplex of $X$. Then
$$\link_X(\Delta)=p^{-1}\left(\link_{\ov X}(\ov \Delta)\right)\star (\bigstar_{v\in \ov \Delta^{(0)}}\link_{\Squid(v)}(\Delta_v)).$$
Moreover, $\link_{\Squid(v)}(\Delta_v)$ is either:
\begin{itemize}
    \item $\emptyset$, if $\Delta_v=\{(v,x)\}$ is an edge;
    \item $\{v\}$, if $\Delta_v=\{(x)\}$ where $x\in L_v$;
    \item $(L_v)^{(0)}$, if $\Delta_v=\{(v)\}$.
\end{itemize}
\end{lemma}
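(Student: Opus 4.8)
The plan is to reduce the statement to a computation with vertex neighbourhoods. Since $X$ is a simplicial graph and its simplices are exactly its cliques, for any simplex $\Delta$ the subcomplex $\link_X(\Delta)$ is precisely the induced (clique) subcomplex of $X$ on the common neighbourhood $\bigcap_{a\in\Delta^{(0)}}N_X(a)$, where $N_X(a)$ denotes the set of vertices adjacent to $a$: indeed, a simplex $\Sigma$ is disjoint from $\Delta$ with $\Sigma\star\Delta$ a simplex if and only if every vertex of $\Sigma$ lies in $\bigcap_{a\in\Delta}N_X(a)$, and any clique inside that neighbourhood is automatically disjoint from $\Delta$. So it suffices to (i) identify this vertex set and (ii) check that the induced subcomplex on it splits as the asserted join, which amounts to verifying that vertices coming from different pieces of the decomposition are always $X$-adjacent.

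First I would record the description of the $1$-skeleton of $X$ coming straight from Definition~\ref{defn:blowup}: an edge of $X$ is either internal to a single squid $\Squid(v)$, or --- whenever $vw$ is an edge of $\ov X$ --- \emph{any} edge between a vertex of $\Squid(v)$ and a vertex of $\Squid(w)$. Writing $\Delta=\bigstar_{v\in\ov\Delta^{(0)}}\Delta_v$ with $\Delta_v=\Delta\cap\Squid(v)$ a non-empty simplex of $\Squid(v)$, this gives $\bigcap_{a\in\Delta}N_X(a)=\bigcap_{v\in\ov\Delta^{(0)}}N_X(\Delta_v)$ where $N_X(\Delta_v):=\bigcap_{a\in\Delta_v}N_X(a)$, and for a single squid one checks directly that
$$N_X(\Delta_v)=p^{-1}\bigl(\link_{\ov X}(v)\bigr)^{(0)}\ \sqcup\ \link_{\Squid(v)}(\Delta_v)^{(0)},$$
the first summand being the vertices outside $\Squid(v)$ adjacent to all of $\Delta_v$ (equivalently, those in a squid spanning a join with $\Squid(v)$, i.e.\ over a neighbour of $v$ in $\ov X$) and the second being $N_X(\Delta_v)\cap\Squid(v)^{(0)}$.

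Next I would intersect over $v\in\ov\Delta^{(0)}$. The decisive observation is that $\ov\Delta$ is a clique of $\ov X$, so for distinct $v,v'\in\ov\Delta^{(0)}$ one has $v'\in\link_{\ov X}(v)$, hence $\Squid(v')^{(0)}\subseteq p^{-1}(\link_{\ov X}(v))^{(0)}$; a short case analysis on which squid a vertex $u$ lies in then shows that $u$ survives all the intersections iff either $u\in\link_{\Squid(v_0)}(\Delta_{v_0})$, where $v_0\in\ov\Delta^{(0)}$ is the vertex with $u\in\Squid(v_0)$, or $u$ lies over a vertex of $\ov X$ adjacent to every vertex of $\ov\Delta$, i.e.\ $u\in p^{-1}(\link_{\ov X}(\ov\Delta))$. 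This identifies $\link_X(\Delta)^{(0)}$ with the disjoint union of the vertex sets of $p^{-1}(\link_{\ov X}(\ov\Delta))$ and of the $\link_{\Squid(v)}(\Delta_v)$, $v\in\ov\Delta^{(0)}$. Finally, to upgrade to the join formula I would note that any two vertices drawn from distinct pieces lie in squids of distinct, $\ov X$-adjacent vertices of $\ov X$ (using again that $\ov\Delta$ is a clique), hence are $X$-adjacent; since each piece carries its induced structure and all cross-edges are present, the induced subcomplex on the union is exactly the iterated join, which is the first displayed identity.

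For the ``moreover'' part, the non-empty simplices of the star $\Squid(v)=v*(L_v)^{(0)}$ are exactly $\{(v)\}$, the singletons $\{(x)\}$ with $x\in(L_v)^{(0)}$, and the edges $\{(v,x)\}$; computing the link (equivalently the common neighbourhood) of each inside the star yields $(L_v)^{(0)}$, $\{v\}$, and $\emptyset$ respectively. I do not anticipate a genuine obstacle here: the whole argument is a direct inspection of the construction, and the only point requiring care is the bookkeeping in the intersection step --- in particular using that $\ov\Delta$ is a clique so that the ``global'' piece $p^{-1}(\link_{\ov X}(\ov\Delta))$ absorbs everything coming from the other squids, and remembering that for $v\in\ov\Delta^{(0)}$ it is the internal link $\link_{\Squid(v)}(\Delta_v)$, not all of $\Squid(v)$, that appears.
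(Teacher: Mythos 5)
Your argument is correct, and it is exactly the direct verification that the paper invokes with "a careful inspection of the construction yields the following" (the paper gives no separate proof). Identifying $\link_X(\Delta)$ with the induced flag complex on $\bigcap_{a\in\Delta}N_X(a)$, computing $N_X(\Delta_v)$ squid by squid, and then using that $\ov\Delta$ is a clique so all the cross-edges required for the join are present is precisely the intended bookkeeping.
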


\begin{cor}\label{cor:bounded_links}
Suppose that $\ov{X}$ is triangle-free, and that no connected component of $\ov{X}$ is a single point. Then, given a simplex $\Delta$ of $X$, one of the following holds:
\begin{enumerate}
\item \label{cor:bounded_links_emptyset} $\Delta=\emptyset$, and $\link_X(\Delta)=X$;
 \item \label{cor:bounded_links_edge} (Edge-type simplex) $\Delta=\{(v,x)\}$, where $v\in \ov{X}^{(0)}$ and $x\in L_v$, and $\link_X(\Delta)=p^{-1}\link_{\ov X}(v)$;
 \item \label{cor:bounded_links_almostmax} (Triangle-type simplex)  $\Delta=\{(v,x),(w)\}$, where $v,w\in \ov{X}^{(0)}$ are adjacent and $x\in L_v$, and $\link_X(\Delta)=(L_w)^{(0)}$;
 \item \label{cor:bounded_links_maximal} $\Delta=\left\{(v,x), (w,y)\right\}$ is a maximal simplex, where $v,w\in \ov{X}^{(0)}$ are adjacent, $x\in L_v$ and $y\in L_w$, and $\link_X(\Delta)=\emptyset$. 
  \item \label{cor:bounded_links_bounded} $\link_X(\Delta)$ is either a single vertex or a non-trivial join.
\end{enumerate}
\end{cor}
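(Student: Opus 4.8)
The statement is a direct unwinding of Lemma~\ref{lem:decomposition_of_links}, which for any simplex $\Delta$ of $X$ gives
$\link_X(\Delta) = p^{-1}\!\left(\link_{\ov X}(\ov\Delta)\right) \star \bigl(\bigstar_{v\in\ov\Delta^{(0)}}\link_{\Squid(v)}(\Delta_v)\bigr)$
and identifies each squid factor $\link_{\Squid(v)}(\Delta_v)$ as $\emptyset$, $\{v\}$, or $(L_v)^{(0)}$ according to whether $\Delta_v$ is an edge $\{(v,x)\}$, a leaf $\{(x)\}$, or the apex $\{(v)\}$. The plan is to run a case analysis on the size of the support $\ov\Delta^{(0)}$: since $\ov\Delta$ is a clique in the triangle-free graph $\ov X$, it has at most two vertices, so $|\ov\Delta^{(0)}|\in\{0,1,2\}$, and within each value I would further split on the type of each $\Delta_v$. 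The hypothesis that no connected component of $\ov X$ is a point enters only in the form: every vertex has a neighbour, hence $p^{-1}(\link_{\ov X}(v))\neq\emptyset$ for all $v$.

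First, if $|\ov\Delta^{(0)}|=0$ then $\Delta=\emptyset$ and $\link_X(\emptyset)=X$ straight from the definition of link; this is case~(\ref{cor:bounded_links_emptyset}). Next, if $|\ov\Delta^{(0)}|=1$, say $\ov\Delta=\{v\}$, then $\Delta\subseteq\Squid(v)$ and Lemma~\ref{lem:decomposition_of_links} reads $\link_X(\Delta)=p^{-1}(\link_{\ov X}(v))\star\link_{\Squid(v)}(\Delta_v)$. If $\Delta_v=\{(v,x)\}$ is an edge the squid factor is empty and $\link_X(\Delta)=p^{-1}(\link_{\ov X}(v))$, giving case~(\ref{cor:bounded_links_edge}). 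If instead $\Delta_v$ is a leaf or the apex, the squid factor is $\{v\}$ or $(L_v)^{(0)}$, so $\link_X(\Delta)$ is the join of two non-empty graphs, i.e.\ a non-trivial join: case~(\ref{cor:bounded_links_bounded}).

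Finally, if $|\ov\Delta^{(0)}|=2$, say $\ov\Delta=\{v,w\}$ with $v$ adjacent to $w$, then triangle-freeness of $\ov X$ forces $\link_{\ov X}(\{v,w\})=\emptyset$, so $\link_X(\Delta)=\link_{\Squid(v)}(\Delta_v)\star\link_{\Squid(w)}(\Delta_w)$ with $\Delta_v,\Delta_w$ both non-empty. If both are edges, both squid factors are empty, so $\link_X(\Delta)=\emptyset$ and $\Delta$ is maximal: case~(\ref{cor:bounded_links_maximal}). If exactly one of them — say $\Delta_v$ — is an edge, then $\link_X(\Delta)=\link_{\Squid(w)}(\Delta_w)$, which is $(L_w)^{(0)}$ when $\Delta_w$ is the apex (case~(\ref{cor:bounded_links_almostmax})) and the single vertex $\{w\}$ when $\Delta_w$ is a leaf (case~(\ref{cor:bounded_links_bounded})). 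If neither is an edge, $\link_X(\Delta)$ is again a join of two non-empty graphs, hence a non-trivial join: case~(\ref{cor:bounded_links_bounded}). This exhausts all possibilities, since $\ov\Delta=\Delta_v\star\Delta_w$ and each $\Delta_v$ has one of the three listed shapes.

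I do not expect a genuine obstacle here: this is a bookkeeping corollary of Lemma~\ref{lem:decomposition_of_links}. The only points deserving a word of care are that triangle-freeness gets used twice — once to bound $|\ov\Delta^{(0)}|$ and once to annihilate $\link_{\ov X}$ of an edge — and that cases~(\ref{cor:bounded_links_edge}) and~(\ref{cor:bounded_links_almostmax}) really cannot be absorbed into~(\ref{cor:bounded_links_bounded}), because $p^{-1}(\link_{\ov X}(v))$ and $(L_w)^{(0)}$ are in general disconnected and hence neither single vertices nor joins. (One should also assume, as holds in every example of interest, that each $L_v$ is non-empty; otherwise a vertex with $L_v=\emptyset$ has $\Squid(v)=\{v\}$ and the leaf/apex sub-cases need not produce non-trivial joins.)
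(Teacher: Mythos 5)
Your proof is correct and is exactly the case analysis the paper leaves implicit (the paper states Corollary~\ref{cor:bounded_links} immediately after Lemma~\ref{lem:decomposition_of_links} with no separate argument, relying on the reader to perform the bookkeeping you carry out). Triangle-freeness is indeed used twice as you say — once to force $|\ov\Delta^{(0)}|\le 2$, once to kill $\link_{\ov X}(\{v,w\})$ — and the "no isolated vertex" hypothesis supplies the non-emptiness of $p^{-1}(\link_{\ov X}(v))$ needed for cases~(\ref{cor:bounded_links_edge}) and~(\ref{cor:bounded_links_bounded}). Your parenthetical about each $L_v$ being non-empty is a genuine hygiene point: if some $L_v=\emptyset$, then $\Delta=\{(v)\}$ has $\link_X(\Delta)=p^{-1}(\link_{\ov X}(v))$ (possibly a disjoint union of squids, hence neither a vertex nor a join), which escapes all five cases. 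The paper is silently using that in its intended setting each $L_v$ is a Cayley graph and hence non-empty (Definition~\ref{defn:quasilines_from_quasimorph_for_squid}), so your reading is consistent with the authors' intent, but it is worth flagging as you do.
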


\begin{figure}[htp]
    \centering
    \includegraphics[width=\textwidth, alt={A simplex of edge type connects a vertex with a point in its cone. A simplex of triangle type is a triangle whose link is the base of a cone.}]{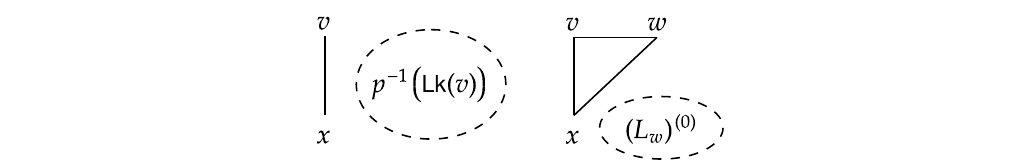}
    \caption{A simplex of edge-type (on the left) and a simplex of triangle-type (on the right). Their links are represented by the dashed areas.}
    \label{fig:links}
\end{figure}

\subsection{Definition}\label{defn:short_HHG}
Let $G$ be a combinatorial HHG, whose structure comes from the action on the combinatorial HHS $(X,\W)$. We say that $G$ is \emph{short} if it satisfies Axioms \eqref{short_axiom:graph}-\eqref{short_axiom:extension}-\eqref{short_axiom:cobounded} below.
\begin{ax}[Underlying graph]\label{short_axiom:graph} $X$ is obtained as a blowup of some graph $\ov{X}$, which is triangle- and square-free and such that no connected component of $\ov X$ is a point. Moreover, $\ov X$ is a $G$-invariant subgraph of $X$. 
\end{ax}

The above Axiom implies, in particular, that the $G$-action on $X$ restricts to a $G$-action on $\ov X$ with finitely many $G$-orbits of vertices and edges.

\begin{ax}[Vertex stabilisers are cyclic-by-hyperbolic]\label{short_axiom:extension}
    For every $v\in\ov{X}^{(0)}$ there is an extension
        $$\begin{tikzcd}
        0\ar{r}&Z_v\ar{r}&\Stab{G}{v}\ar{r}{\p_v}&H_v\ar{r}&0
        \end{tikzcd}$$
        where $H_v$ is a finitely generated hyperbolic group and $Z_v$ is a cyclic, normal subgroup of $\Stab{G}{v}$ which acts trivially on $\link_{\ov{X}}(v)$. We call $Z_v$ the \emph{cyclic direction} associated to $v$.
        
        Moreover, one requires that the family of such extensions is equivariant with respect to the $G$-action by conjugation; in particular, $Z_{gv}=gZ_vg^{-1}$ for every $v\in \ov{X}^{(0)}$ and $g\in G$.
\end{ax}

\begin{notation}\label{notation:lv_Uv}
    For every $v\in \ov{X}^{(0)}$, let $\ell_v=[\Delta]$ be the domain associated to any triangle-type simplex whose link is $(L_v)^{(0)}$, and let $\U_v=[\Sigma]$ for any edge-type simplex supported on $v$.
\end{notation}

\begin{ax}[(Co)bounded actions]\label{short_axiom:cobounded}
For every $v\in \ov{X}^{(0)}$, the cyclic direction $Z_v$ acts geometrically on $\C\ell_v$ and with uniformly bounded orbits on $\C\ell_w$ for every $w\in\link_{\ov X}(v)$. In particular, $\C\ell_v$ is a quasiline if $Z_v$ is infinite cyclic, and uniformly bounded otherwise.
\end{ax}

We will denote a short HHG, together with its short structure, by $(G, \ov X, \W)$.

\subsection{Examples}\label{sec:example}
Here we list some known combinatorial HHG which happen to be short. All examples (possibly excluding the last one, see the footnote below) also enjoy the following property, which will be relevant when taking quotients in the second paper:
\begin{defn}\label{defn:colourable}
    A short HHG $(G, \ov X, \W)$ is \emph{colourable} if the graph $\ov X$ is $G$-colourable, meaning that one can partition of the vertices of $\ov{X}$ into finitely many colours, such that no two adjacent vertices share the same colour, and the $G$-action on $\ov X$ descends to an action on the set of colours.
\end{defn}

\subsubsection{Mapping class group of a 5-holed sphere}\label{subsec:mcg}
Let $S$ be a two-dimensional sphere with five points removed, and let $\MCG(S)$ be its (extended) mapping class group, that is, the group of self-homeomorphism of $S$ up to isotopy. In \cite[Theorem 9.8]{converse}, together with Mark Hagen, we constructed a combinatorial HHG structure for $\MCG(S)$, where:
\begin{itemize}
    \item The underlying graph is the blow-up of the \emph{curve graph} $\C S$ of $S$, first introduced by Harvey \cite{Harvey}. By e.g. \cite[Lemma 2.1]{MM1}, $\C S$ is connected. Furthermore, $\C S$ is triangle- and square-free (we highlight that this is false for surfaces of higher complexity, where there is “more space” to allow these configurations of curves).
    \item In \cite{BBF}, the authors produce a finite colouring of $\C S$ which is preserved by $\MCG(S)$, and such that two adjacent curves have different colours.
    \item The stabiliser of a curve $v\in (\C S)^{(0)}$ is isomorphic to the mapping class group of the surface $Y_v$, which is the sphere with three punctures and a boundary component cut out by $v$. The centre of $\MCG(Y_v)$ contains the subgroup generated by the \emph{Dehn Twist} $T_v$ around $v$; furthermore, by e.g. \cite[Proposition 3.19]{FarbMargalit}) the quotient $\MCG(Y_v)/\langle T_v\rangle$ is a finite-index subgroup of the mapping class group of a four-punctured sphere, which is non-elementarily hyperbolic. 
    \item Given a vertex $v$ of $\C S$, the associated coordinate space $\C\ell_v$ is the \emph{annular curve graph} of $v$, which is a quasiline on which $\langle T_v\rangle $ acts coboundedly (see e.g. \cite[Section 2.4]{MM2}). Notice that, if $v$ and $w$ are two adjacent curves, then $T_v$ fixes $w$, and the $\langle T_v\rangle$-orbits inside $\C\ell_v$ are uniformly bounded (see e.g \cite[Lemma 3.1]{Mousley}).
\end{itemize}

\subsubsection{Artin groups of large and hyperbolic type}\label{subsec:example_artin}
Let $A_\Gamma$ be an Artin group of large and hyperbolic type, defined by the labelled graph $\Gamma$. In \cite{ELTAG_HHS}, the authors produce a combinatorial HHG structure $(X,\W)$ for $A_\Gamma$, with the following properties.
\begin{itemize}
    \item Let $\mathcal H$ be the collection of all cyclic subgroups generated by either a standard generator $a\in \Gamma^{(0)}$ (up to a certain equivalence relation), or by the centre $z_{ab}$ of a standard dihedral subgroup $A_{ab}\coloneq\langle a, b\rangle$. For every $H\in \mathcal H$, let $N(H)$ be its normaliser. Then $X$ is a blowup of the \emph{commutation graph} $Y$, whose vertices are the cosets of the $N(H)$, and two cosets $gN(H)$ and $hN(H')$ are adjacent if and only if $gHg^{-1}$ commutes with $hH'h^{-1}$. The authors prove that $Y$ contains no triangles or squares \cite[Lemma 3.9]{ELTAG_HHS}, and that if $\Gamma$ is connected then so is $Y$ \cite[Lemma 3.10]{ELTAG_HHS}. 
    \item By construction, $Y$ is bipartite, as two different conjugates of the standard generators never commute, nor do two different conjugates of centres of Dihedral subgroups. This gives an $A_\Gamma$-invariant colouring of $Y$.
    \item In \cite[Lemmas 2.27 and 2.28]{ELTAG_HHS}, the authors describe $N(H)$ as follows: if $H=\langle a\rangle$ then $N(H)$ is the direct product of $\langle a\rangle$ and a free group; if $H=\langle z_{ab}\rangle$ then $N(H)=A_{ab}$ is the corresponding Dihedral subgroup, which is a central extension of a virtually free group with kernel $\langle z_{ab}\rangle$. In both cases, $N(H)$ coincides with the centraliser of $H$. Notice that, as long as at least one connected component of $\Gamma$ contains an edge, there exists $H$ such that $N(H)/H$ is non-elementarily hyperbolic.
    \item If $gN(H)$ and $h N(H')$ are adjacent, then $gHg^{-1}$ fixes $h N(H')$ by \cite[Lemma 3.21]{ELTAG_HHS}.
    \item The quasiline associated to each coset is constructed in \cite[Section 4.1]{ELTAG_HHS} using \cite[Lemma 4.15]{ABO}, which takes as input a certain quasimorphism associated to the central extension and produces a quasiline. In turn, such quasimorphisms are constructed using \cite[Lemma 4.4]{ELTAG_HHS}, which is false but can be replaced by our Corollary~\ref{cor:finding_quasimorphisms}. We postpone all further explanations to the Appendix, and in particular to Remark~\ref{rem:error_in_ELTAG}.
\end{itemize}
All this shall be explored more thoroughly in the second paper.

\subsubsection{Graph manifold groups and beyond}\label{subsec:example_pi1}
Let $\mathcal G$ be a graph of groups which is \emph{admissible}, in the sense of \cite{croke-Kleiner}. This class includes all $3$-dimensional non-geometric graph manifolds. In \cite{HRSS_3manifold}, it is proved that $\pi_1(\mathcal G)$ admits a combinatorial HHG structure $(X,\W)$ with the following properties.

\begin{itemize}
    \item $X$ is obtained as a blowup of the Bass-Serre tree $T$ of $\mathcal G$, which is clearly triangle- and square-free; furthermore, $T$ is not a point (unless the graph of groups decomposition is trivial)
    \item Note that, as $T$ is a tree, it is bipartite (just fix some $v_0\in T^{(0)}$ and colour each vertex $v$ according to the parity of $\dist_T(v,v_0)$). This results in $(\pi_1(\mathcal G),T)$ being colourable, as for every $g\in \pi_1(\mathcal G)$ one has that $$\dist_T(gv, v_0)\equiv \dist_T(gv, gv_0)+\dist_T(gv_0, v_0)\equiv \dist_T(v,v_0)+\dist_T(gv_0, v_0)\mod 2.$$
    \item By how an admissible graph of groups is defined (see \cite[Definition 2.13]{HRSS_3manifold}), each vertex group $G_v$, which is the stabiliser of the corresponding vertex of $T$, has infinite cyclic centre $Z_v$, and the quotient is non-elementarily hyperbolic. Furthermore, whenever $\{v,w\}$ is an edge of $\Gamma$, $Z_v$ is also a subgroup of $G_w$, and therefore fixes the corresponding vertex of $T$.
    \item Again, quasilines are built using quasimorphisms and invoking \cite[Lemma 4.15]{ABO}.
\end{itemize}

\subsubsection{Extensions of Veech groups}\label{subsec:example_veech}
Let $S$ be a closed, connected, oriented surface of genus at least $2$. To every subgroup $G$ of $\MCG(S)$, which is isomorphic to $\text{Out}(\pi_1(S))$ by the Dehn-Nielsen-Baer theorem (see e.g. \cite[Theorem 8.1]{FarbMargalit}), one can associate a group extension 
$$\begin{tikzcd}
    0\ar{r}&\pi_1(S)\ar{r}&\Gamma\ar{r}&G\ar{r}&0
\end{tikzcd}$$
where $\Gamma$ is the fundamental group of the $S$-bundle with monodromy $G$. We now focus on the case when $G$ varies among all \emph{lattice Veech groups}, which are punctured-surface subgroups of $\MCG(S)$ first introduced in \cite{Mosher_ext} as an analogue of geometrically finite subgroups of Kleinean groups. In \cite{veech}, the authors prove that the extension $\Gamma$ admits a combinatorial HHG structure $(X,\W)$ with the following properties\footnote{To the present, the author does not know how to prove that the support graph is $\Gamma$-colourable, so we would be grateful to the authors of \cite{veech} if they could suggest an argument.}:

\begin{itemize}
    \item The graph $X$ is a blowup of a disjoint union of trees. Each tree is the Bass-Serre tree of a subgroup of $\Gamma$ isomorphic to the fundamental group of a non-geometric graph manifold. 
    \item The vertex groups admit the same description as in the graph manifold case. 
    \item Again, quasilines are built using quasimorphisms.
\end{itemize}

All this has been recently extended to all finitely generated Veech groups in \cite{bongiovanni2024extensions}.

\subsection{Properties of short HHG}
\subsubsection{Unbounded domains}
\begin{rem}\label{rem:unbounded_dom_short_hhg}
    Axiom~\eqref{short_axiom:graph} puts us in the framework of Subsection~\ref{subsec:blowup}. Hence, as a consequence of Corollary~\ref{cor:bounded_links}, if a simplex $\Delta\subseteq X$ is such that the associated coordinate space $\C (\Delta)$ is unbounded, then $$[\Delta]\in\{S\}\cup\{\ell_v\}_{v\in\ov{X}^{(0)}}\cup \{\U_v\}_{v\in\ov{X}^{(0)},\,|\link_{\ov X}(v)|=\infty}$$
    where $S=[\emptyset]$ is the $\nest$-maximal domain, and $\ell_v$, $\U_v$ are defined as in Notation~\ref{notation:lv_Uv}. 
    
    Furthermore, by how nesting and orthogonality are defined in a combinatorial HHS (Definition~\ref{defn:nest_orth}), we see that:
    \begin{itemize}
        \item $\ell_v\orth \ell_w$ whenever $v\neq w$ are adjacent in $\ov X$, and are transverse otherwise; 
        \item $\ell_v\orth \U_v$; 
        \item $\ell_v\nest \U_w$ whenever $v\neq w$ are adjacent in $\ov X$;
        \item If $v$ has valence greater than one in $\ov X$, and $\dist_{\ov X}(v,w)\ge 2$, then $\U_v\transverse \ell_w$.
        \item If both $v$ and $w$ have valence greater than one in $\ov X$, and $\dist_{\ov X}(v,w)\ge 2$, then $\U_v\transverse \U_w$.
    \end{itemize}
    We avoided describing the slightly more complicated relations involving $\U_v$ when $v$ has valence one in $\ov X$, as we shall not need them.
\end{rem}

\subsubsection{Strong Bounded Geodesic Image}
\begin{rem}\label{rem:coord_spaces_wrt_lines}
 The main coordinate space $\C S=X^{+\W}$ $G$-equivariantly retracts onto the \emph{augmented support graph} $\ov X^{+\W}$, obtained from $\ov{X}$ by adding an edge between $v$ and $w$ if they belong to $\W$-adjacent maximal simplices of $X$. In other words, $\C S$ is $G$-equivariantly quasi-isometric to a graph with vertex set $\ov X^{(0)}$, which contains $\ov X$ as a (non-full) $G$-invariant subgraph.
 
Similarly, for every $v\in \ov{X}^{(0)}$, $\C \U_v$ is $\Stab{G}{v}$-equivariantly quasi-isometric to the \emph{augmented link} $\link_{\ov{X}}(v)^{+\W}$, on which $Z_v$ acts trivially.
  \end{rem}

\begin{rem}[Saturations of triangle-type] \label{rem:saturation_in_Short}
Let $\Delta=\left\{(v,x), (w)\right\}$. Then $$\Sat(\Delta)=\{w\}\cup_{u\in\link_{\ov{X}}(w)}\Squid(u).$$ 
Furthermore, if $\Sigma=\left\{(v,x)\right\}$ is of edge-type then $\Sat(\Delta)\cap \link(\Sigma)=\{w\}$, as $\ov{X}$ is triangle-free.
\end{rem}

\begin{notation}\label{notation:rho}
    Set $\rho^w_{w'}\coloneq \rho^{[\Delta]}_{[\Delta']}$ for any two simplices of triangle-type $\Delta=\left\{(v,x), (w)\right\}$ and $\Delta'=\left\{(v',x'), (w')\right\}$. 
\end{notation}
Then we can invoke Lemma~\ref{lem:strong_bgi}, together with the description of saturations from Remark~\ref{rem:saturation_in_Short}, to get the following version of the strong bounded geodesic image axiom (to avoid having to deal with many constants, we first enlarge the HHS constant $E$ for $(X,\W)$ to be bigger than the constant $M$ from Lemma~\ref{lem:strong_bgi}):

\begin{lemma}[Strong BGI for projections on quasilines]\label{lem:strong_bgi_for_short}
    Whenever $u,v,w\in \ov X^{(0)}$, if both $\rho^u_w$ and $\rho^v_w$ are defined and at least $2E$-apart in $\C \ell_w$, then every geodesic $[u,v]\subseteq \ov X^{+\W}$ must pass through $\operatorname{Star}_{\ov{X}}(w)$.

    Similarly, whenever $u,v,w\in\link_{\ov{X}}(z)$, if both $\rho^u_w$ and $\rho^v_w$ are defined and at least $2E$-apart in $\C\ell_w$, then every geodesic $[u,v]\subseteq \link_{\ov{X}}(z)^{+\W}$ must pass through $w$.
\end{lemma}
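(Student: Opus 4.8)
The plan is to reduce the statement to the general Strong BGI axiom (Lemma~\ref{lem:strong_bgi}) via the retraction from $\C S$ to $\ov X^{+\W}$, respectively from $\C\U_z$ to $\link_{\ov X}(z)^{+\W}$, described in Remark~\ref{rem:coord_spaces_wrt_lines}. First I would set up the first claim: given $u,v,w\in\ov X^{(0)}$, pick triangle-type simplices $\Delta_u,\Delta_v,\Delta_w$ supported at $u,v,w$ so that $[\Delta_w]=\ell_w$ and $\rho^u_w,\rho^v_w$ are (by Notation~\ref{notation:rho}) the $\rho$-projections $\rho^{[\Delta_u]}_{[\Delta_w]}$ and $\rho^{[\Delta_v]}_{[\Delta_w]}$; note $\ell_w\propnest S$, so these are honest subsets of $\C\ell_w$. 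The hypothesis says $\dist_{\C\ell_w}(\rho^u_w,\rho^v_w)\ge 2E\ge M$. Lemma~\ref{lem:strong_bgi} then gives that every geodesic $[x,y]$ in $\C(\emptyset)=\C S$, with $x\in\pi_S$-image of a maximal simplex in $[\Delta_u]$'s ``realisation'' and similarly for $y$, has a vertex in $\Sat(\Delta_w)$. By Remark~\ref{rem:saturation_in_Short}, $\Sat(\Delta_w)=\{w\}\cup\bigcup_{u'\in\link_{\ov X}(w)}\Squid(u')$, whose image under the retraction $p\colon X\to\ov X$ is exactly $\operatorname{Star}_{\ov X}(w)$. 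Since the retraction $\C S\to\ov X^{+\W}$ is coarsely the identity on $\ov X^{(0)}$ and is Lipschitz, a geodesic in $\ov X^{+\W}$ from $u$ to $v$ maps forward to a path in $\C S$ between (coarsely) $\pi_S(u)$ and $\pi_S(v)$; passing to a geodesic with the same endpoints and applying the above, one obtains a vertex in $\Sat(\Delta_w)$, hence after retracting a vertex of $\operatorname{Star}_{\ov X}(w)$ on (a uniformly-close point of) the geodesic in $\ov X^{+\W}$. A small amount of care is needed because ``geodesic in $\ov X^{+\W}$'' versus ``geodesic in $\C S$'' are only coarsely comparable; one absorbs this into the choice of constant by having enlarged $E$.

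The second claim is the same argument carried out inside the factor/link: for $z\in\ov X^{(0)}$, the coordinate space $\C\U_z$ is, by Remark~\ref{rem:coord_spaces_wrt_lines}, $\Stab{G}{z}$-equivariantly quasi-isometric to $\link_{\ov X}(z)^{+\W}$, and $(\link(\Delta_z),\W^{\Delta_z})$ is itself a combinatorial HHS (as used in the proof of Lemma~\ref{lem:PR_in_CHHS}, citing \cite[Theorem 4.9]{BHMS}) where $\Delta_z$ is an edge-type simplex with $[\Delta_z]=\U_z$. For $u,v,w\in\link_{\ov X}(z)$, the domains $\ell_u,\ell_v,\ell_w$ are all nested in $\U_z$ (Remark~\ref{rem:unbounded_dom_short_hhg}), so $\ell_w\propnest\U_z$ and I can apply Lemma~\ref{lem:strong_bgi} within this sub-combinatorial-HHS. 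The relevant saturation is now computed inside $\link(\Delta_z)$, i.e. $\Sat(\Delta_w)\cap\link(\Delta_z)$; since $\ov X$ is triangle-free, exactly as in Remark~\ref{rem:saturation_in_Short} (the edge-type case computed there), this intersection retracts under $p$ to just $\{w\}$ — there is no room for $\operatorname{Star}$, because any neighbour of $w$ inside $\link_{\ov X}(z)$ would form a triangle $z,w,u'$. Hence the geodesic $[u,v]\subseteq\link_{\ov X}(z)^{+\W}$ must pass through $w$ itself, which is the asserted conclusion.

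The main obstacle I anticipate is bookkeeping the passage between the several ``avatars'' of each metric space: $\ov X^{+\W}$ vs.\ $\C S$, and $\link_{\ov X}(z)^{+\W}$ vs.\ $\C\U_z$, are only quasi-isometric, and Lemma~\ref{lem:strong_bgi} talks about genuine geodesics in $\C(\Delta)$. So the honest statement one extracts is ``every geodesic in $\C S$ between the right points has a vertex in $\Sat(\Delta_w)$'', and one must argue that this transfers to geodesics in $\ov X^{+\W}$ passing through $\operatorname{Star}_{\ov X}(w)$. The cleanest route is to observe that $\ov X^{+\W}$ embeds as a (non-full, $G$-invariant) subgraph of $\C S=X^{+\W}$ whose inclusion is a quasi-isometry and which is coarsely the identity on $\ov X^{(0)}$ (Remark~\ref{rem:coord_spaces_wrt_lines}); a geodesic of $\ov X^{+\W}$ is then an $(L,L)$-quasigeodesic of $\C S$ for uniform $L$, and since $\C S$ is $E$-hyperbolic it fellow-travels an actual $\C S$-geodesic, to which Lemma~\ref{lem:strong_bgi} applies. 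Then $p$ sends $\Sat(\Delta_w)$ into $\operatorname{Star}_{\ov X}(w)$ and the fellow-traveling plus the (coarse) retraction $p$ deposits a point of $\operatorname{Star}_{\ov X}(w)$ on the original geodesic — all the error terms being absorbed by the preliminary enlargement of $E$ (and, if desired, by slightly fattening $\operatorname{Star}_{\ov X}(w)$ to a bounded neighbourhood, which is harmless). Everything else is a direct quotation of Remark~\ref{rem:saturation_in_Short} and Lemma~\ref{lem:strong_bgi}.
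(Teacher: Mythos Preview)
Your approach is the same as the paper's: combine Lemma~\ref{lem:strong_bgi} with the description of $\Sat(\Delta_w)$ from Remark~\ref{rem:saturation_in_Short}. The paper gives no further details beyond enlarging $E$ past $M$, so your expansion is welcome. However, your handling of the passage from $\ov X^{+\W}$ to $\C S=X^{+\W}$ has a genuine gap.

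You propose to treat a geodesic of $\ov X^{+\W}$ as an $(L,L)$-quasigeodesic of $\C S$, fellow-travel it to an actual $\C S$-geodesic, apply Lemma~\ref{lem:strong_bgi} to that geodesic, and then ``deposit a point of $\operatorname{Star}_{\ov X}(w)$ on the original geodesic'' via the retraction. This last step does not work: fellow-travelling only guarantees a vertex of $\Sat(\Delta_w)$ within a bounded neighbourhood of the $\ov X^{+\W}$-geodesic, not on it. Your suggested fixes do not help: enlarging $E$ only strengthens the hypothesis, not the conclusion, and ``fattening $\operatorname{Star}_{\ov X}(w)$ to a bounded neighbourhood'' is not harmless, since the statement asserts the geodesic passes through $\operatorname{Star}_{\ov X}(w)$ itself. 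This matters most in the second claim, where the conclusion is that the geodesic passes through the single vertex $w$.

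The correct observation is sharper and avoids fellow-travelling entirely: the retraction $p\colon X^{+\W}\to \ov X^{+\W}$ is simplicial, hence $1$-Lipschitz. Indeed, if $a\in\Squid(v)$ and $b\in\Squid(w)$ are $X$-adjacent then $v,w$ are $\ov X$-adjacent; and if $a,b$ lie in $\W$-adjacent maximal simplices $\Sigma_a,\Sigma_b$, then so do $p(a)=v\in\Sigma_a$ and $p(b)=w\in\Sigma_b$ (maximal simplices always contain their squid apices), so $v,w$ are $\ov X^{+\W}$-adjacent. Since the inclusion $\ov X^{+\W}\hookrightarrow X^{+\W}$ is also $1$-Lipschitz, distances between vertices of $\ov X$ agree in the two graphs, and a geodesic $[u,v]\subseteq\ov X^{+\W}$ is literally a geodesic in $\C S$. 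Lemma~\ref{lem:strong_bgi} then gives a vertex of $\Sat(\Delta_w)$ on this geodesic; as all its vertices lie in $\ov X^{(0)}$, that vertex lies in $\Sat(\Delta_w)\cap\ov X^{(0)}=\{w\}\cup\link_{\ov X}(w)^{(0)}=\operatorname{Star}_{\ov X}(w)^{(0)}$. The second claim is identical, using the retraction $\C\U_z\to\link_{\ov X}(z)^{+\W}$ and Remark~\ref{rem:saturation_in_Short} (triangle-freeness forces $\Sat(\Delta_w)\cap\link_{\ov X}(z)=\{w\}$).
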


We can establish a similar result for the projection to $\C\U_w$. Let $\Delta=\{(w,y)\}$ be a simplex of edge type supported on $w$. If $w$ has valence at lest two in $\ov X$ then $\Sat(\Delta)=\Squid(w),$ as $\ov X$ is square-free. Moreover, the retraction $p$ maps $\C(\Delta)$ onto $\link_{\ov X}(w)^{+\W}$, so for every $u,v\in \ov X^{(0)}-\{w\}$ we can define $$\dist_{\link_{\ov X}(w)^{+\W}}(u, v)=\dist_{\link_{\ov X}(w)^{+\W}}\left( p(\rho^{\ell_u}_{\U_w}), p(\rho^{\ell_v}_{\U_w})\right).$$
If instead $w$ has valence one then $\link_{\ov X}(w)$ is a point, and we set $\dist_{\link_{\ov X}(w)^{+\W}}(u, v)=0$.

\begin{lemma}[Strong BGI for projections on links]\label{lem:strong_bgi_for_short_edge_type}
    Let $w\in\ov X^{(0)}$. For every $u,v\in \ov X^{(0)}-\{w\}$, if $\dist_{\link_{\ov X}(w)^{+\W}}(u, v)\ge 2E$, then every geodesic $[u,v]\subseteq \ov X^{+\W}$ must pass through $w$.
\end{lemma}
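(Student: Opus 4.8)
The statement is the "part 2" analogue of Lemma~\ref{lem:strong_bgi_for_short}, now for the edge-type domain $\U_w$ instead of the quasiline $\ell_w$. The natural approach is to reduce it to the abstract strong BGI axiom (Lemma~\ref{lem:strong_bgi}) applied to the nesting $[\Sigma]\nest S$, where $\Sigma=\{(w,y)\}$ is an edge-type simplex supported on $w$ (so $[\Sigma]=\U_w$) and $S=[\emptyset]$ is the maximal domain, whose coordinate space $\C S$ retracts onto $\ov X^{+\W}$ (Remark~\ref{rem:coord_spaces_wrt_lines}). The key computation is identifying the saturation $\Sat(\Sigma)$ inside $\ov X^{+\W}$: I would first treat the case where $w$ has valence at least two in $\ov X$, where the discussion preceding the lemma already records $\Sat(\Sigma)=\Squid(w)$, using that $\ov X$ is square-free (two distinct vertices of $\link_{\ov X}(w)$ cannot be adjacent, so no edge-type simplex on a different vertex can be $\sim$-equivalent to $\Sigma$, and the $L$-coordinate can be changed freely). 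Under the retraction $p\colon X\to\ov X$ this saturation maps to the single vertex $\{w\}$.

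**Key steps.** First, fix $u,v\in\ov X^{(0)}-\{w\}$ with $\dist_{\link_{\ov X}(w)^{+\W}}(u,v)\ge 2E$, which by definition means $\dist_{\link_{\ov X}(w)^{+\W}}(p(\rho^{\ell_u}_{\U_w}),p(\rho^{\ell_v}_{\U_w}))\ge 2E$; since $p$ is $1$-Lipschitz on $\C(\Sigma)$ onto $\link_{\ov X}(w)^{+\W}$, this forces $\dist_{\C\U_w}(\rho^{\ell_u}_{\U_w},\rho^{\ell_v}_{\U_w})\ge 2E\ge M$ (recall $E$ was enlarged past $M$). Next I would observe that the $\rho$-points relevant to Lemma~\ref{lem:strong_bgi} are $\rho^S_{\U_w}$ evaluated at $\pi_S(u)$ and $\pi_S(v)$; using the consistency between the hierarchy of $(X,\W)$ and the sub-hierarchy, together with the nesting relations $\ell_u\nest\U_w$ when $u\in\link_{\ov X}(w)$ (Remark~\ref{rem:unbounded_dom_short_hhg}) — and, when $u\notin\link_{\ov X}(w)$, transversality with $\rho^{\ell_u}_{\U_w}$ recording the entry point — one checks that $\rho^S_{\U_w}(u)$ coarsely coincides with $\rho^{\ell_u}_{\U_w}$ (and similarly for $v$), so the hypothesis of Lemma~\ref{lem:strong_bgi} is met. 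Then Lemma~\ref{lem:strong_bgi} gives that every geodesic $[u,v]$ in $\C S\simeq\ov X^{+\W}$ has a vertex in $\Sat(\Sigma)$, which projects under $p$ into $\{w\}$; since geodesics of $\ov X^{+\W}$ are already at the level of the support graph, this means the geodesic passes through $w$ itself. Finally, the valence-one case is immediate: there $\link_{\ov X}(w)^{+\W}$ is a point and $\dist_{\link_{\ov X}(w)^{+\W}}(u,v)=0<2E$, so the hypothesis is vacuous.

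**Main obstacle.** The delicate point is the bookkeeping that translates "$\dist$ in $\link_{\ov X}(w)^{+\W}$ is large" into "the abstract $\rho^S_{\U_w}$-projections in $\C S$ are $M$-far", i.e. matching the combinatorially-defined distance in the retracted link graph with the HHS-projection $\rho^S_{\U_w}$, and correctly handling the two cases $u\in\link_{\ov X}(w)$ versus $u$ at distance $\ge 2$ from $w$ (where $\rho^{\ell_u}_{\U_w}$ is defined via transversality $\U_w\transverse\ell_u$ rather than nesting). One must also make sure $p$ does not collapse distances in a way that breaks the inequality — but since $p$ is $1$-Lipschitz it can only decrease them, so a large post-$p$ distance forces a large pre-$p$ distance, which is the direction we need. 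Everything else is a routine unwinding of Definitions~\ref{defn:projections} and~\ref{defn:complement} and the nesting dictionary of Remark~\ref{rem:unbounded_dom_short_hhg}.
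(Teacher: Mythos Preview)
Your proposal is correct and follows exactly the paper's approach. The paper's own proof is a single line---``If $w$ has valence one then $\link_{\ov X}(w)^{+\W}$ is a point, and the lemma holds vacuously. Otherwise the conclusion follows from Lemma~\ref{lem:strong_bgi}''---so you are simply unpacking the details the author leaves implicit: identifying $\Sat(\Sigma)=\Squid(w)$, noting that geodesics of $\ov X^{+\W}$ are geodesics of $X^{+\W}$ (via the $1$-Lipschitz retraction $p$), and observing that $\Squid(w)\cap\ov X^{(0)}=\{w\}$. One minor simplification: since $u\in\Sat(\ell_u)\cap Y_\Sigma$, you get directly $\rho^S_{\U_w}(u)\subseteq\rho^{\ell_u}_{\U_w}$ from Definition~\ref{defn:projections}, so the ``consistency'' justification you flag as the main obstacle is immediate and no case split on $u\in\link_{\ov X}(w)$ versus $\dist_{\ov X}(u,w)\ge 2$ is actually needed.
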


\begin{proof}
     If $w$ has valence one then $\link_{\ov X}(w)^{+\W}=\{w\}$ is a point, and the lemma holds vacuously. Otherwise the conclusion follows from Lemma~\ref{lem:strong_bgi}. 
\end{proof}

\subsubsection{Cobounded product regions}
In this paragraph, we prove that short HHG have cobounded product regions, in the sense of Definition~\ref{defn:cobounded_stab}. We first need the following general lemma:
\begin{lemma}\label{lem:simplicial_cont_for_squid}
    Let $(X,\W)$ be a combinatorial HHS, where $X$ is a blowup of some triangle- and square-free graph $\ov X$ such that no connected component of $\ov X$ is a point. Then $(X,\W)$ has weak simplicial containers, in the sense of Definition~\ref{defn:simp_cont}. More precisely, for every simplex $\Delta$ of $X$, there exists $\Theta$ and $\Psi$ such that 
    $$\link(\link(\Delta))=\link(\Theta)\star\Psi,$$
    and one can always choose $\Psi=\emptyset$, unless $\Delta=\{(v,x)\}$ is of edge-type and $v$ has valence greater than one in $\ov X$.
\end{lemma}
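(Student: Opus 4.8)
The plan is to compute the double link $\link(\link(\Delta))$ for every simplex $\Delta$ of $X$ and then exhibit it in the form $\link(\Theta)\star\Psi$. The two structural inputs are Lemma~\ref{lem:decomposition_of_links}, which decomposes $\link_X(\Delta)$ into a ``horizontal'' part $p^{-1}(\link_{\ov X}(\ov\Delta))$ (with $p\colon X\to\ov X$ the squid-collapsing retraction) and ``vertical'' parts $\link_{\Squid(v)}(\Delta_v)$, and its Corollary~\ref{cor:bounded_links}, which lists the possible shapes of $\link_X(\Delta)$. Once $\link_X(\Delta)$ is pinned down exactly, one takes links a second time using a handful of elementary computations in the blowup graph.

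First I would establish those computations, all from the defining property of a blowup (squids of adjacent vertices of $\ov X$ span joins, squids of non-adjacent vertices are disjoint) together with triangle- and square-freeness of $\ov X$: (i) the vertices adjacent to all of $\Squid(w)$ are exactly those of $p^{-1}(\link_{\ov X}(w))$; (ii) if $L_v\ne\emptyset$, the vertices adjacent to all of $(L_v)^{(0)}$ are exactly those of $\{v\}\star p^{-1}(\link_{\ov X}(v))$; (iii) if $v$ has valence at least two, the vertices adjacent to all of $p^{-1}(\link_{\ov X}(v))$ are exactly those of $\Squid(v)$. Facts (i) and (ii) are immediate from the special case of Lemma~\ref{lem:decomposition_of_links} saying that every base vertex $x\in L_v$ has link $p^{-1}(\link_{\ov X}(v))\star\{v\}$ in $X$, while the apex $v$ has link $p^{-1}(\link_{\ov X}(v))\star(L_v)^{(0)}$. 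For (iii) — the one genuinely using square-freeness — I would take two neighbours $w_1\ne w_2$ of $v$; by (i) a vertex adjacent to all of $p^{-1}(\link_{\ov X}(v))$ lies in $p^{-1}\!\big(\link_{\ov X}(w_1)\cap\link_{\ov X}(w_2)\big)$, and since $w_1\not\sim w_2$ (triangle-freeness) and any common neighbour $v'\ne v$ would make $\{v,w_1,v',w_2\}$ an induced square (triangle-freeness also ruling out the diagonal $vv'$), the only common neighbour is $v$, so the vertex lies in $\Squid(v)$; the reverse inclusion is the join property. I would also record, all directly from Lemma~\ref{lem:decomposition_of_links}, that $\{v\}$, $(L_v)^{(0)}$, the edge $\{v,w\}$ of $\ov X$, and $\{w\}\star(L_v)^{(0)}$ are each realised as a link: $\link_X(\{(x),(w,y)\})=\{v\}$, $\link_X(\{(w,y),(v)\})=(L_v)^{(0)}$, $\link_X(\{(x),(y)\})=\{v,w\}$, and $\link_X(\{(v),(y)\})=\{w\}\star(L_v)^{(0)}$, for $x\in L_v$, $y\in L_w$, $v\sim w$.

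With these in hand, I would run through the cases of Corollary~\ref{cor:bounded_links}. If $\Delta=\emptyset$ then $\link(\link(\Delta))=\link(X)=\emptyset=\link(\Theta)$ for $\Theta$ maximal; if $\Delta$ is maximal then $\link(\link(\Delta))=\link(\emptyset)=X=\link(\emptyset)$; in both cases $\Psi=\emptyset$. If $\Delta=\{(v,x),(w)\}$ is triangle-type then $\link(\Delta)=(L_w)^{(0)}$, so by (ii) $\link(\link(\Delta))=\{w\}\star p^{-1}(\link_{\ov X}(w))=\link_X(\{(y)\})$ for $y\in L_w$, and $\Psi=\emptyset$. If $\link_X(\Delta)$ is a single vertex $u$, then $\link(\link(\Delta))=\link_X(u)$, so $\Theta=\{u\}$ and $\Psi=\emptyset$. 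For the remaining simplices $\link_X(\Delta)$ is a non-trivial join; using $\link(A\cup B)=\link(A)\cap\link(B)$ together with (i)--(iii) and the link formulas above, a short computation in each of the finitely many shapes — single apex $\{(v)\}$, pair of adjacent apices $\{(v),(w)\}$, single base vertex $\{(x)\}$, base vertex plus adjacent apex $\{(x),(w)\}$ — shows that $\link(\link(\Delta))$ equals $\{v\}$, the edge $\{v,w\}$, $(L_v)^{(0)}$, resp.\ $\{w\}\star(L_v)^{(0)}$, each of which was recognised above as the link of an explicit simplex; so again $\Psi=\emptyset$. Finally, if $\Delta=\{(v,x)\}$ is edge-type then $\link(\Delta)=p^{-1}(\link_{\ov X}(v))$: when $v$ has valence one with neighbour $w$ this is $\Squid(w)$, so by (i) $\link(\link(\Delta))=p^{-1}(\link_{\ov X}(w))=\link_X(\{(w,y)\})$ and $\Psi=\emptyset$; when $v$ has valence at least two, (iii) gives $\link(\link(\Delta))=\Squid(v)=(L_v)^{(0)}\star\{v\}$, which by Lemma~\ref{lem:decomposition_of_links} is not the link of any simplex (both factors would have to be supplied by the single squid over $v$), so here one is forced to take $\Psi=\{v\}$ and $\Theta=\{(w,y),(v)\}$ a triangle-type simplex with $w\sim v$, giving $\link(\Theta)\star\Psi=(L_v)^{(0)}\star\{v\}$.

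The step I expect to take the most care is the penultimate one: checking that the list of non-trivial-join simplices is exhaustive and that each iterated link is correctly identified with the link of one of the explicit simplices above — a finite but somewhat lengthy verification — and, inside it, the use of square-freeness in (iii), which is precisely what makes the iterated link collapse onto a single squid and thereby forces $\Psi\ne\emptyset$ exactly in the edge-type, valence-at-least-two case. Degenerate configurations where some $L_u$ is empty only shrink the graphs involved and are absorbed into the cases above.
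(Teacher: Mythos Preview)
Your approach is essentially the same as the paper's: both proceed by exhaustive case analysis on the shape of $\Delta$, compute $\link(\link(\Delta))$ explicitly using Lemma~\ref{lem:decomposition_of_links}, and exhibit it as $\link(\Theta)\star\Psi$ for a concrete $\Theta$. The paper organises the cases by whether $\ov\Delta$ is empty, a vertex, or an edge, and records the answers in two tables; you organise by the trichotomy of Corollary~\ref{cor:bounded_links} and front-load the three link computations (i)--(iii), but the content is identical.

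One small omission: your list of ``non-trivial join'' simplices misses the case $\Delta=\{(x),(y)\}$ with $x\in L_v$, $y\in L_w$, $v\sim w$, where $\link(\Delta)=\{v,w\}$ and $\link(\link(\Delta))=(L_v)^{(0)}\star(L_w)^{(0)}=\link(\{(v),(w)\})$. Your own facts (ii) and triangle-freeness handle it immediately, so this is a bookkeeping slip rather than a gap in the argument; you already flag that checking exhaustiveness of this list is the delicate step.
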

\begin{proof}
Recall that, for every simplex $\Delta$ of $X$ and every $v\in \ov\Delta$,  $\Delta_v$ is defined as $\Delta\cap\Squid(v)$. Taking the link in the expression from Lemma~\ref{lem:decomposition_of_links}, we get that 
$$\link(\link(\Delta))=p^{-1}(\link_{\ov X}(\link_{\ov X}(\ov\Delta)))\cap\bigcap_{v\in\ov\Delta}\link_X(\link_{\Squid(v)}(\Delta_v)).$$
We explicitly go through all possible shapes of $\Delta$ and produce two simplices $\Theta, \Psi$  such that $\link(\link(\Delta))=\link(\Theta)\star\Psi$.

If $\Delta=\emptyset$ then clearly $\link(\link(\Delta))=\emptyset$ is the link of any maximal simplex.

Now assume that $\ov\Delta=\{v\}$ is a single vertex. Let $w\in\link_{\ov X}(v)$, which exists as no component of $\ov X$ is a single vertex, and let $x\in (L_v)^{(0)}$ and $y\in (L_w)^{(0)}$. Figure~\eqref{table:lklk1} shows how to choose $\Theta$, according to the shape of $\Delta$. The only case which requires attention is when $\Delta=\{(v,x)\}$ is of edge-type: if $\link_{\ov X}(v)=\{w\}$ is a single vertex, then we set $\Theta=\{(w,y)\}$ and $\Psi=\emptyset$; otherwise, as $\ov X$ is triangle- and square-free, we have that $\link_{\ov X}(\link_{\ov X}(v))=\{v\}$, so we set $\Theta=\{(v), (w,y)\}$ and $\Psi=\{(v)\}$.

Finally, suppose that $\ov\Delta=\{v,w\}$ is an edge. Let $x\in (L_v)^{(0)}$ and $y\in (L_w)^{(0)}$. Then we choose $\Theta$ and $\Psi$ as in Figure~\eqref{table:lklk2}.
\end{proof}

\begin{figure}[htp]
    \centering
    \begin{tabular}{c|c|c|c|c}
         $\Delta$& $\link(\Delta)$ & $\link(\link(\Delta))$ & $\Theta$ & $\Psi$ \\
         \hline
         $\{(v)\}$& $p^{-1}(\link_{\ov X}(v)) \star (L_v)^{(0)}$ & $\{v\}$ & $\{(x), (w, y)\}$& $\emptyset$\\
         $\{(x)\}$& $p^{-1}(\link_{\ov X}(v)) \star \{v\}$ & $(L_v)^{(0)}$ & $\{(v), (w, y)\}$& $\emptyset$\\
        $\{(v,x)\}$ (valence $1$)& $\Squid(w)$ & $p^{-1}(\link_{\ov X}(w))$ & $\{(w,y)\}$& $\emptyset$\\
        $\{(v,x)\}$ (valence $\ge 2$)& $p^{-1}(\link_{\ov X}(v))$ & $\Squid(v)$ & $\{(v), (w,y)\}$& $\{(v)\}$
    \end{tabular}
    \caption{How to choose $\Theta$ and $\Psi$ when $\ov\Delta$ is a single vertex. }
    \label{table:lklk1}
\end{figure}

\begin{figure}[htp]
    \centering
    \begin{tabular}{c|c|c|c|c}
         $\Delta$& $\link(\Delta)$ & $\link(\link(\Delta))$ & $\Theta$ & $\Psi$ \\
         \hline
         $\{(v),(w)\}$&  $(L_v)^{(0)}\star (L_w)^{(0)}$ & $\{v,w\}$ & $\{(x), (y)\}$& $\emptyset$\\
         $\{(v),(y)\}$&  $(L_v)^{(0)}\star \{w\}$ & $\{v\}\star (L_w)^{(0)}$ & $\{(x), (w)\}$& $\emptyset$\\
         $\{(x),(y)\}$&  $\{v,w\}$ & $(L_v)^{(0)}\star (L_w)^{(0)}$ & $\{(v), (w)\}$& $\emptyset$\\
        $\{(v,x),(w)\}$&  $(L_w)^{(0)}$ & $p^{-1}(\link_{\ov X}(w))\star \{w\}$ & $\{(y)\}$& $\emptyset$\\
        $\{(v,x),(y)\}$&  $\{w\}$ & $p^{-1}(\link_{\ov X}(w))\star (L_w)^{(0)}$ & $\{(w)\}$& $\emptyset$
    \end{tabular}
    \caption{How to choose $\Theta$ and $\Psi$ when $\ov\Delta$ is an edge. In this case one can always choose $\Psi=\emptyset$.}
    \label{table:lklk2}
\end{figure}

\begin{lemma}\label{lem:cobounded_stab_for_short}
    A short HHG has cobounded product regions, in the sense of Definition~\ref{defn:cobounded_stab}.
\end{lemma}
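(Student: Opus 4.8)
The plan is to combine the concrete description of product regions from Lemma~\ref{lem:PR_in_CHHS} with the classification of simplices from Corollary~\ref{cor:bounded_links}, and then to bootstrap coboundedness of the $G$--action on $\W$ (part of the combinatorial HHG hypothesis) together with the cyclic--direction actions of Axiom~\eqref{short_axiom:cobounded} up to coboundedness of each $\Stab{G}{[\Delta]}$ on $P_{[\Delta]}$, in the sense of Definition~\ref{defn:cobounded_stab}.

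First, since $(X,\W)$ has weak simplicial containers by Lemma~\ref{lem:simplicial_cont_for_squid}, Lemma~\ref{lem:PR_in_CHHS} applies and, for each non--maximal simplex $\Delta$, identifies $P_{[\Delta]}$ coarsely with an explicit set of maximal simplices of $X$. I would then run through the (finitely many, up to the $G$--action, since $G$ acts cocompactly on $\ov X$ by Axiom~\eqref{short_axiom:graph}) shapes of $\Delta$ permitted by Corollary~\ref{cor:bounded_links}, computing $\link_X(\Delta)$ and the iterated link $\link_X(\link_X(\Delta))$ via Lemma~\ref{lem:decomposition_of_links} and using the triangle-- and square--freeness of $\ov X$ throughout, to show that $P_{[\Delta]}$ is, up to finite Hausdorff distance, one of three ``models'': all of $\W$, when $[\Delta]=S=[\emptyset]$; the set $\mathcal{M}_v$ of all maximal simplices of $X$ containing a fixed apex $v\in\ov X^{(0)}$, which covers $[\Delta]\in\{\ell_v,\U_v\}$ as well as every domain whose coordinate space is bounded but whose link is a single apex; and the set $\mathcal{M}_e$ of maximal simplices containing a fixed edge $e=\{v,w\}$ of $\ov X$, covering the remaining domains, whose links are non--trivial joins supported on an edge of $\ov X$. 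In the same pass one records the stabilisers: $\Stab{G}{S}=G$; $\Stab{G}{[\Delta]}=\Stab{G}{v}$ in the second case (square--freeness recovers the apex $v$ from $\link_X(\Delta)$; when the relevant vertex has valence one in $\ov X$ the bookkeeping changes slightly but the conclusion does not); and $\Stab{G}{[\Delta]}=\Stab{G}{e}$ in the third.

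It then remains to prove coboundedness in each model. The case $[\Delta]=S$ is the hypothesis. For $\mathcal{M}_e$: the decomposition $P_{[\Delta]}=F_{[\Delta]}\times E_{[\Delta]}$ from Definition~\ref{defn:factor} exhibits $\mathcal{M}_e$, with its $\W$--metric, coarsely as the product of quasilines $\C\ell_v\times\C\ell_w$; the subgroup $\langle Z_v,Z_w\rangle$, which is normalised by $\Stab{G}{e}$, already acts coboundedly there, since by Axiom~\eqref{short_axiom:cobounded} $Z_v$ acts coboundedly on $\C\ell_v$ and with uniformly bounded orbits on $\C\ell_w$, and symmetrically for $Z_w$. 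For $\mathcal{M}_v$: the same decomposition gives $\mathcal{M}_v=F_{\ell_v}\times E_{\ell_v}$, where $F_{\ell_v}$ is coarsely $\C\ell_v$ (one checks $\ell_v$ is the only domain nested in itself) and $E_{\ell_v}$ is the ``neighbour factor'', built from $\C\U_v$ and the quasilines $\{\C\ell_w\}_{w\in\link_{\ov X}(v)}$ (cf.\ Remark~\ref{rem:coord_spaces_wrt_lines}). Here the normal subgroup $Z_v$ acts coboundedly on $F_{\ell_v}$ by Axiom~\eqref{short_axiom:cobounded} and with \emph{uniformly bounded orbits} on $E_{\ell_v}$: by Axiom~\eqref{short_axiom:extension} and Remark~\ref{rem:coord_spaces_wrt_lines} it has uniformly bounded orbits on $\C\U_v$, by Axiom~\eqref{short_axiom:cobounded} it has uniformly bounded orbits on each $\C\ell_w$, and the remaining coordinate spaces involved are bounded, so the uniqueness axiom forces $Z_v$ to displace every point of $E_{\ell_v}$ by a uniformly bounded amount. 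Granting in addition that $\Stab{G}{v}$ acts coboundedly on $E_{\ell_v}$, one concludes as usual: given a point of $F_{\ell_v}\times E_{\ell_v}$, first translate its $E_{\ell_v}$--coordinate into a fixed bounded set using an element of $\Stab{G}{v}$, then correct the $\C\ell_v$--coordinate using an element of $Z_v$, which perturbs the $E_{\ell_v}$--coordinate only by the uniform amount above. Hence $\Stab{G}{v}$ acts coboundedly on $\mathcal{M}_v$.

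The main obstacle, and the bulk of the work, is the first step: exhausting the shapes of $\Delta$, computing the iterated links, and verifying that $P_{[\Delta]}$ always collapses to one of the three models with the claimed stabiliser --- square--freeness of $\ov X$ being exactly what prevents spurious coincidences among the links (and hence keeps the three models apart). The one genuinely substantive ingredient left over is the coboundedness of $\Stab{G}{v}$ on the neighbour factor $E_{\ell_v}$; the natural route is to use cocompactness of the $G$--action on $\ov X$ --- which need not upgrade to cocompactness on $X$ --- to see that $\Stab{G}{v}$ acts with finitely many orbits on $\link_{\ov X}(v)$ and on $\operatorname{Star}_{\ov X}(v)$, and then to feed in the cobounded $Z_w$--actions on the quasilines $\C\ell_w$ to absorb the remaining ``squid'' directions; alternatively one can set this up as an induction on the complexity of the HHS, since $E_{\ell_v}$ is coarsely the product region of a domain of strictly smaller complexity.
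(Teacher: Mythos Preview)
Your reduction to three models via Lemma~\ref{lem:PR_in_CHHS} and Lemma~\ref{lem:simplicial_cont_for_squid}, and your edge case, match the paper's argument exactly (the paper tabulates all shapes of $\Delta$ and reduces to the ``edge'' and ``vertex'' cases). The point of divergence is the vertex case, where you go via the factor splitting $\mathcal{M}_v\simeq F_{\ell_v}\times E_{\ell_v}$ and defer to coboundedness of $\Stab{G}{v}$ on $E_{\ell_v}$, whereas the paper argues directly in the $\W$--metric on $\mathcal{M}_v$ and bootstraps from the already--established edge case.

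There is one concrete ingredient your sketch omits, and it is the technical core of the paper's vertex argument: the fullness-of-links axiom (Definition~\ref{defn:combinatorial_HHS}.\eqref{item:C_0=C}). Cocompactness of $G$ on $\ov X$ gives you that $\Stab{G}{v}$ acts cofinitely on $\link_{\ov X}(v)$, so any two $w,w'\in\link_{\ov X}(v)$ are connected (after a $\Stab{G}{v}$--translate) by a short chain of $\W$--adjacent vertices $w_0,\ldots,w_k$ in $\link_{\ov X}(v)$. But this by itself does not bound $\W$--distance in $\mathcal{M}_v$: you need to promote $\W$--adjacency of $w_{i-1},w_i$ to $\W$--adjacency of maximal simplices $\Phi_i,\Psi_i$ both containing $v$, with $\ov{\Phi_i}=\{v,w_{i-1}\}$ and $\ov{\Psi_i}=\{v,w_i\}$. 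That promotion is exactly Definition~\ref{defn:combinatorial_HHS}.\eqref{item:C_0=C}. The paper then applies the edge case at each $\{v,w_i\}$ to bound $\dist_{\mathcal{M}_v}(\Psi_i,\Phi_{i+1})$. Your proposed alternative, proving $\Stab{G}{v}$ acts coboundedly on $E_{\ell_v}$ by ``feeding in the $Z_w$--actions'', would ultimately need the same input (and beware of circularity: the paper later derives coboundedness on $F_{\U_v}$, which is your $E_{\ell_v}$, \emph{from} Lemma~\ref{lem:cobounded_stab_for_short}, in Claim~\ref{claim:quasiaction_geometric_H_V}). The induction-on-complexity alternative you float does not quite apply: the structure has fixed complexity and the orthogonal factor $E_{\ell_v}$ is not itself a combinatorial HHS to which Definition~\ref{defn:cobounded_stab} readily inducts.
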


\begin{proof}
Let $\Delta$ be a simplex of $\ov X$. We want to prove that $\Stab{G}{[\Delta]}$ (which is the setwise stabiliser of $\link(\Delta)$) acts coboundedly on the associated product region $P_{[\Delta]}$. In Figure~\eqref{fig:stab_and_PR} we go through all possible shapes of $\Delta$ and describe both $\Stab{G}{[\Delta]}$ and $P_{[\Delta]}$. Recall that, since $(X,\W)$ has weak simplicial containers, we can use Lemma~\ref{lem:PR_in_CHHS} to get a description of $P_{[\Delta]}$ as the maximal simplices of $X$ of the form $\Sigma=\Pi_1\star\Pi_2\star \Pi_3$, where $\Pi_1$ is maximal in $\link(\Delta)$ and $\Pi_2$ is maximal in $\link(\link(\Delta))$.

\begin{figure}[htp]
    \centering
    \makebox[\textwidth][c]{
    \begin{tabular}{c|c|c|c|c}
         $\Delta$& $\link(\Delta)$ &$\link(\link(\Delta))$ & $\Stab{G}{\link(\Delta)}$ & $\Sigma \in P_{[\Delta]}$  \\
         \hline
         $\{(v)\}$& $p^{-1}(\link_{\ov X}(v)) \star (L_v)^{(0)}$ &$\{v\}$ & $\Stab{G}{v}$ & $\{(v)\}\subseteq \Sigma$\\
         
         $\{(x)\}$& $p^{-1}(\link_{\ov X}(v)) \star \{v\}$ & $(L_v)^{(0)}$ & $\Stab{G}{v}$ & $\{(v)\}\subseteq \Sigma$\\
         
        $\{(v,x)\}$ (valence $1$)& $\Squid(w)$ & $p^{-1}(\link_{\ov X}(w))$ & $\Stab{G}{w}$& $\{(w)\}\subseteq \Sigma$ \\
        
        $\{(v,x)\}$, (valence $\ge 2$)& $p^{-1}(\link_{\ov X}(v))$ & $\Squid(v)$ & $\Stab{G}{v}$& $\{(v)\}\subseteq \Sigma $ \\
        
         $\{(v),(w)\}$&  $(L_v)^{(0)}\star (L_w)^{(0)}$ & $\{v,w\}$ & $\Stab{G}{e}$& $\{(v),(w)\}\subseteq \Sigma $ \\
         
         $\{(v),(y)\}$&  $(L_v)^{(0)}\star \{w\}$ & $\{v\}\star (L_w)^{(0)}$ & $\text{P}\Stab{G}{e}$& $\{(v),(w)\}\subseteq \Sigma $ \\

         $\{(x),(y)\}$&  $\{v,w\}$ & $(L_v)^{(0)}\star (L_w)^{(0)}$ & $\Stab{G}{e}$& $\{(v),(w)\}\subseteq \Sigma $ \\
         
        $\{(v,x),(w)\}$&  $(L_w)^{(0)}$ & $p^{-1}(\link_{\ov X}(w))\star \{w\}$ & $\Stab{G}{w}$ & $\{(w)\}\subseteq \Sigma $ \\
        $\{(v,x),(y)\}$&  $\{w\}$ & $p^{-1}(\link_{\ov X}(w))\star (L_w)^{(0)}$ &  $\Stab{G}{w}$ & $\{(w)\}\subseteq \Sigma $ 
    \end{tabular}}
    \caption{Description of the stabilisers of links, and of the associated product regions. Here $v,w\in\ov{X}^{(0)}$ are adjacent, $e=\{v,w\}$ is the edge with endpoints $v$ and $w$, and $x\in (L_v)^{(0)}$ and $y\in (L_w)^{(0)}$. Moreover $\text{P}\Stab{G}{e}\coloneq \Stab{G}{v}\cap\Stab{G}{w}$ is the point-wise stabiliser of $e$, while $\Stab{G}{e}$ is the set-wise stabiliser. Clearly $\text{P}\Stab{G}{e}$ has index at most two in $\Stab{G}{e}$.}
    \label{fig:stab_and_PR}
\end{figure}

By inspection of the Table, we only need to show that $\text{P}\Stab{G}{e}$ acts coboundedly on $P_e\coloneq\{\Sigma\in \W\,|\, \{(v), (w)\}\subseteq \Sigma\}$ (edge case below), and that 
the same holds for the $\Stab{G}{v}$-action on $P_v\coloneq\{\Sigma\in \W\,|\, \{(v)\}\subseteq \Sigma\}$ (vertex case below).

\par\medskip

\textbf{Edge case}:  Notice that $\text{P}\Stab{G}{e}$ contains $\langle Z_v, Z_w\rangle$, which acts coboundedly on $P_e$ as every cyclic direction acts coboundedly on the corresponding $\C\ell$ and with uniformly bounded orbits on the other. Thus $\text{P}\Stab{G}{e}$ acts coboundedly on $P_e$. For later purposes, let $B\ge 0$ be such that, for every edge $e$, every $\text{P}\Stab{G}{e}$-orbit inside $P_e$ is $B$-dense in $P_e$ (such a uniform constant exists, as there are finitely many $G$-orbits of $\ov X$-edges).

\par\medskip

\textbf{Vertex case}: Let $\Sigma, \Sigma'\in P_v$, and let $w$ (resp. $w'$) be the vertex of $\ov \Sigma$ (resp. $\ov \Sigma'$) other than $v$. Notice that $\Stab{G}{v}$ acts cofinitely on $\link_{\ov X}(v)$, because $G$ acts on $\ov X$ with finitely many orbits of edges. Thus, there exist a constant $K\in\mathbb{N}_{>0}$, some $g\in \Stab{G}{v}$, and a collection $w_0=w, w_1, \ldots w_{k-1}, w_k=g w'\in \link_{\ov X}(v)$, where $k\le K$, such that every $w_i$ is $\W$-adjacent to $w_{i+1}$. Furthermore, we can choose $K$ to be uniform in $v$, as there are finitely many $G$-orbits of vertices of $\ov X$.

Now, by fullness of links, Definition~\ref{defn:combinatorial_HHS}.\eqref{item:C_0=C}, for every $i=1,\ldots, k$ we can find two $\W$-adjacent maximal simplices $\Phi_i, \Psi_i$, such that $\ov\Phi_i=\{v, w_{i-1}\}$ while $\ov\Psi_i=\{v, w_{i}\}$. Now set $\Psi_0=\Sigma$ and $\Phi_{i+1}=g \Sigma'$. As a consequence of the edge case, for every $i=0,\ldots, k$ we can find $g_i\in \Stab{G}{\{v, w_i\}}\le \Stab{G}{v}$ such that $$\dist_{P_v}(\Psi_i, \Phi_{i+1})\le \dist_{P_{\{v,w_i\}}}(\Psi_i, \Phi_{i+1})\le B.$$
Putting everything together, we get that the distance between $\Sigma$ and $g\Sigma'$ in $P_v$ is at most $(k+1)B+k\le KB+K+B$, and this concludes the proof.
\end{proof}

\subsubsection{(Relative) hyperbolicity}
\begin{lemma}\label{lem:hyperbolic_short_HHG}
    A short HHG whose cyclic directions are all bounded is hyperbolic.
\end{lemma}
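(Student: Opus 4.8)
Since $G$ acts geometrically on the combinatorial HHS $\W$, it is quasi-isometric to $\W$, so it is enough to prove that $(\W,\frakS)$ is a hyperbolic space. The plan is to read this off from the hierarchical structure via the following standard criterion: an HHS is hyperbolic as soon as there is a uniform bound on $\min\{\diam_{\C U}(\C U),\diam_{\C V}(\C V)\}$ over all orthogonal pairs $U\orth V$ — equivalently, as soon as no two domains with \emph{unbounded} coordinate spaces are orthogonal. (Orthogonality is the only mechanism producing product behaviour in an HHS: such a pair gives a quasi-isometrically embedded product of two unbounded hyperbolic spaces, while conversely a non-hyperbolic HHS must contain a quasiflat of rank $\ge 2$; see \cite{HHS_II} and the distance formula therein.) Thus the whole argument reduces to bookkeeping of which coordinate spaces are unbounded and of the orthogonality relation between them. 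I expect the only genuine subtlety to be in citing/using this criterion in exactly the right form; the combinatorial bookkeeping below is routine given the results already in the excerpt.

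\textbf{Which coordinate spaces are unbounded.} By hypothesis every cyclic direction $Z_v$ is bounded, so Axiom~\eqref{short_axiom:cobounded} forces $\C\ell_v$ to be uniformly bounded for every $v\in\ov X^{(0)}$. Feeding this into Remark~\ref{rem:unbounded_dom_short_hhg} (which in turn rests on Corollary~\ref{cor:bounded_links}), the only domains whose coordinate space can be unbounded are the $\nest$-maximal domain $S$ and the domains $\U_v$ for those $v\in\ov X^{(0)}$ of infinite valence in $\ov X$. Since $\frakS$ has finitely many $G$-orbits, "unbounded" here can even be read as "of diameter above a uniform threshold".

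\textbf{No orthogonal pair among them.} The maximal domain $S$ is orthogonal to no domain. For the $\U_v$'s, inspecting links via Lemma~\ref{lem:decomposition_of_links}: the orthogonality relations among $\{\ell_v\}$ and $\{\U_v\}$ recorded in Remark~\ref{rem:unbounded_dom_short_hhg} are $\ell_v\orth\ell_w$ (for $v,w$ adjacent) and $\ell_v\orth\U_v$, each of which now involves a bounded domain; and the only way one can have $\U_v\orth\U_w$ is if one of $v,w$ has valence one in $\ov X$, in which case the corresponding augmented link is a single vertex, so that $\C\U$ is bounded. Hence every orthogonal pair of domains has at least one uniformly bounded member, so no two unbounded domains are orthogonal. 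By the criterion recalled above, $(\W,\frakS)$ is hyperbolic, and therefore $G$ is a hyperbolic group.
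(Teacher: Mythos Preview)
Your proof is correct and follows essentially the same approach as the paper's: identify the potentially unbounded domains via Remark~\ref{rem:unbounded_dom_short_hhg}, check that no two of them are orthogonal, and then invoke the standard ``no orthogonal unbounded pair $\Rightarrow$ hyperbolic'' criterion (which the paper cites as \cite[Corollary~2.16]{quasiflats}). Your treatment of the orthogonality check is in fact slightly more explicit than the paper's, which simply refers back to the relations listed in the Remark; the only thing you might tighten is to replace the informal justification of the hyperbolicity criterion with a direct citation.
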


\begin{proof}
As each cyclic direction $Z_v$ acts coboundedly on the associated $\C\ell_v$, we have that the latter is always bounded. Thus, by
Remark~\ref{rem:unbounded_dom_short_hhg} the collection of domains with unbounded coordinate spaces is contained in $$\{S\}\cup\{\U_v\}_{\{v\in \ov X^{(0)},\,|\link_{\ov X}(v)|=\infty\}}.$$ 
Moreover, as described in the same Remark, no two domains in this collection are orthogonal, and therefore $G$ is hyperbolic by e.g. \cite[Corollary 2.16]{quasiflats}.
\end{proof}

We now show that, for every vertex $v\in \ov X^{(0)}$, $H_v$ has a simple relatively hyperbolic structure, whose peripherals are (finite index overgroups of) the adjacent central directions. We first need two lemmas:

\begin{lemma}\label{lem:finite_int_w-v-w'}
    Whenever $w,w'\in\link_{\ov X}(v)$ are distinct, the intersection $Z_w\cap Z_{w'}$ is finite. 
\end{lemma}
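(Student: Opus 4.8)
The plan is to use the geometry of the short HHG structure, specifically the ``cobounded actions'' Axiom~\eqref{short_axiom:cobounded} together with the properness of the $G$-action. Since $w, w' \in \link_{\ov X}(v)$ are distinct vertices both adjacent to $v$, and $\ov X$ is triangle-free, $w$ and $w'$ are \emph{not} adjacent in $\ov X$; moreover they lie at distance $2$ from each other (through $v$). The key point is that $Z_w$ acts geometrically on $\C\ell_w$, but also — since $v \in \link_{\ov X}(w)$ — acts with uniformly bounded orbits on $\C\ell_v$. Symmetrically, $Z_{w'}$ acts geometrically on $\C\ell_{w'}$ and with bounded orbits on $\C\ell_v$.

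First I would observe that $Z_w \cap Z_{w'}$ is a subgroup of $Z_w$, hence (being a subgroup of a cyclic group) is itself cyclic, and I want to show it is finite, equivalently that it does not contain an element of infinite order. Suppose for contradiction that $g \in Z_w \cap Z_{w'}$ has infinite order. Then $\langle g \rangle$ is a finite-index subgroup of the infinite cyclic groups $Z_w$ and $Z_{w'}$ are infinite, so in particular $Z_w$ and $Z_{w'}$ are both infinite cyclic. Since $\langle g\rangle \le Z_w$ has finite index, $\langle g\rangle$ still acts geometrically — in particular \emph{coboundedly and properly} — on $\C\ell_w$, so $\C\ell_w$ is a quasiline and $g$ acts on it as a loxodromic isometry (its orbits are coarsely the whole quasiline). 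Symmetrically, $g$ acts loxodromically on $\C\ell_{w'}$.

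Now I bring in the transversality relation from Remark~\ref{rem:unbounded_dom_short_hhg}: since $w \neq w'$ and $w, w'$ are \emph{not} adjacent in $\ov X$ (as $\ov X$ is triangle-free and both are adjacent to $v$), the domains $\ell_w$ and $\ell_{w'}$ are transverse. The $g$-action is an automorphism of the HHG structure fixing $w$ and $w'$ (since $g \in \Stab{G}{w} \cap \Stab{G}{w'}$ and $Z_w$, $Z_{w'}$ act trivially on the respective links, actually we need $g$ to fix the domains $\ell_w, \ell_{w'}$, which it does as $g \in \Stab{G}{w} \cap \Stab{G}{w'}$). The hard part, and the main obstacle, is deriving the contradiction: an element cannot simultaneously act loxodromically on the coordinate spaces of two transverse domains while lying in a proper metric space action. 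Concretely, I would use the Behrstock inequality / consistency: for a realisation point $z$, one of $\dist_{\ell_w}(z, \rho^{\ell_{w'}}_{\ell_w})$ or $\dist_{\ell_{w'}}(z, \rho^{\ell_w}_{\ell_{w'}})$ is $\le E$; applying powers of $g$ and using that $g$ translates in both $\C\ell_w$ and $\C\ell_{w'}$ by a definite amount while fixing $\rho^{\ell_w}_{\ell_{w'}}$ and $\rho^{\ell_{w'}}_{\ell_w}$ (as $g$ fixes both domains), one gets that $\dist_{\ell_w}(g^n z, \rho^{\ell_{w'}}_{\ell_w}) \to \infty$ and $\dist_{\ell_{w'}}(g^n z, \rho^{\ell_w}_{\ell_{w'}}) \to \infty$ simultaneously, violating consistency for $n$ large.

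Alternatively — and this may be the cleaner route — I would avoid consistency and argue directly via properness of the $G$-action on $X$ (equivalently on $\cuco Z$): the subgroup $\langle g \rangle$ would then have to act properly on $X$, but one can locate a point (e.g. a realisation point, or a maximal simplex, for the tuple that is far out along $\C\ell_w$ in one coordinate and fixed elsewhere) whose $\langle g\rangle$-orbit has unbounded $\C\ell_w$-projection yet bounded projection to every \emph{other} unbounded domain, including $\C\ell_{w'}$ (bounded there since $v \in \link(w')$... wait, that's the wrong containment). Actually the bounded orbits come from: $g \in Z_w$ acts with bounded orbits on $\C\ell_v$ for $v\in\link(w)$, but we need bounded orbits on $\C\ell_{w'}$, which does \emph{not} follow since $w' \notin \link(w)$. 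So the transversality argument of the previous paragraph is the one to pursue, and the main obstacle is precisely to make rigorous the incompatibility of simultaneous loxodromic behaviour on two transverse domains — which I expect follows from the distance formula or the consistency inequality applied to $g^n z$ as $n \to \infty$.
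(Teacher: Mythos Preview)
Your first approach is correct and shares the decisive observation with the paper: since $g\in\Stab{G}{w}\cap\Stab{G}{w'}$ fixes both domains $\ell_w$ and $\ell_{w'}$, it must fix the bounded set $\rho^{\ell_{w'}}_{\ell_w}\subset\C\ell_w$. The paper then finishes in one stroke: any nontrivial element of $Z_w$ acts loxodromically on the quasiline $\C\ell_w$, hence cannot preserve a bounded subset, so $g=1$. Your detour through a realisation point and the Behrstock inequality works, but it is unnecessary once you have the fixed $\rho$-point --- you only need loxodromicity on \emph{one} of the two quasilines, and consistency plays no role. (Your second approach via properness indeed does not go through, for the reason you identified.)
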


\begin{proof}
    The corollary trivially holds if either $Z_w$ or $ Z_{w'}$ is finite. Otherwise, let $g\in Z_w\cap Z_{w'}$. Since $g\in \Stab{G}{w}\cap \Stab{G}{v'}$, it must fix the projection $\rho^{w'}_w\in\C\ell_w$. Therefore $g=1$, as every non-trivial element of $Z_w$ acts loxodromically on $\C \ell_w$.
\end{proof}

\begin{lemma}\label{lem:malnormality}
    Let $v\in \ov{X}^{(0)}$, and let $W$ be a collection of $\Stab{G}{v}$-orbit representatives of vertices in $\link_{\ov X}(v)$ with unbounded cyclic directions. The collection $\{\p_v(Z_w)\}_{w\in W}$ is \emph{independent} in $H_v$, that is, for any two different $w,w'\in W$ and any $h\in H$, $\p_v(Z_w)^h\cap \p_v(Z_{w'})=\{1\}$.
\end{lemma}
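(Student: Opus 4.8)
The plan is to reduce the statement to the analogous independence statement \emph{inside} $\Stab{G}{v}$, and then to the action on the coordinate spaces $\C\ell_w$. First I would fix $w,w'\in W$ distinct and $h\in H_v$, and pick a preimage $g\in\Stab{G}{v}$ of $h$ under $\p_v$. Since $Z_v$ is central (indeed normal) in $\Stab{G}{v}$ and acts trivially on $\link_{\ov X}(v)$, conjugation by $g$ and by any representative of $gZ_v$ have the same effect on the subgroups $Z_u$, $u\in\link_{\ov X}(v)$; moreover $\p_v(Z_w)^h=\p_v(g^{-1}Z_wg)=\p_v(Z_{g^{-1}w})$ because the family of cyclic directions is $G$-equivariant (Axiom~\eqref{short_axiom:extension}), and $g^{-1}w\in\link_{\ov X}(v)$ since $g\in\Stab{G}{v}$. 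Hence it suffices to show: if $g^{-1}w\neq w'$ in $\ov X^{(0)}$, then $\p_v(Z_{g^{-1}w})\cap\p_v(Z_{w'})=\{1\}$; the hypothesis $w\neq w'$ in $W$, which is a set of $\Stab{G}{v}$-orbit representatives, guarantees precisely that $g^{-1}w$ and $w'$ are distinct vertices (they cannot be $\Stab{G}{v}$-equivalent).

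Set $u=g^{-1}w$, so $u\neq w'$ are distinct vertices of $\link_{\ov X}(v)$ with unbounded cyclic directions. An element of $\p_v(Z_u)\cap\p_v(Z_{w'})$ lifts to some $z\in Z_u$ and some $z'\in Z_{w'}$ with $z(z')^{-1}\in Z_v$. I would like to conclude that $z$ and $z'$ themselves are trivial (hence so is their common image). The key point is that $Z_v$ acts with uniformly bounded orbits on both $\C\ell_u$ and $\C\ell_{w'}$, by Axiom~\eqref{short_axiom:cobounded} (since $u,w'\in\link_{\ov X}(v)$), so $z$ and $z'$ have uniformly comparable translation length on $\C\ell_u$: writing $z=z'c$ with $c\in Z_v$, the element $z$ acts on $\C\ell_u$ the same way (coarsely) as $z'$, which, being in $Z_{w'}\le\Stab{G}{u}$ — wait, $z'$ stabilises $w'$, not $u$; rather $z'$ fixes the simplex supporting $\ell_u$? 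No: $z'\in Z_{w'}$ fixes $\link_{\ov X}(w')$, and if $u\in\link_{\ov X}(w')$ then $z'$ fixes $u$, hence the $\rho$-point $\rho^{w'}_u\in\C\ell_u$, and $z'$ has bounded orbits on $\C\ell_u$ by Axiom~\eqref{short_axiom:cobounded}. In general $u$ and $w'$ need not be adjacent; but $z'$ still has bounded orbits on $\C\ell_u$ — this is exactly the content I need, and it follows because $z'$ stabilises $w'$, so it stabilises (coarsely) the $\rho$-point $\rho^{w'}_u$ whenever that is defined, and otherwise $\ell_u\transverse\ell_{w'}$ forces $z'$ to coarsely stabilise $\rho^{w'}_u\in\C\ell_u$ by consistency. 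Either way $z=z'c$ has bounded orbits on $\C\ell_u$, hence acts elliptically there; but every nontrivial element of $Z_u$ acts loxodromically on $\C\ell_u$ (as $Z_u$ is infinite cyclic acting geometrically on the quasiline $\C\ell_u$), so $z=1$. Symmetrically $z'=1$, and therefore $\p_v(Z_u)\cap\p_v(Z_{w'})=\{1\}$, which is what we wanted.

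The main obstacle I anticipate is the step showing that $z'\in Z_{w'}$ acts with uniformly bounded orbits on $\C\ell_u$ when $u$ and $w'$ are \emph{non-adjacent} vertices of $\link_{\ov X}(v)$ (recall $\ov X$ is triangle-free, so distinct neighbours of $v$ are automatically non-adjacent — this is in fact the generic case, not the exceptional one). There Axiom~\eqref{short_axiom:cobounded} does not apply directly. The right tool is transversality: by Remark~\ref{rem:unbounded_dom_short_hhg}, $\ell_u\transverse\ell_{w'}$, so the consistency inequalities (Definition~\ref{defn:HHS}.\eqref{item:dfs_transversal}) say that any point with large $\C\ell_u$-distance from $\rho^{w'}_u$ has $\C\ell_{w'}$-coordinate uniformly close to $\rho^{u}_{w'}$; applying this to an orbit $z'^n\cdot o$ (which has bounded $\C\ell_{w'}$-coordinate because $z'\in\Stab{G}{w'}$ but $Z_{w'}$ has bounded orbits on — no, $Z_{w'}$ acts geometrically on $\C\ell_{w'}$). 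Instead one observes directly that $z'$ fixes the curve/vertex $u$ is not needed: $z'\in\Stab{G}{w'}$ coarsely preserves $\rho^{w'}_u$ in $\C\ell_u$, and since it also coarsely preserves the projection $\pi_u$ of any $z'$-orbit, the orbit stays a bounded distance from $\rho^{w'}_u$. Making this last paragraph precise — identifying exactly which equivariance/consistency statement pins the $\C\ell_u$-orbit of $z'$ to a bounded set — is where I expect to spend the real effort; everything else is bookkeeping with the equivariance of the family $\{Z_v\}$ and the loxodromicity of generators of $Z_u$ on the quasiline $\C\ell_u$.
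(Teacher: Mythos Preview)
Your overall strategy is the same as the paper's, and the reduction at the beginning is correct: after lifting $h$ to $g\in\Stab{G}{v}$ and setting $u=g^{-1}w$ (so $u\neq w'$ are distinct vertices of $\link_{\ov X}(v)$), you want to show that whenever $z\in Z_u$, $z'\in Z_{w'}$, and $z(z')^{-1}=c\in Z_v$, then $z=1$.

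Where you take a detour is in the ``main obstacle'' paragraph. You try to show that $z'$ has bounded orbits on $\C\ell_u$, intending to combine this with the bounded orbits of $c$ to conclude that $z=z'c$ is elliptic on $\C\ell_u$. That combination step does not work in general (a product of two elements with bounded orbits need not have bounded orbits), and the consistency/transversality discussion you sketch is not needed. The paper's argument sidesteps this entirely by observing that $z$ itself fixes the \emph{vertex} $w'\in\ov X$: indeed $z'\in Z_{w'}\le\Stab{G}{w'}$ fixes $w'$, and $c\in Z_v$ fixes $w'$ because $Z_v$ acts trivially on $\link_{\ov X}(v)\ni w'$ (Axiom~\eqref{short_axiom:extension}). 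Hence $z=z'c\in Z_u$ fixes both $u$ and $w'$, so it fixes the point $\rho^{w'}_u\in\C\ell_u$; but any nontrivial element of $Z_u$ acts loxodromically on the quasiline $\C\ell_u$, forcing $z=1$. So your anticipated obstacle dissolves once you argue about what $z$ (not $z'$) does to the vertex $w'$, rather than about orbits on the quasiline.
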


\begin{proof}
    Given $h,w,w'$ as in the statement, let $Z_v=\langle z_v\rangle$, and similarly define $z_w$ and $z_{w'}$. Let $g\in \Stab{G}{v}$ be a preimage of $h$. If $\p_v(Z_w)^h\cap \p_v(Z_{w'})$ was not trivial, one could find integers $a,b\in \Z-\{0\}$, $c\in \Z$ such that $$z_{gw}^a=gz_w^ag^{-1}=z_{w'}^bz_v^c.$$
    Notice that $gw\neq w'$, as $w$ and $w'$ are in different $\Stab{G}{v}$-orbits. But this yields a contradiction, as the element on the right-hand side fixes $w'$ while $z_{gw}$ does not (again, because it acts loxodromically on $\C \ell_{gw}$ and therefore does not fix the projection $\rho^{w'}_{gw}$).
\end{proof}

Now recall that, if $H$ is a hyperbolic group and $h\in H$ has infinite order, then $\langle h\rangle$ is undistorted in $H$ by e.g. \cite[Corollary III.3.10]{bridsonhaefliger}. In particular, it acts loxodromically on the Cayley graph for $H$ with respect to any finite set of generators $S$. Furthermore, since $H$ acts properly discontinuously on $\Cay{H}{S}$, $h$ is \emph{weakly properly discontinuous}, or \emph{WPD}, in the sense of e.g. \cite{bestvina_fujiwara}.
Then \cite[Lemma 6.5]{DGO} grants the existence of a maximal virtually cyclic subgroup $K_H(h)\le H$ containing $h$. Notice that $K_H(h)=K_H(h^n)$ for every $n\in \Z-\{0\}$ (as a consequence of e.g. \cite[Corollary 6.6]{DGO}); hence, if $Q$ is a cyclic subgroup acting loxodromically on $\Cay{H}{S}$, with a little abuse of notation we can define $K_H(Q)\coloneq K_H(h)$ for some (equivalently, any) non-trivial $h\in Q$.

\begin{lemma}\label{lem:Hv_hyp_1}
    Let $(G, \ov X, \W)$ be a short HHG, $v\in \ov{X}^{(0)}$, and let $W$ be a collection of $\Stab{G}{v}$-orbit representatives of vertices in $\link_{\ov X}(v)$ with unbounded cyclic directions. Then $H_v$ is hyperbolic relative to $\{K_{H_v}(\p_v(Z_w))\}_{w\in W}$. In particular, $H_v$ is hyperbolic.
\end{lemma}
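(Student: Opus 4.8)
The plan is to apply a standard criterion for relative hyperbolicity of a group acting on a hyperbolic space with a malnormal-type family of loxodromic subgroups, namely the characterization of relatively hyperbolic groups via actions on hyperbolic spaces (in the spirit of \cite{bowditch_relhyp} or, more conveniently here, via the "hyperbolically embedded subgroups" framework of \cite{DGO}). The relevant space is $\C\U_v$, which by Remark~\ref{rem:coord_spaces_wrt_lines} is $\Stab{G}{v}$-equivariantly quasi-isometric to the augmented link $\link_{\ov X}(v)^{+\W}$, and on which $Z_v$ acts trivially; hence $\C\U_v$ carries an induced action of $H_v=\Stab{G}{v}/Z_v$, which is a (quasi-)geometric action since $\Stab{G}{v}$ acts coboundedly on $P_{\U_v}$ by Lemma~\ref{lem:cobounded_stab_for_short} and $\C\U_v$ is quasi-isometric to the corresponding factored space.

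First I would set up the space: by Remark~\ref{rem:coord_spaces_wrt_lines} replace $\C\U_v$ by $\link_{\ov X}(v)^{+\W}$, observe it is $\delta$-hyperbolic (coordinate space of a combinatorial HHS), and that $H_v$ acts on it with finitely many orbits of vertices and with uniformly bounded (indeed, by Lemma~\ref{lem:cobounded_stab_for_short}'s vertex-case argument, cobounded) point stabilisers modulo... — more precisely, I would identify the vertex stabiliser of $w\in\link_{\ov X}(v)$ in $H_v$ with $\p_v(\Stab{G}{v}\cap\Stab{G}{w})$, which contains $\p_v(Z_w)$ as a finite-index, undistorted, loxodromic subgroup. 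Then I would verify the hypotheses of the Drutu–Sapir / Dahmani–Guirardel–Osin criterion: each $\p_v(Z_w)$ acts loxodromically and WPD-ly on $\link_{\ov X}(v)^{+\W}$ (loxodromicity because $z_w$ does not fix $\rho^{w'}_w$-type points and so moves vertices of the link away — here one uses the strong BGI Lemma~\ref{lem:strong_bgi_for_short} to see the translation length is positive; WPD-ness from properness of the $\Stab{G}{v}$-action on the product region plus finiteness of the relevant intersections, Lemma~\ref{lem:finite_int_w-v-w'}); and the family $\{\p_v(Z_w)\}_{w\in W}$ over a set $W$ of orbit representatives is independent by Lemma~\ref{lem:malnormality}. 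Passing from $\p_v(Z_w)$ to its maximal virtually cyclic overgroup $K_{H_v}(\p_v(Z_w))$ — which exists and is well-defined by \cite[Lemma 6.5, Corollary 6.6]{DGO} as recalled above — preserves independence (maximal virtually cyclic subgroups containing a given loxodromic element have the same fixed points at infinity, so independence is inherited), and these are exactly the peripheral candidates. Quasiconvexity/malnormality of the $K_{H_v}(\p_v(Z_w))$ together with coboundedness of the action then gives, via the combination theorem characterization, that $H_v$ is hyperbolic relative to $\{K_{H_v}(\p_v(Z_w))\}_{w\in W}$; since each peripheral is virtually cyclic, hence hyperbolic, $H_v$ is itself hyperbolic (this last implication is standard: a group hyperbolic relative to hyperbolic subgroups is hyperbolic).

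The main obstacle I expect is verifying that the action of $H_v$ on $\link_{\ov X}(v)^{+\W}$ is \emph{acylindrical along}, or at least sufficiently well-behaved with respect to, the subgroups $\p_v(Z_w)$ — i.e. the properness/WPD bookkeeping needed to conclude that $\{\p_v(Z_w)\}_{w\in W}$ is a \emph{hyperbolically embedded} family rather than merely an independent collection of loxodromics. Concretely, one must control, for each $w$, the set of elements of $H_v$ that coarsely stabilise a long segment of the $\p_v(Z_w)$-axis; this should follow from the metric properness of the $\Stab{G}{v}$-action on $P_{\U_v}$ combined with the fact that such a segment "pins down" the vertex $w$ (again via strong BGI), so that the relevant stabiliser maps into the finite-by-$Z_w$-by-... group $\Stab{G}{v}\cap\Stab{G}{w}$ modulo $Z_v$, which is virtually cyclic commensurable with $\p_v(Z_w)$ and hence contained in $K_{H_v}(\p_v(Z_w))$ up to finite index. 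Once hyperbolic embeddedness is established, the equivalence between hyperbolically embedded peripherals (of the whole group, i.e. with trivial further structure) and relative hyperbolicity \cite[Proposition 4.28]{DGO} yields the statement; I would also invoke finiteness of $G$-orbits of vertices of $\ov X$ to ensure all constants are uniform, which matters for later uses of this lemma though not for the bare statement.
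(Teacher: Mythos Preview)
There is a genuine error in your approach: the subgroup $\p_v(Z_w)$ does \emph{not} act loxodromically on $\C\U_v\cong\link_{\ov X}(v)^{+\W}$. Indeed, $Z_w\le\Stab{G}{w}$ fixes the vertex $w\in\link_{\ov X}(v)$, so $\p_v(Z_w)$ has a global fixed point in $\C\U_v$ and acts elliptically. Your justification for loxodromicity (``$z_w$ does not fix $\rho^{w'}_w$-type points'') confuses the action on $\C\ell_w$ with the action on $\C\U_v$: the former is loxodromic, the latter is not. Relatedly, the action of $H_v$ on $\C\U_v$ is not proper (vertex stabilisers are infinite, containing the $\p_v(Z_w)$'s), so your claim that the action is ``(quasi-)geometric'' also fails.

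The paper avoids this by working instead with the \emph{factor} $F_{\U_v}$, which retains the quasilines $\C\ell_w$ as sub-HHS coordinate spaces. Since no two unbounded domains nested in $\U_v$ are orthogonal, $F_{\U_v}$ is hyperbolic (by \cite[Theorem~3.2]{russell}); the paper then checks that the induced quasi-action of $H_v$ on $F_{\U_v}$ is proper and cobounded, so $H_v$ is hyperbolic. Only \emph{after} establishing hyperbolicity of $H_v$ does the paper run the DGO argument, and it does so on $\Cay{H_v}{T}$ for a finite generating set $T$: there each $\p_v(Z_w)$ is an infinite cyclic subgroup of a hyperbolic group, hence undistorted and loxodromic, hence WPD, and independence (Lemma~\ref{lem:malnormality}) plus \cite[Theorem~6.8, Proposition~4.28]{DGO} finishes. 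Your overall strategy (independence $\Rightarrow$ hyperbolically embedded $\Rightarrow$ relatively hyperbolic) is the same as the paper's second step, but you have chosen the wrong model space to verify loxodromicity and WPD. A Bowditch-style argument directly on $\C\U_v$ (peripherals as vertex stabilisers, checking fineness) could in principle be made to work, but that is not what you outlined and would require different bookkeeping.
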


\begin{proof}
Let $F_{\U_v}$ be the factor associated to $\U_v$, which is a HHS whose domain set corresponds to the domains nested in $\U_v$. Hence, every domain for $F_{\U_v}$ with unbounded coordinate space is in the collection 
$$\{\U_v\}\cup\bigcup_{w\in\link_{\ov X}(v)} \ell_w.$$
Since no two domains in such collection are orthogonal, \cite[Theorem 3.2]{russell} yields that $F_{\U_v}$ is hyperbolic.

Now, $\Stab{G}{v}$ acts on $F_{\U_v}$. As $Z_v$ acts trivially on $\C\U_v=\link_{\ov X}(v)^{+\W}$ and with uniformly bounded orbits on every $\C\ell_w$, by the uniqueness axiom~\ref{defn:HHS}.\eqref{item:dfs_uniqueness}, there exists a uniform constant $B\ge 0$ such that $Z_v$-orbits in $F_{\U_v}$ have diameter at most $B$. In turn, this means that the action $\Stab{G}{v}\circlearrowleft F_{\U_v}$ induces a \emph{quasi-action} $H_v\circlearrowleft F_{\U_v}$, meaning that, for every $h,h'\in H_v$ and every $x\in F_{\U_v}$, we have that
$$\dist_{F_v}\left( (h h')(x), h(h'(x))\right)\le B.$$
\begin{claim}\label{claim:quasiaction_geometric_H_V}
    The quasi-action is metrically proper and cobounded.
\end{claim}

\begin{claimproof}[Proof of Claim~\ref{claim:quasiaction_geometric_H_V}]
First, as short HHG have cobounded product regions (Lemma \ref{lem:cobounded_stab_for_short}), $\Stab{G}{v}$ acts coboundedly on $P_{\ell_v}=F_{\ell_v}\times F_{\U_v}$. The action is simply the direct product of the actions on the factors, so $\Stab{G}{v}$ acts coboundedly on $F_{\U_v}$ as well. Furthermore, as $Z_v$ acts with uniformly bounded orbits, the quasi-action of $H_v$ on $F_{\U_v}$ is still cobounded. 
\par\medskip
Moving to metric properness, fix any point $x\in F_{\U_v}$ and any radius $R\ge0$. Suppose that $h\in H_v$ is such that $\dist_{F_{\U_v}}(x,hx)\le R$, and we want to show that $h$ belongs to a finite collection of elements. For every $g\in \Stab{G}{v}$ such that $\p_v(g)=h$, we have that
$$\dist_{F_{\U_v}}(x,gx)\le R+B.$$
Now fix $p\in F_{\ell_v}$. As $Z_v$ acts coboundedly on $F_{\ell_v}$, we can choose $g$ in such a way that $\dist_{F_{\ell_v}}(p,gp)$ is bounded above by some constant $B'$. But then, if we see the pair $(p, x)$ as a point in $P_{\ell_v}$, we have that $\dist_{P_{\ell_v}}\left((p,x),g(p,x)\right)$ is bounded above in terms of $R$, $B$, and $B'$. Since $\Stab{G}{v}$ acts metrically properly on $P_{\ell_v}$, there are finitely many such $g$, and therefore finitely many such $h$.
\end{claimproof}
Combining the Claim with e.g. \cite[Proposition 4.4]{Manning_pseudochar} we have that $H_v$ acts metrically properly and coboundedly on a geodesic metric space $F'$ quasi-isometric to $F_{\U_v}$, which is therefore hyperbolic; thus $H_v$ is a hyperbolic group.

Moving to the relative hyperbolic structure, $\{\p_v(Z_W)\}_{w\in W}$ is an independent collection of infinite cyclic subgroups, which act loxodromically (hence WPD) on $\Cay{H_v}{T}$ for some finite generating set $T$. Then by \cite[Theorem 6.8]{DGO} the collection $\{K_{H_v}(\p_v(Z_w))\}_{w\in W}$ is \emph{hyperbolically embedded} in $(H_v,T)$, in the sense of \cite[Definition 2.1]{DGO}. Then, since $T$ is finite, \cite[Proposition 4.28]{DGO} yields that $H_v$ is hyperbolic relative to the collection, as required.
\end{proof}

\subsubsection{Absence of hidden symmetries}

\begin{defn}[No hidden symmetries]\label{defn:hiddensymm_general}
    Let $0\to \Z\to G\xrightarrow[]{\pi} H\to 1$ be a group extension, with $H$ hyperbolic, and let $C\le G$ be a cyclic subgroup. We say that $C$ has \emph{no hidden symmetries} if $\pi(C)$ is infinite and $C$ contains a finite-index subgroup which is normal in $\pi^{-1}(K_H(\pi(C)))$. We say that an element $g\in G$ has no hidden symmetries if $\langle g\rangle$ has no hidden symmetries.
\end{defn}

\begin{lemma}\label{lem:nohidden}
    Let $(G,\ov X, \W)$ be a short HHG, and let $v,w\in \ov X^{(0)}$ be adjacent vertices with infinite cyclic directions. Then $Z_w$ has no hidden symmetries in $\Stab{G}{v}$.
\end{lemma}

\begin{proof} 
    Let $n\in \Z-\{0\}$ be such that $\langle \p_v(z_w^n)\rangle$ is normal in $K_{H_v}(\p_v(z_w))$. Then for every $p\in \p_v^{-1}(K_{H_v}(\p_v(z_w)))$ there exist $k\in \Z$ and $\varepsilon\in \{\pm 1\}$ such that $$z_w^{\varepsilon n}=pz_w^np^{-1}z_v^k=z_{pw}^nz_v^k.$$
    The vertices $w$ and $pw$ are both in $\link_{\ov X}(v)$, so they either coincide or are not adjacent. In the latter case we would get a contradiction, because the right hand side would fix $pw$ while the left hand side would not (again, since $Z_w$ acts loxodromically on $\C\ell_w$ and therefore cannot fix $\rho^{pw}_w$). Then $w=pw$, meaning that $z_w^{(\varepsilon -1)n}=z_v^k$. In turn, this is possible if and only if $\varepsilon=1$ and $k=0$, because $Z_v$ acts with uniformly bounded orbits on $\C\ell_w$. This proves that every $p\in \p_v^{-1}(K_{H_v}(\p_v(z_w)))$ normalises $\langle z_w^n\rangle$, as required.
\end{proof}

\section{Short structures from blowup materials}\label{sec:squid}
In this Section we describe a procedure to manufacture a short HHG structure on a group. The inputs of our machinery are what we call \emph{blowup materials}, which roughly consist of:
\begin{itemize}
    \item a graph $\ov X$ to blow-up, on which $G$ acts with cyclic-by-hyperbolic vertex stabilisers;
    \item a choice of a \emph{quasimorphism} $\Stab{G}{v}\to \R$ every vertex $v\in \ov X^{(0)}$, which we use to build the coordinate space associated to $\ell_v$ (see Subsection~\ref{subsec:quasilines_from_quasimorph} below);
    \item a family of coarse retractions $G\to \Stab{G}{v}$, which we use to define projections.
\end{itemize}  
As we shall see in Proposition~\ref{prop:short_is_squid}, admitting blowup materials is equivalent to being a short HHG. However, the advantage of this point of view is that one is often able to replace some blowup materials to get a plethora of different short HHG structures. This will be the source of the non-equivalent coarse medians (see Section~\ref{sec:mcg_coarsemedian}).

\subsection{Quasilines from quasimorphisms}\label{subsec:quasilines_from_quasimorph}
We first describe how, given a quasimorphism on a group, one can construct a quasiline on which the group acts. We start by recalling some definitions.
\begin{defn}[(Homogeneous) quasimorphism]
    Let $H$ be a group. A map $m\colon H\to \mathbb{R}$ is a \emph{quasimorphism} if there exists a constant $D\ge 0$, called the \emph{defect} of $m$, such that for every $h,k\in H$ we have that
    $$|m(hk)-m(h)-m(k)|\le D.$$
    A quasimorphism is \emph{homogeneous} if it restricts to a group homomorphism on every cyclic subgroup of $H$, i.e. $m(h^n)=nm(h)$ for every $h\in H$ and $n\in\mathbb{Z}$. 
\end{defn}

\begin{rem}\label{rem:plasmon}
To every quasimorphism $m$ one can associate a homogeneous quasimorphism $m'$, defined as
$$m'(h)=\lim_{n\to+\infty} \frac{m(h^n)}{n}.$$
Notice that, if $m$ is already homogeneous, then $m'$ coincides with $m$. This in particular implies that a homogeneous quasimorphism is invariant under conjugation. Indeed,
$$m(khk^{-1})=\lim_{n\to+\infty} \frac{m(kh^nk^{-1})}{n}.$$
Then by the properties of the homogeneous quasimorphism we get that
$$m(kh^nk^{-1})\in \left[n m(h)+m(k)+m(k^{-1})-2D, nm(h)+m(k)+m(k^{-1})+2D\right],$$ 
and therefore the limit coincides with $m(h)$.
\end{rem}

We now recall the main tool we shall use to construct quasilines that witness the infinite cyclic directions of our extensions.

\begin{lemma}[{\cite[Lemma 4.15]{ABO}}]\label{lem:abo}
    Let $H$ be a finitely generated group, and let $m\colon H\to \mathbb{R}$ be a non-zero homogeneous quasimorphism. Let
$$\tau_{\{m,C\}} =  \left\{g \in G \mbox{ s.t. } |m(g)| < C\right\},$$
where $C$ is any constant such that the defect of $m$ is at most $C/2$, and that there exists a value of $m$ in the interval $(0, C/2)$. Then $\tau_{\{m,C\}}$ is a generating set of $H$, and the map $m\colon \Cay{H}{\tau_{\{m,C\}}} \to \mathbb{R}$ is a quasi-isometry. 
\end{lemma}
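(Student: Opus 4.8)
I would deduce both conclusions — that $\tau=\tau_{\{m,C\}}$ generates and that $m\colon\Cay{H}{\tau}\to\R$ is a quasi-isometry — from one \emph{reduction} argument: given $g\in H$, multiply $g$ by a suitable power of a fixed element $h_0$ whose $m$-value lies in $(0,C/2)$ so as to land in $\tau$, while keeping the number of factors linear in $|m(g)|$. First the setup: fix $h_0\in H$ with $t\coloneq m(h_0)\in(0,C/2)$ (which exists by hypothesis) and let $D\le C/2$ be the defect of $m$. Since $m$ is homogeneous, $m(1)=0$ and $m(g^{-1})=-m(g)$ for all $g$; hence $\tau$ is symmetric and contains $1$, so $\Cay{H}{\tau}$ is well defined, and $h_0^{\pm1}\in\tau$ because $|m(h_0^{\pm1})|=t<C$.

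\textbf{Generation with a length estimate.} For $g\in H$, set $n\coloneq\lfloor|m(g)|/t\rfloor$, so $0\le|m(g)|-nt<t$, and let $\varepsilon=+1$ if $m(g)\le0$ and $\varepsilon=-1$ if $m(g)>0$. Put $g'\coloneq gh_0^{\varepsilon n}$. By homogeneity $m(h_0^{\varepsilon n})=\varepsilon n t$, so the defect bound gives
$$m(g')\in\left[m(g)+\varepsilon n t-D,\ m(g)+\varepsilon n t+D\right].$$
The choice of $\varepsilon$ and $n$ forces $m(g)+\varepsilon n t\in(-t,t)$, hence $m(g')\in(-t-D,t+D)\subseteq(-C,C)$ since $t+D<C/2+C/2=C$; thus $g'\in\tau$. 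Therefore $g=g'h_0^{-\varepsilon n}$ is a product of at most $n+1$ elements of $\tau$, which proves $\tau$ generates $H$ and moreover
$$\dist_\tau(1,g)\le n+1\le\frac{|m(g)|}{t}+1.$$

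\textbf{The two quasi-isometry bounds and conclusion.} For the coarse Lipschitz direction, if $g'=gs$ with $s\in\tau$ then $|m(g')-m(g)|\le|m(s)|+D<C+D$, so induction on word length yields $|m(g)-m(g')|\le(C+D)\,\dist_\tau(g,g')$ for all $g,g'$. For the other direction, apply the length estimate to $g^{-1}g'$ and use $|m(g^{-1}g')|\le|m(g')-m(g)|+D$ (homogeneity plus defect) to get
$$\dist_\tau(g,g')=\dist_\tau(1,g^{-1}g')\le\frac1t|m(g')-m(g)|+\Big(\frac Dt+1\Big).$$
Finally, $m(\langle h_0\rangle)=t\Z$ is $(t/2)$-dense in $\R$ with $t/2<C$, so the image of $m$ is coarsely dense. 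Combining the two displayed inequalities with coarse density shows $m$ is a quasi-isometry.

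\textbf{Main obstacle.} There is no essential difficulty: the whole content sits in the reduction step, where the two hypotheses on $C$ (defect $\le C/2$, and a value of $m$ in $(0,C/2)$) are precisely what forces $gh_0^{\varepsilon n}$ into $\tau$ while keeping the factor count $\le|m(g)|/t+1$; the rest is formal manipulation of the quasimorphism inequality and left-invariance of the word metric.
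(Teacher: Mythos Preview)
The paper does not prove this lemma; it is quoted verbatim from \cite[Lemma~4.15]{ABO} and used as a black box. So there is no ``paper's own proof'' to compare against. Your argument is correct and self-contained: the reduction $g\mapsto gh_0^{\varepsilon n}$ works exactly as you say (the two hypotheses on $C$ are used precisely where you use them), the word-length estimate $\dist_\tau(1,g)\le|m(g)|/t+1$ follows, and the two Lipschitz bounds together with coarse density of $m(\langle h_0\rangle)=t\Z$ in $\R$ give the quasi-isometry. As a minor remark, your proof does not use the hypothesis that $H$ is finitely generated, which indeed is not needed for the conclusion as stated.
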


Notice that, in the above Lemma, given any finite generating set $S$ for $H$, one can always choose $C$ large enough that $\tau_{\{m,C\}}$ contains $S$.

In \cite{HRSS_3manifold, ELTAG_HHS}, the previous lemma is used as follows: given a $\Z$-central extension $E$ of a hyperbolic group, one can construct a quasimorphism $E\to \mathbb{R}$ which is unbounded on $\Z$ (see e.g. \cite[Lemma 4.3]{ELTAG_HHS}), and therefore Lemma~\ref{lem:abo} grants the existence of a Cayley graph $L$ for $E$ which is a quasiline and where $\Z$ is unbounded. Then $L$ can be used as one of the domains of the HHG structure, which “witnesses” the $\Z$ direction. However, we are interested in short HHG, where a cyclic direction is not necessarily central in the corresponding stabiliser. What is true, however, is that a $\Z$-by-hyperbolic group has an index-two subgroup where $\Z$ is central. This justifies the following lemma:

\begin{lemma}[Extending quasilines to finite-index overgroups]\label{lem:quasiline_extension}
    Let $G$ be a finitely generated group and $E$ a finite-index, normal subgroup. Let $g_1=1,g_2,\ldots, g_k$ be coset representatives for $G/E$. Let $\langle z\rangle\le E$ be an infinite cyclic subgroup which is central in $E$ and normal in $G$. 
    
    Given a homogeneous quasimorphism $m\colon E\to \mathbb{R}$ which is unbounded on $\langle z\rangle$, define $m^G\colon E\to \mathbb{R}$ as
    $$m^G(h)=\frac{1}{[E:G]}\sum_{i=1}^k\varepsilon(g_i) m(g_ihg_i^{-1}),$$
    where $\varepsilon(g)\in\{\pm1\}$ is such that $gz g^{-1}=z^{\varepsilon(g)}$. Then $m^G$ is a homogeneous quasimorphism which is unbounded on $\langle z\rangle$.
    
    Furthermore, let $C>2D$, such that there exists a value of $m^G$ in the interval $(0, C/2)$. Then the inclusion $E\hookrightarrow G$ induces a quasi-isometry
    $$i_m\colon \Cay{E}{\tau_{\{m^G,C\}}}\hookrightarrow \Cay{G}{\tau_{\{m^G,C\}}\cup\{g_1,\ldots, g_k\}},$$
    where $\tau_{\{m^G,C\}}$ is the generating set for $E$ from  Lemma~\ref{lem:abo}. In particular, $\langle z\rangle$ acts loxodromically on $\Cay{G}{\tau_{\{m^G,C\}}\cup\{g_1,\ldots, g_k\}}$.
\end{lemma}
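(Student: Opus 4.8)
The plan is to verify each of the three assertions in turn: that $m^G$ is a homogeneous quasimorphism, that it is unbounded on $\langle z\rangle$, and that the induced inclusion of Cayley graphs is a quasi-isometry.

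\textit{Step 1: $m^G$ is a homogeneous quasimorphism on $E$.} I would first note that for fixed $i$, the map $h\mapsto \varepsilon(g_i)\,m(g_i h g_i^{-1})$ is a homogeneous quasimorphism on $E$: conjugation by $g_i$ is an automorphism of $E$ (here we use that $E$ is normal in $G$), so $h\mapsto m(g_i h g_i^{-1})$ is a homogeneous quasimorphism with the same defect $D$ as $m$, and multiplying by the constant $\pm 1$ preserves this. A finite average (with real coefficients) of homogeneous quasimorphisms is again a homogeneous quasimorphism, with defect at most the average of the defects, hence $\le D$; so $m^G$ is a homogeneous quasimorphism with defect $\le D$. (I'd flag the harmless typo that the normalising factor should be $\tfrac{1}{k}=\tfrac{1}{[G:E]}$, not $\tfrac{1}{[E:G]}$.)

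\textit{Step 2: $m^G$ is unbounded on $\langle z\rangle$.} Since $z$ is central in $E$, we have $g_i z^n g_i^{-1} = z^{\varepsilon(g_i) n}$, so $\varepsilon(g_i)\,m(g_i z^n g_i^{-1}) = \varepsilon(g_i)\,m(z^{\varepsilon(g_i)n}) = \varepsilon(g_i)^2 n\, m(z) = n\,m(z)$, using homogeneity of $m$. Averaging over $i$ gives $m^G(z^n) = n\, m(z)$, and since $m$ is unbounded on $\langle z\rangle$ we have $m(z)\neq 0$, so $m^G$ is unbounded on $\langle z\rangle$ (in fact it agrees with $m$ there).

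\textit{Step 3: the Cayley graph inclusion is a quasi-isometry.} Write $L_E = \Cay{E}{\tau_{\{m^G,C\}}}$ and $L_G = \Cay{G}{\tau_{\{m^G,C\}}\cup\{g_1,\dots,g_k\}}$. By Lemma~\ref{lem:abo} applied to $m^G$ on $E$ (valid since $C>2D\ge$ twice the defect of $m^G$, and $C/2$ contains a value of $m^G$ by hypothesis), $\tau_{\{m^G,C\}}$ generates $E$ and $m^G\colon L_E\to\R$ is a quasi-isometry. The inclusion $i_m$ is $1$-Lipschitz; conversely, since $[G:E]=k$ is finite and the $g_i$ lie in the generating set of $L_G$, every point of $L_G$ is within distance $1$ of $E$, so $i_m$ is coarsely surjective, and it remains to lower-bound $\dist_{L_E}(1,h)$ in terms of $\dist_{L_G}(1,h)$ for $h\in E$. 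For this I would use that $m^G\colon L_G\to\R$ is still coarsely Lipschitz: each generator $s\in\tau_{\{m^G,C\}}$ changes the value of $m^G$ by at most $C+D$ (by the quasimorphism inequality $|m^G(hs)-m^G(h)|\le |m^G(s)|+D\le C+D$), and conjugation by the $g_i$ shows $|m^G(g_i)|$ is bounded by a uniform constant, so each of the finitely many extra generators $g_i$ also changes $m^G$ boundedly. Hence $|m^G(h)|\le A\,\dist_{L_G}(1,h)$ for a uniform $A$. Combining with the lower bound from Lemma~\ref{lem:abo} on $L_E$, namely $\dist_{L_E}(1,h) \le A'|m^G(h)| + A'$, we get $\dist_{L_E}(1,h)\le AA'\,\dist_{L_G}(1,h) + A'$, and $G$-equivariance of $\dist_{L_E}$ and $\dist_{L_G}$ under left multiplication by $E$ upgrades this to all pairs. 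This shows $i_m$ is a quasi-isometry. The final sentence is then immediate: $\langle z\rangle$ acts loxodromically on $L_E$ because $m^G$ is unbounded on it and $m^G$ is a quasi-isometry $L_E\to\R$, and a quasi-isometry intertwines loxodromic actions, so $\langle z\rangle$ acts loxodromically on $L_G$ as well.

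\textit{Main obstacle.} The subtle point is Step 3: showing $i_m$ is a quasi-isometric \emph{embedding}, i.e.\ that distances in $L_E$ are not drastically distorted inside $L_G$. The clean way around this is not to compare word metrics directly but to route everything through the quasimorphism $m^G$ — using that $m^G$ is coarsely Lipschitz on \emph{both} Cayley graphs (which requires the uniform bound on $|m^G(g_i)|$, itself a consequence of homogeneous quasimorphisms being conjugation-invariant up to defect, cf. Remark~\ref{rem:plasmon}) and quasi-isometric on $L_E$. I expect the bookkeeping of constants to be routine once this strategy is fixed.
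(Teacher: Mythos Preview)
Your Steps~1 and~2 are fine and match the paper. The gap is in Step~3, and it is exactly at the point you flag as the ``main obstacle''. You want to argue that $m^G\colon L_G\to\R$ is coarsely Lipschitz, and in particular you invoke a bound on $|m^G(g_i)|$. But $m^G$ is only defined on $E$: for $i\ge 2$ the coset representatives $g_i$ lie in $G\setminus E$, so $m^G(g_i)$ is simply undefined, and more generally a geodesic in $L_G$ between two points of $E$ will typically pass through $G\setminus E$, where $m^G$ has no value. Remark~\ref{rem:plasmon} (conjugation-invariance of homogeneous quasimorphisms) does not help here, since it only tells you $m^G(ehe^{-1})=m^G(h)$ for $e,h\in E$; it does not produce values of $m^G$ outside $E$.

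The paper closes this gap differently. It first proves the stronger identity $m^G(ghg^{-1})=\varepsilon(g)\,m^G(h)$ for all $g\in G$ and $h\in E$, i.e.\ that $|m^G|$ is invariant under conjugation by the \emph{whole} group $G$; this is a short but genuine computation using the averaging in the definition of $m^G$. With this in hand, the paper builds an explicit coarse retraction $r_m\colon L_G\to L_E$ by writing each $g\in G$ uniquely as $g=h g_i$ with $h\in E$ and setting $r_m(g)=h$. To see $r_m$ is Lipschitz, one checks the two types of generators: multiplying by some $g_j$ changes $r_m$ by one of finitely many fixed elements $h_{ij}\in E$; multiplying by $s\in\tau_{\{m^G,C\}}$ changes $r_m$ by $g_i s g_i^{-1}$, which lies in $\tau_{\{m^G,C\}}$ precisely because $|m^G(g_i s g_i^{-1})|=|m^G(s)|<C$ by the $G$-invariance of $|m^G|$. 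This retraction argument replaces your attempt to run $m^G$ along paths in $L_G$. If you wanted to salvage your route-through-the-quasimorphism strategy, you would need either to extend $m^G$ to a quasimorphism on $G$ (extra work) or to push paths back into $E$ --- which is exactly what $r_m$ does, and what requires the $G$-invariance identity.
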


\begin{rem}\label{rem:mG_trivial_on_adjacent_Zw}
    Later we shall also need that, if $e\in E$ is such that $m(geg^{-1})=0$ for every $g\in G$, then clearly $m^G(e)=0$. 
\end{rem}

\begin{proof}
Firstly, each $m(g_i \cdot g_i^{-1})$ is a homogeneous quasimorphism, being the composition of a group automorphism and $m$. Then $m^G$ is a homogeneous quasimorphism, since it is defined as a linear combination of homogeneous quasimorphisms. Furthermore, by construction $m^G(z)=m(z)$, so $m^G$ is itself unbounded on $\langle z\rangle$.

Now we claim that the absolute value of $m^G$ is $G$-invariant. Indeed, for every $g\in G$ and every $h\in E$, we have that
$$m^G(g h g^{-1})=\sum_{i=1}^k\varepsilon(g_i) m(g_ighg^{-1}g_i^{-1}).$$
Let $g_ig=g_{j(i)} h_{j(i)}$, for some $g_{j(i)}$ in the set of representatives and some $h_{j(i)}\in E$. Notice that $i\to j(i)$ is a permutation of the set $\{1, \ldots, k\}$, as $g_ig$ and $g_jg$ cannot lie in the same coset of $E$. Furthermore $\varepsilon(g_i)=\varepsilon(g)\varepsilon(g_{j(i)})$, as $h_{j(i)}$ commutes with $z$. Hence
$$m^G(g h g^{-1})=\varepsilon(g)\sum_{i=1}^k\varepsilon(g_{j(i)}) m(g_{j(i)}h_{j(i)}hh_{j(i)}^{-1}g^{-1}g_{j(i)})=$$
$$=\varepsilon(g)\sum_{i=1}^k\varepsilon(g_{j(i)}) m\left((g_{j(i)}h_{j(i)}g_{j(i)}^{-1})(g_{j(i)}hg_{j(i)}^{-1})(g_{j(i)}h_{j(i)}^{-1}g_{j(i)}^{-1})\right).$$
Now $g_{j(i)}h_{j(i)}^{-1}g_{j(i)}^{-1}\in E$, as $E$ is normal in $G$, and using that $m$ is invariant under conjugation by elements of $E$ we get that
$$m^G(g h g^{-1})=\varepsilon(g)\sum_{i=1}^k\varepsilon(g_{j(i)}) m\left(g_{j(i)}hg_{j(i)}^{-1}\right)=\varepsilon(g)m^G(h).$$

Now choose $C$ as above. Clearly $i_m$ is $1$-Lipschitz and coarsely surjective, so in order to show that it is a quasi-isometry we shall prove that there exists a coarsely Lipschitz retraction 
$$r_m\colon \Cay{G}{\tau_{\{m^G,C\}}\cup\{g_1,\ldots, g_k\}}\to \Cay{E}{\tau_{\{m^G,C\}}}.$$
For every $g\in G$, we can write it uniquely as $g=hg_i$ for some $h\in E$ and $g_i$ in the chosen collection of coset representatives. Then we set $r_m(g)=h$, which is the identity on $E$. 

Now suppose $g, g'\in G$ are adjacent in $\Cay{G}{\tau_{\{m^G,C\}}\cup\{g_1,\ldots, g_k\}}$. We can write $g=hg_i$ as above. If $g'=gg_j$ for some coset representative $g_j$ then, setting $g_ig_j=h_{ij}g_{l}$ for some $h_{ij}\in E$ and $g_j$ in the set of representatives, we have that $r_m(g)=h$ and $r_m(g')=hh_{ij}$ differ by $h_{ij}$, chosen in a finite subset of $E$, and in particular their distance is uniformly bounded.

If instead $g'=gh'$ for some $h'\in \tau_{\{m^G,C\}}$, then $g'=h(g_i h' g_i^{-1}) g_i$, so $r_m(g)$ and $r_m(g')$ differ by $g_i h' g_i^{-1}$, which still belongs to $\tau_{\{m^G,C\}}$ as the absolute value of $m^G$ is $G$-invariant.
\end{proof}

\subsection{Blowup materials}

\begin{defn}\label{defn:weak_hyp}
    Given a group $G$, a generating set $S$ for $G$, and a collection of subgroups $\{\Lambda_1,\ldots, \Lambda_k\}$, the \emph{coned-off Cayley graph} $\h G$ is the graph obtained from the Cayley graph $\Cay{G}{S}$ by adding a vertex for every coset of $\Lambda_i$, for $i=1,\ldots, k$, and declaring that the link of such vertex is the corresponding coset. 
    
    A finitely generated group $G$ is \emph{weakly hyperbolic} relative to a collection  of conjugacy classes of subgroups if, given any generating set $S$ for $G$, and any choice of a subgroup $H_i$ in each conjugacy class, the coned-off Cayley graph of $G$ with respect to the collection $\{H_i\}$ is hyperbolic.
\end{defn}

\begin{defn}
    Let $G$ be a finitely generated group, and let $\mathcal{H}$ be a collection of finitely generated subgroups. A \emph{compatible} generating set for $(G,\mathcal{H})$ is a finite generating set $\tau$ for $G$ such that, for every $H\in \mathcal{H}$, $\tau\cap H$ generates $H$. 
\end{defn}

\begin{notation}\label{notation:pre_def_squid}
In what follows, $G$ is a group acting on a simplicial graph $\ov X$ with finitely many orbits of edges. Fix $V=\{v_1,\ldots, v_k\}$ a set of representatives of the $G$-orbits of vertices in $\ov{X}$. For every $v_i\in V$ and every $g\in G$, set $P_{gv_i}=g\Stab{G}{v_i}$, which we call the \emph{product region} associated to $gv_i$ (with respect to this choice of orbit representatives). We always see $P_v$ as a metric subspace of $G$, where the latter is equipped with the word metric $\dist_G$ with respect to any generating set which is compatible with the $\Stab{G}{v_i}$. 

For every $v_i\in V$ fix a collection $\{h_i^1 v_{i(1)}, \ldots h_i^l v_{i(l)}\}$ of representatives of the $G$-orbits of vertices of $\link_{\ov X}(v_i)$, where every $h_i^j$ belongs to $G$. Whenever the dependence of some $h_i^j v_{i(j)}$ on $i$ and $j$ is irrelevant, we denote $h_i^j$ by $h$ and $v_{i(j)}$ by $v'$. This way, every $w\in\link_{\ov X}(v_i)$ can be expressed as $w=ghv'$, for some $g\in \Stab{G}{v_i}$. Later we shall also need the following constant: $$r\coloneq\max_{i,j}\left|h_i^j\right|=\max_{i,j}\left|(h_i^j)^{-1}\right|,$$ where $|\cdot|$ denotes the norm in the word metric $\dist_G$.
\end{notation}

\begin{defn}[Blowup materials]\label{defn:squid_material}
The following data define \emph{blowup materials} for a finitely generated group $G$:
    \begin{enumerate}
        \item\label{squid_material:graph} $G$ acts cocompactly on a simplicial graph $\ov{X}$, called the \emph{support graph}, which is triangle- and square-free, and such that no connected component of $\ov{X}$ is a single point.
        \item\label{squid_material:extensions}
        For every $v\in \ov X^{(0)}$, its stabiliser is an extension
        $$\begin{tikzcd}
        0\ar{r}&Z_v\ar{r}&\Stab{G}{v}\ar{r}{\p_v}&H_v\ar{r}&0
        \end{tikzcd}$$
        where $H_v$ is a finitely generated hyperbolic group and $Z_v$ is a cyclic, normal subgroup of $\Stab{G}{v}$ which acts trivially on $\link_{\ov X}(v)$. The family of such extensions is equivariant with respect to the $G$-action by conjugation.
        \item\label{squid_material:edge_stab} Whenever $e=\{v,w\}$ is an edge of $\ov{X}$, $\text{P}\Stab{G}{e}\coloneq\Stab{G}{v}\cap\Stab{G}{w}$ contains $\langle Z_v, Z_w\rangle $ as a finite index subgroup. Moreover, $\p_v(Z_w)$ is quasiconvex in $H_v$.
        \item\label{squid_material:big_papa} $G$ is weakly hyperbolic relative to $\{\Stab{G}{v_i}\}_{v_i\in V}$.
        \item\label{squid_material:quasimorphisms}
        For all $v_i\in V$ for which $Z_{v_i}$ is infinite, there exists a finite-index, normal subgroup $E_{v_i}$ of $\Stab{G}{v_i}$, containing $Z_{v_i}\cap E_{v_i}$ in its centre. Furthermore, there is a homogeneous quasimorphism
        $$\phi_{v_i}\colon E_{v_i}\to \mathbb{R},$$
        which is unbounded on $Z_{v_i}\cap E_{v_i}$ and trivial on $Z_w\cap E_{v_i}$ for every vertex $w\in\link_{\ov X}(v_i)$. If, instead, $Z_{v_i}$ is finite, we set $E_{v_i}=\Stab{G}{v_i}$ and $\phi_{v_i}\equiv 0$.
        \item\label{squid_material:gates}
        There exist a constant $B\ge 0$ and, for every $v_i\in V$, a coarsely Lipschitz, coarse retraction $$\gate_{v_i}\colon G\to 2^{E_{v_i}},$$
        which we call \emph{gate}. We require that, whenever $w\in\link_{\ov X}(v_i)$, $$\gate_{v_i}(P_w)\subseteq N_B(P_w).$$ Furthermore, whenever $\dist_{\ov X}(v_i, u)\ge 2$, there exist $g\in\Stab{G}{v_i}$, $h\in G$, and $v'\in V$, as in Notation~\ref{notation:pre_def_squid}, such that 
        $$\gate_{v_i}(P_u)\subseteq N_B(gZ_{hv'}).$$
        In particular, if one sets $w(u)=ghv'$, then
        $$\gate_{v_i}(P_u)\subseteq N_B(gh\Stab{G}{v'}h^{-1})\subseteq N_{B+r}(gh\Stab{G}{v'})=N_{B+r}(P_{w(u)}).$$
    \end{enumerate}
\end{defn}

\subsection{From blowup materials to a short HHG}\label{subsec:squid_to_short}
\begin{thm}\label{thm:squidification}
    Let $G$ be a finitely generate group admitting blowup materials, with support graph $\ov X$. Then $(G, \ov X, \W)$ is a short HHG, where the cyclic direction associated with each $v\in\ov{X}^{(0)}$ is (a finite-index subgroup of) $Z_v$.
\end{thm}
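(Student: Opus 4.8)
The plan is to build, out of the squid materials, a combinatorial HHS $(X,\W)$ on which $G$ acts as a combinatorial HHG in the sense of Theorem~\ref{thm:hhs_links}, and then to read off Axioms~\eqref{short_axiom:graph}, \eqref{short_axiom:extension} and~\eqref{short_axiom:cobounded}. The first ingredient is the family of bottom-level quasilines. For each orbit representative $v_i\in V$ with $Z_{v_i}$ infinite, I would feed the homogeneous quasimorphism $\phi_{v_i}\colon E_{v_i}\to\R$ of~\eqref{squid_material:quasimorphisms} into Lemma~\ref{lem:quasiline_extension}: this produces a generating set $\tau_{v_i}$ of $\Stab{G}{v_i}$, which we may enlarge so that it contains the fixed finite generating set, such that $\C\ell_{v_i}:=\Cay{\Stab{G}{v_i}}{\tau_{v_i}}$ is a quasiline on which a finite-index subgroup of $Z_{v_i}$ — hence all of $Z_{v_i}$ — acts loxodromically and cocompactly, while by Remark~\ref{rem:mG_trivial_on_adjacent_Zw} and the vanishing hypothesis on $\phi_{v_i}$ every $Z_w$ with $w\in\link_{\ov X}(v_i)$ acts with uniformly bounded orbits. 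When $Z_{v_i}$ is finite I would instead let $\C\ell_{v_i}$ be a point. Transporting this data $G$-equivariantly produces a quasiline $\C\ell_v$ for every $v\in\ov X^{(0)}$.

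Next I would let $X$ be the blowup of $\ov X$ (Definition~\ref{defn:blowup}) in which the graph attached to a vertex $v$ is the discrete graph $L_v$ with vertex set $G$-equivariantly identified with $\Stab{G}{v}$ (a single vertex when $Z_v$ is finite), so that a maximal simplex of $X$ is the datum of an $\ov X$-edge $\{v,w\}$ together with points of $L_v$ and $L_w$; and I would define the $X$-graph $\W$ by declaring two maximal simplices adjacent when their supports are uniformly close in the cone-off graph $\h G$ of Definition~\ref{defn:weak_hyp} and, for every vertex they have in common, the corresponding points are uniformly close in the relevant $\C\ell$, the thresholds being tuned so that fullness of links (Definition~\ref{defn:combinatorial_HHS}.\eqref{item:C_0=C}) holds; $\W$ is then $G$-invariant by construction. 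To verify $(X,\W)$ is a combinatorial HHS I would invoke Corollary~\ref{cor:bounded_links} (applicable by~\eqref{squid_material:graph}): it classifies all links of simplices, gives finite complexity~\eqref{item:chhs_flag}, and shows that the only domains with possibly unbounded coordinate space are $S$, the $\U_v$ and the $\ell_v$. Unwinding Definitions~\ref{defn:complement} and~\ref{defn:projections} one then checks that $\C(\ell_v)$ is $\Stab{G}{v}$-equivariantly quasi-isometric to $\C\ell_v$, hence a quasiline; that $\C\U_v$ is $\Stab{G}{v}$-equivariantly quasi-isometric to $\link_{\ov X}(v)^{+\W}$, which is quasi-isometric to a Cayley graph of $H_v$ with the cosets of the subgroups $\p_v(Z_w)$ coned off ($w$ ranging over $\Stab{G}{v}$-orbit representatives in $\link_{\ov X}(v)$), hence hyperbolic since those subgroups are quasiconvex by~\eqref{squid_material:edge_stab}; and that $\C S$ is $G$-equivariantly quasi-isometric to $\h G$, hyperbolic by~\eqref{squid_material:big_papa}. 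The quasi-isometric embeddings into the spaces $Y_\Delta$, the join axiom~\eqref{item:chhs_join} and the existence of containers would be deduced from the weak simplicial containers of Lemma~\ref{lem:simplicial_cont_for_squid} together with the fact that a non-trivial join has uniformly bounded coordinate space.

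To upgrade to a combinatorial HHG, the action of $G$ on $X$ has finitely many orbits of links (cocompactness on $\ov X$ plus $G$-equivariance of the squid structure) and extends to the promised $G$-action on $\W$ by construction; coboundedness of the latter reduces, exactly as in the proof of Lemma~\ref{lem:cobounded_stab_for_short}, to coboundedness of $\Stab{G}{v}\cap\Stab{G}{w}$ on the set of maximal simplices supported on an edge $\{v,w\}$, which holds because this group contains $\langle Z_v,Z_w\rangle$ with finite index and, in the relevant metric, $Z_v$ and $Z_w$ act cocompactly on $\C\ell_v$ and $\C\ell_w$ respectively with bounded orbits on the other. Metric properness of the $\W$-action is where the gates of~\eqref{squid_material:gates} enter: if $g\in G$ moves a fixed base maximal simplex a bounded $\W$-distance then it moves boundedly in $\C S\simeq\h G$, so it lies uniformly close to some coset of a $\Stab{G}{v_i}$; applying $\gate_{v_i}$ and the estimates in~\eqref{squid_material:gates} then confines $g$ to a uniform neighbourhood of that coset, reducing the claim to properness of $\Stab{G}{v_i}$ acting on its product region $P_{\ell_{v_i}}$, which holds by~\eqref{squid_material:extensions} and Lemma~\ref{lem:abo} exactly as in the proof of Claim~\ref{claim:quasiaction_geometric_H_V}. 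Once $G$ is a combinatorial HHG with this structure, Axiom~\eqref{short_axiom:graph} is~\eqref{squid_material:graph} together with the $G$-equivariant embedding $\ov X\hookrightarrow X$, Axiom~\eqref{short_axiom:extension} is~\eqref{squid_material:extensions} (with the cyclic direction $Z_v$, or the finite-index subgroup of it used in the quasiline construction), and Axiom~\eqref{short_axiom:cobounded} records what was established about the quasilines above: $Z_v$ acts loxodromically, hence geometrically, on the quasiline $\C\ell_v$, and with uniformly bounded orbits on $\C\ell_w$ for each $w\in\link_{\ov X}(v)$.

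The main obstacle is the interlocking design of $\W$ and the verification of the combinatorial-HHS axioms: $\W$ must be tight enough for fullness of links and the join axiom to hold, yet loose enough that $\C(\ell_v)$, $\C\U_v$ and $\C S$ come out quasi-isometric to the intended quasilines and cone-off graphs. In particular, proving that $\link_{\ov X}(v)^{+\W}$ and $\ov X^{+\W}$ are quasi-isometric to the respective cone-offs, and that the canonical combinatorial-HHS projections $\pi_{[\Delta]}$ agree coarsely with the group-theoretic gates $\gate_{v_i}$ (which is precisely why the gates are part of the squid materials, and is what makes the $G$-action metrically proper), are the steps that require genuine work. Everything else is bookkeeping within the combinatorial-HHS formalism and the identifications of stabilisers and product regions recorded in Section~\ref{sec:short}.
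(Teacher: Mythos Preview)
Your overall architecture matches the paper's: build $X$ as a blowup of $\ov X$ by the quasilines coming from Lemma~\ref{lem:quasiline_extension}, define a suitable $X$-graph $\W$, verify the combinatorial HHS axioms, and then read off the short axioms. But you misplace the role of the gates from \eqref{squid_material:gates}. In the paper they are \emph{not} used to establish metric properness of the $G$-action on $\W$; rather, they are the key input in verifying Definition~\ref{defn:combinatorial_HHS}.\eqref{item:chhs_delta}, namely that $\C(\Delta)$ is quasi-isometrically embedded in $Y_\Delta$ for $\Delta$ of edge- and triangle-type. One builds explicit coarse Lipschitz retractions $Y_\Delta\to \C(\Delta)$ by sending a squid over $u$ to $\gate_v(P_u)$ (postcomposed with $\Proj_w$ in the triangle case), and the hypotheses on $\gate_v(P_u)$ in \eqref{squid_material:gates} are exactly what make these retractions well-defined and Lipschitz. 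Your claim that the qi-embedding follows from weak simplicial containers together with bounded joins is not right: Lemma~\ref{lem:simplicial_cont_for_squid} only controls $\link(\link(\Delta))$, which says nothing about the metric embedding of $\C(\Delta)$ into $Y_\Delta$ when the latter is large.

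Conversely, the paper obtains geometricness of the $G$-action on $\W$ in one stroke: it defines a $G$-equivariant \emph{realisation map} $f\colon \W\to G$ (sending a maximal simplex to the intersection of thickened level-sets of the two projections $\Proj_v,\Proj_w$), shows $f$ is a quasi-isometry, and then both properness and coboundedness are immediate. This also dictates the definition of $\W$-edges (close realisations in $G$, plus ``staple'' edges), which is not quite your ``supports close in $\h G$ and shared coordinates close in $\C\ell$''; in particular your coboundedness and properness arguments, as sketched, rely on product-region machinery that is not yet available at this point of the construction. So the gap is concrete: you need the gates to build the retractions for the qi-embedding axiom, and you need some form of realisation map (or an equivalent device) to get the geometric action.
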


\begin{proof}[Outline of the proof] In Section~\ref{subsec:construction_XW}, see in particular Definitions~\ref{defn:blowup_for_squidification} and~\ref{defn:W-edges}, we construct a pair $(X,\W)$, which we then prove to be a combinatorial HHG. Here is where each axiom from Definition~\ref{defn:combinatorial_HHS} is verified;
    \begin{itemize}
        \item Definition~\ref{defn:combinatorial_HHS}.\eqref{item:chhs_flag} is Corollary~\ref{cor:finite_complexity_HHS_blow_up};
        \item Definition~\ref{defn:combinatorial_HHS}.\eqref{item:chhs_delta} is split between Subsections~\ref{subsec_link_hyp} and~\ref{subsec:link_qi_emb};
        \item Definition~\ref{defn:combinatorial_HHS}.\eqref{item:chhs_join} is Lemma~\ref{lem:simplicial_wedge_property};
        \item Definition~\ref{defn:combinatorial_HHS}.\eqref{item:C_0=C} is Lemma~\ref{lem:edges_in_link}.
    \end{itemize}
    In Remarks~\ref{rem:G_action_on_X} and~\ref{rem:G_action_on_W} we check that $G$ acts on $X$ with finitely many orbits of links, and the action extends to $\W$. Moreover, in Definition~\ref{defn:realisation} we construct a $G$-equivariant map $f\colon \W\to G$, which we prove to be a quasi-isometry in Lemma~\ref{lem:f_qi}. This in particular implies that $G$ acts geometrically on $\W$, and therefore it is a combinatorial HHG. Finally, in Subsection~\ref{subsec:check_short_for_squid} we check that the $G$-action on $(X,\W)$ satisfies the axioms of a short HHG.
\end{proof}

\subsubsection{The candidate combinatorial structure}\label{subsec:construction_XW}

\begin{rem}
It is easy to check that, if we replace each $Z_v$ with a subgroup $Z_v'$ of finite index (in a $G$-equivariant way), then the new extensions
$$\begin{tikzcd}
0\ar{r}&Z_v\ar{r}&\Stab{G}{v}\ar{r}&\Stab{G}{v}/Z_v'\ar{r}&0
\end{tikzcd}$$
still satisfy all properties of Definition~\ref{defn:squid_material}. Thus, as we stated Theorem~\ref{thm:squidification} to allow this kind of replacements, we can (and will) assume that: 
\begin{itemize}
\item every finite $Z_v$ is trivial;
\item $Z_v\le E_u$ for every $u\in\operatorname{Star}(v)$; in particular, $Z_v$ and $Z_w$ commute whenever $v,w$ are adjacent in $\ov X$.
\end{itemize}
\end{rem}

\begin{defn}[Quasilines]\label{defn:quasilines_from_quasimorph_for_squid} 
For every $v_i\in V$, if $Z_{v_i}$ is trivial let $$L_{v_i}=\Cay{\Stab{G}{v_i}}{\Stab{G}{v_i}},$$ that is, the complete graph on $\Stab{G}{v_i}$. If instead $Z_{v_i}$ is infinite, we can apply Lemma~\ref{lem:quasiline_extension} to $\langle z\rangle=Z_{v_i}$, $E=E_{v_i}$, $G=\Stab{G}{v_i}$, $m=\phi_{v_i}$, and a suitable choice of the constant $C$.  This yields a generating set $\lambda_i$ for $\Stab{G}{v_i}$ such that $$L_{v_i}\coloneq \Cay{\Stab{G}{v_i}}{\lambda_i}$$ is a quasiline, on which $Z_{v_i}$ acts geometrically.

For every $g\in G$ and every $v=gv_i$, let $L_v=gL_{v_i}$ be a parallel copy of $L_{v_i}$, on which $\Stab{G}{v}=g\Stab{G}{v_i}g^{-1}$ acts. 
\end{defn}

\begin{rem}[Bounded orbits on $L_v$]\label{rem:bounded_orbits_Lv}
    Let $v_i\in V$ have infinite cyclic direction. Notice that, for every $g\in \Stab{G}{v_i}$ and every $w\in\link_{\ov X}(v_i)$, $gZ_wg^{-1}=Z_{gw}$, on which $\phi_{v_i}$ is trivial. Then by Remark~\ref{rem:mG_trivial_on_adjacent_Zw} each $Z_w$ lies in the generating set $\lambda_i$. In turn, this means that every $Z_w$-orbit has diameter $1$ in $L_{v_i}$ (to see this, notice that, if $z\in Z_w$ and $x\in \Stab{G}{v_i}$, then $|\phi^G_{v_i}(x^{-1}zx)|=|\phi^G(z)|=0$, so in particular $\dist_{L_{v_i}}(x,zx)=\dist_{L_{v_i}}(1,x^{-1}zx)=1$). 

Furthermore, recall that, whenever $\dist_{\ov{X}}(v_i,u)\ge 2$, the projection $\gate_{v_i}(P_u)$ is contained in the $B$-neighbourhood of some $Z_{w(u)}$-orbit, where $w(u)$ is some vertex in the link of $v_i$ and $B$ is the constant from Definition~\ref{defn:squid_material}.\eqref{squid_material:gates}. Hence $\gate_{v_i}(P_u)$ has diameter at most $2B+1$ in $L_v$.
\end{rem}

\begin{rem}\label{rem:Pv_to_Lv}
For every $v=gv_i\in \ov{X}^{(0)}$, the word metric $\dist_{g\Stab{G}{v_i}}$ on the coset $P_v=g\Stab{G}{v_i}$, coming from the chosen generating set for $\Stab{G}{v_i}$, is quasi-isometric to the inherited distance $\dist_G$, as $G$ has a coarsely Lipschitz, coarse retraction on $P_v$. 

Furthermore, consider the $\Stab{G}{v}$-equivariant projection $$\Proj_v\colon P_v\to L_v,$$ induced by the identity on the elements of $g\Stab{G}{v_i}$. This map is coarsely Lipschitz with respect to $\dist_{g\Stab{G}{v_i}}$, as $L_{v_i}=\Cay{\Stab{G}{v_i}}{\lambda_i}$ and $\lambda_i$ contains a finite generating set (up to choosing a larger $C$ in the definition of $L_{v_i}$); therefore $\Proj_v$ is also coarsely Lipschitz when $P_v$ is equipped with $\dist_G$. 
\end{rem}

\begin{defn}\label{defn:blowup_for_squidification} Let $X$ be the blowup of $\ov{X}$ with respect to the family $\{L_v\}_{v\in\ov{X}^{(0)}}$, as in Definition~\ref{defn:blowup}.
\end{defn}

Let $p\colon X\to \ov X$ be the retraction mapping every cone to its tip. Recall that Lemmas~\ref{lem:decomposition_of_links} and~\ref{cor:bounded_links} give descriptions of the possible shapes of a simplex $\Delta$ and its link, in terms of its support $\ov \Delta=p(\Delta)$ and the intersections with the cones $\Delta_v=\Delta\cap\Squid(v)$.

\begin{rem}[$G$-action on $X$]\label{rem:G_action_on_X}
    Notice that the $G$-action on $\ov X$ extends to an action on $X$ by simplicial automorphisms. Indeed, if $v=gv_i$ then $L_v$ is a Cayley graph of $g\Stab{G}{v_i}$, and one can set $h\cdot x=hx\in L_{hv}$ for every $h\in G$ and $x\in L_v$. 

    Furthermore, by Lemma~\ref{lem:decomposition_of_links} the link of a simplex $\Delta$ of $X$ is uniquely determined by the support $\ov \Delta$ and by the subcomplexes $\link_{\Squid(v)}(\Delta_v)$ for every $v\in \ov \Delta$. Since there are finitely many $G$-orbits of supports (as $G$ acts cocompactly on $\ov{X}$), and since the “moreover” part of Lemma~\ref{lem:decomposition_of_links} tells us that there are only three possibilities for $\link_{\Squid(v)}(\Delta_v)$, we get that the $G$-action on $X$ has finitely many orbits of subcomplexes of the form $\link(\Delta)$.
\end{rem}

\begin{defn}[Coarse level sets]
    Fix a constant $R\ge 0$. For every $x\in L_v$, let 
$$N(x)=N_{R}\left(\left\{g\in P_v\,|\,\dist_{L_v}(x, \Proj_{v}(g))\le R\right\}\right).$$
In other words, $N(x)$ is a thickening (in $G$) of a “level set” of the projection $\Proj_{v}$.
\end{defn}

\begin{defn}[Realisation]\label{defn:realisation}
Given a maximal simplex $\Delta=\Delta(x,y)$, we define its \emph{realisation} as
$$f(\Delta)=N(x)\cap N(y).$$
Notice that, by construction, for every $g\in G$ and every $\Delta(x,y)$, we have that 
$$gf(\Delta(x,y))=f(\Delta(gx,gy)),$$
i.e. realisation is defined in a $G$-equivariant way. \end{defn}

\begin{lemma}\label{lem:realisation_exists} There exists $R_0\in\mathbb{R}$, depending only on the blowup materials, such that the following holds if $R> R_0$. 

For every maximal simplex $\Delta$, supported on the edge $e=\{v,w\}$ of $\ov{X}$, its realisation $f(\Delta)$ is non-empty, and bounded in terms of $R$. Moreover, the coarse map $$f\colon L_v\times L_v\to G,$$ sending $(x,y)$ to $f(\Delta(x,y))$, is a $\text{P}\Stab{G}{e}$-equivariant quasi-isometric embedding whose constants depend on $R$, and the Hausdorff distance between its image and the \emph{edge product region} $P_e:=N_R(P_v)\cap N_R(P_w)$ is bounded in terms of $R$.
\end{lemma}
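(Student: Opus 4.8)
The plan is to show that $f$ agrees, up to bounded error, with a quasi-isometry $\Psi\colon L_v\times L_w\to \text{P}\Stab{G}{e}$ obtained from the Milnor--\v{S}varc lemma, and to deduce every assertion from this. Since realisation is $G$-equivariant (Definition~\ref{defn:realisation}), I may translate the edge $e=\{v,w\}$ so that $v\in V$, hence $P_v=\Stab{G}{v}$, and then translate by an element of $\Stab{G}{v}$ so that $P_w=h\Stab{G}{v'}$ with $v'\in V$ and $|h|\le r$ (Notation~\ref{notation:pre_def_squid}); left-invariance of $\dist_G$ then gives $\text{P}\Stab{G}{e}\coloneq\Stab{G}{v}\cap\Stab{G}{w}\subseteq N_r(P_v)\cap N_r(P_w)\subseteq P_e$ whenever $R\ge r$. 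Next I claim that $\text{P}\Stab{G}{e}$, with a word metric, acts geometrically on $L_v\times L_w$: by axiom~\eqref{squid_material:edge_stab} the subgroup $\langle Z_v,Z_w\rangle$ has finite index in $\text{P}\Stab{G}{e}$; $Z_v$ acts geometrically on $L_v$ (Definition~\ref{defn:quasilines_from_quasimorph_for_squid}) and with diameter-one orbits on $L_w$ (Remark~\ref{rem:bounded_orbits_Lv}), and symmetrically for $Z_w$; since an infinite subgroup of $Z_v$ cannot act with bounded orbits on the quasiline $L_v$, we get $Z_v\cap Z_w=\{1\}$, so $\langle Z_v,Z_w\rangle=Z_v\times Z_w$ acts geometrically on the product. (When $Z_v$ or $Z_w$ is trivial the corresponding factor is bounded and the discussion only simplifies.) By Milnor--\v{S}varc, any orbit map $\Phi\colon\text{P}\Stab{G}{e}\to L_v\times L_w$ is a quasi-isometry; fix a coarse inverse $\Psi$.

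The main point --- and, I expect, the main obstacle --- is the coarse-geometric fact
\[
\text{for all }C\ge 0\text{ there is }D=D(C)\text{ with } N_C(P_v)\cap N_C(P_w)\subseteq N_D\bigl(\text{P}\Stab{G}{e}\bigr).\qquad(\star)
\]
To establish $(\star)$ I would pass to the cone-off graph $\h G$ of $G$ relative to $\{\Stab{G}{v_i}\}_{v_i\in V}$ (Definition~\ref{defn:weak_hyp}), which is hyperbolic by axiom~\eqref{squid_material:big_papa} and in which the peripheral cosets $P_v,P_w$ are coarsely bounded. A point of $N_C(P_v)\cap N_C(P_w)$ then lies on a peripheral coset $Q$ that is $\dist_G$-boundedly close to $P_w$ and that meets $P_v$; unwinding, $Q\cap P_v$ is, up to bounded error, a left coset of $\text{P}\Stab{G}{e}$ inside $\Stab{G}{v}$, and one must control the $\dist_G$-distance from our point to it. This is where axiom~\eqref{squid_material:edge_stab} --- the quasiconvexity of $\p_v(Z_w)$ in the hyperbolic group $H_v$ --- together with the gate property~\eqref{squid_material:gates} for link vertices enters, constraining how the direction of $Q$ (that is, of $Z_w$) can lie inside $\Stab{G}{v}$ and so keeping the coarse intersection within $\text{P}\Stab{G}{e}$. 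Granting $(\star)$, the set $P_e=N_R(P_v)\cap N_R(P_w)$ is within Hausdorff distance $D(R)$ of $\text{P}\Stab{G}{e}$, hence quasi-isometric to $L_v\times L_w$.

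It remains to harvest the statement. First, $\mathrm{im}(f)=P_e$ exactly: taking $x=\Proj_v(p)$ shows each $p\in P_v$ lies in the level set $\{g\in P_v:\dist_{L_v}(x,\Proj_v(g))\le R\}$, so the union of the $N(x)$ over $x\in L_v$ is $N_R(P_v)$, and likewise $\bigcup_y N(y)=N_R(P_w)$, whence $\mathrm{im}(f)=\bigcup_{x,y}(N(x)\cap N(y))=N_R(P_v)\cap N_R(P_w)=P_e$; in particular the Hausdorff distance to $P_e$ is zero. Next, once $R$ exceeds the additive constant of $\Phi$, the point $\Psi(x,y)$ projects $R$-close to $x$ in $L_v$ and (up to the bounded shift implicit in $\Phi$, which is absorbed by the $R$-thickening defining $N(y)$) $R$-close to $y$ in $L_w$, so $\Psi(x,y)\in N(x)\cap N(y)=f(\Delta(x,y))\neq\emptyset$. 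For boundedness, let $g\in f(\Delta(x,y))$: then $g$ lies $R$-close to a point of $P_v$ with $\Proj_v$-image in the $R$-ball about $x$ and $R$-close to a point of $P_w$ with $\Proj_w$-image in the $R$-ball about $y$, so $g\in P_e$, and by $(\star)$ there is $t\in\text{P}\Stab{G}{e}$ with $\dist_G(g,t)\le D(R)$; since $\Proj_v,\Proj_w$ are coarsely Lipschitz on $P_v,P_w$ (Remark~\ref{rem:Pv_to_Lv}), $\Phi(t)$ lies within a constant depending only on $R$ of $(x,y)$, so $t$, and hence $g$, is uniformly close to $\Psi(x,y)$. Thus $f(\Delta(x,y))$ has diameter bounded in terms of $R$, and $(x,y)\mapsto f(\Delta(x,y))$ is within bounded Hausdorff distance of $\Psi$. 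That $\Psi$ is a quasi-isometric embedding of $L_v\times L_w$ into $(G,\dist_G)$ is clear: the subgroup word metric of $\text{P}\Stab{G}{e}$ dominates $\dist_G$ and $\Psi$ is a quasi-isometry onto $\text{P}\Stab{G}{e}$ (upper bound), while post-composing $\Psi$ with the coarsely Lipschitz maps $\Proj_v,\Proj_w$ and using $\dist_{L_v\times L_w}\le 2\max\{\dist_{L_v},\dist_{L_w}\}$ yields the lower bound. Hence $f$ is one too, and $\text{P}\Stab{G}{e}$-equivariance of $f$ is immediate from Definition~\ref{defn:realisation}. Taking $R_0$ to be the maximum of $r$ and the additive constant of $\Phi$ completes the plan.
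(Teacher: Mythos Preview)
Your overall strategy matches the paper's: reduce to showing that $P_e$ coarsely coincides with $\text{P}\Stab{G}{e}$ (equivalently with $\langle Z_v,Z_w\rangle$), that the latter is quasi-isometric to $L_v\times L_w$ via its geometric action, and that $f$ agrees with this quasi-isometry up to bounded error. Granting $(\star)$, the remainder of your argument is correct and tracks the paper closely; your observation that $\mathrm{im}(f)=P_e$ exactly is a clean shortcut the paper does not make explicit.

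The genuine gap is your justification of $(\star)$. The sketch you give --- passing to the cone-off graph $\h G$, locating an auxiliary peripheral coset $Q$, and then invoking the quasiconvexity of $\p_v(Z_w)$ in axiom~\eqref{squid_material:edge_stab} together with the gate property~\eqref{squid_material:gates} --- is vague and does not assemble into a proof: the sentence ``a point of $N_C(P_v)\cap N_C(P_w)$ then lies on a peripheral coset $Q$\ldots'' does not isolate any useful $Q$, and hyperbolicity of $\h G$ gives no direct control on $\dist_G$-neighbourhoods. In fact none of these special axioms are needed. The paper establishes $(\star)$ in one line by citing \cite[Lemma~4.5]{Hruska_Wise}, a general fact about coarse intersections of cosets of subgroups in a finitely generated group: it gives $N_R(\Stab{G}{v})\cap N_R(h\Stab{G}{v'})\subseteq N_{R'}\bigl(\Stab{G}{v}\cap h\Stab{G}{v'}h^{-1}\bigr)=N_{R'}\bigl(\text{P}\Stab{G}{e}\bigr)$ for some $R'$ depending only on $R$ and the finitely many pairs $(v_i,h_i^jv_{i(j)})$, which is exactly your $(\star)$. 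Replace your paragraph on $(\star)$ with this citation and the proof is complete.
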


\begin{proof}
All the coarse level sets and maps involved in the statement are defined in a $G$-equivariant way. Thus, without loss of generality, we can assume that $v=v_i\in V$ and $w=h v'$ as in Notation~\ref{notation:pre_def_squid}. First, we show that the edge product region coarsely coincides with $\langle Z_v, Z_w\rangle$:

\begin{claim}\label{claim:Pe_coarsely_square} Let $R$ be any constant greater than $r$. There exists $C_0\ge 0$, depending only on $R$ and $G$, such that $P_e$ is within Hausdorff distance at most $C_0$ from $\langle Z_v, Z_w\rangle$.
\end{claim}

\begin{claimproof}[Proof of Claim~\ref{claim:Pe_coarsely_square}]
Since $r\ge |h^{-1}|$, we have that 
$$P_e=N_R(P_v)\cap N_R(P_w)=N_R\left(\Stab{G}{v}\right)\cap N_R\left(h\Stab{G}{v'}\right) \supseteq $$
$$\supseteq\Stab{G}{v}\cap \left(h\Stab{G}{v'} h^{-1}\right)=\Stab{G}{v}\cap \Stab{G}{w},$$
where we used that $R\ge r\ge |h^{-1}|$. Conversely, by \cite[Lemma 4.5]{Hruska_Wise} there exists a constant $R'$, depending only on the generating set for $G$, $R$, and the finitely many choices for $v=v_i$ and $w=h_i^jv_{i(j)}$, such that 
$$P_e\subseteq N_{R'}\left(\Stab{G}{v}\cap h\Stab{G}{v'}h^{-1}\right)=N_{R'}\left(\Stab{G}{v}\cap \Stab{G}{w}\right).$$
This proves that $P_e$ and  $\Stab{G}{v}\cap \Stab{G}{w}$ are within Hausdorff distance at most $R'$, depending on $R$ and $G$. Moreover, $\Stab{G}{v}\cap \Stab{G}{w}$ contains $\langle Z_v,Z_w\rangle$ as a subgroup of finite index, and thus the Hausdorff distance between $P_e$ and $\langle Z_v,Z_w\rangle$ is bounded by some constant $C_0$, depending on $G$ and $R$.
\end{claimproof}

Now we turn to the proof of Lemma~\ref{lem:realisation_exists}. Let $\Delta=\Delta(x,y)$, and let $1\in G$ be the identity element. As $1\in P_v=\Stab{G}{v}$ and $h\in P_w= h\Stab{G}{v'}$, we can define
$$\Phi\colon \langle Z_v, Z_w\rangle \to L_v\times L_w$$
by mapping $g\in \langle Z_v, Z_w\rangle$ to 
$$( g\Proj_v(1), g\Proj_w(h))= (\Proj_{gv}(g), \Proj_{gw}(gh))=(\Proj_v(g), \Proj_w(gh)),$$
where we used that $\langle Z_v, Z_w\rangle\le \Stab{G}{v}\cap \Stab{G}{w}$.
This map is a quasi-isometry, as $\langle Z_v, Z_w\rangle $ acts geometrically on $ L_v\times L_w$ (in turn, this is because each cyclic direction acts geometrically on the associated quasiline and with uniformly bounded orbits on the other). Furthermore, the constants of the quasi-isometry only depend on the blowup materials, and more precisely on the constants of the homogeneous quasimorphisms we used to define the projections. We can then find a constant $R_0\ge r$, again independent on $R$, such that $\Phi$ has $R_0$-dense image. Thus, if $R> R_0$ and if $I_R(x,y)$ denotes the product of the ball of radius $R$ around $x$ in $L_v$ and the ball of radius $R$ around $y$ in $L_w$, we have that $\Phi^{-1}(I_R(x,y)\cap\text{im}\Phi)$ is a non-empty subset of  $\langle Z_v, Z_w\rangle$. Moreover, by construction, every $g\in \Phi^{-1}(I_R(x,y)\cap\text{im}\Phi)$ belongs to $f(\Delta(x,y))$. Indeed, $g\in N(x)$, as $g\in P_v$ projects $R$-close to $x$ in $L_v$; furthermore, $g\in N(y)$ since it is at distance $r\le R$ from $gh\in P_w$ which projects $R$-close to $y$. This proves that $f(\Delta(x,y))$ is non-empty.

\par\medskip
    Next, we argue that $f(\Delta(x,y))$ is bounded in terms of $R$. Pick any $k\in f(\Delta(x,y))$. Since $f(\Delta)\subseteq P_e$, there is some $g\in \langle Z_v, Z_w\rangle\le P_v$ such that $\dist_G(k,g)\le C_0$. Moreover, let $a\in P_v$ such that $\dist_G(k,a)\le R$ and $\dist_{L_v}(\Proj_v(a), x)\le R$. Now, both $g$ and $a$ belong to $P_v$, and we have that $\dist_G(g,a)\le C_0+R$. By Remark~\ref{rem:Pv_to_Lv}, their projections to $L_v$ are uniformly close in terms of $R$, and therefore the distance between $\Proj_v(g)$ and $x$ is bounded in terms of $R$. The same is true for $\Proj_w(gh)$ and $y$: there is some $b\in P_w$ such that $\dist_G(k,b)\le R$ and $\dist_{L_v}(\Proj_w(b), y)\le R$, and then one uses that $gh\in P_w$ and $\dist_{G}(gh,b)\le r+C_0+R$. Combining the two facts, we get that $g$ belongs to the bounded set $\Phi^{-1}(I_{C_1}(x,y))$, for some radius $C_1$ depending on $R$. Then in turn $f(\Delta(x,y))$ lies in the $C_0$-neighbourhood of $\Phi^{-1}(I_{C_1}(x,y)I_{C_1}(x,y))$, and is therefore uniformly bounded.

\par\medskip
    Regarding the “moreover” part of the Lemma, we first construct a map $$f'\colon L_v\times L_w\to \langle Z_v,Z_w\rangle$$ by setting $f'(x,y)= f(\Delta(x,y))\cap  \langle Z_v,Z_w\rangle$. As a consequence of the arguments above, $f'$ is a well-defined coarse map, and it coincides with $f$ up to a uniform error. Then the conclusion follows if we show that $f'$ is a quasi-inverse for $\Phi$. Indeed, we noticed above that, for every $g\in \langle Z_v, Z_w\rangle$, $$f'(\Phi(g))=f(\Delta(\Proj_v(g), \Proj_w(gh)))\cap \langle Z_v,Z_w\rangle$$ is a uniformly bounded set containing $g$. Conversely, $\Phi(f'(x,y))$ uniformly coarsely coincides with $(\Proj_v(g),\Proj_w(gh))$ for any $g\in f(\Delta(x,y))\cap \langle Z_v,Z_w\rangle$. But then the distance between $\Proj_v(g)$ and $x$ (resp. $\Proj_w(gh)$ and $y$) is bounded in terms of $R$.
\end{proof}

Before proceeding, let us point out some easy consequences of the arguments in the above proof:
\begin{cor}\label{cor:g_in_edge_prod_region} The following holds if $R\ge R_0$. For every $v\in \ov{X}^{(0)}$ and every $g\in P_v$ there exists $w\in \link_{\ov X}(v)$ such that $g\in N_R(P_v)\cap N_R(P_w)$. 
\end{cor}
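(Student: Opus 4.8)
The plan is to deduce the statement directly from the $G$-equivariance of the assignment $v\mapsto P_v$ (Notation~\ref{notation:pre_def_squid}), together with the bound $R_0\ge r$ obtained in the course of proving Lemma~\ref{lem:realisation_exists}; no further input is needed, so this is genuinely an ``easy consequence'' of the arguments above.

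First I would fix $v\in\ov X^{(0)}$ and $g\in P_v$, and let $v_i\in V$ be the representative of the $G$-orbit of $v$. Unwinding the definition $P_v=\bar g\Stab{G}{v_i}$ (for any $\bar g\in G$ with $\bar gv_i=v$), membership $g\in P_v$ is equivalent to $gv_i=v$, that is, to $g^{-1}v=v_i$. The containment $g\in N_R(P_v)$ is then immediate since $g\in P_v$ and $R\ge0$, so the whole content is to produce a suitable neighbour $w$ of $v$.

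Since no connected component of $\ov X$ is a point, $\link_{\ov X}(v_i)\neq\emptyset$, so I may choose one of its $G$-orbit representatives $w_0=hv'$ with $v'\in V$ and $h=h_i^j$ as in Notation~\ref{notation:pre_def_squid}, so in particular $|h|\le r$. Set $w:=gw_0$. Because $g$ acts on $\ov X$ by simplicial automorphisms and $g^{-1}w=w_0\in\link_{\ov X}(v_i)=\link_{\ov X}(g^{-1}v)$, we get $w\in\link_{\ov X}(v)$, as required. Moreover $w=ghv'$, so the $G$-equivariance of the product-region assignment gives $P_w=gh\Stab{G}{v'}=gP_{w_0}$; hence, using left-invariance of $\dist_G$ and $h\in h\Stab{G}{v'}=P_{w_0}$,
$$\dist_G(g,P_w)=\dist_G\bigl(g,gP_{w_0}\bigr)=\dist_G\bigl(1,P_{w_0}\bigr)\le|h|\le r\le R_0\le R .$$
Thus $g\in N_R(P_w)$, and combined with $g\in N_R(P_v)$ this proves the corollary.

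I do not expect a real obstacle here: the only subtlety is bookkeeping with cosets — $P_v$ is a coset rather than a subgroup unless $v\in V$ — and making sure that the candidate vertex $w=gw_0$ genuinely lies in $\link_{\ov X}(v)$ rather than merely in $\link_{\ov X}(gv_i)$; the identity $g^{-1}v=v_i$ is exactly what reconciles the two.
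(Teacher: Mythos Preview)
Your proof is correct and follows essentially the same approach as the paper's: both pick a representative $hv'\in\link_{\ov X}(v_i)$ and set $w=ghv'$, then use $|h|\le r\le R_0\le R$ to conclude $g\in N_R(P_w)$. The only cosmetic difference is that the paper first reduces to $v=v_i\in V$ via the $G$-action (so $P_v=\Stab{G}{v_i}$ and $g\in\Stab{G}{v_i}$), whereas you handle general $v$ directly through the observation $g\in P_v\iff gv_i=v$; the resulting computations are the same.
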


\begin{proof}
    Again, up to the $G$-action it is enough to prove the Corollary for $v\in V$, so that $P_v=\Stab{G}{v}$. Choose any $hv'$ in the set of representatives of all $G$-orbits of vertices of $\link_{\ov X}(v)$, as in Notation~\ref{notation:pre_def_squid}. Then
    $$g\in g\left(\Stab{G}{v}\cap h\Stab{G}{v'}h^{-1}\right)\subseteq  \Stab{G}{v}\cap N_R\left(gh\Stab{G}{v'}\right)=P_v\cap N_R(P_{ghv'}),$$
    where we used that we chose $R\ge r\ge |h^{-1}|$.
    Hence, it suffices to set $w=gh v'$.
\end{proof}

\begin{cor}\label{cor:close_regions} The following holds if $R\ge R_0$. If $v,w\in\ov{X}^{(0)}$ are $\ov{X}$-adjacent then $\dist_G(P_v, P_w)\le 2R$. 
\end{cor}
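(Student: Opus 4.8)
The plan is to reduce to the analogous statement about $\ov{X}$-adjacent vertices $v,w$ with $v\in V$, using the $G$-equivariance of all the product regions $P_\cdot$: if $g\in G$ carries the pair $(v,w)$ to $(gv,gw)$, then $gP_v=P_{gv}$ and $gP_w=P_{gw}$, and left-multiplication by $g$ is an isometry of $(G,\dist_G)$, so it suffices to establish the bound for one representative of each $G$-orbit of edges of $\ov{X}$. Since there are finitely many such orbits, a uniform constant of the form $2R$ (for $R\ge R_0$) will do.

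So assume $v=v_i\in V$ and $w=h v'$ as in Notation~\ref{notation:pre_def_squid}, where $|h^{-1}|\le r\le R_0\le R$. Then $\langle Z_v, Z_w\rangle$ is non-trivial — indeed, it is a finite-index subgroup of $\text{P}\Stab{G}{e}=\Stab{G}{v}\cap\Stab{G}{w}$ by Definition~\ref{defn:squid_material}.\eqref{squid_material:edge_stab}, which contains $1$ — so pick any element $k\in \Stab{G}{v}\cap\Stab{G}{w}$ (for instance $k=1$). Then $k\in \Stab{G}{v}=P_v$, and, since $k$ fixes $w=hv'$, we have $k\in h\Stab{G}{v'}h^{-1}$, hence $kh\in h\Stab{G}{v'}=P_w$ with $\dist_G(k, kh)=|h|\le r\le R$. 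Therefore $\dist_G(P_v, P_w)\le \dist_G(k, kh)\le r\le 2R$, and we are done. (In fact the same argument was already carried out inside the proof of Lemma~\ref{lem:realisation_exists}: the element $1\in P_v$ and the element $h\in P_w$ are at distance $|h|\le r$, which is exactly the edge product region being non-empty and containing a point close to both cosets.)

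The statement is essentially immediate once one recalls the identifications $P_v=\Stab{G}{v}$ and $P_w=h\Stab{G}{v'}$ for representatives in $V$, so there is no genuine obstacle; the only point requiring a moment's care is the bookkeeping of the constant $r=\max_{i,j}|h_i^j|=\max_{i,j}|(h_i^j)^{-1}|$ and the observation that it is uniform over the finitely many $G$-orbits of edges, which is precisely why $R\ge R_0\ge r$ was built into the hypotheses. One could even sharpen the bound from $2R$ to $r$, but $2R$ is all that is needed downstream, so I would simply state it in the form given.
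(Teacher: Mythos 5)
Your proof is correct, and it is essentially the same observation as the paper's, though packaged differently. The paper's proof is a one-liner citing that $N_R(P_v)\cap N_R(P_w)$ contains $f(\Delta(x,y))$, which is non-empty by Lemma~\ref{lem:realisation_exists}; inside that lemma's proof, the non-emptiness of $f(\Delta)$ is itself established by exactly the elementary observation you make here, namely that $1\in P_v$ and $h\in P_w$ are at distance $|h|\le r$. So you have simply unrolled the paper's citation into the direct argument, which is arguably cleaner: you bypass the realization machinery entirely and get the sharper bound $\dist_G(P_v,P_w)\le r$ essentially for free. The reduction to orbit representatives and the use of $r\le R_0\le R$ are both correct. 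One tiny stylistic note: the invocation of $\langle Z_v,Z_w\rangle$ being a finite-index subgroup of $\text{P}\Stab{G}{e}$ is unnecessary for the argument — you only need that $1\in\Stab{G}{v}\cap\Stab{G}{w}$, which is automatic — but it does no harm.
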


\begin{proof}
Just notice that $N_R(P_v)\cap N_R(P_w)$ is always non-empty, as it contains $f(\Delta(x,y))$ for any choice of $x\in L_v$ and $y\in L_w$.
\end{proof}

\begin{defn}[$\W$-edges]\label{defn:W-edges}
Fix a constant $T\ge 0$. Let $\W$ be the graph whose vertices are maximal simplices of $X$, and where two simplices $\Delta=\Delta(x,y)$  and $\Delta'=\Delta(x',y')$ are $\W$-adjacent if and only if one of the following holds:
    \begin{itemize}
        \item \textbf{Type 1 (close realisations)}: $\dist_{G}(f(\Delta), f(\Delta'))\le 1$.
        \item \textbf{Type 2 (staple edges)}: $x=x'$ and $\dist_{G}(N(y), N(y'))\le T+1$.
    \end{itemize}
\end{defn}

\begin{rem}[$G$-action on $\W$]\label{rem:G_action_on_W}
    The $G$-action on $X$ induces an action on $\W$. Indeed, $G$ maps maximal simplices of $X$ to maximal simplices; moreover, $\W$-edges are defined in a $G$-equivariant way, since they depend on the distance in $G$ between (intersections of) coarse level sets of the projections, which are $G$-equivariant.
\end{rem}

\begin{lemma}\label{lem:W-edge_close} The following holds if $R> \max\{R_0, B\}$. There exists a constant $\widetilde{K}\ge0$, depending on $R$ and $T$, such that, if $\Delta,\Delta'$ are two $\W$-adjacent maximal simplices, then $\dist_{G}(f(\Delta), f(\Delta'))\le \widetilde{K}$.
\end{lemma}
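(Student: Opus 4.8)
For a Type 1 edge there is nothing to do, since $\dist_G(f(\Delta),f(\Delta'))\le 1$ by definition; so I would take $\widetilde K\ge 1$ and concentrate on the Type 2 (staple) case. There I would write $\Delta=\Delta(x,y)$ and $\Delta'=\Delta(x,y')$ for the common vertex $x\in L_v$ and the remaining vertices $y\in L_w$, $y'\in L_{w'}$ (so $\{v,w\},\{v,w'\}$ are edges of $\ov X$), with $\dist_G(N(y),N(y'))\le T+1$, and by $G$-equivariance assume $v=v_i\in V$.

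The first step is to coordinatise the two realisations via Lemma~\ref{lem:realisation_exists}. Applied to the edge $\{v,w\}$, it gives that $f(\Delta)$ lies within $O(R)$ of the point of $\langle Z_v,Z_w\rangle$ with $\Phi$-coordinates $(x,y)$; writing $\langle Z_v,Z_w\rangle=\langle z_v\rangle\times\langle z_w\rangle$ (the factors commute and meet trivially, by Lemma~\ref{lem:finite_int_w-v-w'} and the reductions made at the start of the section) this point is $z_v^{p}z_w^{q}$ with $\dist_{L_v}(z_v^{p}\Proj_v(1),x)\le O(R)$ and $\dist_{L_w}(z_w^{q}\Proj_w(h),y)\le O(R)$, where $h$ is the representative with $w=hv'$ and I use that $Z_v,Z_w$ have diameter-$1$ orbits on $L_w,L_v$ respectively (Remark~\ref{rem:bounded_orbits_Lv}); analogously $f(\Delta')$ lies within $O(R)$ of some $z_v^{p'}z_{w'}^{q'}\in\langle Z_v,Z_{w'}\rangle$. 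Since $f(\Delta),f(\Delta')\subseteq N(x)$, their $L_v$-coordinates are $O(R)$-close to $x$, so $\dist_{L_v}(z_v^{p}\Proj_v(1),z_v^{p'}\Proj_v(1))=O(R)$, and properness of the geometric $\langle z_v\rangle$-action on the quasiline $L_v$ gives $|p-p'|\le K_1(R)$. Using that $z_v$ commutes with $z_w$ and $z_{w'}$, left-invariance of $\dist_G$ reduces the statement to proving $\dist_G(z_w^{q},z_{w'}^{q'})=O(R,T)$.

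For that last bound I would distinguish two cases. If $w=w'$: taking $a\in N(y),a'\in N(y')$ at distance $\le T+1$ and $c,c'\in P_w$ within $R$ of $a,a'$ with $\dist_{L_w}(\Proj_w(c),y),\dist_{L_w}(\Proj_w(c'),y')\le R$, we get $\dist_G(c,c')\le 2R+T+1$, hence $\dist_{L_w}(\Proj_w(c),\Proj_w(c'))=O(R+T)$ since $\Proj_w$ is coarsely Lipschitz on $P_w$ for $\dist_G$ (Remark~\ref{rem:Pv_to_Lv}); thus $\dist_{L_w}(y,y')=O(R+T)$, so $|q-q'|=O(R+T)$ because $\langle z_w\rangle$ acts geometrically on $L_w$, and $\dist_G(z_w^{q},z_w^{q'})\le|q-q'|\,|z_w|_G=O(R+T)$. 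If $w\ne w'$: then $w,w'$ are non-adjacent in $\ov X$ (else $\{v,w,w'\}$ is a triangle) and $v$ is their unique common neighbour (square-freeness), so the pointwise stabiliser of $\{w,w'\}$ lies in $\Stab{G}{v}$ and coarsely coincides with $\langle Z_v\rangle$ — it equals $\text{P}\Stab{G}{\{v,w\}}\cap\text{P}\Stab{G}{\{v,w'\}}$, whose coarse intersection is $\langle Z_v,Z_w\rangle\cap\langle Z_v,Z_{w'}\rangle=Z_v$ by Lemmas~\ref{lem:finite_int_w-v-w'} and~\ref{lem:malnormality}. By \cite[Lemma 4.5]{Hruska_Wise}, uniformly over the finitely many $G$-orbits of configurations $(v,w,w')$, there is $\eta=\eta(R,T)$ with $N_R(P_w)\cap N_{R+T+1}(P_{w'})\subseteq N_\eta(Z_v)$. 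Choosing $a\in N(y)\subseteq N_R(P_w)$ that is $(T+1)$-close to $N(y')\subseteq N_R(P_{w'})$ — hence $a\in N_\eta(Z_v)$ — and comparing the $L_w$-projection of a point of $P_w$ near $a$ (which is $O(R)$-close to $y$) with that of a point $z_v^{k}h\in P_w$ near $a$ (which is within $1$ of $\Proj_w(h)$, since $Z_v$ has diameter-$1$ orbits on $L_w$), I would conclude $\dist_{L_w}(y,\Proj_w(h))=O(R+\eta)$; together with the first step this forces $|q|=O(R,T)$, symmetrically $|q'|=O(R,T)$, and hence $\dist_G(z_w^{q},z_{w'}^{q'})\le|q|\,|z_w|_G+|q'|\,|z_{w'}|_G=O(R,T)$.

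The main obstacle will be the $w\ne w'$ staple case: $N(y)$ and $N(y')$ are genuinely large sets (each is coarsely a copy of the hyperbolic quotient $H_w$, resp.\ $H_{w'}$, inside the product region $P_w$, resp.\ $P_{w'}$), so their being $(T+1)$-close as sets carries no information about the particular points $z_w^{q},z_{w'}^{q'}$ unless one first observes that such closeness forces $N(y)$ to meet a bounded neighbourhood of the coarse intersection of $P_w$ and $P_{w'}$; identifying that coarse intersection with $\langle Z_v\rangle$ up to finite index and bounded error, and verifying that $\langle Z_v\rangle$ has bounded image under $\Proj_w$, is where triangle- and square-freeness of $\ov X$, the malnormality Lemmas~\ref{lem:finite_int_w-v-w'} and~\ref{lem:malnormality}, and the uniformity coming from cocompactness of the $G$-action on $\ov X$ all get used, and where the bookkeeping of constants needs care.
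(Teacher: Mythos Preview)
Your overall coordinatisation via Lemma~\ref{lem:realisation_exists} and the $w=w'$ sub-case are fine, but the $w\neq w'$ case has a genuine gap, and it is exactly the step you flag as the ``main obstacle''. You claim that $N_R(P_w)\cap N_{R+T+1}(P_{w'})\subseteq N_\eta(Z_v)$, which via Hruska--Wise reduces to showing that $\Stab{G}{w}\cap\Stab{G}{w'}$ is \emph{uniformly} within bounded Hausdorff distance of $Z_v$; in turn this needs $\p_v(Z_w)\cap\p_v(Z_{w'})$ to be trivial. You justify this by Lemmas~\ref{lem:finite_int_w-v-w'} and~\ref{lem:malnormality}, but those are proved in Section~\ref{sec:short} for a short HHG, using the coordinate spaces $\C\ell_w$ and the projections $\rho^{w'}_w$ of the HHG structure --- precisely the structure Lemma~\ref{lem:W-edge_close} is a step towards constructing. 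So the citation is circular. Nor does the argument of Lemma~\ref{lem:finite_int_w-v-w'} transplant directly: it uses that $Z_{w'}$ fixes a point of $\C\ell_w$, but here $w'\notin\link_{\ov X}(w)$, so $Z_{w'}$ need not act on $L_w$ at all, and Remark~\ref{rem:bounded_orbits_Lv} gives no control on it. (There is also a uniformity issue: even granting $\langle Z_v,Z_w\rangle\cap\langle Z_v,Z_{w'}\rangle=Z_v$, the Hausdorff distance from $Z_v$ to the finite-index overgroup $\Stab{G}{w}\cap\Stab{G}{w'}$ could depend on the pair $(w,w')$, and there need not be finitely many $G$-orbits of such pairs.)

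The paper takes a different route that sidesteps all of this: it never separates $w=w'$ from $w\neq w'$, and it never computes the coarse intersection of $P_w$ and $P_{w'}$. Instead it uses the gate maps of Definition~\ref{defn:squid_material}.\eqref{squid_material:gates} twice. First $\gate_v$ pulls the close pair $g\in P_w$, $g'\in P_{w'}$ to close points $h,h'\in P_v$, with $h$ still near $P_w$ and $h'$ near $P_{w'}$ (because $\gate_v(P_w)\subseteq N_B(P_w)$). Then, in the key Claim~\ref{claim:W-edge_close_gate_does_not_change}, the chain $\Proj_w\circ\gate_w$ is used together with the fact (Remark~\ref{rem:bounded_orbits_Lv}) that $\gate_w(P_{w'})$ has bounded diameter in $L_w$ whenever $\dist_{\ov X}(w,w')\ge 2$; this is exactly the replacement for your missing ``$\p_v(Z_w)\cap\p_v(Z_{w'})$ is trivial'', and it comes directly from the squid material axiom~\eqref{squid_material:gates} rather than from any property of a yet-to-be-built HHG. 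In short, the gates are not incidental: axiom~\eqref{squid_material:gates} is precisely what encodes the bounded-interaction between non-adjacent product regions that your Hruska--Wise argument is trying to recover by other means.
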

\begin{proof} Throughout the proof, we shall say that a bound is \emph{uniform} if it only depends on $R$ and $T$. 

Let $\Delta,\Delta'$ be two $\W$-adjacent simplices. If they have close realisations, then by definition $\dist_{G}(f(\Delta), f(\Delta'))\le 1$. Thus suppose $\Delta=\Delta(x,y)$ and $\Delta'=\Delta(x,y')$ are joined by a staple edge. Let $v=p(x)$, $w=p(y)$, and $w'=p(y')$. There exist $g\in P_w$, $g'\in P_{w'}$ which are $(T+2R+1)$-close, and such that $\dist_{L_w}(\Proj_w(g), y)\le R$ and similarly $\dist_{L_{w'}}(\Proj_{w'}(g'), y')\le R$. Let $h=\gate_v(g)$ and $h'=\gate_v(g')$, which are uniformly close as $\gate_v$ is coarsely Lipschitz. Moreover, $\gate_v(P_w)\subseteq N_B(P_w)$ by Definition~\ref{defn:squid_material}.\eqref{squid_material:gates}; hence there exists $k\in P_w$ such that $\dist_G(h,k)\le B\le  R$. This means that $h\in f(\Delta(\Proj_v(h), z))$ where $z=\Proj_w(k)$. Similarly, one can find an element $k'\in P_{w'}$ such that $\dist_G(h',k')\le R$, and so $h'\in f(\Delta(\Proj_v(h'), z'))$ where $z'=\Proj_w(k')$. The situation is depicted in Figure~\ref{fig:countless_projections}. Notice that, as $\dist_G(h,h')$ is uniformly bounded, then so is $\dist_G(k,k')$. 

\begin{figure}[htp]
    \centering
    \includegraphics[width=\linewidth, alt={The various points and projections involved in the construction of the Lemma}]{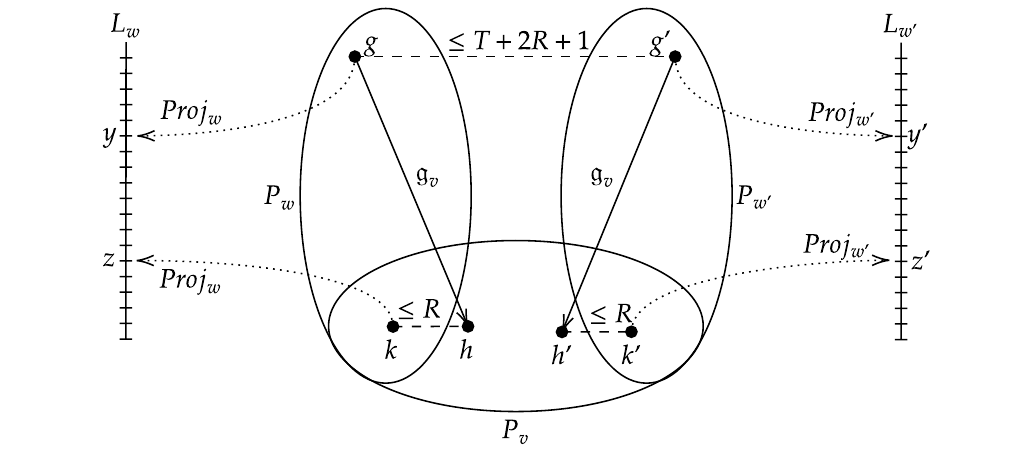}
    \caption{The various points and projections involved in the construction from Lemma~\ref{lem:W-edge_close}. A dashed line between two point means that their distance is bounded in terms of $R$ and $T$. The goal is to prove that $y$ and $z$ (resp. $y'$ and $z'$) are uniformly close in the quasiline $L_w$ (resp. $L_{w'}$).}
    \label{fig:countless_projections}
\end{figure}

    Now we claim the following:
    \begin{claim}\label{claim:W-edge_close_gate_does_not_change}
        Both $\dist_{L_{w}}(y, z)$ and $\dist_{L_{w'}}(y', z')$ are uniformly bounded.
    \end{claim}
    Let us assume Claim~\ref{claim:W-edge_close_gate_does_not_change} for a moment, and show how it implies Lemma~\ref{lem:W-edge_close}. We know that $h\in f(\Delta(\Proj_v(h), z))$ and $h'\in f(\Delta(\Proj_v(h'), z'))$ are uniformly close. Furthermore, combining the Claim with the “moreover” part of Lemma~\ref{lem:realisation_exists}, we get that the distance between $f(\Delta(\Proj_v(h), y))$ and $f(\Delta(\Proj_v(h), y'))$ is also uniformly bounded. Now, one can find an element $t\in Z_v$ such that $t\Proj_v(h)$ is uniformly close to $x$ (as $Z_v$ acts geometrically on $L_v$) while $ty$ and $ty'$ are still uniformly close to $y$ and $y'$, respectively (as $Z_v$ acts with uniformly bounded orbits on both $L_w$ and $L_{w'}$). Then the distance between $f(\Delta(x, y))$ and $f(\Delta(x, y'))$ will also be bounded by some uniform constant $\widetilde{K}$, as required. 
\end{proof}

    \begin{claimproof}[Proof of Claim~\ref{claim:W-edge_close_gate_does_not_change}]
        We prove that $\dist_{L_{w}}(y, z)$ is uniformly bounded, as the symmetrical statement follows analogously. 
        
        We will repeatedly use that, if the distance between two subsets of $P_w$ is uniformly bounded, then so is the distance between their projections to $L_w$, as the map $\Proj_w\colon P_w\to L_w$ is coarsely Lipschitz by Remark~\ref{rem:Pv_to_Lv}. We have the following chain, where $A\sim B$ denotes that the subsets $A$ and $B$ of $L_w$ are uniformly close:
        $$z=\Proj_w(k)\sim \Proj_w(\gate_w(k)).$$
        In the second passage we used that $\gate_w$ is a coarse retraction, that is, it coarsely coincides with the identity on $P_w$. Furthermore, as $\gate_w$ is coarsely Lipschitz and $\dist_G(k,k')$ is uniformly bounded, we have that
        $$\Proj_w(\gate_w(k))\sim \Proj_w(\gate_w(k')).$$
        But now $k'\in P_{w'}$, and $\Proj_w(P_{w'})$ has diameter less than $2B+1$ in $L_w$ by Remark~\ref{rem:bounded_orbits_Lv}. In particular, we can replace $k'$ with $g'$:
        $$\Proj_w(\gate_w(k'))\sim \Proj_w(\gate_w(g'))\sim \Proj_w(\gate_w(g))\sim \Proj_w(g)\sim y,$$
        where again we used that $\dist_G(g, g')$ is uniformly bounded, and that $g\in P_w$ is uniformly close to its gate $\gate_w(g)$. 
    \end{claimproof}

We shall prove that $(X,\W)$ is a combinatorial HHS, under the following choice of constants for the construction of $\W$:
\begin{notation}\label{notation:constants_squidification}
Let $R> \max\{R_0,1, B+r\}$, where:
\begin{itemize}
    \item $R_0$ is the constant from Lemma~\ref{lem:realisation_exists},
    \item $B$ is the constant from Definition~\ref{defn:squid_material}.\eqref{squid_material:gates}, and
    \item $r$ is the constant from Notation~\ref{notation:pre_def_squid}.
\end{itemize}
Choose $T$ such that, for every $v\in \ov{X}^{(0)}$, if two points $g,g'\in G$ are $2R+1$-close, then their gates $\gate_v(g), \gate_v(g')$ are $T$-close. Such $T$ exists as gates are uniformly coarsely Lipschitz. 
\end{notation}

\subsubsection{Finite complexity and intersection of links}\label{subsubsec:finite_complexity_and_int_link_squid}
\begin{lemma}[Intersection of links]\label{lem:intersection_links}
    Let $\Sigma,\Delta$ be non-maximal simplices of $X$. Then there exist two (possibly empty or maximal) simplices $\Pi,\Psi\subseteq X$ such that $\Sigma\subseteq \Pi$ and $$\link(\Sigma)\cap\link(\Delta)=\link(\Pi)\star\Psi.$$
\end{lemma}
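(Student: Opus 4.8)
The plan is to mirror the proof of Lemma~\ref{lem:simplicial_cont_for_squid}: reduce to a bounded list of configurations and, in each, write down $\Pi$ and $\Psi$ explicitly using the description of links from Lemma~\ref{lem:decomposition_of_links} and Corollary~\ref{cor:bounded_links}.

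First I would dispose of the degenerate cases. If $\Sigma=\emptyset$ then $\link(\Sigma)=X$, so $\link(\Sigma)\cap\link(\Delta)=\link(\Delta)$ and one takes $\Pi=\Delta$, $\Psi=\emptyset$; symmetrically if $\Delta=\emptyset$. If $\link(\Sigma)\cap\link(\Delta)=\emptyset$, take for $\Pi$ any maximal simplex containing $\Sigma$ (these exist since $\ov X$ has no isolated vertices and is triangle-free, by inspection of Corollary~\ref{cor:bounded_links}) and $\Psi=\emptyset$, so that $\link(\Pi)=\emptyset$. Hence we may assume $\Sigma,\Delta$ are both non-empty, so $\ov\Sigma:=p(\Sigma)$ and $\ov\Delta:=p(\Delta)$ are each a vertex or an edge of $\ov X$, and that $S:=\link(\Sigma)\cap\link(\Delta)\neq\emptyset$.

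Now the main step. Since $X$ is flag, $S$ is the full subcomplex of $X$ on the common vertex set of the two links; and by Lemma~\ref{lem:decomposition_of_links} the vertices of $\link(\Sigma)$ lying in a fixed squid $\Squid(t)$ form $\Squid(t)^{(0)}$ if $t\in\link_{\ov X}(\ov\Sigma)$, form $\link_{\Squid(t)}(\Sigma_t)\in\{\emptyset,\{t\},(L_t)^{(0)}\}$ if $t\in\ov\Sigma$, and are empty otherwise (similarly for $\Delta$). Intersecting, the ``fibre'' $Y_t:=S^{(0)}\cap\Squid(t)^{(0)}$ is one of $\emptyset,\{t\},(L_t)^{(0)},\Squid(t)^{(0)}$; it equals $\Squid(t)^{(0)}$ precisely for $t\in\link_{\ov X}(\ov\Sigma)\cap\link_{\ov X}(\ov\Delta)$, and it is a proper non-empty fibre only for $t\in\ov\Sigma\cup\ov\Delta$, a set of at most four vertices. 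Since $\ov X$ is triangle- and square-free, the ``full'' fibres occur over $\link_{\ov X}(v)$ when $\ov\Sigma=\ov\Delta=\{v\}$, over at most one vertex (a common neighbour) when $\ov\Sigma,\ov\Delta$ are distinct non-adjacent vertices, and over no vertex at all as soon as one of $\ov\Sigma,\ov\Delta$ is an edge or the two are adjacent. This pins down $S$ up to a finite list, indexed by the shape of $\Sigma$ (Corollary~\ref{cor:bounded_links}) and the position of $\ov\Delta$ relative to $\ov\Sigma$. For each entry I would read off $\Pi$ by enlarging $\Sigma$: when $S$ has a points-fibre $(L_w)^{(0)}$ over a vertex $w$ adjacent to $\ov\Sigma$ one takes $\Pi$ triangle-type of the form $\{(v,x),(w)\}$ with $v\in\ov\Sigma$, $x\in L_v$ (so $\link(\Pi)=(L_w)^{(0)}$), and in the remaining sub-cases one likewise extends $\Sigma$ inside $S$; the residual apex-only fibres $\{t\}$ — of which there is at most one, and only when $S$ is a single full squid — are collected into $\Psi=\{(t)\}$, and otherwise $\Psi=\emptyset$. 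As in Lemma~\ref{lem:simplicial_cont_for_squid} this is most cleanly recorded in a table.

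The only real obstacle is the bookkeeping needed to guarantee $\Sigma\subseteq\Pi$ throughout. The delicate configuration is $S=\Squid(t)$ for a vertex $t\notin\ov\Sigma$: then $\Squid(t)$ is not the link of any simplex containing $\Sigma$ (unless $\ov X$ has a valence-one vertex), so one must use the splitting $\Squid(t)=(L_t)^{(0)}\star\{t\}$ and set $\Pi=\{(v,x),(t)\}$ with $v\in\ov\Sigma$, $x\in L_v$ and $\Psi=\{(t)\}$. Here triangle- and square-freeness of $\ov X$ is exactly what forces $t$ to be the unique common neighbour of $\ov\Sigma$ and $\ov\Delta$, hence $\ov\Sigma$ to be the single vertex $v$ with $v\sim t$, so that $\{(v,x),(t)\}$ is a genuine triangle-type simplex containing $\Sigma$. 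Beyond this one only has to keep track of the join-with-empty conventions — an empty factor $\link_{\Squid(v)}(\Delta_v)$ coming from an edge-type $\Delta_v$ acting as the identity for $\star$ — and of degenerate $L_v$; these cause no difficulty.
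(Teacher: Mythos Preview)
Your approach is correct and essentially the same as the paper's: both proceed by analysing the squid-fibres $Y_t$ of $\link(\Sigma)\cap\link(\Delta)$ via Lemma~\ref{lem:decomposition_of_links}, and both identify the unique case requiring $\Psi\neq\emptyset$ as $S=\Squid(u)$ for the common neighbour $u$ of $\ov\Sigma,\ov\Delta$ at distance~$2$. The paper organises the case analysis by the cardinality of $\ov\Sigma$ (empty, vertex, edge) and gives a uniform recipe for $\Pi_v$ in a table, whereas you organise by the fibre types directly; the paper's version is slightly more systematic for the bookkeeping you flag, but the content is the same.
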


\begin{proof}
There are three cases to consider, depending on the supports $\ov\Sigma$ and $\ov \Delta$.

If $\Sigma$ is empty, then the results clearly holds with $\Pi=\Delta$ and $\Psi=\emptyset$.
\par\medskip
If $\ov\Sigma=\{v,w\}$ is an edge, then $\link(\Sigma)=\link_{\Squid(v)}(\Sigma_v)\star \link_{\Squid(w)}(\Sigma_w) $, by Corollary~\ref{cor:bounded_links}. Moreover, $\link_{\Squid(v)}(\Sigma_v)\cap \link(\Delta)$ either coincides with $\link_{\Squid(v)}(\Sigma_v)$ or is trivial, and similarly for $w$. Then let $\Pi$ be a simplex obtained from $\Sigma$ by completing $\Sigma_v$ to an edge if $\link_{\Squid(v)}(\Sigma_v)\cap \link(\Delta)=\emptyset$, and similarly completing $\Sigma_w$ to an edge if necessary. By construction, we have that
$\link(\Sigma)\cap\link(\Delta)=\link(\Pi)$. 

\par\medskip
Finally, suppose that $\ov\Sigma=\{v\}$ is a single vertex. We must then look at how $\link_{\ov X}(\ov\Sigma)$ interacts with $\ov\Delta$. If $\ov\Delta\cap\link_{\ov X}(\ov \Sigma)$ is non-trivial, we set $\ov\Phi=\ov\Delta\cap\link_{\ov X}(\ov \Sigma)$. If $\link_{\ov X}(\ov \Sigma)\cap \link_{\ov X}(\ov \Delta)$ is a single vertex (which happens if both $\ov\Sigma$ and $\ov\Delta$ are single vertices at distance $2$), then set $\ov\Psi=\link_{\ov X}(\ov \Sigma)\cap \link_{\ov X}(\ov \Delta)$. If none of the previous is true, then $\link_{\ov X}(\ov \Sigma)\cap \link_{\ov X}(\ov \Delta)$ is trivial, and we set $\ov \Theta=\{w\}$ where $w$ is any vertex inside $\link_{\ov X}(\ov\Sigma)$. Notice that, by construction, exactly one between $\ov\Phi$, $\ov\Psi$ and $\ov\Theta$ is non-empty.

Now let $\Pi$ be the simplex defined as follows:
\begin{itemize}
    \item $\ov\Pi=\ov\Sigma\star\ov\Phi\star\ov\Psi\star\ov\Theta$.
    \item If $v\in \ov\Sigma$ does not belong to $( \ov\Delta\cup\link_{\ov X}(\ov\Delta))$ then $\Pi_v$ is an edge containing $\Sigma_v$, so that $\link_{\Squid(v)}\left(\Pi_v\right)=\emptyset$;
    \item If $v\in \ov\Sigma\cap \link_{\ov X}(\ov\Delta)$ then $\Pi_v= \Sigma_v$;
    \item If $v\in \ov\Sigma\cap \ov\Delta$ then $\Pi_v$ is an edge containing $\Sigma_v$ if $\link_{\Squid(v)}(\Sigma_v)\cap \link(\Delta)=\emptyset$; otherwise $\Pi_v=\Sigma_v$. In other words, we choose $\Pi_v$ so that 
    $$\link_{\Squid(v)}\left(\Pi_v\right)=\link_{\Squid(v)}(\Sigma_v)\cap \link_{\Squid(v)}(\Delta_v);$$
    \item If $v\in \ov\Phi$ then $\Pi_v= \Delta_v$;
    \item If $v\in \ov\Psi$ then $\Pi_v$ is the cone point $v$.
    \item If $v\in \ov\Theta$ then $\Pi_v$ is an edge.
\end{itemize}

\begin{figure}[htp]
\centering
\renewcommand{\arraystretch}{1.5}
\begin{tabular}{c|c|c}
$\Pi_v$&$\ov\Sigma$&$\link_{\ov X}(\ov\Sigma)$\\
\hline
$\ov\Delta$& extend $\Sigma_v$ if needed&$\Delta_v$, if $\ov\Phi=\{v\}$\\
$\link_{\ov X}(\ov\Delta)$&$\Sigma_v$& $v$, if $\ov\Psi=\{v\}$\\
$\ov X- (\operatorname{Star}_{\ov X}(\ov\Delta))$&complete $\Sigma_v$ to an edge& any edge in $\Squid(v)$, if $\ov\Theta=\{v\}$
\end{tabular}
    \caption{Schematic representation of the simplex $\Pi$. Each cell describes how $\Pi_v$ is defined whenever the vertex $v$ belongs to the area given by the intersection between the row label and the column label (for example, if $v\in\ov\Sigma\cap \link_{\ov X}(\ov\Delta)$ we have that $\Pi_v=\Sigma_v$).}
    \label{tab:Sigma_star_Pi}
\end{figure}

Moreover, if $\ov\Psi=\{u\}$ is non-empty, we set $\Psi=\{u\}$. Now one can check that $\link(\Sigma)\cap\link(\Delta)=\link(\Pi)\star\Psi$ (one can argue exactly as in \cite[Lemma 5.7, \textbf{Finding the extension of $\Sigma$}]{converse}).
\end{proof}

\begin{cor}[Verification of Definition~\ref{defn:combinatorial_HHS}.\eqref{item:chhs_flag}]\label{cor:finite_complexity_HHS_blow_up}
$X$ has complexity at most $25$.
\end{cor}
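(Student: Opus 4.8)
By Definition~\ref{defn:combinatorial_HHS}.\eqref{item:chhs_flag} the complexity $n$ is the largest $i$ for which there is a chain $\link(\Delta_1)\subsetneq\cdots\subsetneq\link(\Delta_i)$ of links of simplices of $X$, so the task is to bound such chains uniformly. The plan is to feed in the complete classification of the links of $X$ given by Corollary~\ref{cor:bounded_links} and Lemma~\ref{lem:decomposition_of_links}: every link is determined by its \emph{support} $\ov\Delta=p(\Delta)$ --- a simplex of the triangle-free graph $\ov X$, hence with at most two vertices --- together with, for each vertex $v\in\ov\Delta$, which of the three sets $\emptyset$, $\{v\}$, $(L_v)^{(0)}$ the factor $\link_{\Squid(v)}(\Delta_v)$ equals. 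Thus every link has one of a short list of \emph{shapes}: $X$ (when $\ov\Delta=\emptyset$); one of $p^{-1}(\link_{\ov X}(v))$, $p^{-1}(\link_{\ov X}(v))\star\{v\}$, $p^{-1}(\link_{\ov X}(v))\star (L_v)^{(0)}$ (when $\ov\Delta$ is a vertex $v$); or, when $\ov\Delta$ is an edge $\{v,w\}$ (so $\link_{\ov X}(\ov\Delta)=\emptyset$ by triangle-freeness), a join of two squid factors, i.e. $(L_v)^{(0)}\star(L_w)^{(0)}$, $(L_v)^{(0)}\star\{w\}$, $\{v,w\}$, $(L_w)^{(0)}$ or $\{w\}$ --- the remaining case of two empty squid factors being a maximal simplex, hence not a domain.

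The core of the argument is to show that each shape appears only boundedly often along a chain. The mechanical input is that intersecting any link with a squid $\Squid(v)$ yields one of $\emptyset$, $\{v\}$, $(L_v)^{(0)}$, or all of $\Squid(v)$, so one compares two links by comparing these squid-by-squid restrictions together with their images under $p$; from this one checks directly that each "edge-supported" shape, as well as each of the two vertex-supported shapes involving $\{v\}$ or $(L_v)^{(0)}$, can occur at most once in a given chain. The one step that is not purely mechanical is bounding a chain of links of shape $p^{-1}(\link_{\ov X}(v))$: since $p^{-1}(A)\subseteq p^{-1}(B)$ iff $A\subseteq B$, such a subchain induces a strictly increasing chain $\link_{\ov X}(v_1)\subsetneq\link_{\ov X}(v_2)\subsetneq\cdots$ of vertex-links in $\ov X$, and I would argue this has length at most two: if $\link_{\ov X}(v)\subsetneq\link_{\ov X}(v')$ with $v\neq v'$, then $v\not\sim v'$ (a common neighbour would otherwise give a triangle), so two distinct common neighbours of $v$ and $v'$ would form a $4$-cycle, contradicting square-freeness, whence $v$ has valence one; but a valence-one vertex has a one-element link, which cannot strictly contain the non-empty link of yet another vertex, since no connected component of $\ov X$ is a single vertex. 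Triangle- and square-freeness likewise control the remaining steps (e.g. a link supported on an edge contains no full squid, and the only links strictly above $p^{-1}(\link_{\ov X}(v))$ when $v$ has valence at least two are $p^{-1}(\link_{\ov X}(v))\star\{v\}$, $p^{-1}(\link_{\ov X}(v))\star (L_v)^{(0)}$ and $X$).

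Summing the bounded counts --- at most one occurrence of $X$, at most two of the shape $p^{-1}(\link_{\ov X}(v))$, at most one of each of the other shapes --- gives $n\le 25$ with room to spare; in fact one sees that the complexity is much smaller, the longest chains having length $5$ (for instance $(L_w)^{(0)}\subsetneq\Squid(w)\subsetneq p^{-1}(\link_{\ov X}(v))\subsetneq p^{-1}(\link_{\ov X}(v))\star (L_v)^{(0)}\subsetneq X$ when $\ov X$ has a leaf, using that $\Squid(w)=p^{-1}(\link_{\ov X}(u))$ for a valence-one $u$), but the crude estimate $25$ is all that is required. The main obstacle is purely organisational: there is no single clever idea, only a finite case check over the shapes of $\link(\Delta)$ and $\link(\Delta')$ and the ways one can be nested in the other; the sole substantive ingredient is the triangle- and square-freeness argument above, which is exactly what makes the count terminate.
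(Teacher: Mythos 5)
Your proof is correct but takes a genuinely different route from the paper's. The paper simply cites \cite[Claim 6.9]{BHMS}: once one knows the intersection-of-links property (Lemma~\ref{lem:intersection_links} here) together with the fact that $X$ is finite-dimensional, that claim bounds the complexity by a function of the dimension, and the paper reads off $(d+1)^2$ with $d=4$, whence $25$. What you do instead is a direct, shape-by-shape enumeration: using the classification of links from Lemma~\ref{lem:decomposition_of_links} and Corollary~\ref{cor:bounded_links} you list the finitely many possible \emph{shapes} a link of $X$ can take, and use disjointness of squids together with triangle- and square-freeness of $\ov X$ to argue that each shape contributes boundedly to any $\subsetneq$-chain. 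The one non-mechanical step you isolate --- that a strictly increasing chain $\link_{\ov X}(v_1)\subsetneq\link_{\ov X}(v_2)\subsetneq\cdots$ in a triangle- and square-free graph without isolated vertices has length at most two, because a strict inclusion of vertex-links forces the smaller vertex to have valence one --- is correct and is exactly what makes the count terminate. Your approach buys a much sharper bound (you exhibit an explicit chain of length $5$ and argue nothing longer can occur), whereas the paper's blanket citation gives $25$; what the paper's approach buys is brevity and modularity, since the BHMS claim applies to any combinatorial HHS and so the paper need only supply the dimension and Lemma~\ref{lem:intersection_links}. A minor remark: the constant $25$ comes from the paper's use of $d=4$, although a maximal simplex of $X$ has four vertices and hence dimension $3$; as your argument shows, the true complexity is far below either value, so nothing in the sequel depends on the discrepancy.
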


\begin{proof}
One can argue exactly as in the proof of \cite[Claim 6.9]{BHMS}, which only uses \cite[Condition 6.4.B]{BHMS} (our Lemma \ref{lem:intersection_links} here) and that $X$ has finite dimension. By inspection of the same proof, one also gets that the complexity is at most $(d+1)^2$, where $d$ is the dimension of $X$ (which is $4$ in our case).
\end{proof}

\begin{cor}[Verification of Definition~\ref{defn:combinatorial_HHS}.\eqref{item:chhs_join}]\label{lem:simplicial_wedge_property}
Let $\Sigma,\Delta$ be non-maximal simplices of $X$, and suppose that there exists a non-maximal simplex $\Gamma$ such that $[\Gamma]\nest[\Sigma]$, $[\Gamma]\nest[\Delta]$ and $\diam(\C (\Gamma))\ge 3$. Then there exists a non-maximal simplex $\Pi$ which extends $\Sigma$ such that $[\Pi]\nest[\Delta]$ and all $\Gamma$ as above satisfy $[\Gamma]\nest[\Pi]$.
\end{cor}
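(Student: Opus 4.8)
The plan is to deduce this directly from the intersection-of-links Lemma~\ref{lem:intersection_links}, using the diameter hypothesis on $\C(\Gamma)$ only to rule out a ``cone'' degeneracy, exactly as in the corresponding step of \cite{BHMS}.

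First I would unwind the hypotheses: $[\Gamma]\nest[\Sigma]$ and $[\Gamma]\nest[\Delta]$ mean $\link(\Gamma)\subseteq\link(\Sigma)$ and $\link(\Gamma)\subseteq\link(\Delta)$, so $\link(\Gamma)\subseteq\link(\Sigma)\cap\link(\Delta)$. Applying Lemma~\ref{lem:intersection_links} to the pair $\Sigma,\Delta$ yields simplices $\Pi\supseteq\Sigma$ and $\Psi$ with $\link(\Sigma)\cap\link(\Delta)=\link(\Pi)\star\Psi$, where $\link(\Pi)$ and $\Psi$ have disjoint vertex sets, $\Psi$ is a simplex, and every vertex of $\Psi$ is $X$-adjacent to every other vertex of $\link(\Pi)\star\Psi$. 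Since $\link(\Pi)\subseteq\link(\Pi)\star\Psi\subseteq\link(\Delta)$, we already get $[\Pi]\nest[\Delta]$, and $\Pi$ extends $\Sigma$; it remains to establish $\link(\Gamma)\subseteq\link(\Pi)$ for every admissible $\Gamma$, which also forces $\Pi$ to be non-maximal, as $\link(\Gamma)\neq\emptyset$ by Definition~\ref{defn:join_link_star} ($\Gamma$ being non-maximal).

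For the containment $\link(\Gamma)\subseteq\link(\Pi)$, I would argue by contradiction. If $\Psi=\emptyset$ the containment is immediate from $\link(\Sigma)\cap\link(\Delta)=\link(\Pi)$, so assume $\Psi\neq\emptyset$ and that some vertex of $\link(\Gamma)$ lies outside $\link(\Pi)$; since $\link(\Gamma)\subseteq\link(\Pi)\star\Psi$, such a vertex lies in $\Psi^{(0)}$, giving $u\in\link(\Gamma)^{(0)}\cap\Psi^{(0)}$. Because $u\in\Psi$, it is $X$-adjacent to every other vertex of $\link(\Pi)\star\Psi$, hence to every other vertex of $\link(\Gamma)$; thus $\link(\Gamma)$ is a cone with apex $u$. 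Now observe that $\link(\Gamma)\cap\Sat(\Gamma)=\emptyset$: a vertex lying both in $\link(\Gamma)$ and in some $\Gamma'\sim\Gamma$ would lie simultaneously in $\Gamma'$ and in $\link(\Gamma')=\link(\Gamma)$, which is impossible since a simplex is disjoint from its link. Hence $u$ survives to the augmented link, $\C(\Gamma)^{(0)}=\link(\Gamma)^{(0)}$, and $u$ is joined by an edge of $Y_\Gamma$ to every other vertex of $\C(\Gamma)$; therefore $\diam(\C(\Gamma))\le 2$, contradicting $\diam(\C(\Gamma))\ge 3$. This proves $\link(\Gamma)\subseteq\link(\Pi)$, i.e. $[\Gamma]\nest[\Pi]$, for all admissible $\Gamma$, and completes the proof.

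I expect the only genuine subtlety — again parallel to \cite{BHMS} — to be the bookkeeping distinguishing $\C(\Gamma)$ from $\link(\Gamma)$: one must verify that the putative apex $u$ is not discarded when passing from $\link(\Gamma)$ to the augmented link $\C(\Gamma)$, so that the cone structure really does cap the diameter of $\C(\Gamma)$. Everything else is a formal consequence of Lemma~\ref{lem:intersection_links} together with the observation (from Corollary~\ref{cor:bounded_links}, in spirit) that links admitting a dominating vertex have uniformly bounded augmented links.
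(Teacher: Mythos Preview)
Your proposal is correct and follows precisely the approach the paper intends: the paper's proof simply cites the corresponding argument from \cite[Theorem~6.4]{BHMS}, deducing the statement from Lemma~\ref{lem:intersection_links}, and you have faithfully unpacked that citation. Your handling of the one genuine subtlety---checking that the cone apex $u$ lies in $Y_\Gamma$ so that the cone structure on $\link(\Gamma)$ bounds $\diam(\C(\Gamma))$---is exactly the bookkeeping that the referenced \cite{BHMS} argument carries out.
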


\begin{proof} 
One can argue as in the proof of \cite[Theorem 6.4]{BHMS} (more precisely, at the beginning of the paragraph named “\textbf{$(X,W)$ is a combinatorial HHS}”) to deduce the Corollary from Lemma~\ref{lem:intersection_links}.
\end{proof}

\subsubsection{Fullness of links}
\begin{lemma}[Verification of Definition~\ref{defn:combinatorial_HHS}.\eqref{item:C_0=C}]\label{lem:edges_in_link}
Let $\Delta\neq \emptyset$ be a simplex of $X$. Suppose that $a,b\in\link(\Delta)$ are distinct, non-adjacent vertices which are contained in $\W$--adjacent maximal simplices $\Sigma^a,\Sigma^b$. Then there exist $\W$--adjacent maximal simplices $\Pi^a,\Pi^b$ of $X$ such that $\Delta\star a\subseteq\Pi^a$ and $\Delta\star b\subseteq\Pi^b$.
\end{lemma}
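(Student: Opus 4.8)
The statement is a "fullness of links" axiom (Definition~\ref{defn:combinatorial_HHS}.\eqref{item:C_0=C}) for the candidate CHHS $(X,\W)$ built from squid materials. The plan is to reduce to the case where $\Delta$ is as large as possible — i.e. to replace $\Delta$ by a simplex whose link is $\link(\Delta)$ but which is "non-extendable at $a$ and $b$" — and then to split into cases according to the shape of $\Delta$ (more precisely, according to which of the cases \eqref{cor:bounded_links_emptyset}--\eqref{cor:bounded_links_bounded} of Corollary~\ref{cor:bounded_links} the simplex $\link(\Delta)$ falls into), using the explicit description of links from Lemma~\ref{lem:decomposition_of_links}. Since $a,b\in\link(\Delta)$ are non-adjacent, $\link(\Delta)$ is not a single vertex nor a non-trivial join (the two factors of a join are adjacent), so by Corollary~\ref{cor:bounded_links} we are in one of the cases where $\link(\Delta)$ is of the form $p^{-1}(\link_{\ov X}(v))$, or $(L_w)^{(0)}$, or (after passing to the edge/triangle cases) $\ov X$ itself is not excluded but there $a,b$ are non-adjacent squid points or vertices of $\ov X$.

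The heart of the argument is the construction of the two $\W$-adjacent maximal simplices $\Pi^a, \Pi^b$ containing $\Delta\star a$ and $\Delta\star b$ respectively. First I would use that $\Sigma^a, \Sigma^b$ are $\W$-adjacent: by Lemma~\ref{lem:W-edge_close} their realisations $f(\Sigma^a), f(\Sigma^b)$ are within a uniform distance $\widetilde K$ in $G$. Writing $\Sigma^a = \Delta(x_a,y_a)$ and $\Sigma^b=\Delta(x_b,y_b)$, the vertices $a$ and $b$ lie in $\link(\Delta)$ and hence (by Lemma~\ref{lem:decomposition_of_links}) inside some squid $\Squid(u_a)$, $\Squid(u_b)$ with $u_a, u_b\in\link_{\ov X}(\ov\Delta)$, say $a\in L_{u_a}$ and $b\in L_{u_b}$. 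I want to produce realisation data: pick a point $g_a\in f(\Sigma^a)$ near $a$'s level set and a point $g_b\in f(\Sigma^b)$ near $b$'s level set, and use the gate maps $\gate$ from Definition~\ref{defn:squid_material}.\eqref{squid_material:gates} together with the coboundedness of $\langle Z_{u_a},\dots\rangle$-type subgroups to find a single maximal simplex $\Pi^a$ supported on an edge through $u_a$ that contains $\Delta\star a$ and whose realisation is close to $f(\Sigma^a)$ — and symmetrically $\Pi^b$. Finally one checks $\Pi^a$ and $\Pi^b$ are $\W$-adjacent: either via a Type 1 edge, because $f(\Pi^a)$ and $f(\Pi^b)$ stay within distance~$1$ after the construction, or — in the case $x_a = x_b$ (common apex coordinate) — via a Type 2 staple edge, since then $N(y_a)$ and $N(y_b)$ are uniformly close. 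This is exactly the kind of bookkeeping carried out in \cite[Lemma 5.7]{converse}, to which one can appeal for the more tedious sub-cases.

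The main obstacle I anticipate is the case analysis on the support of $\Delta$ combined with keeping the $\W$-adjacency of $\Pi^a$ and $\Pi^b$ under control: when $\Delta$ forces $a$ and $b$ to lie in \emph{different} squids (the case $\link_X(\Delta) = p^{-1}(\link_{\ov X}(v))$, where $a\in\Squid(u_a)$, $b\in\Squid(u_b)$ with $u_a\ne u_b$), one must simultaneously complete $\Delta\star a$ and $\Delta\star b$ to maximal simplices and arrange that the realisations land within distance one; this requires carefully choosing the "second leg" of each maximal simplex (a point in $L_{u_a}$, resp. $L_{u_b}$, paired with $a$, resp. $b$) and invoking Definition~\ref{defn:squid_material}.\eqref{squid_material:gates} to see that the gate of one product region into the relevant stabiliser lands in a bounded neighbourhood of the appropriate $Z$-orbit, so the realisations can be matched up using that each cyclic direction acts geometrically on its quasiline and boundedly on the others. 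I would handle the degenerate cases ($\ov\Delta$ an edge, so $\link_X(\Delta)$ is a single squid base $(L_w)^{(0)}$; or $\Delta=\emptyset$) first, as there $a,b$ lie in a common squid and the staple-edge (Type 2) mechanism applies almost directly, and then treat the $p^{-1}(\link_{\ov X}(v))$ case, which is where the real work is.
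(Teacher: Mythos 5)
Your skeleton is right — split on whether $a,b$ lie in the same squid, and build $\W$-adjacent $\Pi^a,\Pi^b$ containing $\Delta\star a$, $\Delta\star b$ — but the mechanism you propose for $\W$-adjacency has the roles of the two edge types backwards, and the gate/coboundedness machinery is superfluous. You claim $f(\Pi^a)$ and $f(\Pi^b)$ can be pushed to within distance $1$ (a Type 1 edge) unless $\Sigma^a,\Sigma^b$ already share an apex coordinate. That Type 1 claim is not supportable: $f(\Pi^a)$ is essentially determined by $\Pi^a$ (each realisation has uniformly bounded diameter), and since $\Pi^a$ and $\Sigma^a$ merely share a vertex but need not coincide, the most you can say a priori is $\dist_G(f(\Pi^a),f(\Sigma^a))\le\widetilde K$ via Lemma~\ref{lem:W-edge_close}, not $\le 1$. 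There is no freedom in the construction to close that gap, so the Type 1 branch of your dichotomy fails.

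The paper instead makes $\Pi^a,\Pi^b$ staple-adjacent (Type 2) in \emph{all} cases, and this is the right target. If $p(a)=p(b)=w$, then $a,b\in(L_w)^{(0)}$ (non-adjacency rules out the apex), and one picks $v\in\link_{\ov X}(w)$ and $x\in(L_v)^{(0)}$ with $\Delta\subseteq\{(v,x),(w)\}$, setting $\Pi^a=\Delta(x,a)$, $\Pi^b=\Delta(x,b)$. If $p(a)=w\neq p(b)=w'$, non-adjacency of $w,w'$ plus $a,b\in\link(\Delta)$ forces $\ov\Delta=\{v\}$ with $w,w'\in\link_{\ov X}(v)$; one sets $\Pi^a=\Delta(x,y)$, $\Pi^b=\Delta(x,y')$ where $y=\Sigma^a\cap(L_w)^{(0)}$, $y'=\Sigma^b\cap(L_{w'})^{(0)}$, and $x\in(L_v)^{(0)}$ is chosen to contain $\Delta$. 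In both cases $\Pi^a,\Pi^b$ share the coordinate $x$ by construction, and the remaining check $\dist_G(N(\cdot),N(\cdot))\le T+1$ follows by inspecting the $\W$-edge between $\Sigma^a,\Sigma^b$ directly: if Type 1, the bound is $\le 1$ because $f(\Sigma^\bullet)\subseteq N(\cdot)$; if Type 2, it is the definition of a staple edge (the shared coordinate of $\Sigma^a,\Sigma^b$ cannot be $a,b$ or $y,y'$ for support reasons, so those are the "other" coordinates). No gates, no quasi-axes, no reduction to a maximal $\Delta$. Two side remarks: $\Delta=\emptyset$ is excluded by hypothesis so need not be discussed, and \cite[Lemma 5.7]{converse} concerns intersection of links (Lemma~\ref{lem:intersection_links} here), not fullness of links, so it is not the right reference for the bookkeeping you had in mind.
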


\begin{proof}
    Suppose first that $p(a)=p(b)=w$ for some $w\in\ov{X}^{(0)}$. Then $a$ and $b$ must belong to the base $(L_w)^{(0)}$ of the cone under $w$, as they are non-adjacent. Moreover, since $\Sigma^a$ and $\Sigma^b$ are $\W$-adjacent, we must have that $\dist_{G}\left(N(a),N(b)\right)\le T+1$, regardless of the type of edge connecting $\Sigma^a$ and $\Sigma^b$. Now fix $v\in \link_{\ov X}(w)$ and $x\in (L_v)^{(0)}$, in such a way that $\Delta$ is contained in $\{(v,x),(w)\}$. Then the maximal simplices $\Pi^a=\Delta(x,a)$ and $\Pi^b=\Delta(x,b)$ contain $\Delta$ and are joined by a staple edge.
\par\medskip
    Now suppose that $p(a)=w\neq p(b)=w'$. In particular, $w$ and $w'$ are not $\ov{X}$-adjacent, or $a$ and $b$ would be joined by an edge of $X$. This forces $\ov\Delta=\{v\}$ to be a single vertex, such that $w,w'\in\link_{\ov X}(v)$. Let $y=\Sigma^a\cap (L_w)^{(0)}$, so that $a$ is either $y$ or $w$, and $y'=\Sigma^b\cap (L_{w'})^{(0)}$. Again, since $\Sigma^a$ and $\Sigma^b$ are $\W$-adjacent, we must have that $\dist_{G}\left(N(y),N(y')\right)\le T+1$, and we can complete $\Delta$ to two simplices $\Pi^a=\Delta(x,y)$ and $\Pi^b=\Delta(x,y)$, for some $x\in (L_v)^{(0)}$, which are joined by a staple edge.
\end{proof}

\subsubsection{Hyperbolicity of augmented links}\label{subsec_link_hyp}
Our next goal is to show that, for every non-maximal simplex $\Delta\subseteq X$, the augmented link $\C(\Delta)=\link(\Delta)^{+\W}$ is uniformly hyperbolic. If $\link(\Delta)$ has already diameter $2$ in $X$ then it is clearly $2$-hyperbolic. Thus we only have to focus on the cases when $\link(\Delta)$ is unbounded, which were described in Corollary~\ref{cor:bounded_links}.
\par\medskip
Let $\h G$ be the cone-off graph of $G$ with respect to the collection $\{\Stab{G}{v_i}\}$, as in Definition~\ref{defn:weak_hyp}, which is hyperbolic by Definition~\ref{defn:squid_material}.\eqref{squid_material:big_papa}.
\begin{lemma}[$\Delta=\emptyset$]\label{lem:CS_qi_to_squidoff}
    $X^{+\W}$ is $G$-equivariantly quasi-isometric to $\h G$.
\end{lemma}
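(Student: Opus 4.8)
The plan is to exhibit an explicit $G$-equivariant map $\Phi\colon X^{+\W}\to\h G$, together with a coarse section $s$ for it, and to check that they are inverse quasi-isometries. Recall that a vertex of $X^{+\W}$ is either the tip $v$ of a squid $\Squid(v)$, with $v\in\ov X^{(0)}$, or a point $x$ of the base $(L_v)^{(0)}$ of some squid; and, since $L_v=g\,\Cay{\Stab{G}{v_i}}{\lambda_i}$ whenever $v=gv_i$, in the latter case $x$ is literally an element of the coset $P_v=g\Stab{G}{v_i}$. Define $\Phi$ by sending a tip $v$ to the cone vertex of $\h G$ attached to the coset $P_v$, and a base point $x\in(L_v)^{(0)}$ to the group element $x\in G\subseteq\h G^{(0)}$. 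Since $g=g\cdot 1\in g\Stab{G}{v_i}=P_{gv_i}$ for every $g\in G$ and every $v_i\in V$, every element of $G$ is a base point of some squid, so $\Phi$ is surjective on vertices, hence coarsely surjective; equivariance is immediate from $P_{gv}=gP_v$. For coarse Lipschitzness one goes through the three kinds of edges of $X^{+\W}$: edges of $X$ inside one squid map to pairs at $\h G$-distance $\le 2$ (both endpoints lie in one coset); join edges between squids of $\ov X$-adjacent vertices map to pairs controlled by $2R$ via Corollary~\ref{cor:close_regions}; and a $\W$-edge between maximal simplices $\Delta,\Delta'$ maps to a pair controlled by Lemma~\ref{lem:W-edge_close}, using that $f(\Delta)\subseteq N_R(P_v)$ for every $v\in\ov\Delta$ and that $\dist_{\h G}\le\dist_G$ once we take the generating set defining $\h G$ to be the finite compatible one of Notation~\ref{notation:pre_def_squid}.

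Fix $v_1\in V$ once and for all and define $s\colon\h G\to X^{+\W}$ on vertices by sending the cone vertex of $P_v$ to the tip $v$, and an element $g\in G$ to the copy of $g$ sitting in the base of $\Squid(gv_1)$. By construction $\Phi\circ s=\mathrm{id}$ on vertices. The heart of the argument is a uniform bound, call it $D$, on $\dist_{X^{+\W}}$ between the two vertices ``$g$ in $\Squid(gv_i)$'' and ``$g$ in $\Squid(gv_j)$'', for $g\in G$ and $v_i,v_j\in V$. Granting this: $s\circ\Phi$ is within $D$ of the identity, and $s$ is coarsely Lipschitz, because a cone-edge of $\h G$ maps to a path of length $\le D+1$, while for an edge $g\sim g'$ of $\Cay{G}{S}$ one builds, using Corollary~\ref{cor:g_in_edge_prod_region} to pick $w\in\link_{\ov X}(gv_1)$ with $g\in N_R(P_w)$ and then realising the pair $\bigl(g,\Proj_w(g'')\bigr)$ for a witness $g''\in P_w$ (exactly as in the non-emptiness argument of Lemma~\ref{lem:realisation_exists}), a maximal simplex $\Delta_g$ supported on the edge $\{gv_1,w\}$ of $\ov X$, having ``$g$ in $\Squid(gv_1)$'' among its vertices and $g\in f(\Delta_g)$; symmetrically one gets $\Delta_{g'}$ at $g'v_1$. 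Since $\dist_G(g,g')\le 1$ forces $\dist_G(f(\Delta_g),f(\Delta_{g'}))\le 1$, the simplices $\Delta_g,\Delta_{g'}$ are $\W$-adjacent of Type~1, so all their vertices are pairwise $X^{+\W}$-adjacent; in particular $s(g)\sim s(g')$. Then $\Phi$ is coarsely Lipschitz and coarsely surjective, $s$ is coarsely Lipschitz, $\Phi s=\mathrm{id}$ and $s\Phi$ is uniformly close to $\mathrm{id}$, so both are quasi-isometries by the standard criterion; as $\Phi$ is $G$-equivariant, this gives the claim.

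It remains to establish the uniform bound $D$. By $G$-equivariance it suffices to bound $\dist_{X^{+\W}}\bigl(1\text{ in }\Squid(v_i),\,1\text{ in }\Squid(v_j)\bigr)$ over the finitely many pairs $v_i,v_j\in V$, and finiteness of $V$ then turns any finite bound into a uniform one. To get a finite bound, choose maximal simplices $\Delta^0,\Delta^1$ supported on edges of $\ov X$ at $v_i$ and at $v_j$ respectively (possible since no component of $\ov X$ is a point), pick $g_0\in f(\Delta^0)$ and $g_m\in f(\Delta^1)$ — non-empty by Lemma~\ref{lem:realisation_exists} — and join $g_0$ to $g_m$ by an edge path $g_0,\dots,g_m$ in $\Cay{G}{S}$. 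For each $g_l$ build, as above, a maximal simplex $\Delta_l$ supported on an edge at $g_lv_1$ with $g_l\in f(\Delta_l)$. Consecutive $\Delta_l,\Delta_{l+1}$ are then $\W$-adjacent (their realisations are $\dist_G\le 1$ apart), and $\Delta^0\sim_\W\Delta_0$, $\Delta_m\sim_\W\Delta^1$ because $g_0\in f(\Delta^0)\cap f(\Delta_0)$ and $g_m\in f(\Delta^1)\cap f(\Delta_m)$. Reading off one tip out of each simplex along this chain, and capping off with the edges from ``$1$ in $\Squid(v_i)$'' to the tip $v_i$ and from the tip $v_j$ to ``$1$ in $\Squid(v_j)$'' (legitimate since $1\in P_{v_i}\cap P_{v_j}$), produces a finite path in $X^{+\W}$ between the two vertices. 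This last step is where I expect the main difficulty: when $\ov X$ is disconnected — as happens for the Veech-group examples — the tips $v_i$ and $v_j$ need not be joined inside $X$ at all, and one genuinely has to use Lemma~\ref{lem:realisation_exists} and Corollary~\ref{cor:g_in_edge_prod_region} to navigate $X^{+\W}$ by moving along $G$ and bridging distinct components through Type~1 $\W$-edges. (As a byproduct, this also re-proves that $X^{+\W}$ is connected, which is implicit in the statement.)
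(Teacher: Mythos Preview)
Your proof is correct and uses the same core ingredients as the paper: the natural map $X^{+\W}\to\h G$ collapsing squids, Lipschitzness via Corollary~\ref{cor:close_regions} and Lemma~\ref{lem:W-edge_close}, and the inverse direction via Corollary~\ref{cor:g_in_edge_prod_region} together with Type~1 $\W$-edges. The paper's proof is organised more economically: rather than building an explicit section and establishing your bound $D$, it simply shows that whenever $\dist_G(P_v,P_{v'})\le 1$ the tips $v,v'$ are $\W$-adjacent, which is exactly your $\Delta_g\sim_\W\Delta_{g'}$ step stripped of the surrounding scaffolding. Your finer map (base points to actual group elements rather than collapsing the whole squid) and your detour through the finite-pairs bound $D$ both work, but they add length without adding content; the paper's version avoids the auxiliary section entirely.
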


\begin{proof}
    Define a coarse map $P_{p(\cdot)}\colon X^{+\W}\to \h G$, which, at the level of vertices, maps each $\Squid(v)$ to the product region $P_v$, which is bounded in $\h G$. This map is coarsely surjective, as $G$ is covered by the union of the $P_v$s.

    Next, we show that $P_{p(\cdot)}$ is Lipschitz, by showing that whenever $a,b\in (X^{+\W})^{(0)}$ are joined by an edge then $P_{p(a)}$ and $P_{p(b)}$ are uniformly close. If $a,b$ are joined by an edge of $X$ then either they belong to the same cone, and thus they both map to $P_{p(a)}$, or $p(a)$ and $p(b)$ are $\ov{X}$-adjacent, and therefore $\dist_G(P_{p(a)}, P_{p(b)})\le 2R$ by Corollary~\ref{cor:close_regions}. If instead $a,b$ belong to $\W$-adjacent maximal simplices $\Delta$ and $\Sigma$, then
    $$\dist_G(P_{p(a)}, P_{p(b)})\le 2R+\dist_G(N_R(P_{p(a)}), N_R(P_{p(b)}))\le 2R+\dist_G(f(\Delta), f(\Sigma)).$$
    But then $\dist_G(f(\Delta), f(\Sigma))$ is bounded above by the constant $\widetilde{K}$ from Lemma~\ref{lem:W-edge_close}.
\par\medskip
    Finally, in order to prove that $P$ is a quasi-isometry we are left to show that  $v$ and $v'$ are joined by a $\W$-edge whenever $\dist_G(P_v,P_{v'})\le 1$,. Let $g\in P_v$ and $g'\in P_{v'}$ be such that $\dist_G(g,g')\le 1$. By Corollary~\ref{cor:g_in_edge_prod_region}, there exist $w\in\link_{\ov X}(v)$ and $w'\in\link_{\ov X}(v')$ such that $g\in N_R(P_w)$ and $g'\in N_R(P_{w'})$, thus let $k\in P_w$ and $k'\in P_{w'}$ be $R$-close to $g$ and $g'$, respectively. Now, by construction $g$ belongs to $f(\Delta(\Proj_v(g),\Proj_w(k)))$, and similarly $g'\in f(\Delta(\Proj_{v'}(g'),\Proj_{w'}(k')))$. But then, since $\dist_G(g,g')\le 1$, we see that the simplices $\Delta(\Proj_v(g),\Proj_w(k))$ and $\Delta(\Proj_{v'}(g'),\Proj_{w'}(k'))$ have close realisations, and this implies that $v$ and $v'$ are $\W$-adjacent.
\end{proof}

Now, for every $v\in \ov X^{(0)}$, let $\h H_v$ be the cone-off graph of $H_v$ with respect to the finite collection $\{\p_v(Z_{hv'})\}$, as in Definition~\ref{defn:weak_hyp}. Such graph is hyperbolic by e.g \cite[Theorem 7.11]{Bowditch_relhyp}, which applies as all $\p_v(Z_{hv'})$ are quasiconvex subgroups of the hyperbolic group $H_v$.
\begin{lemma}[$\Delta$ of edge-type]\label{lem:hyp_link_edge_type}
    Let $\Delta=\{(v,x)\}$ be of edge-type. Then $\C(\Delta)$ is $\Stab{G}{v}$-equivariantly, uniformly quasi-isometric to $\h H_v$.
\end{lemma}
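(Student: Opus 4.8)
The plan is to recognise $\C(\Delta)$ as the main coordinate space of the factor $F_{\U_v}$, to describe that factor via the quasi-action of $H_v$, and to match its proper product regions with cosets of the subgroups $\p_v(Z_{hv'})$; the statement then drops out of the general identification ``main coordinate space $=$ space with product-region stabilisers coned off'' (Remark~\ref{rem:top_guy_HHG}). Since $\Delta=\{(v,x)\}$ is of edge-type, $[\Delta]=\U_v$ and $\link_X(\Delta)=p^{-1}(\link_{\ov X}(v))$ by Corollary~\ref{cor:bounded_links}. First I would record that $F_{\U_v}$ is a (normalised, after Remark~\ref{rem:normalise}) HHS with domain set $\{V\in\frakS:V\nest\U_v\}$ whose only domains carrying unbounded coordinate spaces are $\U_v$ --- which is $\nest$-maximal in $F_{\U_v}$ --- and the $\ell_w$ for $w\in\link_{\ov X}(v)$, each nested in $\U_v$ (Remark~\ref{rem:unbounded_dom_short_hhg}); hence $\C(\Delta)=\C\U_v$ is the main coordinate space of $F_{\U_v}$.

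Next I would apply \cite[Corollary 2.9]{hhs_asdim}, as recalled in Remark~\ref{rem:top_guy_HHG}, to the HHS $F_{\U_v}$: its main coordinate space $\C\U_v$ is $\Stab{G}{v}$-equivariantly quasi-isometric to the factored space obtained by coning off the proper product regions of $F_{\U_v}$. Using the explicit form of gates onto factors (Remark~\ref{rem:factors_are_hqc}) together with the link relations of Remark~\ref{rem:unbounded_dom_short_hhg}, I would check that --- up to uniformly bounded Hausdorff distance --- the unbounded proper product regions of $F_{\U_v}$ are precisely the sets $\gate_{F_{\U_v}}(P_e)$ for $e=\{v,w\}$ with $w\in\link_{\ov X}(v)$ and $Z_w$ infinite, where $P_e$ is the edge product region; and that each such set is quasi-isometric to the quasiline $\C\ell_w$.

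Then I would pass to $H_v$. By Claim~\ref{claim:quasiaction_geometric_H_V}, $\Stab{G}{v}$ acts on $F_{\U_v}$ with $Z_v$ acting by uniformly bounded orbits, so the action induces a geometric quasi-action of $H_v$; as in the last paragraph of the proof of Lemma~\ref{lem:Hv_hyp_1}, \cite[Proposition 4.4]{Manning_pseudochar} provides an $H_v$-equivariant quasi-isometry $F_{\U_v}\to\Cay{H_v}{T}$ for a finite generating set $T$. Under this quasi-isometry I would argue that $\gate_{F_{\U_v}}(P_e)$ sits within uniformly bounded Hausdorff distance of a coset of $\p_v(Z_w)$: the edge product region $P_e$ coarsely coincides with $\text{P}\Stab{G}{e}=\Stab{G}{v}\cap\Stab{G}{w}$ (Lemma~\ref{lem:cobounded_stab_for_short}), which contains $\langle Z_v,Z_w\rangle$ with finite index by Definition~\ref{defn:squid_material}.\eqref{squid_material:edge_stab}; as $Z_w$ acts coboundedly on $\C\ell_w$ and $Z_v$ acts with bounded orbits on $F_{\U_v}$, the image $\p_v(Z_w)=\p_v(\langle Z_v,Z_w\rangle)$ acts coboundedly on $\gate_{F_{\U_v}}(P_e)$, and $\p_v(Z_w)$ is quasiconvex, hence undistorted, in the hyperbolic group $H_v$ (again Definition~\ref{defn:squid_material}.\eqref{squid_material:edge_stab}). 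Because $\Stab{G}{v}$ acts on $\link_{\ov X}(v)$ with finitely many orbits --- represented by the vertices $h_i^jv_{i(j)}$ of Notation~\ref{notation:pre_def_squid} --- these cosets form exactly the $H_v$-translates of the cosets of $\{\p_v(Z_{h_i^jv_{i(j)}})\}$.

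Finally, I would conclude: coning off the proper product regions of $F_{\U_v}$ matches, through the quasi-isometry $F_{\U_v}\to\Cay{H_v}{T}$, coning off bounded neighbourhoods of the cosets of $\{\p_v(Z_{hv'})\}$; since the two families of subsets coarsely coincide and the coned-off subgroups are undistorted, this changes the metric only up to quasi-isometry, whence $\C(\Delta)$ is quasi-isometric to $\widehat H_v$. All the quasi-isometries are $\Stab{G}{v}$-equivariant and descend to maps compatible with the $H_v$-action on $\widehat H_v$, because $Z_v$ acts with uniformly bounded orbits at every stage, and uniformity in $v$ follows from there being finitely many $G$-orbits of vertices (all gate constants, quasimorphism defects and quasiconvexity constants being uniform). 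The hard part will be the identification of $\gate_{F_{\U_v}}(P_e)$ with a coset of $\p_v(Z_w)$ up to bounded Hausdorff distance: this is the only step that genuinely uses the squid-material hypotheses --- both clauses of Definition~\ref{defn:squid_material}.\eqref{squid_material:edge_stab} and the coboundedness of edge product regions --- rather than formal HHS bookkeeping, and getting a Hausdorff bound (not merely a quasi-isometry) is exactly what forces one to invoke undistortedness of $\p_v(Z_w)$ in $H_v$.
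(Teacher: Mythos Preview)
Your argument is circular. The lemma sits inside the proof of Theorem~\ref{thm:squidification}: at this point in the paper we have only the squid materials of Definition~\ref{defn:squid_material}, the blown-up graph $X$ from Definition~\ref{defn:blowup_for_squidification}, and the $\W$-edges of Definition~\ref{defn:W-edges}; we are in the middle of \emph{verifying} the combinatorial HHS axioms for $(X,\W)$. There is no HHS structure on $G$ yet, hence no factor $F_{\U_v}$, no product region $P_e$ in the sense of Definition~\ref{defn:factor}, no gate $\gate_{F_{\U_v}}$, and no applicability of \cite[Corollary 2.9]{hhs_asdim} or Remark~\ref{rem:top_guy_HHG}. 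Likewise, Remark~\ref{rem:unbounded_dom_short_hhg}, Lemma~\ref{lem:cobounded_stab_for_short}, Lemma~\ref{lem:Hv_hyp_1} and Claim~\ref{claim:quasiaction_geometric_H_V} are all statements about \emph{short HHGs}, which is precisely what Theorem~\ref{thm:squidification} is meant to establish; invoking them here assumes the conclusion.

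The paper's proof avoids this by working entirely at the combinatorial level: it writes down an explicit $\Stab{G}{v}$-equivariant map $\theta_v\colon\C(\Delta)\to\widehat H_v$ sending each squid $\Squid(w)$, with $w=ghv'$, to the coset $\p_v(gZ_{hv'})$, and checks directly that $\theta_v$ is coarsely Lipschitz with a coarsely Lipschitz inverse. The forward direction uses only Lemma~\ref{lem:edges_in_link}, Lemma~\ref{lem:W-edge_close}, and the squid-material hypothesis Definition~\ref{defn:squid_material}.\eqref{squid_material:edge_stab}; the backward direction reads the definition of staple edges off the metric on $H_v$. Nothing about factors, HHS gates, or coboundedness of product regions is needed --- those become available only \emph{after} Theorem~\ref{thm:squidification} is proved. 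If you want to salvage your strategy, you would have to replace every reference to $F_{\U_v}$, $P_e$, and the HHS machinery by the concrete objects actually at hand (cosets of $\Stab{G}{v_i}$, the edge product regions $N_R(P_v)\cap N_R(P_w)$ of Claim~\ref{claim:Pe_coarsely_square}, the maps $\Proj_v$), at which point you are essentially redoing the paper's argument.
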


\begin{proof}
Since $G$ acts on both $X$ and $\W$ by isometries, we can assume without loss of generality that $v\in V$, so that $P_v=\Stab{G}{v}$. Furthermore, every $w\in \link_{\ov X}(v)$ is of the form $w=ghv'$, where $g\in \Stab{G}{v}$, $h\in G$, and $v'\in V$ are as in Notation~\ref{notation:pre_def_squid}.

Then we can define a coarse map $\theta_v\colon\C(\Delta)\to \h H_v$ which, at the level of vertices, maps the whole $\Squid(w)$ under $w=ghv'$ to the set 
$$\theta_v(w)=\bigcup_{g'} \p_v(g' Z_{hv'}),$$
where $g'$ varies among all elements in $\Stab{G}{v}$ such that $g'hv'=ghv'=w$. Notice that, by construction, $\theta_v$ is $\Stab{G}{v}$-equivariant.

We first point out that $\theta_v(w)$ is always uniformly bounded in $\h{H}_v$, and therefore $\theta_v$ is a well-defined coarse map. Indeed, if $g'\in \Stab{G}{v}$ is such that $w=ghv'=g'hv'$, we have that $g^{-1}g'\in\Stab{G}{v}\cap\Stab{G}{hv'}$, which by Definition~\ref{defn:squid_material}.\eqref{squid_material:edge_stab} virtually coincides with $\langle Z_v, Z_{hv'}\rangle$. Then $\p_v(g^{-1}g')$ lies in a finite index overgroup of $\p_v(Z_{hv'})$, which in turn means that the distance $$\dist_{\h H_v}(\p_v(g Z_{hv'}), \p_v(g' Z_{hv'}))\le \dist_{\h H_v}(\p_v(g), \p_v(g'))$$ is uniformly bounded. Furthermore, $\theta_v$ is coarsely surjective, as the cosets of the various $\p_v(Z_{hv'})$ cover $H_v$.

We now prove that $\theta_v$ is coarsely Lipschitz. Pick $a,b\in\link(\Delta)$ which are $\W$-adjacent, and let $w_a=p(a)$ and $w_b=p(b)$. We want to show that, if $w_a=g_a h_a v_a$ and $w_b=g_b h_b v_b$, then $\p_v(g_a Z_{h_av_a})$ is uniformly close to $\p_v(g_b Z_{h_bv_b})$ in $H_v$. By fullness of links, Lemma~\ref{lem:edges_in_link}, we can find two $\W$-adjacent maximal simplices $a\in \Sigma_a$ and $b\in\Sigma_b$ which extend $\Delta$. Then we have that
$$\dist_G(N_R(P_v)\cap N_R(P_{w_a}), N_R(P_v)\cap N_R(P_{w_b}))\le \dist_G(f(\Sigma_a),f(\Sigma_b))\le \widetilde{K},$$
where the second inequality is Lemma~\ref{lem:W-edge_close}. Now, by adapting the argument of Claim~\ref{claim:Pe_coarsely_square}, one sees that the Hausdorff distance between $N_R(P_v)\cap N_R(P_{w_a})$ and $g_a\langle Z_v, Z_{h_a v_a}\rangle$ is bounded by the constant $C_0$, and similarly if we replace $a$ by $b$. Thus
$$\dist_G(g_a\langle Z_v, Z_{h_a v_a}\rangle,g_b\langle Z_v, Z_{h_b v_b}\rangle)\le 2C_0+ \widetilde{K}.$$
Now, by Remark~\ref{rem:Pv_to_Lv}, there exists a constant $N\ge 0$, depending on $C_0$ and $\widetilde{K}$, such that 
$$\dist_{\Stab{G}{v}}(g_a\langle Z_v, Z_{h_a v_a}\rangle,g_b\langle Z_v, Z_{h_b v_b}\rangle)\le N,$$
where $\dist_{\Stab{G}{v}}$ is the word metric we previously fixed on $\Stab{G}{v}$. Then we can take the quotient projection to $H_v$, which is $1$-Lipschitz, and see that 
$$\dist_{H_v}(\p_v(g_a Z_{h_av_a}),\p_v(g_b Z_{h_bv_b}))\le N.$$

To conclude that $\theta_v$ is a quasi-isometry, we can almost read the above argument backwards. Indeed, suppose that $w_a=g_a h_a v_a$ and $w_b=g_b h_b v_b$ are such that $$\dist_{H_v}(\p_v(g_a Z_{h_av_a}),\p_v(g_b Z_{h_bv_b}))\le 1,$$
and we want to show that $w_a$ is $\W$-adjacent to $w_b$. Taking the preimages with respect to $\p_v$, we see that 
$$\dist_{\Stab{G}{v}}(g_a \langle Z_v, Z_{h_av_a}\rangle,g_b \langle Z_vZ_{h_bv_b}\rangle)\le 1.$$
Now, $\dist_G$ is bounded above by the intrinsic distance $\dist_{\Stab{G}{v}}$, as we chose the generating set for $G$ to contain a generating set for $\Stab{G}{v}$. Thus we get that 
$$\dist_G(g_a \langle Z_v, Z_{h_av_a}\rangle,g_b \langle Z_vZ_{h_bv_b}\rangle)\le 1,$$
and in turn, as $g_a \langle Z_v, Z_{h_av_a}\rangle\subseteq N_R(P_{w_a})$ and symmetrically for $b$, we get that $\dist_G(N_R(P_{w_a}), N_R(P_{w_b}))\le 1$. Then we see that there exists a staple edge between some simplex supported on $\{v,w_a\}$ and some simplex supported on $\{v,w_b\}$, so in particular $w_a$ and $w_b$ are $\W$-adjacent.
\end{proof}

\begin{lemma}[$\Delta$ of triangle-type]\label{lem:hyp_link_triangle_type} Let $\Delta=\{(v,x),(w)\}$ be a simplex of triangle-type. Then the identity map on $(L_w)^{(0)}$ is a $\Stab{G}{w}$-equivariant, uniform quasi-isometry $\lambda_w\colon L_w \to \C(\Delta)$.
\end{lemma}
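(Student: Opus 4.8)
The plan is to unwind the definition of the augmented link $\C(\Delta)$ for a triangle-type simplex and then check that the identity map on vertex sets is a quasi-isometry with uniform constants. First I would note that by Corollary~\ref{cor:bounded_links} we have $\link_X(\Delta)=(L_w)^{(0)}$, and by Remark~\ref{rem:saturation_in_Short} the saturation $\Sat(\Delta)=\{w\}\cup\bigcup_{u\in\link_{\ov X}(w)}\Squid(u)$ is disjoint from $(L_w)^{(0)}$: the apex $w$ and the bases of the squids over the neighbours of $w$ are all vertices distinct from the base of $\Squid(w)$. Hence the vertex set of $\C(\Delta)$ is exactly $(L_w)^{(0)}$, so $\lambda_w$ is a bijection on vertex sets, and it is $\Stab{G}{w}$-equivariant since on the common vertex set $(L_w)^{(0)}$ both the action on $L_w$ and the action on $\C(\Delta)$ are the restriction of the simplicial $G$-action on $X$. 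It then remains to bound $\dist_{\C(\Delta)}$ and $\dist_{L_w}$ against each other on adjacent vertices; uniformity of the resulting constants is automatic, as there are only finitely many $G$-orbits of vertices of $\ov X$ and hence of all the objects involved.

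Next I would show $\lambda_w$ is $1$-Lipschitz. Let $a,b\in(L_w)^{(0)}$ be adjacent in $L_w$. By Remark~\ref{rem:Pv_to_Lv} the projection $\Proj_w\colon P_w\to L_w$ is the identity on the common underlying set, and $R\ge1$ by Notation~\ref{notation:constants_squidification}; so the element $a$, viewed inside $P_w$, lies in both $N(a)$ and $N(b)$, whence $\dist_G(N(a),N(b))=0\le T+1$. By Definition~\ref{defn:W-edges} the maximal simplices $\Delta(x,a)$ and $\Delta(x,b)$ (which both extend $\Delta$) are then joined by a staple edge of $\W$, so $a$ and $b$, being vertices of $\W$-adjacent maximal simplices of $X$ lying outside $\Sat(\Delta)$, are adjacent in $\C(\Delta)$.

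For the reverse bound, suppose $a,b\in(L_w)^{(0)}$ are adjacent in $\C(\Delta)$; since $(L_w)^{(0)}$ carries no edges of $X$, this means $a$ and $b$ lie in $\W$-adjacent maximal simplices $\sigma=\{(u,z),(w,a)\}$ and $\rho=\{(u',z'),(w,b)\}$, every maximal simplex through a base vertex of $\Squid(w)$ having this shape by Corollary~\ref{cor:bounded_links}. By Lemma~\ref{lem:W-edge_close} I may pick $p\in f(\sigma)$ and $q\in f(\rho)$, non-empty by Lemma~\ref{lem:realisation_exists}, with $\dist_G(p,q)\le\widetilde K$. Since $p\in f(\sigma)\subseteq N(a)$, there is $g_a\in P_w$ with $\dist_G(p,g_a)\le R$ and $\dist_{L_w}(a,g_a)\le R$ (again using $\Proj_w=\mathrm{id}$), and similarly $g_b\in P_w$ for $b$; then $\dist_G(g_a,g_b)\le\widetilde K+2R$, and feeding this into the coarsely Lipschitz map $\Proj_w\colon(P_w,\dist_G)\to L_w$ of Remark~\ref{rem:Pv_to_Lv} bounds $\dist_{L_w}(a,b)$ by a uniform constant. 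Together with the previous paragraph this shows $\lambda_w$ is a uniform quasi-isometry.

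I expect the main obstacle to be the bookkeeping in this last step: the maximal simplices $\sigma,\rho$ containing $a$ and $b$ need not extend $\Delta$, so one cannot directly invoke the quasi-isometric embedding of a single edge product region from Lemma~\ref{lem:realisation_exists}, and must instead keep track only of the $L_w$-coordinates and appeal to the uniform $\W$-edge bound of Lemma~\ref{lem:W-edge_close}. The other delicate point is the identification of $\Proj_w$ with the identity on the underlying set, which is precisely what makes the forward (staple-edge) direction essentially immediate.
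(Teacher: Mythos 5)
Your proof is correct, and it follows the paper's own argument closely in the forward ($1$-Lipschitz) direction. In the reverse direction you take a mildly different route: where the paper first invokes fullness of links (Lemma~\ref{lem:edges_in_link}) to replace the two $\W$-adjacent maximal simplices containing $a$ and $b$ by simplices extending $\Delta$ — i.e. ones that share the same $v$-coordinate vertex $x$ — and then observes that in both edge types this forces $\dist_G(N(a),N(b))\le T+1$, you instead work with the original arbitrary $\W$-adjacent maximal simplices $\sigma,\rho$ and apply Lemma~\ref{lem:W-edge_close} to bound $\dist_G(f(\sigma),f(\rho))\le\widetilde K$, then unpack $N(a)$ and $N(b)$ and feed the resulting uniform $\dist_G$-bound on elements of $P_w$ into Remark~\ref{rem:Pv_to_Lv}. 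This avoids the detour through Lemma~\ref{lem:edges_in_link}, at the cost of leaning on the full strength of Lemma~\ref{lem:W-edge_close} (whose proof itself already combines both edge types and the gate machinery); the net constants are the same order. Both routes are legitimate, and the preliminary paragraph you added (checking that $(L_w)^{(0)}$ is disjoint from $\Sat(\Delta)$, hence the vertex sets genuinely agree and the map is equivariant) is a worthwhile detail the paper leaves implicit.
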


\begin{proof}
First, we show that $\lambda_w$ is $1$-Lipschitz, that is, if $y,z\in (L_w)^{(0)}$ are adjacent in $L_w$ then they are connected by a $\W$-edge. Indeed, choose $g\in P_w$ such that $\Proj_w(g)=y$, and notice that $g$ belongs to both $N(y)$ and $N(z)$ as $\dist_{L_w}(\Proj_w(g), z)=\dist_{L_w}(y,z)\le 1\le R$. In other words, $N(y)$ intersects $N(z)$, which implies that $\Delta(x,y)$ and $\Delta(x,z)$ are connected by a staple edge.

Conversely, suppose that $y,z\in (L_w)^{(0)}$ are $\W$-adjacent, and we want to show that $y,z$ are uniformly close in $L_w$. By fullness of links, Lemma~\ref{lem:edges_in_link}, we can assume that the simplices $\Delta(x,y)$ and $\Delta(x,z)$ are $\W$-adjacent, and in particular $\dist_G(N(y),N(z))\le T+1$, regardless of the type of edge. Thus we can find elements  $g, h \in P_w$ such that $\Proj_w(g)$ and $\Proj_w(h)$ are $R$-close to $y$ and $z$, respectively, and $\dist_G(g,h)\le 2R+T+1$. Then Remark~\ref{rem:Pv_to_Lv} grants the existence of some constant $N\ge 0$, depending on $R$ and $T$, such that 
$$\dist_{L_w}(y,z)\le 2R+ \dist_{L_w}(\Proj_w(g),\Proj_w(h))\le 2R+N,$$
as required.
\end{proof}

\subsubsection{Quasi-isometric embeddings}\label{subsec:link_qi_emb}
The final axiom to check, in order to prove that $(X,\W)$ defines a combinatorial HHS, is that every augmented link $\C(\Delta)$ is quasi-isometrically embedded in $Y_\Delta$. Again, we look at all possible shapes of $\link(\Delta)$, according to Corollary~\ref{cor:bounded_links}. If $\link(\Delta)$ has diameter at most $2$, or if $\Delta=\emptyset$ so that $\C(\emptyset)=Y_\emptyset=X^{+\W}$, then the conclusion is trivial. Then there are two cases left to consider.

\begin{lemma}[$\Delta$ of edge-type]\label{lem:qi_embedding_edge_case}
    Let $\Delta=\{(v,x)\}$ be of edge-type. Then there is a coarsely Lipschitz, coarse retraction
    $$\varrho_{\link_{\ov X}(v)}\colon Y_\Delta\to \C(\Delta),$$
    whose constants are independent of $v$. In particular, $\C(\Delta)$ is quasi-isometrically embedded in $Y_\Delta$.
\end{lemma}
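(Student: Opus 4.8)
The plan is to construct $\varrho_{\link_{\ov X}(v)}$ explicitly on vertices using the gate maps from the squid materials, and to check coarse Lipschitzness by transporting everything through the $\Stab{G}{v}$-equivariant quasi-isometry $\theta_v\colon\C(\Delta)\to\h H_v$ of Lemma~\ref{lem:hyp_link_edge_type}. First I would dispose of the degenerate case: if $v$ has valence $\le 1$ in $\ov X$ then $\link_{\ov X}(v)$ is a single vertex $w$ (non-empty, since no component of $\ov X$ is a point), so $\C(\Delta)=p^{-1}(w)=\Squid(w)$ has diameter $2$ and the statement is vacuous. So assume $v$ has valence at least $2$; then $\Sat(\Delta)=\Squid(v)$ (because $\ov X$ is square-free, as recalled before Lemma~\ref{lem:strong_bgi_for_short_edge_type}), hence $Y_\Delta=X^{+\W}$ with the vertices of $\Squid(v)$ removed and, by Corollary~\ref{cor:bounded_links}.\eqref{cor:bounded_links_edge}, $\C(\Delta)$ has vertex set $p^{-1}(\link_{\ov X}(v))^{(0)}$. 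In particular every vertex $a$ of $Y_\Delta$ has $p(a)\neq v$, so either $p(a)\in\link_{\ov X}(v)$ or $\dist_{\ov X}(v,p(a))\ge 2$.

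I would then define $\varrho=\varrho_{\link_{\ov X}(v)}$ on vertices by $\varrho(a)=p(a)$ when $p(a)\in\link_{\ov X}(v)$, and $\varrho(a)=w(u)$ when $u=p(a)$ satisfies $\dist_{\ov X}(v,u)\ge 2$, where $w(u)\in\link_{\ov X}(v)$ is a vertex furnished by Definition~\ref{defn:squid_material}.\eqref{squid_material:gates} with $\gate_v(P_u)\subseteq N_{B+r}(P_{w(u)})$ (fix one choice per $u$; only finitely many choices are involved up to the $G$-action). Since any $a\in\C(\Delta)^{(0)}$ has $\varrho(a)=p(a)$ within distance $2$ of $a$ in $\C(\Delta)$, this $\varrho$ is a coarse retraction, so it remains to bound $\dist_{\C(\Delta)}(\varrho(a),\varrho(b))$ for an arbitrary edge $\{a,b\}$ of $Y_\Delta$.

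The core observation is that, through $\theta_v$, the point $\varrho(a)$ is uniformly coarsely the cone point of $\h H_v$ attached to $\p_v(\gate_v(P_{p(a)}))$. Indeed $\gate_v(P_{p(a)})\subseteq E_v$, and by Definition~\ref{defn:squid_material}.\eqref{squid_material:edge_stab} together with the Hruska--Wise bound used in Claim~\ref{claim:Pe_coarsely_square}, the set $E_v\cap N_B(P_w)$ for $w\in\link_{\ov X}(v)$ — resp. $E_v\cap N_{B+r}(P_{w(u)})$ — lies within uniform Hausdorff distance of a coset of $\Stab{G}{v}\cap\Stab{G}{w}$, which virtually coincides with $\langle Z_v,Z_w\rangle$; applying $\p_v$ (which kills $Z_v$) sends it into a uniformly bounded neighbourhood of a coset of $\p_v(Z_w)$, i.e.\ of a single cone point of $\h H_v$, and by the description of $\theta_v$ in the proof of Lemma~\ref{lem:hyp_link_edge_type} this cone point uniformly coarsely equals $\theta_v(p(a))$, resp.\ $\theta_v(w(u))$. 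Now for an edge $\{a,b\}$ of $Y_\Delta$: if it lies inside a squid then $p(a)=p(b)$ and there is nothing to prove; if it is an edge of $X$ between two squids then $p(a),p(b)$ are $\ov X$-adjacent, so $\dist_G(P_{p(a)},P_{p(b)})\le 2R$ by Corollary~\ref{cor:close_regions}; if it is a $\W$-edge coming from $\W$-adjacent maximal simplices $\Delta^a\ni a$, $\Delta^b\ni b$, then $f(\Delta^a)\subseteq N_R(P_{p(a)})$ and $f(\Delta^b)\subseteq N_R(P_{p(b)})$ while $\dist_G(f(\Delta^a),f(\Delta^b))\le\widetilde K$ by Lemma~\ref{lem:W-edge_close}, so $\dist_G(P_{p(a)},P_{p(b)})\le 2R+\widetilde K$. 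In every case we get $g\in P_{p(a)}$, $g'\in P_{p(b)}$ with $\dist_G(g,g')$ uniformly bounded, hence $\gate_v(g),\gate_v(g')$ uniformly $\dist_G$-close (coarse Lipschitzness of $\gate_v$), hence uniformly close in the word metric of $\Stab{G}{v}$ by Remark~\ref{rem:Pv_to_Lv}, hence uniformly close in $\h H_v$ after the $1$-Lipschitz map $\p_v$ and coning off. Combined with the core observation this bounds $\dist_{\h H_v}(\theta_v(\varrho(a)),\theta_v(\varrho(b)))$ uniformly, and applying a quasi-inverse of $\theta_v$ (uniform constants) bounds $\dist_{\C(\Delta)}(\varrho(a),\varrho(b))$. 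Thus $\varrho$ is coarsely Lipschitz with constants independent of $v$ (there being finitely many $G$-orbits of edge-type simplices), and the standard fact that a coarsely Lipschitz coarse retraction $Y_\Delta\to\C(\Delta)$ forces $\C(\Delta)\hookrightarrow Y_\Delta$ to be a quasi-isometric embedding finishes the proof.

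The main obstacle I anticipate is the bookkeeping in the core observation: making the chain of coset/neighbourhood comparisons — $\gate_v(P_\bullet)\subseteq E_v\cap N_\bullet(P_\bullet)$, Hruska--Wise, finite index of $\langle Z_v,Z_w\rangle$, and the explicit form of $\theta_v$ — fit together with uniform constants, in particular ensuring one lands on a \emph{single} cone point of $\h H_v$ and not an unbounded subset, while carefully tracking which metric ($\dist_G$, the word metric of $\Stab{G}{v}$, $\dist_{H_v}$, $\dist_{\h H_v}$, $\dist_{\C(\Delta)}$) is in play at each step. The edge case analysis on $Y_\Delta$ and the reduction to valence $\ge 2$ are routine by comparison (and triangle-freeness makes the subcase of two $\link_{\ov X}(v)$-vertices being $\ov X$-adjacent vacuous, although that subcase is already subsumed by the uniform argument above).
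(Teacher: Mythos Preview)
Your proof is correct, but it takes a genuinely different route from the paper's.

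You and the paper agree on the setup: disposing of the valence-one case, identifying $Y_\Delta$ with $p^{-1}(\ov X-\{v\})$, and defining $\varrho$ via $w(u)$ coming from the gate axiom. The divergence is in the Lipschitz check. You transport everything through the quasi-isometry $\theta_v\colon\C(\Delta)\to\h H_v$: from $\dist_G(P_{p(a)},P_{p(b)})$ bounded, you pass to $\gate_v$, then to $H_v$ via $\p_v$, then to $\h H_v$, and finally invoke your ``core observation'' that $\theta_v(\varrho(a))$ coarsely coincides with the cone point picked out by $\p_v(\gate_v(P_{p(a)}))$. This is valid, but (as you yourself anticipate) the core observation requires aligning several layers of coset bookkeeping.

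The paper instead argues directly inside $\C(\Delta)$, avoiding $\theta_v$ entirely. From $\dist_G(\gate_v(P_u),\gate_v(P_{u'}))\le T$ it observes that $P_v\cap N_R(P_{w(u)})$ and $P_v\cap N_R(P_{w(u')})$ are $T$-close, and this is precisely what one needs to place a \emph{staple edge} between simplices supported on $\{v,w(u)\}$ and $\{v,w(u')\}$, making $w(u)$ and $w(u')$ adjacent in $\C(\Delta)$. For the awkward subcase of a staple edge between $a$ and $b$ (where no single realisation bound applies directly), the paper uses the common $\ov X$-neighbour $z$ and the triangle inequality through $w(z)$, rather than appealing to the uniform bound $\widetilde K$ from Lemma~\ref{lem:W-edge_close} as you do. The paper's approach is shorter and sidesteps the metric-juggling you flag as the main obstacle; your approach has the minor conceptual advantage of making explicit that Lemma~\ref{lem:hyp_link_edge_type} already encodes the retraction, so this lemma is in some sense a corollary of that one.
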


\begin{proof}
If $v$ has valence one in $\ov X$, that is, if $\link_{\ov X}(v)$ is a single vertex $w$, then $\link(\Delta)=\Squid(w)$ is uniformly bounded, and we have nothing to prove. Otherwise, as $\ov X$ is square-free, there is no $v'\neq v$ such that $\link_{\ov X}(v)=\link_{\ov X}(v')$. This implies that 
$$Y_{\Delta}=p^{-1}(\ov X-\{v\}).$$
Up to the action of $G$, assume that $v\in V$. Define $\varrho_{\link_{\ov X}(v)}$ by mapping the whole $\Squid(u)$ under $u\in\ov{X}^{(0)}-\{v\}$ to a vertex $w(u)\in\link_{\ov X}(v)$, chosen as follows:
\begin{itemize}
    \item If $\dist_{\ov X}(u,v)\ge 2$, choose any $w(u)$ such that $\gate_v(P_u)\subseteq N_R(P_{w(u)})$ (such a vertex exists by Definition~\ref{defn:squid_material}.\eqref{squid_material:gates}, combined with our choice of $R\ge B+r$);
    \item If instead $u\in\link_{\ov X}(v)$ set $w(u)=u$. Notice that, in this case as well, Definition~\ref{defn:squid_material}.\eqref{squid_material:gates} and our choice of $R$ give that $\gate_v(P_u)\subseteq N_R(P_{w(u)})$.
\end{itemize}

By construction, $\varrho_{\link_{\ov X}(v)}$ is a coarse retraction onto $\C(\Delta)$, so it is enough to prove that it is Lipschitz. Let $u,u'\in \ov{X}^{(0)}-\{v\}$ be adjacent in $Y_\Delta$, and we claim that $\dist_{\C(\Delta)}(w(u), w(u'))\le 2$. There are several cases to consider.

\begin{itemize}
    \item If $\dist_{\ov{X}}(u,u')\le 1$, then by Corollary~\ref{cor:close_regions} we have that $\dist_G(P_u, P_{u'})\le 2R$. But then $$\dist_G(P_v\cap N_R(P_{w(u)}), P_v\cap N_R(P_{w(u')}))\le \dist_G(\gate_v(P_u), \gate_v(P_{u'}))\le T,$$
where we used that we chose $T$ in such a way that, if two points in $G$ are $2R+1$-close, then their gates are $T$-close. In particular, there exists a staple edge between $w(u)$ and $w(u')$.
\item Suppose that $\dist_{\ov{X}}(u,u')\ge 2$ and they are joined by a $\W$-edge. If there exists $\Delta$ and $\Delta'$ with close realisations, whose supports contain $u$ and $u'$, respectively, then in particular $\dist_G(P_u, P_{u'})\le 2R+1$, and again this implies the existence of a staple edge between $w(u)$ and $w(u')$.
\item If instead $u$ and $u'$ are joined by a staple edge, then there exists  $z\in\ov{X}^{(0)}$ which is $\ov X$-adjacent to both $u$ and $u'$. If $z=v$ then $u,u'\in\link_{\ov X}(v)$, and therefore $w(u)=u$ and $w(u')=u'$ are already $\W$-adjacent. Otherwise, $w(z)$ is also well-defined, and by the first bullet we get that 
$$\dist_{\C(\Delta)}(w(u), w(u'))\le \dist_{\C(\Delta)}(w(u), w(z))+\dist_{\C(\Delta)}(w(z), w(u'))\le 2,$$
as required.\qedhere
\end{itemize}
\end{proof}

\begin{lemma}[$\Delta$ of triangle-type]\label{lem:qi_embedding_triangle_case}
    Let $\Delta=\{(v,x), (w)\}$ be of triangle-type. Then there is a coarsely Lipschitz retraction
    $$\varrho_w\colon Y_\Delta=\left(X-\left(\{w\}\cup p^{-1}(\link_{\ov X}(w))\right)\right)^{+\W}\to \C(\Delta)\cong L_w,$$
    whose constants are independent of $v$. In particular, $\C(\Delta)$ is quasi-isometrically embedded in $Y_\Delta$.
\end{lemma}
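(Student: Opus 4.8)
The plan is to mirror the structure of the edge-type case, Lemma~\ref{lem:qi_embedding_edge_case}, but now using the coarse retraction $\Proj_w\colon P_w\to L_w$ (Remark~\ref{rem:Pv_to_Lv}) as the target of the retraction on the coordinate space side. First I would identify $Y_\Delta$: by Remark~\ref{rem:saturation_in_Short}, $\Sat(\Delta)=\{w\}\cup_{u\in\link_{\ov X}(w)}\Squid(u)$, so $Y_\Delta=\bigl(X-(\{w\}\cup p^{-1}(\link_{\ov X}(w)))\bigr)^{+\W}$, and the identification $\C(\Delta)\cong L_w$ is Lemma~\ref{lem:hyp_link_triangle_type}. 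Next I would \emph{define} $\varrho_w$ on vertices: any vertex of $Y_\Delta$ lies in $\Squid(u)$ for some $u\in\ov X^{(0)}$ with $\dist_{\ov X}(u,w)\ge 2$ (or $u=w$ is excluded, and $u\in\link_{\ov X}(w)$ is excluded). For such $u$, Definition~\ref{defn:squid_material}.\eqref{squid_material:gates} (with our choice $R\ge B+r$) gives $g\in\Stab{G}{w}$, $h\in G$, $v'\in V$ with $\gate_w(P_u)\subseteq N_B(gZ_{hv'})\subseteq N_R(P_{w(u)})$ where $w(u)=ghv'\in\link_{\ov X}(w)$; then set $\varrho_w(\Squid(u))$ to be (a point uniformly close to) $\Proj_w\bigl(\gate_w(P_u)\bigr)\subseteq L_w$ — this is uniformly bounded in $L_w$ because $\gate_w(P_u)$ has diameter at most $2B+1$ in $L_w$ by Remark~\ref{rem:bounded_orbits_Lv}. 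So $\varrho_w$ is a well-defined coarse map, and it is a coarse retraction since on a maximal simplex $\Delta(x,y)$ with $p(y)=w$ its realisation meets $P_w$ near the level set of $y$, so $\varrho_w$ coarsely sends it to $y$.

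The core is Lipschitzness: take $u,u'\in\ov X^{(0)}$, both at $\ov X$-distance $\ge 2$ from $w$, whose squids contain $Y_\Delta$-adjacent vertices; I want $\dist_{L_w}(\varrho_w(\Squid(u)),\varrho_w(\Squid(u')))$ uniformly bounded. Split into cases exactly as in Lemma~\ref{lem:qi_embedding_edge_case}. If $\dist_{\ov X}(u,u')\le 1$, Corollary~\ref{cor:close_regions} gives $\dist_G(P_u,P_{u'})\le 2R$, so $\dist_G(\gate_w(P_u),\gate_w(P_{u'}))\le T$ (our choice of $T$), and then applying the coarsely Lipschitz map $\Proj_w$ (Remark~\ref{rem:Pv_to_Lv}) to these subsets of $P_w$ — after replacing the gates by their own gates, which is harmless since $\gate_w$ is a coarse retraction on $P_w$ — bounds the $L_w$-distance uniformly. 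If $\dist_{\ov X}(u,u')\ge 2$ and they lie in simplices with close realisations, then $\dist_G(P_u,P_{u'})\le 2R+1$ and we argue identically. If instead $u,u'$ are joined by a staple edge, there is $z\in\ov X^{(0)}$ adjacent to both; if $z\ne w$ then $\dist_{\ov X}(z,w)\ge 1$, and either $z\in\link_{\ov X}(w)$ or $z$ is still at distance $\ge 2$ — in the latter case bridge through $\varrho_w(\Squid(z))$ using the first case twice; in the former case one has to argue $w(u)$ and $w(u')$ (hence their projections) are uniformly close to $z$'s position, which follows because $\gate_w(P_u)\subseteq N_B(P_z)$ as $P_u$ and $P_z$ are $\ov X$-adjacent and then $\gate_w(P_z)\subseteq N_B(P_z)$ by Definition~\ref{defn:squid_material}.\eqref{squid_material:gates}, so both land near $\Proj_w(P_z)$, a bounded set by Remark~\ref{rem:bounded_orbits_Lv}. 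The case $z=w$ cannot happen since $u,u'$ have $\ov X$-distance $\ge 2$ from $w$. Finally, $\W$-adjacency via a staple edge of the maximal simplices themselves is handled by the fullness of links (Lemma~\ref{lem:edges_in_link}) plus Lemma~\ref{lem:W-edge_close}, exactly as in the edge-type argument.

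The main obstacle I anticipate is the bookkeeping in the staple-edge subcase when the intermediate vertex $z$ lies in $\link_{\ov X}(w)$ (so $z\notin Y_\Delta$ and $\varrho_w(\Squid(z))$ is not literally defined): one cannot bridge through $z$ directly, and must instead observe that the gates $\gate_w(P_u)$ and $\gate_w(P_{u'})$ are each within $N_B$ of $P_z$ — using that $P_u,P_z$ and $P_{u'},P_z$ are $\ov X$-adjacent so their gates land near $P_z$ — and that $\Proj_w(P_z)$ has diameter $\le 2B+1$ in $L_w$ by Remark~\ref{rem:bounded_orbits_Lv}, hence both images are uniformly close to a common bounded set. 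Once $\varrho_w$ is shown coarsely Lipschitz, the ``in particular'' follows formally: composing the inclusion $\C(\Delta)\hookrightarrow Y_\Delta$ with the coarse retraction $\varrho_w$ is coarsely the identity on $\C(\Delta)\cong L_w$, so the inclusion is a quasi-isometric embedding, with constants independent of $v$ since all of $B$, $R$, $T$, the constants of $\Proj_w$, and the constant $2B+1$ are uniform.
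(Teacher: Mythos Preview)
There is a genuine gap in your identification of the vertices of $Y_\Delta$. You assert that every vertex of $Y_\Delta$ lies in $\Squid(u)$ for some $u$ with $\dist_{\ov X}(u,w)\ge 2$, but this omits $(L_w)^{(0)}=\link(\Delta)$: the saturation $\Sat(\Delta)=\{w\}\cup\bigcup_{u\in\link_{\ov X}(w)}\Squid(u)$ removes only the apex $w$ from $\Squid(w)$, not the base of the squid. Hence $\C(\Delta)=(L_w)^{(0)}$ sits inside $Y_\Delta$ (as it must, by the very definition of $\C(\Delta)\subseteq Y_\Delta$), and you have not defined $\varrho_w$ there. The paper simply sets $\varrho_w(y)=y$ for $y\in(L_w)^{(0)}$, which is what makes $\varrho_w$ an honest retraction. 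Your sentence about ``a maximal simplex $\Delta(x,y)$ with $p(y)=w$'' does not repair this: $\varrho_w$ is a map on vertices of $Y_\Delta$, not on maximal simplices, and there is nothing in your definition to evaluate at $y\in(L_w)^{(0)}$.

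This omission propagates to the Lipschitz check: you never verify the case of $y\in(L_w)^{(0)}$ $\W$-adjacent to some $a\in\Squid(u)$ with $\dist_{\ov X}(u,w)\ge 2$, which is precisely the interface between $\C(\Delta)$ and the rest of $Y_\Delta$. This is the first bullet in the paper's proof and needs its own argument: such a $\W$-edge forces $N(y)$ to be $(T+1)$-close to $N_R(P_u)$, giving some $g\in P_w$ with $\Proj_w(g)$ near $y$ and $g$ near $P_u$, whence one bounds $\dist_{L_w}(y,\Proj_w(\gate_w(P_u)))$ via the chain $y\approx\Proj_w(g)\approx\Proj_w(\gate_w(g))\approx\Proj_w(\gate_w(P_u))$. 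Separately, your $\Squid(u)$--$\Squid(u')$ analysis via bridging through an intermediate $z$ is more elaborate than needed (and the specific inclusion $\gate_w(P_u)\subseteq N_B(P_z)$ you invoke is not what the squid materials give). The paper observes directly that any $Y_\Delta$-adjacency between such squids --- whether via an $X$-edge, close realisations, or a staple edge --- forces $\dist_G(P_u,P_{u'})\le 2R+T+1$, after which the coarse Lipschitzness of $\Proj_w\circ\gate_w$ concludes in one line.
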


\begin{proof}
For every $y\in\link(\Delta)=(L_w)^{(0)}$, set $\varrho_w(y)=y$. Moreover, for every $a\in\Squid(u)$, where $u\in\ov{X}^{(0)}$ is such that $\dist_{\ov{X}}(w,u)\ge 2$, set $\varrho_w(a)=\Proj_w(\gate_w(P_u))$, which is uniformly bounded in $L_w$ as pointed out in Remark~\ref{rem:bounded_orbits_Lv}. By construction, the map $\varrho_w$ restricts to the identity on $\C(\Delta)$; so we have to prove that it is Lipschitz, by showing that if two vertices of $Y_\Delta$ are adjacent then their images under $\varrho_w$ are uniformly close. There are several cases to consider.
\begin{itemize}
    \item Suppose that $y\in (L_w)^{(0)}$ is adjacent to $a\in \Squid(u)$, for some $u$ as above. In particular, they are not adjacent in $X$, as $\dist_{\ov{X}}(w,u)\ge 2$, so they must be $\W$-adjacent. Regardless of the type of $\W$-edge, this implies that $N(y)$ is $(T+1)$-close to some point in $N_R(P_u)$. In other words, there exists $g\in P_w$ such that $\dist_{L_w}(\Proj_w(g), y)\le R$ and $\dist_G(g, P_u)\le 2R+T+1$. Then
    $$\dist_{L_v}(\varrho_w(y),\varrho_w(a))=\dist_{L_v}(y,\Proj_w(\gate_w(P_u)))\le $$
    $$\le R+\dist_{L_v}(\Proj_w(g),\Proj_w(\gate_w(g)))+\dist_{L_v}(\Proj_w(\gate_w(g)),\Proj_w(\gate_w(P_u))).$$
    Notice that $g$ is within uniform distance from $\gate_w(g)$ because it belongs to $P_w$, so $\dist_{L_v}(\Proj_w(g),\Proj_w(\gate_w(g)))$ uniformly bounded. Regarding the other term, it is enough to notice that $\dist_G(g, P_u)\le 2R+T+1$ and the composition $\Proj_w\circ \gate_w$ is coarsely Lipschitz.

\item Now suppose that $a\in \Squid(u)$ is adjacent in $Y_\Delta$ to $a'\in \Squid(u')$, for some $u,u'$ as above. Either by how $\W$-edges are defined or by Corollary~\ref{cor:close_regions} (depending on whether $a$ and $a'$ are connected by a $\W$-edge or an edge of $X$), this means that $\dist_G(P_u, P_{u'})\le 2R+T+1$, and again $\Proj_w(\gate_w(P_u))$ and $\Proj_w(\gate_w(P_{u'}))$ are uniformly close.\qedhere
\end{itemize}
\end{proof}

\subsubsection{Geometric action}
We need to prove that the action of $G$ on $(X,\W)$ endows $G$ with a combinatorial HHG structure, that is, it satisfies the “moreover” part of Theorem~\ref{thm:hhs_links}. By Remarks~\ref{rem:G_action_on_X} and~\ref{rem:G_action_on_W} we already know that $G$ acts on $X$  with finitely many $G$-orbits of links of simplices, and the action extends to a simplicial action on $\W$. Hence, we are left to prove that the action is geometric, which follows from the next lemma:

\begin{lemma}\label{lem:f_qi}
    The realisation map $f\colon\W\to G$ from Definition~\ref{defn:realisation} is a $G$-equivariant quasi-isometry.
\end{lemma}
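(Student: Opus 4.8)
The map $f$ is already $G$-equivariant by construction (Definition~\ref{defn:realisation}). Since $G$ acts on $\W$ by simplicial automorphisms with finitely many orbits (Remarks~\ref{rem:G_action_on_X} and~\ref{rem:G_action_on_W}), and on $G$ by left multiplication (cocompact, proper), it suffices to show $f$ is a quasi-isometry of metric spaces; equivariance and properness of the $\W$-action will then follow formally from the Milnor--\v{S}varc lemma. So the real content is: $f$ is coarsely surjective, coarsely Lipschitz, and admits a coarse Lipschitz quasi-inverse.

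The plan is as follows. \emph{Coarse surjectivity}: given $g\in G$, pick a vertex $v$ with $g\in P_v$ (the $P_v$ cover $G$). By Corollary~\ref{cor:g_in_edge_prod_region}, $g\in N_R(P_v)\cap N_R(P_w)$ for some $w\in\link_{\ov X}(v)$, and tracing through the proof of Lemma~\ref{lem:realisation_exists}, $g$ is within bounded distance of $f(\Delta(x,y))$ for a suitable maximal simplex $\Delta(x,y)$ supported on $\{v,w\}$; hence $f$ is coarsely onto. \emph{Coarse Lipschitz}: if $\Delta,\Delta'$ are $\W$-adjacent, then $\dist_G(f(\Delta),f(\Delta'))\le\widetilde K$ by Lemma~\ref{lem:W-edge_close}; this immediately gives that $f$ is $(\widetilde K,0)$-coarsely Lipschitz. \emph{Lower bound / quasi-inverse}: this is the heart of the argument. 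I would define a coarse map $q\colon G\to\W$ by sending $g$ to a maximal simplex $\Delta_g$ whose realisation contains (a bounded neighbourhood of) $g$ — concretely, choose $v$ with $g\in P_v$, then $w\in\link_{\ov X}(v)$ with $g\in N_R(P_v)\cap N_R(P_w)$ via Corollary~\ref{cor:g_in_edge_prod_region}, and set $\Delta_g=\Delta(\Proj_v(g),\Proj_w(k))$ for an appropriate $k\in P_w$ close to $g$, exactly as in the surjectivity step. Then $f(\Delta_g)$ is coarsely $g$, so $f\circ q\simeq\mathrm{id}_G$ and $q\circ f\simeq\mathrm{id}_\W$ (the latter since $f$ is coarsely onto and both compositions land near the identity). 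It remains to check $q$ is coarsely Lipschitz: if $\dist_G(g,g')\le 1$ then, running the argument in the last paragraph of the proof of Lemma~\ref{lem:CS_qi_to_squidoff}, the simplices $\Delta_g$ and $\Delta_{g'}$ have realisations within distance $O(1)$, hence are joined by a Type~1 ($\W$-)edge (or a uniformly bounded chain of such), so $\dist_\W(q(g),q(g'))$ is uniformly bounded.

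The main obstacle is the lower bound, i.e. showing $f$ does not collapse distances: two maximal simplices with bounded-distance realisations must be at bounded $\W$-distance. This is essentially built into the definition of Type~1 edges (Definition~\ref{defn:W-edges}), so the subtlety is only in verifying that the candidate inverse $q$ is well-defined up to bounded ambiguity (the choices of $v$, $w$, and $k$) and genuinely coarsely Lipschitz; here one leans on Corollary~\ref{cor:close_regions}, Corollary~\ref{cor:g_in_edge_prod_region}, Lemma~\ref{lem:W-edge_close}, and the coarse-Lipschitz control of $\Proj_v$ and $\gate_v$ from Remark~\ref{rem:Pv_to_Lv} and Definition~\ref{defn:squid_material}.\eqref{squid_material:gates}. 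Once $q$ is in hand, that $q\circ f$ and $f\circ q$ are each within bounded distance of the identity is a short diagram chase using coarse surjectivity of $f$, and the proof is complete; metric properness and coboundedness of the $G$-action on $\W$ — hence that $G$ is a combinatorial HHG — then follow from the $G$-equivariant quasi-isometry $\W\simeq G$.
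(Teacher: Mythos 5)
Your plan is correct in substance, but it is considerably more elaborate than the paper's argument, and one preliminary remark is slightly misstated.

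The paper's proof is essentially three lines. For surjectivity, it does not invoke Corollary~\ref{cor:g_in_edge_prod_region} at all: since some realisation $f(\Delta_0)$ is nonempty (Lemma~\ref{lem:realisation_exists}), and $f$ is $G$-equivariant while $G$ acts simply transitively on itself, every $g\in G$ lies in $f\bigl((gh^{-1})\Delta_0\bigr)$ for any $h\in f(\Delta_0)$; so $f$ is genuinely surjective, not merely coarsely so. For the Lipschitz bound both you and the paper quote Lemma~\ref{lem:W-edge_close}. For the lower bound, the paper simply observes that if $\dist_G(f(\Delta),f(\Sigma))\le 1$ then $\Delta,\Sigma$ are by definition joined by a Type~1 $\W$-edge; combined with surjectivity this already gives the full quasi-isometry inequality, with no need to build a coarse inverse. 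Your approach of explicitly constructing $q\colon G\to\W$ via Corollary~\ref{cor:g_in_edge_prod_region} and the realisation mechanism is valid and ultimately verifies the same inequality, but the key content you use to show $q$ is coarsely Lipschitz is exactly the Type~1 edge observation — so the extra machinery of $q$ is an unnecessary wrapper around the one-line argument. Your construction does buy you an explicit formula for the quasi-inverse, which could be useful elsewhere, but it is not needed here.

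One small imprecision: in your opening paragraph you suggest that equivariance ``will follow formally from Milnor--\v{S}varc.'' This is backwards — equivariance of $f$ is built into Definition~\ref{defn:realisation} (as you correctly state elsewhere), and Milnor--\v{S}varc is what one uses \emph{afterwards}, together with the quasi-isometry $f$, to conclude that the $G$-action on $\W$ is geometric. It does not produce the equivariance. This is a phrasing slip rather than a gap, since you do treat equivariance correctly in the body of the plan.
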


\begin{proof}
    We already noticed that $f$ is $G$-equivariant, and in particular it is surjective as $G$ acts transitively on itself. Moreover, Lemma~\ref{lem:W-edge_close} can be rephrased by saying that $f$ is $\widetilde{K}$-Lipschitz. Conversely, if $\Delta, \Sigma \in \W$ are such that $\dist(f(\Delta),f(\Sigma))\le 1$, then $\Delta$ and $\Sigma$ have close realisations, and therefore are joined by a $\W$-edge.
\end{proof}

\subsubsection{Checking the short HHG axioms}\label{subsec:check_short_for_squid}
We are left to prove that the combinatorial HHG structure for $G$ is short:
\begin{lemma}
    $G$ admits a short HHG structure $(G, \ov X, \W)$, where:
    \begin{itemize}
        \item $\ov X$ is the support graph, from Definition~\ref{defn:squid_material}.\eqref{squid_material:graph};
        \item Definition~\ref{defn:squid_material}.\eqref{squid_material:extensions} describes vertex stabilisers as cyclic-by-hyperbolic extensions;
        \item For every $v\in\ov{X}^{(0)}$, $\C\ell_v$ is the graph $L_v$ from Definition~\ref{defn:quasilines_from_quasimorph_for_squid}.
    \end{itemize}
\end{lemma}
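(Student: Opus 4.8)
The plan is to read off the three defining axioms of a short HHG from the pieces already in place, since we have shown that $(X,\W)$ — with $X$ as in Definition~\ref{defn:blowup_for_squidification} and $\W$ as in Definition~\ref{defn:W-edges} — is a combinatorial HHS and that the $G$-action makes $G$ a combinatorial HHG.

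\textbf{Axiom~\eqref{short_axiom:graph}} is immediate: by Definitions~\ref{defn:squid_material}.\eqref{squid_material:graph} and~\ref{defn:blowup_for_squidification}, $X$ is the blowup of the triangle- and square-free graph $\ov X$, no component of which is a point, and $\ov X$ sits in $X$ as a $G$-invariant subgraph, the $G$-action of Remark~\ref{rem:G_action_on_X} extending the one on $\ov X$. \textbf{Axiom~\eqref{short_axiom:extension}} follows by noting that, since $\ov X$ is $G$-invariant in $X$ and the $G$-action restricts to the given action on $\ov X$, the vertex stabiliser $\Stab{G}{v}$ for the action on $\ov X$ is the group appearing in the squid materials, so the required cyclic-by-hyperbolic extension $0\to Z_v\to\Stab{G}{v}\to H_v\to 0$ and its $G$-equivariance are exactly Definition~\ref{defn:squid_material}.\eqref{squid_material:extensions}; here one should recall the reduction at the start of Subsection~\ref{subsec:construction_XW}, which replaces each finite $Z_v$ by the trivial subgroup and shrinks the infinite ones to lie in the relevant $E_u$'s, so the cyclic direction produced is in general a finite-index subgroup of the original $Z_v$ (as the statement of Theorem~\ref{thm:squidification} permits).

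The substantive point is \textbf{Axiom~\eqref{short_axiom:cobounded}}. Fix $v\in\ov X^{(0)}$; since no component of $\ov X$ is a point, $v$ has a neighbour $u$, and $\ell_v=[\Delta]$ for the triangle-type simplex $\Delta=\{(u,z),(v)\}$ with link $(L_v)^{(0)}$ (any choice of $z\in L_u$ gives the same $\sim$-class). Lemma~\ref{lem:hyp_link_triangle_type} supplies a $\Stab{G}{v}$-equivariant, uniform quasi-isometry $\lambda_v\colon L_v\to\C\ell_v$. Now $Z_v$ acts geometrically on $L_v$: if $Z_v$ is infinite this is built into Definition~\ref{defn:quasilines_from_quasimorph_for_squid} via Lemma~\ref{lem:quasiline_extension}, while if $Z_v$ is trivial then $L_v$ is the complete graph on $\Stab{G}{v}$, which is bounded, so the trivial action is geometric; transporting along $\lambda_v$ gives the geometric action of $Z_v$ on $\C\ell_v$, which is thus a quasiline exactly when $Z_v$ is infinite cyclic. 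For the bounded-orbit clause, let $w\in\link_{\ov X}(v)$; as $Z_v$ acts trivially on $\link_{\ov X}(v)$ it fixes $w$, so $Z_v\le\Stab{G}{w}$, and up to the $G$-action we may take $w=v_i\in V$, so that Remark~\ref{rem:bounded_orbits_Lv} bounds every $Z_v$-orbit in $L_w$ by $1$; transporting along the $\Stab{G}{w}$-equivariant $\lambda_w\colon L_w\to\C\ell_w$ and using that there are only finitely many $G$-orbits of edges of $\ov X$ (so finitely many quasi-isometry constants occur) yields uniformly bounded $Z_v$-orbits in $\C\ell_w$.

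I expect no real obstacle: all the geometry has been done in the link computations (Corollary~\ref{cor:bounded_links}, Lemma~\ref{lem:hyp_link_triangle_type}) and in the construction of $(X,\W)$. The only items needing care are the bookkeeping around finite cyclic directions — handled by the reduction — and verifying that the orbit bounds in Axiom~\eqref{short_axiom:cobounded} are uniform across all vertices and edges, which comes down to cocompactness of the $G$-action on $\ov X$.
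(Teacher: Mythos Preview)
Your proof is correct and follows essentially the same route as the paper: verify Axioms~\eqref{short_axiom:graph} and~\eqref{short_axiom:extension} directly from the squid materials, then for Axiom~\eqref{short_axiom:cobounded} invoke Lemma~\ref{lem:hyp_link_triangle_type} to identify $\C\ell_v$ with $L_v$ and use Remark~\ref{rem:bounded_orbits_Lv} for the bounded-orbit clause. Your write-up is more careful about the finite-$Z_v$ case and the uniformity of constants, but the argument is the same.
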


\begin{proof} We go through all axioms of a short HHG, as in Subsection~\ref{defn:short_HHG}. Axiom~\eqref{short_axiom:graph} is clear, as $X$ is a blowup of $\ov X$. Moreover, Axiom~\eqref{short_axiom:extension} is a direct consequence of Definition~\ref{defn:squid_material}.\eqref{squid_material:extensions}.

    Regarding Axiom~\eqref{short_axiom:cobounded}, Lemma~\ref{lem:hyp_link_triangle_type} tells us that every $\C\ell_v$ is uniformly and $\Stab{G}{v}$-equivariantly quasi-isometric to $L_v$. Then, the corresponding cyclic direction $Z_v$ acts geometrically on $L_v$ (by construction) and with orbits of diameter $1$ on $L_w$ for every $w\in\link_{\ov X}(v)$ (as pointed out in Remark~\ref{rem:bounded_orbits_Lv}). 
\end{proof}
The proof of Theorem~\ref{thm:squidification} is now complete.

\section{Blowup materials from short structures}\label{sec:short_to_squid}
Here we prove that short HHG and the class of groups admitting blowup materials actually coincide.
\begin{prop}
\label{prop:short_is_squid}
    A short HHG $(G, \ov X, \W)$ admits blowup materials, with support graph $\ov X$ and whose extensions are those from Axiom~\ref{short_axiom:extension}.
\end{prop}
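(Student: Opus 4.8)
The plan is to verify, one by one, that a short HHG $(G,\ov X)$ satisfies the six conditions of Definition~\ref{defn:squid_material}, with support graph and extensions the ones supplied by Axioms~\eqref{short_axiom:graph} and~\eqref{short_axiom:extension}. Conditions~\eqref{squid_material:graph} and~\eqref{squid_material:extensions} are those axioms verbatim. For~\eqref{squid_material:edge_stab}, since $Z_v$ acts trivially on $\link_{\ov X}(v)\ni w$ we have $Z_v\le\Stab{G}{w}$, so $\langle Z_v,Z_w\rangle\le\text{P}\Stab{G}{e}\le\Stab{G}{w}$; the index is finite because both $\langle Z_v,Z_w\rangle$ and $\text{P}\Stab{G}{e}$ act metrically properly and coboundedly on the product region $P_e$ — the latter by Lemma~\ref{lem:cobounded_stab_for_short}, the former as noted in its proof (the \textbf{Edge case}) — so a standard orbit–counting argument writes $\text{P}\Stab{G}{e}$ as finitely many cosets of $\langle Z_v,Z_w\rangle$. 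Also $\p_v(Z_w)$, a homomorphic image of a cyclic group, is cyclic, hence quasiconvex in the hyperbolic group $H_v$. Condition~\eqref{squid_material:big_papa} follows from Remark~\ref{rem:top_guy_HHG}: a short HHG is a normalised HHG with cobounded product regions, so $\C S$ is $G$–equivariantly quasi-isometric to $\Cay{G}{T\cup\{\Stab{G}{U_j}\}_j}$ over orbit representatives $U_j$ of the proper domains; choosing these so that $\ell_{v_1},\dots,\ell_{v_k}$ occur among them (their stabilisers being precisely $\Stab{G}{v_1},\dots,\Stab{G}{v_k}$) and using Figure~\ref{fig:stab_and_PR} to see that every other $\Stab{G}{U_j}$ is a subgroup of some vertex stabiliser, hence uniformly bounded in the cone-off graph $\h G=\Cay{G}{T\cup\{\Stab{G}{v_i}\}_{v_i\in V}}$, one gets $\h G\simeq\C S$, which is $E$–hyperbolic.

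For condition~\eqref{squid_material:quasimorphisms} there is nothing to prove when $Z_{v_i}$ is finite. When $Z_{v_i}\cong\Z$, I let $E_{v_i}$ be the intersection of the kernels of $\Stab{G}{v_i}\to\operatorname{Aut}(Z_{v_i})$ and of the orientation homomorphism $\Stab{G}{v_i}\to\{\pm1\}$ attached to the isometric action on the quasiline $\C\ell_{v_i}$; this is a finite–index normal subgroup of $\Stab{G}{v_i}$ containing $Z_{v_i}$ centrally, using that a generator of $Z_{v_i}$ acts loxodromically — hence orientation–preservingly — on $\C\ell_{v_i}$ by Axiom~\eqref{short_axiom:cobounded}. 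Since $E_{v_i}$ acts orientation–preservingly on the quasiline $\C\ell_{v_i}$, the stable translation number gives a homogeneous quasimorphism $\phi_{v_i}\colon E_{v_i}\to\R$; it is unbounded on $Z_{v_i}$ (whose orbits on $\C\ell_{v_i}$ are unbounded) and it vanishes on $Z_w\cap E_{v_i}$ for every $w\in\link_{\ov X}(v_i)$, since $Z_w$ acts with uniformly bounded orbits on $\C\ell_{v_i}$ by Axiom~\eqref{short_axiom:cobounded}. (Alternatively one can extract $\phi_{v_i}$ from the $\Z$–central extension $0\to Z_{v_i}\to E_{v_i}\to\p_{v_i}(E_{v_i})\to0$ of a hyperbolic group via Lemma~\ref{lem:quasimorph_from_central_ext}, with the same two properties.)

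For condition~\eqref{squid_material:gates} I take $\gate_{v_i}$ to be the HHS gate onto the product region $P_{\ell_{v_i}}$ (which, by cobounded product regions, is within uniformly finite Hausdorff distance of $\Stab{G}{v_i}\supseteq E_{v_i}$) followed by the bounded–displacement snap onto $E_{v_i}$; this is a coarsely Lipschitz coarse retraction with uniform constants. To check the two containments I use the explicit gate formula of Remark~\ref{rem:factors_are_hqc}, the consistency inequalities, and the orthogonality/transversality relations between $\ell_{v_i}$, $\U_{v_i}$, the $\ell_w$ and $\ell_u$ recorded in Remark~\ref{rem:unbounded_dom_short_hhg}. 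For $w\in\link_{\ov X}(v_i)$ one has $\ell_{v_i}\orth\ell_w$, so $\gate_{v_i}(P_w)$ coarsely coincides with $P_{\ell_{v_i}}\cap P_{\ell_w}$, the edge product region $P_e$, on which $\langle Z_{v_i},Z_w\rangle\le\Stab{G}{w}$ acts coboundedly; hence $\gate_{v_i}(P_w)\subseteq N_B(P_w)$. For $\dist_{\ov X}(v_i,u)\ge2$ one has $\ell_u\transverse\ell_{v_i}$ and $\ell_u\transverse\U_{v_i}$, so $\gate_{v_i}(P_u)$ is pinned near $\rho^{\ell_u}_{\ell_{v_i}}$ in the $\C\ell_{v_i}$–direction, near $\rho^{\ell_u}_{\U_{v_i}}$ in the $\C\U_{v_i}$–direction, and near $\rho^{\ell_{v_i}}$ in every domain not nested in or orthogonal to $\ell_{v_i}$; in the remaining directions $\C\ell_w$ ($w\sim v_i$) it is pinned unless $w$ is a common neighbour of $v_i$ and $u$, and square–freeness of $\ov X$ forbids two such. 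Hence $\gate_{v_i}(P_u)$ is uniformly close either to a uniformly bounded set (if $v_i,u$ have no common neighbour) or to a coset of the cyclic direction $Z_{w(u)}$ of their unique common neighbour $w(u)$; writing $w(u)=ghv'$ as in Notation~\ref{notation:pre_def_squid}, in either case it lies in a uniform neighbourhood of $gZ_{hv'}=Z_{w(u)}g$. The hard part is this last gate computation: keeping the constants uniform over the finitely many orbits, pinning down exactly which coordinate directions survive, and matching the surviving coarse level set with the concrete coset $gZ_{hv'}$ — all the other conditions are essentially a direct unwinding of the short HHG axioms.
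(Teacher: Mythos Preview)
Your proposal is correct and follows essentially the same route as the paper's proof: verify Definition~\ref{defn:squid_material}\eqref{squid_material:graph}--\eqref{squid_material:gates} in order, using Axioms~\eqref{short_axiom:graph}--\eqref{short_axiom:cobounded}, cobounded product regions (Lemma~\ref{lem:cobounded_stab_for_short}), and the HHS gate onto $P_{\ell_{v_i}}$ for~\eqref{squid_material:gates}. Two minor differences are worth noting. For~\eqref{squid_material:edge_stab} you argue that $\p_v(Z_w)$ is quasiconvex simply because cyclic subgroups of hyperbolic groups are quasiconvex; the paper instead invokes Lemma~\ref{lem:Hv_hyp_1} to get the relative hyperbolic structure on $H_v$ and deduces quasiconvexity of peripherals --- your shortcut is valid and more direct, though the paper's lemma is needed elsewhere anyway. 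For~\eqref{squid_material:quasimorphisms} you invoke the ``stable translation number'', which as usually defined is non-negative and not a quasimorphism; what you want (and what the paper uses) is the Busemann quasimorphism associated to a fixed end of $\C\ell_{v_i}$, which agrees with the signed translation length on orientation-preserving elements --- this is a terminological slip rather than a gap. Your gate analysis in~\eqref{squid_material:gates} matches the paper's three-case breakdown ($\dist_{\ov X}(v_i,u)=1$, $=2$, $\ge 3$), and your appeal to square-freeness to get at most one common neighbour is exactly how the paper isolates the surviving $\ell_w$-direction.
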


\begin{proof}
We check that $G$ satisfies all points of Definition~\ref{defn:squid_material}. We shall introduce all the relevant data (such as quasimorphisms, gates, and so on) along the proof, and they will be $G$-equivariant by construction.

\par\medskip
\textbf{\eqref{squid_material:graph}} By Axiom~\eqref{short_axiom:graph}, $\ov X$ is triangle- and square-free, and none of its connected components is a point. Now, $G$ acts on $\ov X$, as the latter is a $G$-invariant subgraph of $X$. Furthermore, the action of $G$ on $X$ has finitely many $G$-orbits of links of simplices. This means, in particular, that there are finitely many orbits of edges of $\ov{X}$, as whenever $v$ and $w$ are $\ov{X}$-adjacent then $\{v,w\}=\link(\Delta)$ for any simplex of the form $\Delta=\{(x), (y)\}$, where $x\in (L_v)^{(0)}$ and $y\in (L_w)^{(0)}$.

\par\medskip
\textbf{\eqref{squid_material:extensions}} By Axiom~\eqref{short_axiom:extension}, vertex stabilisers are cyclic-by-hyperbolic extensions, and each cyclic direction acts trivially on the link of the corresponding vertex.

\par\medskip
\textbf{\eqref{squid_material:edge_stab}} Let $e=\{v,w\}$ be an edge of $\ov{X}$. By Lemma~\ref{lem:Hv_hyp_1} we have that $H_v$ is hyperbolic relative to $\{K_{H_v}(\p_v(Z_w))\}_{w\in W}$, for any collection $W$ of $\Stab{G}{v}$-orbit representatives of vertices in $\link_{\ov X}(v)$ with unbounded cyclic direction. In particular, for every $w'\in\link_{\ov X}(v)$, $\p_v(Z_{w'})$ is quasiconvex in $H_v$. This is either because $Z_{w'}$ is finite, or because $Z_{w'}$ is conjugate to $Z_w$ for some $w\in W$ and (conjugates of) peripheral subgroups in a relatively hyperbolic group are quasiconvex by e.g. \cite[Lemma 4.15]{drutu-sapir}. 

Now let $\Delta=\{(v), (w)\}$, seen as a simplex of $X$. Recall that, as explained in Lemma~\ref{lem:simplicial_cont_for_squid}, the product region $P_{[\Delta]}$ associated to the domain $[\Delta]$ is the subspace of maximal simplices of the form $\Delta(x,y)$, where $p(x)=v$ and $p(y)=w$. Furthermore, since $G$ has cobounded product regions by Lemma~\ref{lem:cobounded_stab_for_short}, we have that $P_{[\Delta]}$ is acted on geometrically by $\Stab{G}{e}$, and therefore by its index-two subgroup $\text{P}\Stab{G}{e}$. However, Axiom~\eqref{short_axiom:cobounded} tells us that $\langle Z_v, Z_w\rangle$, which is a subgroup of $\text{P}\Stab{G}{e}$, already acts coboundedly on $P_{[\Delta]}$, as each cyclic direction acts coboundedly on the corresponding $\ell$ and with uniformly bounded orbits on the other. This means that $\text{P}\Stab{G}{e}$ must virtually coincide with $\langle Z_v, Z_w\rangle$. 

\par\medskip
\textbf{\eqref{squid_material:big_papa}} By \cite[Remark 2.10]{hhs_asdim}, the space obtained from $G$ by coning off all proper product regions is quasi-isometric to the main coordinate space $\C S$, and is therefore hyperbolic. Furthermore, $G$ has cobounded product regions by Lemma~\ref{lem:cobounded_stab_for_short}, so its product regions coarsely coincide with the cosets of the $\Stab{G}{v_i}$.

\par\medskip
\textbf{\eqref{squid_material:quasimorphisms}} Whenever $Z_{v_i}$ is infinite, let $C_{v_i}$ be the centraliser of $Z_{v_i}$ in $\Stab{G}{v_i}$, and let $C'_{v_i}$ be the normal subgroup of $\Stab{G}{v_i}$ of all elements acting on the quasiline $\C\ell_{v_i}$ \emph{without inversions}, meaning that they do not swap the two points in the Gromov boundary. Let $E_{v_i}=C_{v_i}\cap C'_{v_i}$, which is again a normal subgroup of $\Stab{G}{v_i}$ and has index at most four. 
Then let $\phi_{v_i}\colon E_{v_i}\to\mathbb{R}$ be the \emph{Busemann quasimorphism} associated to the action, which is defined as follows. Fix a sequence $\{x_n\}_{n\in\mathbb{N}}\subseteq \C\ell_{v_i}$ converging to one of the point at infinity, and for every $g\in \Stab{G}{v_i}$ set
        $$m_{v_i}(g)\coloneq \limsup_{n\to+\infty}\left(\dist_{\ell_v}(g^k x_0, x_n)-\dist_{\ell_v}(x_0, x_n)\right).$$
        Then let $\phi_{v_i}$ be the homogeneous quasimorphism associated to $m_{v_i}$, as in Remark~\ref{rem:plasmon}. One can check that $\phi_{v_i}$ does not depend on the sequence $\{x_n\}_{n\in\mathbb{N}}$, and that an element has non-trivial image if and only if it acts loxodromically on the quasiline (see e.g. \cite[Section 4.1]{Manning_actions_on_hyp} for further details). In particular, $\phi_{v_i}$ is unbounded on $Z_{v_i}\cap E_{v_i}$, while it is trivial on $Z_w\cap E_{v_i}$ for every $w\in\link_{\ov X}({v_i})$. 
        
\par\medskip
\textbf{\eqref{squid_material:gates}}
We will implicitly identify $G$ and $\W$ by fixing once and for all a $G$-equivariant quasi-isometry $G\to \W$. Under this identification, there exists a collection of representatives $V$ of the $G$-orbits of vertices of $\ov{X}$ such that, for every $v=gv_i$ for some $g\in G$ and some $v_i\in V$, the product region $P_{\ell_v}\subseteq \W$ associated to the domain $\ell_v$, in the sense of Definition~\ref{defn:factor}, coarsely corresponds to the coset $g\Stab{G}{v_i}$, which is what we defined as $P_v$ in Definition~\ref{defn:squid_material}, and therefore with $g E_{v_i}$ since $E_{v_i}$ has finite index in $\Stab{G}{v_i}$. In particular, there exists a $G$-equivariant family of coarsely Lipschitz gate maps $\gate_v\colon G\to 2^{E_v}$. Moreover, Remark~\ref{rem:factors_are_hqc} gives an explicit description of the coordinates of $\gate_v(g)$ (up to a uniformly bounded error): for every $g\in G$ and every $W\in\frakS$, the projection of $\gate_v(g)$ to $\C W$ is the same as the projection of $g$ if $W=\ell_v$ or $W\nest\U_v$, and is set to $\rho^{\ell_v}_W$ otherwise.

We shall now check that $\gate_{v_i}$ satisfies all properties from Definition~\ref{defn:squid_material}.\eqref{squid_material:gates} by explicitly describing $\gate_{v_i}(P_u)$ for every $u\in\ov{X}^{(0)}$, depending on the distance $\dist_{\ov X}(u,{v_i})$. For short, we shall drop the index and simply denote $v_i$ by $v$. We also fix a collection of representatives of the $\Stab{G}{v}$-orbits of vertices in $\link_{\ov X}(v)$, and as in Notation~\ref{notation:pre_def_squid} we shall denote such a representative by $hv'$ where $h\in G$ and $v'\in V$.

\begin{itemize}
    \item If $\dist_{\ov X}(u,{v})=1$, up to the action of $G$ suppose that $u=hv'$. Then $\gate_{v}(P_u)$ is within finite Hausdorff distance from the intersection $\Stab{G}{v}\cap \Stab{G}{u}$ (this follows by combining \cite[Lemma 4.10]{quasiflats} and \cite[Lemma 4.5]{Hruska_Wise}), and $$\Stab{G}{u}=h\Stab{G}{u}h^{-1}\subseteq N_r(h\Stab{G}{u})=N_r(P_u)$$ where $r$ is chosen as in Notation~\ref{notation:pre_def_squid}. In other words, $\gate_{v}(P_u)$ is contained in a uniform neighbourhood of $P_u$.
    
    \item If $\dist_{\ov X}(u,{v})=2$ then there exists a unique $w\in\link_{\ov X}({v})\cap\link_{\ov X}(u)$, as $\ov X$ is square-free. Again, up to the action of $G$ we can assume that $w=hv'$, for some $h\in G$ and $v'\in V$ as in Notation~\ref{notation:pre_def_squid}. Moreover, $\gate_v(P_u)$ coarsely coincides with some parallel copy $F$ of the factor $F_{\ell_w}$, as $\ell_w$ is the only domain with unbounded coordinate space which is neither transverse to, nor contains, one between $\ell_{v}$ and $\ell_u$. In turn, $Z_{hv'}$ acts coboundedly on $F$, so $\gate_v(P_u)$ is in a neighbourhood of some coset of $Z_{hv'}$. 
    \item If $\dist_{\ov X}(u,{v})\ge 3$ then the coordinates of any $x\in \gate_{v}(P_u)$ are all prescribed (up to uniformly finite distance). Indeed, every domain in $P_{v}$ with unbounded coordinate space must be transverse to $\ell_u$, and taking the gate sets all other coordinates to the projection of $\ell_{v}$. Then the uniqueness axiom~\eqref{item:dfs_uniqueness} tells us that there exists a constant $B_0$, depending only on the HHS structure, such that $\gate_{v}(P_u)$ has diameter at most $B_0$. In particular, if we choose the constant $B$ to be greater than $B_0$, we can find some $w=ghv'\in \link_{ov X}(v_i)$ such that $\gate_{v_i}(P_u)\subseteq N_B(gZ_{hv'})$.
\end{itemize}
The proof of Proposition~\ref{prop:short_is_squid} is now complete.
\end{proof}

\section{Maquillage on blowup materials}\label{sec:maquillage}
Let $(G, \ov{X},\W)$ be a short HHG. We claim that, by tweaking the blowup materials from the proof of Proposition~\ref{prop:short_is_squid} and then invoking Theorem~\ref{thm:squidification}, one can introduce new cyclic directions, corresponding to elements acting loxodromically on the main curve graph $\C S$ or on some augmented link $\C \U_v$. This shall be extremely relevant in the companion paper, as then, when we are given a quotient of a short HHG, we can (almost always) assume that the kernel is normally generated by cyclic directions, of which we understand the action on the support graph.

\subsection{Adding ``globally'' loxodromic directions}
We first recall that, by \cite[Corollary 14.4]{HHS_I}, a normalised HHG $G$ acts acylindrically on the main coordinate space $\C S$. This means that, if an element $g\in G$ acts loxodromically, then it is also WPD. Then \cite[Lemma 6.5]{DGO} implies that such an element $g$ is contained in a maximal virtually cyclic subgroup $K(g)\le G$.

Furthermore, recall that two elements $g,h\in G$ are \emph{commensurable} if some non-zero powers of them are conjugate in $G$. The following Proposition shows how, given a short HHG, one can introduce new cyclic directions with “trivial” stabiliser, corresponding to non-commensurable loxodromic elements for the action on the main coordinate space.

\begin{prop}\label{prop:add_loxo}
    Let $(G, \ov{X},\W)$ be a short HHG, and let $g_1,\ldots, g_r\in G$ be non-commensurable loxodromic elements for the action on the main coordinate space $\C S$. For every $i=1,\ldots,r$ let $K(g_i)$ be the maximal virtually cyclic subgroup of $G$ containing $g_i$. There exists a short HHG structure $(G, \ov X',\W')$ where:
    \begin{itemize}
        \item The new support graph $\ov{X}'$ is obtained from $\ov{X}$ by adding a new connected component for each coset $\{hK(g_i)\}_{h\in G,\,i=1,\ldots, r}$, consisting of an edge $\{u^h_i, w^h_i\}$.
        \item For every $i=1,\ldots, r$ there exists $N_i\in\mathbb{N}_{>0}$ such that the cyclic direction associated to $u^h_i$ is $h\langle g_i^{N_i}\rangle h^{-1}$, while the cyclic direction associated to $w^h_i$ is trivial.
        \item The data of all other vertices are unchanged. 
    \end{itemize}
    Moreover, if $(G, \ov{X},\W)$ is colourable, then so is $(G, \ov X',\W')$.
\end{prop}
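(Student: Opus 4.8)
The strategy is to apply Theorem~\ref{thm:squidification}: we produce squid materials with support graph $\ov X'$ and then invoke the machinery. Since $(G,\ov X)$ is already short, by Proposition~\ref{prop:short_is_squid} it has squid materials with support graph $\ov X$; we keep all of that data untouched for the original vertices and only need to supply the new data associated to the added edges $\{u_i^h, w_i^h\}$. First I would fix, for each $i$, a finite-index infinite cyclic subgroup $\langle g_i^{N_i}\rangle$ of $K(g_i)$ which is normal in $K(g_i)$ (possible since $K(g_i)$ is virtually cyclic, so contains a normal infinite cyclic subgroup of finite index); this will be the cyclic direction $Z_{u_i^h} = h\langle g_i^{N_i}\rangle h^{-1}$, with $\Stab{G}{u_i^h} = hK(g_i)h^{-1}$, and the quotient $K(g_i)/\langle g_i^{N_i}\rangle$ is finite, hence (elementarily) hyperbolic, giving Definition~\ref{defn:squid_material}.\eqref{squid_material:extensions}. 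For $w_i^h$ we set $Z_{w_i^h}$ trivial and $\Stab{G}{w_i^h} = hK(g_i)h^{-1}$ as well (the new edge being a single $G$-orbit of edges, whose setwise stabiliser is commensurable with $K(g_i)$). Since $g_1,\dots,g_r$ are pairwise non-commensurable, the cosets $hK(g_i)$ are genuinely distinct connected components and the $G$-action on $\ov X'$ restricting to $\ov X$ is unchanged, so $\ov X'$ is still triangle- and square-free with no isolated component; this is Definition~\ref{defn:squid_material}.\eqref{squid_material:graph}. Since $\link_{\ov X'}(u_i^h) = \{w_i^h\}$ is a single vertex, $Z_{u_i^h}$ acts trivially on it automatically, and Definition~\ref{defn:squid_material}.\eqref{squid_material:edge_stab} holds because $\text{P}\Stab{G}{e}$ for the new edge $e$ is a finite-index subgroup of $K(g_i)^h$, hence virtually $\langle Z_{u_i^h}, Z_{w_i^h}\rangle = Z_{u_i^h}$ up to finite index, with the quasiconvexity condition vacuous as $\p_{u_i^h}(Z_{w_i^h})$ is trivial.

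The substantive point is Definition~\ref{defn:squid_material}.\eqref{squid_material:big_papa}: $G$ must be weakly hyperbolic relative to the enlarged collection $\{\Stab{G}{v_i}\}_{v_i\in V'}$, where $V'$ adds orbit representatives $u_i$ (equivalently $K(g_i)$, up to finite index) for the new vertices (note $u_i^h$ and $w_i^h$ lie in the same $G$-orbit, or at worst two orbits, but with the same stabiliser $K(g_i)$). By the proof of Proposition~\ref{prop:short_is_squid}.\eqref{squid_material:big_papa}, the cone-off of $G$ over the \emph{original} product regions is quasi-isometric to $\C S$, which is hyperbolic. Coning off additionally the cosets of the virtually cyclic subgroups $K(g_i)$ then amounts to coning off the orbits of a finite family of loxodromic WPD elements in the hyperbolic space $\C S$ on which $G$ acts acylindrically (by \cite[Corollary 14.4]{HHS_I}); by the standard theory — e.g. Dahmani–Guirardel–Osin \cite{DGO}, or directly the fact that $\{K(g_i)\}$ is hyperbolically embedded in $G$ so that $G$ is hyperbolic relative to it "in the weak sense" — the further cone-off remains hyperbolic. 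Concretely, one argues: $\{K(g_i)\}_{i}$ is a hyperbolically embedded collection in $G$ (as the $g_i$ are non-commensurable loxodromic WPD elements, by \cite[Theorem 6.8]{DGO}), which by \cite[Proposition 4.28]{DGO} and its proof shows exactly that the relevant cone-off graph is hyperbolic. Combining the two cone-offs (original product regions plus the new $K(g_i)$-cosets) one still gets a hyperbolic graph, because coning off a hyperbolically embedded collection in a hyperbolic-after-previous-cone-off space preserves hyperbolicity. \emph{I expect this to be the main obstacle} — not because it is deep, but because one must carefully check that the two families of subspaces being coned off interact well, i.e. that coning off the $K(g_i)$ in the already-coned-off space $\h G \simeq \C S$ keeps it hyperbolic; this follows since the $g_i$ remain loxodromic WPD in $\C S$.

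For Definition~\ref{defn:squid_material}.\eqref{squid_material:quasimorphisms}: each $Z_{u_i^h}$ is infinite cyclic, and $\Stab{G}{u_i^h}$ is virtually cyclic, so we may take $E_{u_i^h}$ to be a finite-index normal subgroup isomorphic to $\Z$ containing $Z_{u_i^h}\cap E_{u_i^h}$ centrally (indeed equal to it), and $\phi_{u_i^h}\colon E_{u_i^h}\to\R$ the obvious isomorphism onto $\Z\subseteq\R$, which is unbounded on $Z_{u_i^h}\cap E_{u_i^h}$; there are no adjacent vertices with nontrivial cyclic direction (the only neighbour is $w_i^h$, with $Z_{w_i^h}$ trivial), so the triviality-on-$Z_w$ condition is vacuous. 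For $w_i^h$ we set $E_{w_i^h} = \Stab{G}{w_i^h}$ and $\phi_{w_i^h}\equiv 0$ as permitted. Finally, Definition~\ref{defn:squid_material}.\eqref{squid_material:gates}: the gate maps for the \emph{original} vertices are unchanged; for the new vertices $u_i^h, w_i^h$, since they have exactly one neighbour (each other) and $\dist_{\ov X'}(u_i^h, u) = \infty$ for every original vertex $u$ and for $w_j^{h'}$ with $(j,h')\neq(i,h)$ (distinct components), we need a coarse retraction $\gate_{u_i}\colon G\to 2^{E_{u_i}}$ with $\gate_{u_i}(P_{w_i}) \subseteq N_B(P_{w_i})$ and, for every vertex $u$ in a different component, $\gate_{u_i}(P_u) \subseteq N_B(gZ_{hv'})$ for a neighbour $w(u)$ of $u_i$ — but the \emph{only} neighbour of $u_i$ is $w_i$, whose cyclic direction is trivial, so we need $\gate_{u_i}(P_u)$ uniformly bounded for every $u$ at infinite $\ov X'$-distance. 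This holds: take $\gate_{u_i}$ to be the closest-point projection onto the coset $P_{u_i} = hK(g_i)h^{-1}$ in the cone-off-style word metric, equivalently the gate onto $P_{\ell_{u_i}}$ in the new HHS — but to avoid circularity, one simply invokes that $K(g_i)$ is hyperbolically embedded, so the closest-point projection $G\to K(g_i)^h$ is coarsely Lipschitz and maps any other hyperbolically-embedded-peripheral coset (in particular any original $P_v$, and hence any $P_u$ for $u$ in a different new component) to a uniformly bounded set, by the standard bounded coset penetration / "$\alpha$-projections" property of hyperbolically embedded subgroups \cite[Section 4]{DGO}. One last routine check: the family of all gates (old and new) is $G$-equivariant and uniformly coarsely Lipschitz, which is immediate. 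Having verified all of Definition~\ref{defn:squid_material}, Theorem~\ref{thm:squidification} produces a short HHG structure $(G, \ov X')$ with the stated cyclic directions.

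For the "moreover" about colourability: if $\ov X$ is $G$-colourable with colour set $\mathcal C$ and $G$-action $G\to\operatorname{Sym}(\mathcal C)$, then $\ov X'$ is coloured by $\mathcal C \sqcup \{c_1, c_2\}$ where we colour every $u_i^h$ with $c_1$ and every $w_i^h$ with $c_2$; since the new components are a union of $G$-orbits and $G$ preserves the partition into "$u$-type" and "$w$-type" vertices (the cyclic direction is a conjugacy invariant: $Z_{gu_i^h g^{-1}} = gZ_{u_i^h}g^{-1}$ is infinite while $Z_{w_i^h}$ is trivial), the $G$-action fixes $c_1$ and $c_2$, so it descends to the enlarged colour set. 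No two adjacent vertices share a colour since within $\ov X$ this is inherited and the only new edges go between a $c_1$-vertex and a $c_2$-vertex. This completes the proof.
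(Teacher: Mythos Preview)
Your overall strategy—verify squid materials for $\ov X'$ and invoke Theorem~\ref{thm:squidification}—matches the paper's, and your treatment of axioms~\eqref{squid_material:graph}–\eqref{squid_material:quasimorphisms} and of colourability is fine. The gap is in axiom~\eqref{squid_material:gates}, in two places.

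First, you write ``the gate maps for the original vertices are unchanged'' and move on. But you must still check that the \emph{old} gate $\gate_{v_j}$ satisfies the $\dist\ge 2$ condition for the \emph{new} vertices: concretely, that $\gate_{v_j}(P_{u_i^h})=\gate_{v_j}(hK(g_i))$ lies in a uniform neighbourhood of some coset $gZ_{h'v''}$ with $h'v''\in\link_{\ov X'}(v_j)$. This requires an argument (for instance: $g_i$ loxodromic on $\C S$ forces $hK(g_i)$ to have uniformly bounded projection to every proper domain, hence the HHS gate onto $P_{\ell_{v_j}}$ has uniformly bounded image), which you omit.

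Second, your new gates $\gate_{u_i}$ rely on the DGO bounded-projection property between cosets of a hyperbolically embedded family, and you apply it to the original $P_v$. But the $\Stab{G}{v_j}$ are only \emph{weakly} hyperbolic peripherals, not part of the hyperbolically embedded collection $\{K(g_i)\}$, so DGO's geometric separation does not directly bound $\gate_{u_i}(P_v)$. One can instead use that $P_v$ is coned to a point in $\C S$, hence has bounded nearest-point projection to the quasi-axis of $g_i$ there—but you then still owe an argument that this yields a coarsely Lipschitz coarse retraction $G\to E_{u_i}$ in the \emph{word metric on $G$}, not just in $\C S$.

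The paper avoids both issues by a different route: it first invokes \cite[Proposition~6.14]{hhs_asdim} to produce an intermediate HHG structure $(G,\frakS')$ in which each coset $hK(g_1)$ is a genuine domain with quasiline coordinate space. All gates, old and new, are then HHS gates in $(G,\frakS')$, and the verification reduces to the casework already carried out in Proposition~\ref{prop:short_is_squid}. The paper also works one $g_i$ at a time, invoking \cite[Corollary~6.15]{abbott_manning} to ensure the remaining $g_j$ stay loxodromic on the successively coned-off top-level space; your all-at-once approach is viable too, but only once the gate issues above are repaired.
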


\begin{proof}
Firstly, as $G$ has cobounded product regions by Lemma~\ref{lem:cobounded_stab_for_short}, we can argue as in  Remark~\ref{rem:top_guy_HHG} to get that $\C S$ is ($G$-equivariantly quasi-isometric to) $\Cay{G}{T}$ for some possibly infinite generating set $T$, which we can choose in such a way that $T\cap K(g_1)$ generates $K(g_1)$. Moreover, by \cite[Theorem 6.8]{DGO} we have that $K(g_1)$ is hyperbolically embedded in $(G,T)$.  Combining the two facts, we see that $K(g_1)$ is also \emph{hierarchically hyperbolically embedded}, in the sense of \cite[Definition 6.2]{hhs_asdim} which was designed as the natural generalisation to HHG of the analogous property from \cite{DGO}. Now, a careful inspection of \cite[Proposition 6.14]{hhs_asdim} tells us that there exists a HHG structure $(G,\frakS')$ where:
\begin{itemize}
        \item The index set $\frakS'$ contains $\frakS$, together with one element for every coset $\{hK(g_1)\}_{h\in G}$;
        \item $\nest$ and $\orth$ are unchanged on $\frakS$, while every $hK(g_1)$ is nested in $S$ and transverse to every other domain;
        \item The main coordinate space is quasi-isometric to the cone-off space $\h{\C S}$, obtained from $\C S$ by adding a cone over each coset of $K(g_1)$;
        \item For every $W\in \frakS-\{S\}$, $\C W$ is unchanged, while $\C hK(g_1)=h\Cay{K(g_1)}{\mathcal I}$ for some fixed, finite generating set $\mathcal I$ for $K(g_1)$.
\end{itemize}
Now let $\ov X_1$ be the graph obtained from $\ov{X}$ by adding a new connected component for each coset $\{hK(g_1)\}_{h\in G}$, consisting of an edge $\{u^h_1, w^h_1\}$. Furthermore, we can extend the $G$-action to $\ov{X}_1$ by setting $gu^h_1=u^{gh}_1$ and $gw^h_1=w^{gh}_1$ for every $g,h\in G$. We now construct blowup materials with support graph $\ov X'$, so that Theorem~\ref{thm:squidification} will then grant the existence of a short HHG structure $(G, \ov X',\W')$ with the required properties. The proof will mimic that of Proposition~\ref{prop:short_is_squid}, and indeed we will extensively use that $(G, \ov X, \W)$ already admits blowup materials; therefore we only need to define the blowup materials associated to the new vertices, and check that they interact well with the original blowup materials.

\par\medskip
\textbf{\eqref{squid_material:graph}:} $\ov X_1$ is again triangle- and square-free, since it is obtained from $\ov X$ by adding some new connected components which are edges. As we added two $G$-orbits to the cocompact action on $\ov X$, $G$ still acts cocompactly on $\ov X_1$. Furthermore, any $G$-colouring for $\ov X$ can be extended to $\ov X_1$ by adding two new colours, one for every new $G$-orbit of vertices.
\par\medskip

\textbf{\eqref{squid_material:extensions}:} For every $v\in \ov{X}$, the extensions coming from $(G, \ov X, \W)$ already satisfy all requirements. Moving to $u^h_1$ and $w_h^1$, recall from e.g. \cite[Corollary 6.6]{DGO} that there exists $N_1\in\mathbb{N}_{>0}$ such that $\langle g_1^{N_1}\rangle $ is normal in $K(g_1)$. Then we define the cyclic direction for $u^h_1$ by setting $Z_{u^h_1}=h\langle g_1^{N_1}\rangle h^{-1}$. Notice that the quotient group $\Stab{G}{u^h_1}/Z_{u^h_1}=(hK(g_1)h^{-1})/(h\langle g_1^{N_1}\rangle h^{-1})$ is finite, hence hyperbolic, and that $Z_{u^h_1}$ fixes $w^h_i$ as it belongs to $hK(g_1)h^{-1}$. Regarding $w^h_i$, we can set $Z_{w^h_1}=\{0\}$ to be trivial, and notice that $\Stab{G}{w^h_1}=hK(g_1)h^{-1}$ is already hyperbolic.
\par\medskip

\textbf{\eqref{squid_material:edge_stab}:} The stabiliser of an edge of $\ov X$ is the same as before, so we just need to check that every edge of the form $\{u^h_i, w^h_i\}$ satisfies the properties. The only fact to notice is that $\Stab{G}{u^h_i}=\Stab{G}{w^h_i}=hK(g_1)h^{-1}$ contains $\langle Z_{u^h_1}, Z_{w^h_1}\rangle=h\langle g_1^{N_1}\rangle h^{-1}$ as a subgroup of finite index, and in particular $Z_{u^h_i}$ is quasiconvex in $hK(g_1)h^{-1}$.
\par\medskip

\textbf{\eqref{squid_material:big_papa}:} We already know that the main coordinate space $\C S$ for the original structure $(G,\frakS)$ is quasi-isometric to the cone-off Cayley graph of $G$, with respect to the cosets of a collection of representatives of  stabilisers for the action on $\ov X$. If we furthermore cone-off the cosets of $K(g_1)$ we get the top-level coordinate space for $(G,\frakS')$, which is hyperbolic.
\par\medskip

\textbf{\eqref{squid_material:quasimorphisms}:} Fix a collection $V$ of representatives of the $G$-orbits of vertices of $\ov X$, and let $u_1, w_1$ be the new vertices associated to the coset $K(g_1)$. For all $v\in V$, both $E_v$ and the associated quasimorphism can be defined as in Proposition~\ref{prop:short_is_squid}. Furthermore, let $E_{u_1}$ be the centraliser of $\langle g_1^{N_1}\rangle$ in $K(g_1)$. By e.g. \cite[Lemma 3.2]{Macpherson_virtcyclic}, there exists an epimorphism $\phi_{u_1}\colon E_{u_1}\to \Z$, which in particular is a homogeneous quasimorphism which is unbounded on $\langle g_1^{N_1}\rangle$.
\par\medskip

\textbf{\eqref{squid_material:gates}:}  Notice that, for every $v\in V$, the product region associated to $\ell_v$ in the new structure $(G,\frakS')$ coarsely coincides with the product region for the old structure, as the domains which are either nested into or orthogonal to $\ell_v$, as well as their coordinate spaces, are unchanged. Thus, as in Proposition~\ref{prop:short_is_squid}, the product region coarsely coincides with $E_v$, and we can define a gate map $\gate_v\colon G\to 2^{E_v}$. Furthermore, the product region associated to the domain $K(g_1)$, as well as its stabiliser, both coincide with $K(g_1)$ itself. Thus $K(g_1)$, and in turn $E_{u_1}$, are hierarchically quasiconvex, and we can set $\gate_{v_1}=\gate_{w_1}$ as the gate on $E_{u_1}$. Then, to see that all gates satisfy the requirements of Definition~\ref{defn:squid_material}.\eqref{squid_material:gates}, we can argue as in the proof of Proposition~\ref{prop:short_is_squid}, which only requires that, for every vertex $s\in\ov{X}_1$, there exists a $\nest$-minimal domain (here, either some $\ell_v$ or some $u^h_1$) whose associated product region coarsely coincides with $P_s$.

\par\medskip
The procedure described above gives a short HHG structure for $G$ in which some power of $g_1$ is a cyclic direction. We can repeat the process with $g_2$, then $g_3$ and so on, if we ensure that, for $i=2,\ldots, r$, $g_i$ still acts loxodromically on $\h{\C S}$. Notice that the action on $\C S$ is acylindrical along $K(g_1)$, as $g_1$ is WPD. Then e.g. \cite[Corollary 6.15]{abbott_manning} gives that either $g_i$ acts loxodromically on $\h{\C S}$, or some power of $g_i$ stabilises a coset of $K(g_1)$, and the latter cannot happen as then $g_1$ and $g_i$ would be commensurable. 
\end{proof}

\subsection{Adding ``locally'' loxodromic directions}
The following Proposition roughly says that, whenever $g\in \Stab{G}{v}$ acts loxodromically on $\C\U_v$ and has no hidden symmetries, in the sense of Definition~\ref{defn:hiddensymm_general}, we can tweak the short HHG structure in order to add $\langle g^n\rangle$ as a new cyclic direction, for some $n\in \mathbb N-\{0\}$:

\begin{prop}\label{prop:add_partial_loxo}
Let $(G, \ov{X},\W)$ be a short HHG. Let $v\in\ov{X}^{(0)}$ be a vertex with infinite cyclic direction, and let $g_1,\ldots,g_r\in \Stab{G}{v}$ be such that $\p_v(g_1), \ldots, \p_v(g_r)\in H_v$ are non-commensurable loxodromic elements for the action on $\C\U_{v}$. Suppose further that each $g_i$ has no hidden symmetries in $\Stab{G}{v}$.

For every $i=1\ldots, r$ there exists a non-trivial power $g_i'$ of $g_i$ and a short HHG structure $(G, \ov X',\W')$ such that:
\begin{itemize}
        \item The new support graph $\ov X'$ is obtained from $\ov{X}$ by adding a vertex $u_i^t$ for every coset $\{tS_{i}\}_{t\in G}$, where $S_i$ is the normaliser of $g_i'$ in $G$, and declaring that $u_i^t$ is only adjacent to $t v$. Furthermore, $\ov X'$ is again $G$-colourable.
        \item The cyclic direction associated to $u_i^t$ is $t\langle g_i'\rangle t^{-1}$;
        \item The data of all other vertices are unchanged.
    \end{itemize}
    Moreover, if $(G, \ov X,\W)$ is colourable, then so is $(G, \ov X',\W')$.
\end{prop}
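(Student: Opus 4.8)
<br>

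The plan is to mimic the proof of Proposition~\ref{prop:add_loxo}, replacing the action on the main coordinate space $\C S$ with the action of $H_v$ on the augmented link $\C\U_v\cong\link_{\ov X}(v)^{+\W}$. First I would fix, for each $i$, the ``straightened'' element: by Lemma~\ref{lem:straightening_central_direction} applied to the extension $0\to Z_v\to \p_v^{-1}(K_{H_v}(\p_v(g_i)))\to K_{H_v}(\p_v(g_i))\to 0$ (using that $K_{H_v}(\p_v(g_i))$ is virtually cyclic, which holds because $\p_v(g_i)$ is loxodromic hence WPD on $\C\U_v$ by acylindricity — here $H_v$ is hyperbolic by Lemma~\ref{lem:Hv_hyp_1} and $\C\U_v$ is quasi-isometric to a cone-off of $H_v$), one obtains $g_i'\in\p_v^{-1}(\p_v(\langle g_i\rangle))-Z_v$ with $\langle g_i'\rangle$ normal in $\p_v^{-1}(K_{H_v}(\p_v(g_i)))$; its normaliser $S_i$ in $G$ contains this group, and by maximality of $K_{H_v}(\p_v(g_i))$ one checks $\p_v(S_i\cap\Stab G v)=K_{H_v}(\p_v(g_i))$ while elements of $G$ outside $\Stab G v$ move $v$ and hence cannot normalise $\langle g_i'\rangle\le\Stab G v$ (since $g_i'$ acts loxodromically on $\C\ell_v$, so $v$ is the unique $\ov X$-vertex it can stabilise together with a point of $\link(v)$, arguing as in Lemma~\ref{lem:malnormality}). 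Thus $S_i\le\Stab G v$, and $\Stab G{u_i^t}=tS_it^{-1}$.

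Next I would build the new support graph $\ov X'$ by adding, for each coset $tS_i$, a pendant vertex $u_i^t$ adjacent only to $tv$, with $G$-action $g u_i^t=u_i^{gt}$. Then I verify $\ov X'$ is still triangle- and square-free (adding leaves to $tv$ cannot create triangles or squares since $\ov X$ is square-free, so no two distinct $\link(v)$-vertices share a common neighbour other than $v$), has finitely many $G$-orbits, and is $G$-colourable if $\ov X$ was (add one new colour per new $G$-orbit, which is consistent since each $u_i^t$ is adjacent only to $tv$, and the colour of $v$ is fixed). The bulk of the argument is then to assemble squid materials for $(G,\ov X')$ and invoke Theorem~\ref{thm:squidification}: the materials for the old vertices of $\ov X$ are inherited from Proposition~\ref{prop:short_is_squid} (one must check the new vertices do not disturb Definition~\ref{defn:squid_material}.\eqref{squid_material:big_papa}, i.e.\ the top-level coordinate space only gets further coned off along the $S_i$, and \eqref{squid_material:gates}, i.e.\ the gates $\gate_v\colon G\to 2^{E_v}$ now also have the new vertices in their target graph but the explicit description via Remark~\ref{rem:factors_are_hqc} still applies after enlarging the HHG structure by a ``hierarchically hyperbolically embedded'' subgroup as in \cite[Proposition 6.14]{hhs_asdim}, applied this time to $K_{H_v}(\p_v(g_i))$ hyperbolically embedded in $H_v$ and pulled back to $\Stab G v$). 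For each new vertex $u_i^t$ I set $Z_{u_i^t}=t\langle g_i'\rangle t^{-1}$, note $\Stab G{u_i^t}/Z_{u_i^t}$ is virtually $K_{H_v}(\p_v(g_i))/\langle\p_v(g_i')\rangle$ which is finite hence hyperbolic (giving \eqref{squid_material:extensions}), and $Z_{u_i^t}\le\Stab G{tv}$ fixes $tv$ so acts trivially on $\link_{\ov X'}(u_i^t)=\{tv\}$; edge stabiliser axiom \eqref{squid_material:edge_stab} for the edge $\{u_i^t,tv\}$ holds because $\Stab G{u_i^t}\cap\Stab G{tv}=tS_it^{-1}$ contains $t\langle g_i'\rangle t^{-1}$ with finite index and its image in $H_{tv}$ is $K_{H_v}(\p_v(g_i))$, quasiconvex in the hyperbolic group $H_v$; the quasimorphism $\phi_{u_i^t}$ is any homogeneous quasimorphism on the centraliser $E_{u_i}$ of $\langle g_i'\rangle$ in $S_i$ unbounded on $\langle g_i'\rangle$, furnished by Lemma~\ref{lem:quasimorph_from_central_ext}; and the gate $\gate_{u_i^t}$ is the gate onto the hierarchically quasiconvex subgroup $E_{u_i}$, whose product region (in the enlarged structure) coarsely coincides with $S_i$ itself.

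Finally, as in Proposition~\ref{prop:add_loxo}, one repeats this for $i=1,\dots,r$ in turn, using at each stage that $\p_v(g_j)$ for $j>i$ still acts loxodromically on the cone-off of $\C\U_v$ along the cosets of the already-added $K_{H_v}(\p_v(g_1)),\dots,K_{H_v}(\p_v(g_{i}))$: since the $\p_v(g_\ell)$ are non-commensurable, no power of $\p_v(g_j)$ stabilises such a coset, so by acylindricity along each coned subgroup (e.g.\ \cite[Corollary 6.15]{abbott_manning}) loxodromicity is preserved. I expect the main obstacle to be the bookkeeping in \eqref{squid_material:gates}: one must confirm that, in the HHG structure enlarged by the hierarchically hyperbolically embedded subgroup inside $\Stab G v$, the product region of the new domain $\ell_{u_i^t}$ is genuinely $\nest$-minimal and coarsely equals $P_{u_i^t}=tS_i$, and that the gate estimates of Definition~\ref{defn:squid_material}.\eqref{squid_material:gates} — in particular the $\gate_{v_j}(P_{u_i^t})\subseteq N_B(gZ_{hv'})$ clause for the old vertices $v_j$ — still hold once these extra leaves are present; this requires carefully tracking which domains are transverse to, nested in, or orthogonal to $\ell_{u_i^t}$, but the relations are simple because $u_i^t$ has valence one in $\ov X'$.
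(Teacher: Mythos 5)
Your overall plan (straighten with Lemma~\ref{lem:straightening_central_direction}, attach pendant vertices, verify squid materials, invoke Theorem~\ref{thm:squidification}) matches the paper's, and the $\ov X'$ construction, the edge-stabiliser and complexity checks, and the colouring argument are all sound. But there is a genuine gap at the crux of the argument, which is the definition of the gate maps $\gate_{u_i}$ and the verification of Definition~\ref{defn:squid_material}.\eqref{squid_material:gates}. You propose to ``enlarge the HHG structure'' by appealing to \cite[Proposition 6.14]{hhs_asdim} with $K_{H_v}(\p_v(g_i))$ ``pulled back to $\Stab{G}{v}$''. That proposition applies to subgroups that are \emph{hierarchically hyperbolically embedded in $G$}, which essentially requires that their infinite-order elements be loxodromic on the \emph{top-level} coordinate space $\C S$. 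The $g_i$ are loxodromic on $\C\U_v$, not on $\C S$, so $\p_v^{-1}(K_i)$ is not hierarchically hyperbolically embedded in $G$ and the enlargement machinery does not apply. The paper sidesteps this entirely: it never constructs an intermediate enlarged structure but instead proves directly that the centraliser $E_i$ is hierarchically quasiconvex in the \emph{original} HHG structure $(G,\frakS)$, by exhibiting its coordinate projections (coarsely dense on $\C\ell_v$, a quasi-axis $\gamma_i$ in $\C\U_v$, uniformly bounded everywhere else — the last via the bounded geodesic image axiom applied to $\gamma_i$). The gate $\gate_{u_i}$ is then the gate onto $E_i$ supplied by \cite[Lemma 5.5]{HHS_II}, and all properties in Definition~\ref{defn:squid_material}.\eqref{squid_material:gates} are verified case by case using that explicit projection description together with \cite[Lemma 6.7]{DGO} to bound projections between distinct quasi-axes. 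Your flagged worry that the bookkeeping ``might be the main obstacle'' is exactly the unresolved point.

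A second, smaller gap: for Definition~\ref{defn:squid_material}.\eqref{squid_material:quasimorphisms} at the vertex $v$ itself, the original $\phi_v$ does not suffice — it must also vanish on the new adjacent cyclic directions $Z_1,\dots,Z_r$. The paper produces a replacement $\widetilde\phi_v$ by observing that $\{\p_v(Z_1),\dots,\p_v(Z_r)\}\cup\{\p_v(Z_w)\}_{w\in W}$ is still an independent collection (the new elements are loxodromic on $\h H_v$, the old are elliptic, and the $\p_v(g_i)$ are pairwise non-commensurable) and then invoking Lemma~\ref{lem:quasimorph_from_central_ext}. Your proposal only discusses quasimorphisms on the new vertices and silently reuses the old $\phi_v$, which breaks the axiom. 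Finally, a slip: you write that $g_i'$ ``acts loxodromically on $\C\ell_v$''; it acts loxodromically on $\C\U_v$ (it is the projection to $\C\U_v$ of $u\neq v$, namely $\rho^{\ell_u}_{\U_v}$, that cannot be $g_i'$-fixed), which is what actually pins $S_i\le\Stab{G}{v}$.
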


\begin{proof}
Along the proof, we will often use the following observations. Firstly, if we fix a collection $W$ of $\Stab{G}{v}$-orbit representatives of vertices in $\link_{\ov X}(v)$ with unbounded cyclic direction, Lemma~\ref{lem:malnormality} states that $\{\p_v(Z_{w})\}_{w\in W}$ is an independent collection of cyclic subgroups. Furthermore, $H_v$ acts geometrically on $F_{U_v}$, by Claim~\ref{claim:quasiaction_geometric_H_V}, and each $Z_w$ acts geometrically on the corresponding quasiline; hence the cone-off graph $\h{H_v}$ of $H_v$ with respect to $\{K_{H_v}(\p_v(Z_{w}))\}_{w\in W}$ is quasi-isometric to $\C \U_v$.
\par\medskip

Now, for every $i=1,\ldots, r$ let $K_i$ be the maximal virtually cyclic subgroup of $H_v$ containing $\p_v(g_i)$. Since $g_i$ has no hidden symmetries, there exists a non-trivial power $g_i'$ of $g_i$ whose normaliser $S_i$ in $G$ contains $\p_v^{-1}(K_i)$, and in particular it contains $Z_v$. Notice that $S_i\le \Stab{G}{v}$. Indeed, if $t\in S_i$ then $g_i'$ fixes $t(v)$; moreover, the only vertex of $\ov X$ which is fixed by $g_i'$ is $v$, because for any $u\in\ov{X}-\{v\}$ the projection $\rho^{\ell_u}_{\U_v}\in\C\U_v$ is well-defined and cannot be fixed by the loxodromic action of $g_i'$. In turn, this implies that $S_i$ actually coincides with $\p_v^{-1}(K_i)$, since if $t$ (anti)commutes with $g_i'$ then $\p_v(t)$ (anti)commutes with some power of $\p_v(g_i)$, and therefore lies in $K_i$ by how the latter is described in \cite[Corollary 6.6]{DGO}.

Let $\ov X'$ be the graph obtained from $\ov X$ by adding a vertex $u_i^t$ for every coset $\{tS_{i}\}_{t\in G}$, which we declare to be only adjacent to $t v$. Extend the $G$-action to $\ov X'$ by setting $g u_i^t=u_i^{gt}$. We now check the existence of blowup materials with support graph $\ov X'$, and then again the conclusion will follow from Theorem~\ref{thm:squidification}.

\par\medskip
\textbf{\eqref{squid_material:graph}:} $\ov X'$ is again triangle- and square-free and none of its connected components are points, as we simply added some leaves to the $G$-orbit of $v$. Moreover, $G$ acts cocompactly on $\ov X'$, since we added a finite number of $G$-orbits. Finally, as no two vertices in the new orbits are adjacent, any $G$-colouring for $\ov X$ can be extended to $\ov X'$ by adding a single new colour.

\par\medskip
\textbf{\eqref{squid_material:extensions}:} 
For each $w\in\ov{X}^{(0)}$, we already know that $\Stab{G}{w}$ is a cyclic-by-hyperbolic extension satisfying all the properties of the blowup materials. The only thing to notice is that, whenever $t\in \Stab{G}{v}$, $Z_v$ fixes $u_i^t$ for every $i=1,\ldots, r$. Indeed,
$$Z_v t S_i= t Z_v S_i=t S_i,$$
where we used that $Z_v$ is normal in $\Stab{G}{v}$ and contained in $S_i$.

Moving to the new domains, for every $i=1,\ldots, k$ let $Z_i=\langle g_i'\rangle$, and then for every $t\in G$ let $Z_{u^t_i}=tZ_it^{-1}$. Let $u_i=u_i^1$ be the vertex associated to the coset $S_i$. Notice that $Z_i$ fixes $\{v\}=\link_{\ov X'}(u_i)$, as $Z_i\le\Stab{G}{v}$. Furthermore, by construction $\Stab{G}{u_i}=S_i$, and since $S_i=\p_v^{-1}(K_i)$ virtually coincides with $\langle Z_v, Z_i\rangle$, we have that the quotient $S_i/Z_{i}$ is virtually cyclic, hence hyperbolic.

\par\medskip
\textbf{\eqref{squid_material:edge_stab}:} Whenever $e$ is an edge of $\ov X$, $\text{P}\Stab{G}{e}$ already satisfies all requirements. Then up to the $G$-action let $e=\{v, u_i\}$. Now, $\Stab{G}{v}\cap\Stab{G}{u_i}=S_i$, and we already noticed that the latter is a finite-index overgroup of $\langle Z_v, Z_{i}\rangle$. Furthermore, we have that $Z_v\cap Z_{i}=\{0\}$ (the former acts trivially on $H_v$, the latter loxodromically), so $Z_v$ injects in the virtually cyclic quotient $S_i/Z_i$ and is therefore quasiconvex there. Conversely, $\p_v(Z_{i})$ is quasiconvex in $H_v$, as $\p_v(g_i)$ acts loxodromically on $\C \U_v=\h{H_v}$ and therefore on $H_v$.

\par\medskip
\textbf{\eqref{squid_material:big_papa}:} 
Fix a collection $V=\{v_1=v,\ldots, v_k, u_1, \ldots, u_r\}$ of orbit representatives of vertices in $\ov X'$. From the proof of Proposition~\ref{prop:short_is_squid} we know that $G$ is weakly hyperbolic relative to $\{\Stab{G}{v_1},\ldots, \Stab{G}{v_k}\}$; moreover,  we do not change the quasi-isometry type of the cone-off graph if we enlarge the collection to include $\{S_1,\ldots, S_r\}$, as $S_i\le \Stab{G}{v_1}$. 

\par\medskip
\textbf{\eqref{squid_material:quasimorphisms}:} 
For every $v_j$, $j=1,\ldots, k$, one can define $E_{v_j}$ as in the proof of Proposition~\ref{prop:short_is_squid}. If $j\ge 2$, the quasimorphism $\phi_{v_j}$ from the same proof satisfies all requirements already. We need a little more effort for $j=1$, as we have to produce a quasimorphism $E_{v}\to \mathbb{R}$ that vanishes on the new adjacent cyclic directions, as well as on the original ones. To do so, one first notices that the collection $$\mathcal H=\{\p_v(Z_1), \ldots, \p_v(Z_r)\}\cup\{\p_v(Z_{w})\}_{w\in W}$$ is independent in $H_v$. Indeed, no $\p_v(g_i')$ can be commensurable to any element in $Z_{w}$, since the former acts loxodromically on $\h{H_v}$ while the latter acts elliptically; moreover, by assumption no two loxodromic elements are commensurable, and we already pointed out that the collection $\{\p_v(Z_{w})\}_{w\in W}$ is independent. Furthermore, every $Z_i$ has no hidden symmetries in $\Stab{G}{v}$ by hypothesis, while every $Z_w$ for $w\in W$ has no hidden symmetries by Lemma~\ref{lem:nohidden}. Then Lemma~\ref{cor:finding_quasimorphisms} from the Appendix produces a quasimorphism $\Tilde{\phi}_v$ from the centraliser of $Z_v$ to $\R$ which is the identity on $Z_v$ and trivial on all adjacent cyclic directions, and we can then restrict $\Tilde{\phi}_v$ to $E_v$ to get the required quasimorphism.

Moving to the new domains, let $E_i=\langle 2Z_i, 2Z_v\rangle$, which is a normal subgroup of $S_i$ of index at most two and centralises $Z_i$. Then we can choose the quasimorphism $\phi_{u_i}\colon E_i\to \mathbb{R}$ as the projection onto $2Z_i$, which is clearly unbounded on $2Z_i$ and trivial on $2Z_v$. 

\par\medskip
\textbf{\eqref{squid_material:gates}:}
Before proving the existence of gates, we point out that, for every $i=1,\ldots, r$, $E_i\le \Stab{G}{v}$, and the latter coarsely coincides with the product region for $\ell_v$. This means that, in the original HHG structure, the projection of $E_i$ to any domain which is neither $\ell_v$ nor orthogonal to $\ell_v$ is uniformly bounded. Furthermore, we can describe the remaining projections of $E_i$ as follows:
\begin{itemize}
    \item $\pi_{\ell_v}(E_i)$ is coarsely dense, as it contains $Z_v$;
    \item $\pi_{\U_v}(E_i)$ coarsely coincides with $\pi_{\U_v}(\langle g_i'\rangle)$, as $Z_v$ acts trivially on $\C\U_v$. In particular, $\pi_{\U_v}(E_i)$ coarsely coincides with some quasi-axis $\gamma_i$ for $\p_v(g_i')$.
    \item For the same reason, for every $w\in\link_{\ov X}(v)$, $\pi_{\ell_w}(E_i)$ coarsely coincides with $\pi_{\ell_w}(\langle g_i'\rangle)$. We claim that this projection is uniformly bounded. Indeed, the bounded geodesic image axiom~\eqref{item:dfs:bounded_geodesic_image}, combined with the fact that $\gamma_i$ is a quasigeodesic in $\C\U_v$, imply that the “tails” of $\gamma_i$ have uniformly bounded projections to $\C\ell_w$. In other words, there exist $m,n\in\mathbb{Z}$, whose difference is bounded in norm by some constant $K$ depending on the quasigeodesic constants of $\gamma_i$, such that the projection of $\pi_{\ell_w}(\langle g_i'\rangle)$ coarsely coincides with the projection of $\{(g_i')^{l}\}_{l=m}^n$. Then up to the action of $\langle g_i'\rangle $ we can assume that $m=0$, and as coordinate projections in a HHG are uniformly coarsely Lipschitz we have that
    $$\sup_{n\in \Z\cap[-K,K]}\sup_{W\in\frakS}\dist_W(1, (g_i')^{n})<+\infty. $$
\end{itemize}

From this explicit description we get that $E_i$ is hierarchically quasiconvex, in the sense of Definition~\ref{defn:hqc}. Indeed, its coordinate projections are all quasiconvex; furthermore, whenever $x\in G$ projects close to $\pi_W(E_i)$ for every $W\in\frakS$, we can choose some $l\in\mathbb{Z}$ such that $\pi_{\U_v}(x)$ is within a bounded distance from $\pi_{\U_v}((g_i')^{l})$, and then choose $z\in Z_v$ such that $\pi_{\ell_v}(z(g_i')^{l})$ is uniformly close to $\pi_{\ell_v}(x)$ (notice that multiplying by an element of $Z_v$ does not change the projection to $\U_v$). Thus, there exists a coarsely Lipschitz gate map $\gate_{u_i}\coloneq G\to 2^{E_i}$.

Now, for every $v_j\in V$ define $\gate_{v_j}$ as the gate on $E_{v_j}$, which coarsely coincides with the product region $P_{\ell_{v_j}}$ in the original HHG structure. For every $u_1,\ldots, u_r$ define $\gate_{u_i}$ as above. We now check all properties of the gate $\gate_w$, depending on the type of $w$:
\par\medskip
\textbf{If $w=v$:} We know from before that $\gate_{v}$ is coarsely the identity on $P_{v}$, and therefore on each $P_{u^t_i}=tS_i$ for every $t\in\Stab{G}{v}$. In particular $\gate_{v}(P_{u_i^t}) $ is coarsely contained in $P_{u_i^t}$ for every $t\in \Stab{G}{v}$. Furthermore, if $u\in\link_{\ov X}(v)$ then we already know that $\gate_v(P_u)$ is coarsely contained in $P_u$, as proven in Proposition~\ref{prop:short_is_squid}. 

Moving to the second requirement, for every $u\in\ov{X}^{(0)}$ which is at distance at least $2$ from $v$, by the proof of Proposition~\ref{prop:short_is_squid} we already know that $\gate_v(P_{u})$ is coarsely contained in some coset of some cyclic direction adjacent to $v$. Thus, we only need to consider what happens when $\dist_{\ov X'}(v, u^t_i)\ge 2$, so that $\gate_{v}(P_{u_i^t})\subseteq \gate_{v}(P_{tv})$. If $\dist_{\ov X}(v,tv)\ge 2$ then we are done, as pointed out above. If instead $v$ and $tv$ are $\ov X$-adjacent, then $\gate_{v}(P_{tv})$ has unbounded projections only on $\C\ell_v$ and $\C\ell_{tv}$, and $P_{u_i^t}$ projects to a uniformly bounded subset inside $\C\ell_v$. Then $\gate_{v}(P_{u_i^t})$ coarsely coincides with the coset $tZ_{v}$.

\par\medskip
\textbf{If $w=v_j$, $j\ge 2$:} Recall that $\gate_{v_j}$ is defined as the gate on $E_{v_j}$ in the original HHG structure, hence all properties of $\gate_{v_j}(P_u)$ hold whenever $u\in\ov{X}^{(0)}$. Now fix $t\in G$ and $i\in \{1,\ldots, r\}$. We have that $\gate_{v_j}(P_{u^t_i})\subseteq \gate_{v_j}(P_{tv})$, so the only coordinate spaces to which $\gate_{v_j}(P_{u^t_i})$ might have unbounded projection are $\C\ell_{v_j}$ and $\C\ell_{tv}$. If $\dist_{\ov X}(tv, v_j)=1$ then $\pi_{\ell_{v_j}}(\gate_{v_j}(P_{u^t_i}))=\pi_{\ell_{v_j}}(P_{u^t_i})$ is uniformly bounded, by the above description of the projections of $E_i$; thus $\gate_{v_j}(P_{u^t_i})$ coarsely coincides with $tZ_{v}$, and all properties of the gate follow. If instead $\dist_{\ov X}(tv, v_i)\ge 2$ then we already know that $\gate_{v_j}(P_{tv})$ coarsely coincides with some coset of a cyclic direction adjacent to $v_j$, and again all properties of the gate follow.

\par\medskip
\textbf{If $w=u_i$:} Since $\gate_{u_i}$ takes image in $E_i\subseteq P_v$, for every $u\in\ov{X}'$ we have that $\gate_{u_i}(P_u)\subseteq P_v$. Now we explicitly describe the gate, depending on who is $u$.
\begin{itemize}
    \item Suppose first that $u=u^t_j$ for some $t\in\Stab{G}{v}$ and some $j\in\{1,\ldots, r\}$, so that $\dist_{\ov X'}(u_i, u_j^t)=2$. We know that $\pi_{\U_v}(E_i)$ (resp. $\pi_{\U_v}(E_{u^t_j})$) is coarsely a quasi-axis $\gamma_i$ (resp. $t\gamma_j$) for the action of $\p_v(g_i')$ (resp. $\p_v(tg_j't^{-1})$) on $\C \U_v$. We claim that $t\gamma_j$ has uniformly bounded projection to $\gamma_i$, so that the $\U_v$-coordinate of $\gate_{u_i}(E_{u^t_j})$ is uniformly bounded. This will imply that $\gate_{u_i}(E_{u^t_j})$ uniformly coarsely coincides with some coset of $Z_v$. 
    \\
    If $i=j$  we must have that $t\not\in S_i$, as $u_i$ does not coincide with $u_i^t$. In other words, $\p_v(t)\not \in K_i$, which means that $\p_v(g_i')$ and $\p_v(tg_i't^{-1})$ have no non-trivial common power. Then \cite[Lemma 6.7]{DGO} grants the existence of a constant $M\ge 0$, depending on the hyperbolicity constant of $\C\U_v$, such that the projection of $t\gamma_j$ onto $\gamma_i$ has diameter at most $M$. If instead $i\neq j$ then one can proceed as in the proof of \cite[Theorem 6.8]{DGO} (which, in turn, is ultimately an application of \cite[Lemma 6.7]{DGO}), to get the same conclusion.
    \item If $u\in\link_{\ov X}(v)$ then $\pi_{\U_v}(P_u)$ coarsely coincides with $\rho^{\ell_u}_{\U_v}$. Then again $\pi_{\U_v}(\gate_{u_i}(P_u))$, which coarsely coincides with the projection of $\pi_{\U_v}(P_u)$ onto $\pi_{\U_v}(E_i)$, is uniformly bounded, and we conclude as above.  
    \item Moving to those $u\in(\ov X')^{(0)}$ at distance at least $3$ from $u_i$, suppose at first that $u\in(\ov{X}-\operatorname{Star}(v))^{(0)}$. Then both $\pi_{\U_v}(P_u)$ and $\pi_{\ell_v}(P_u)$ are uniformly bounded. In turn, this means that $\gate_{u_i}(P_u)$ is uniformly bounded, as it has bounded projection to every coordinate space.
    \item Finally, suppose that $u=u^t_j$, where $t\in G-\Stab{G}{v}$ and $j\in\{1, \ldots, r\}$. Then $\gate_{u_i}(P_{u^t_j})\subseteq \gate_{u_i}(P_{tv})$, and the latter coarsely coincides with some coset of $Z_v$ by either the second or the third bullet above (depending on whether $tv$ belongs to $\link_{\ov X}(v)$ or not). 
\end{itemize}
The proof of Proposition~\ref{prop:add_partial_loxo} is now complete.
\end{proof}

In the setting of Proposition~\ref{prop:add_partial_loxo}, if one of the $g_i$ has hidden symmetries, we can still add a central direction generated by $g_i^nz_v^k$, for some $n\in\mathbb{N}-\{0\}$ and some $k\in \Z$, in view of the following Lemma:
\begin{lemma}[Straightening cyclic directions]\label{lem:straightening_central_direction}
Let $0\to \langle z\rangle\to G\xrightarrow[]{\p} H\to 1$ be a $\Z$-extension of a hyperbolic group, and let $g\in G$ map to an infinite order element in $H$. Then there exists $g'=g^nz^k$, for some $n\in\mathbb{N}-\{0\}$ and some $k\in \Z$, without hidden symmetries.
\end{lemma}

\begin{proof} 
    We restrict our consideration to the extension
    $$\begin{tikzcd}
        0\ar{r}&\langle z\rangle\ar{r}&S\ar{r}{\p}&K\ar{r}&0
    \end{tikzcd}$$
    where $K=K_{H}(g)$ is virtually cyclic and $S=\p^{-1}(K)$, and we want to find $g'$ as above which is normal in $S$. The proof boils down to basic linear algebra. Firstly, as $\langle\p(g)\rangle$ has finite index in $K$, there exists $N\in\mathbb{N}_{>0}$ such that $\langle \p(g^{2N})\rangle$ is normal in $K$. Thus, the centraliser $K^+$ of $\p(g^{2N})$ inside $K$ has index at most two. Let $C_1=\p^{-1}\left(K^+\right)$, and let $C_2$ be the centraliser of $z$ in $S$. Finally, let $E=C_1\cap C_2$, which is therefore a normal subgroup of index at most four. 

We first claim that every $t\in E$ commutes with $g^{2N}$. Indeed, since $t\in C_1$ we have that $tg^{2N}t^{-1}=g^{2N} z^K$ for some $K\in\Z$; furthermore, using that $t\in C_2$ must commute with $z$, we get that $t^rg^{2N}t^{-r}=g^{2N}z^{Kr}$ for every $r\in \Z$. However, $\p^{-1}(\langle \p(g^{2N})\rangle)=\langle z, g^{2N}\rangle$ has finite index in $S$, thus there exists some $r$ for which $t^r$ must commute with $g^{2N}$ (here we are using that $z$ commutes with any even power of $g$, as $C_2$ has index at most two in $S$). This yields that $K=0$.

Now, if $E=S$ we are done, as then $g^{2N}$ is central in $S$ and we can set $g'=g^{2N}$. If not, there are some cases to consider.

\begin{itemize}
    \item Suppose first that $E=C_1\lneq S$ but $C_2=S$, that is, $z$ is central in $S$, and pick $t\in S-E$. Since $\p(t)$ anticommutes with $\p(g^{2N})$, we must have that $tg^{2N}t^{-1}=g^{-2N}z^{M}$ for some $M\in\mathbb{Z}$. Then let $g'= g^{4N}z^{-M}$, which anticommutes with $t$. In turn, as $E$ has index two, $g'$ anticommutes with every $t'\in S-E$, so $\langle g'\rangle$ is normal in $S$.
    \item A similar argument works if $C_1= S$ but $E=C_2\lneq S$. Pick any $t\in S-E$, and suppose that $tg^{2N}t^{-1}=g^{2N}z^{M}$ for some $M\in\mathbb{Z}$. Then $g'= g^{4N}z^{M}$ commutes with $t$, and this time $g'$ is central in $S$.
    \item If $E=C_1=C_2\lneq S$, pick any $t\in S-E$, and suppose that $tg^{2N}t^{-1}=g^{-2N}z^{M}$ for some $M\in\mathbb{Z}$. Then $t^2g^{2N}t^{-2}=g^{2N}z^{-2M}$. But $t^2\in E$ must commute with $g^{2N}$, so we must have that $M=0$. This means that $t$ anticommutes with $g'=g^{2M}$, and therefore $\langle g'\rangle$ is normal in $S$.
    \item Finally, suppose that both $C_1$ and $C_2$ are proper, distinct subgroups of $S$. Let $t_1\in C_1-C_2$ and $t_2\in C_2-C_1$. As above, there exist $M_1,M_2\in\Z$ such that $t_1g^{2N}t_1^{-1}=g^{2N}z^{M_1}$ and $t_2g^{2N}t_2^{-1}=g^{-2N}z^{M_2}$. Then
    $$t_1t_2g^{2N}t_2^{-1}t_1^{-1}=t_1g^{-2N}z^{M_2}t_1^{-1}=g^{-2N}z^{-M1-M_2}.$$
    As $t_1t_2\in S-(C_1\cap C_2)$, we can argue as in the previous bullet to get that $M_1=-M_2$. Then set $g'=g^{4N}z^{M_1}$, and one can check that it commutes with $t_1$ and anticommutes with $t_2$ and $t_1t_2$. \qedhere
\end{itemize}
\end{proof}

\begin{rem}\label{rem:Z=K_central}
    With the notation from Lemma~\ref{lem:straightening_central_direction}, if the extension is central and $K=\Z$, then $S\cong \Z^2$, so every $g\in G$ already has no hidden symmetries.
\end{rem}

\section{New examples}\label{sec:new_examples}
In this Section, we use the machinery of blowup materials to provide new examples of short HHG.
\subsection{RAAGs on triangle- and square-free graphs}\label{subsec:new_raags}
\begin{prop}\label{prop:raag_short}
    Let $\Lambda$ be a finite, connected, triangle- and square-free simplicial graph with at least three points. The right-angled Artin group $G_\Lambda$ with defining graph $\Lambda$ is a colourable short HHG. 
\end{prop}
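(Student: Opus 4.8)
The plan is to verify that $G_\Lambda$ admits squid materials in the sense of Definition~\ref{defn:squid_material}, so that Theorem~\ref{thm:squidification} produces the desired short HHG structure, and then to check colourability separately. First I would construct the support graph $\ov X$. The natural choice is a graph whose vertices are cosets $gN(\langle a\rangle)$ of normalisers of the cyclic subgroups generated by standard generators $a\in\Lambda^{(0)}$, with $gN(\langle a\rangle)$ adjacent to $hN(\langle b\rangle)$ precisely when the corresponding conjugates of $\langle a\rangle$ and $\langle b\rangle$ commute; since $\Lambda$ is triangle- and square-free this graph should be triangle- and square-free too (no three pairwise-commuting non-proportional generators, no square of commutations), and connectedness of $\Lambda$ with at least three vertices should force connectedness of $\ov X$ and rule out isolated vertices. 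Here one uses the standard fact that in a RAAG on a triangle-free graph, $N(\langle a\rangle)=\langle a\rangle\times \mathrm{star\ stuff}$, more precisely that the centraliser of a standard generator $a$ in $G_\Lambda$ is $\langle a\rangle\times G_{\mathrm{lk}(a)}$ and its normaliser differs from the centraliser by a finite (in fact here one can often arrange for them to coincide, since no non-trivial power relations occur); in any case $\Stab{G_\Lambda}{v}$ for $v=gN(\langle a\rangle)$ is a conjugate of $\langle a\rangle\times G_{\mathrm{lk}(a)}$, which is a $\Z$-by-(free-or-RAAG) extension. Because $\Lambda$ is triangle-free, $\mathrm{lk}(a)$ is a discrete graph, so $G_{\mathrm{lk}(a)}$ is free, hence hyperbolic, giving axiom~\eqref{squid_material:extensions} with $Z_v$ the central $\Z$ generated by (a conjugate of) $a$; the $G_\Lambda$-equivariance of this family of extensions is immediate from conjugation.

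Next I would check the remaining squid-material axioms. For~\eqref{squid_material:edge_stab}: if $v,w$ are adjacent, i.e. $a$ and $b$ commute in $\Lambda$, then $\Stab{}{v}\cap\Stab{}{w}$ is a conjugate of the centraliser of $\{a,b\}$, which since $\mathrm{lk}(a)\cap\mathrm{lk}(b)=\emptyset$ (square-freeness, as $a,b$ and a common neighbour would give a triangle or, with two common neighbours, a square) is just $\langle a,b\rangle\cong\Z^2=\langle Z_v,Z_w\rangle$, so the finite-index condition holds trivially; and $\p_v(Z_w)$ is the cyclic subgroup generated by the image of $b$ in $G_{\mathrm{lk}(a)}$, a free factor-like cyclic subgroup, which is quasiconvex. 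For~\eqref{squid_material:big_papa}, weak hyperbolicity of $G_\Lambda$ relative to the vertex-group representatives: the cone-off of $G_\Lambda$ along the cosets of the $N(\langle a\rangle)$ should be quasi-isometric to (a cone-off of) the extension graph / contact graph, which is hyperbolic — this is essentially the content of the work identifying the top coordinate space, and I would cite the analogous statement used for RAAGs in the combinatorial-HHS literature or prove it directly via disc-diagram arguments in the Salvetti complex. Axiom~\eqref{squid_material:quasimorphisms} is easy here: $Z_{v_i}$ is already central in all of $\Stab{}{v_i}$ (not just in an index-two subgroup), so take $E_{v_i}=\Stab{}{v_i}$ and let $\phi_{v_i}$ be any homogeneous quasimorphism unbounded on $Z_{v_i}$ and vanishing on the adjacent cyclic directions — exactly what Lemma~\ref{lem:quasimorph_from_central_ext} provides, once we note $\{\p_{v_i}(Z_w)\}_{w\in\mathrm{lk}(v_i)}$ is an independent collection of cyclic subgroups of the free group $G_{\mathrm{lk}(a)}$ (distinct standard generators of a free group generate a malnormal-enough family; independence is clear since $\langle b\rangle\cap\langle b'\rangle=1$ for $b\neq b'$). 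Finally, for~\eqref{squid_material:gates}, I would use the cubical structure: $G_\Lambda$ acts on the CAT(0) cube complex (universal cover of the Salvetti complex), each $P_v$ is a convex subcomplex (a standard flat neighbourhood), and gate maps in CAT(0) cube complexes are coarsely Lipschitz retractions; the images of adjacent product regions under $\gate_{v_i}$ land in bounded neighbourhoods of the relevant $P_w$ and, for far-away $u$, in a bounded neighbourhood of a coset of an adjacent cyclic direction, by the disjointness/parallelism structure of hyperplanes. This is where I expect the main obstacle to lie — pinning down exactly which coset $gZ_{hv'}$ (or $P_{w(u)}$) contains $\gate_{v_i}(P_u)$ and obtaining the uniform constant $B$; it amounts to a careful but routine analysis of how convex subcomplexes carrying $\Z^2$-flats or $\Z$-lines project onto the flat over $v_i$, and one should be able to lift the argument verbatim from the proof of Proposition~\ref{prop:short_is_squid} once the cubical geometry is set up.

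For colourability, I would exhibit a $G_\Lambda$-invariant finite colouring of $\ov X$. Since $\ov X$ is bipartite-like is not automatic, but one can colour the vertex $gN(\langle a\rangle)$ by the generator $a\in\Lambda^{(0)}$ (a finite set, on which $G_\Lambda$ acts trivially): two adjacent vertices $gN(\langle a\rangle)$ and $hN(\langle b\rangle)$ have $a\neq b$ since two \emph{distinct} conjugates of the \emph{same} generator never commute in a RAAG (again using that $\Lambda$ is triangle-free, so $\mathrm{lk}(a)$ is discrete and $a$ is not conjugate into its own centraliser non-trivially) — and conjugates of different generators obviously carry different colours. Hence the colouring is proper and $G_\Lambda$-equivariant, so $(G_\Lambda,\ov X)$ is colourable. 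Assembling these verifications and invoking Theorem~\ref{thm:squidification} gives that $(G_\Lambda,\ov X)$ is a short HHG, and the colourability check completes the proposition.
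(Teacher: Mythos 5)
Your proposal takes essentially the same route as the paper: the support graph is (a relabelling of) the Kim--Koberda extension graph, the vertex stabilisers are the centralisers $C(a)=\langle a\rangle\times G_{\operatorname{lk}(a)}$ of conjugates of standard generators, the edge-stabiliser computation uses triangle-freeness to force $C(a)\cap C(b)=\langle a,b\rangle$, weak hyperbolicity is read off from hyperbolicity of the extension graph, and the colouring assigns each vertex the label of its underlying generator. Two small remarks on where the details differ. For axiom~\eqref{squid_material:quasimorphisms}, the paper exploits the direct-product splitting and simply takes the coordinate projection $C(a)\to\langle a\rangle$, which is already a genuine homomorphism killing every $Z_w$; your appeal to Lemma~\ref{lem:quasimorph_from_central_ext} works but is heavier than necessary. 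For axiom~\eqref{squid_material:gates}, your instinct to use cubical gates is exactly right, but your suggestion to ``lift the argument verbatim from the proof of Proposition~\ref{prop:short_is_squid}'' is circular: that proposition \emph{extracts} squid materials from a group already known to be a short HHG, which is precisely what we do not yet have for $G_\Lambda$. What the paper does instead is invoke the cubical gate maps from \cite[Proposition~8.3]{HHS_I}, which come with an explicit combinatorial description of $\gate_a(gC(b))$ as a translate of the parabolic $G_{\Lambda_0}$ with $\Lambda_0=\operatorname{Star}_\Lambda(a)\cap\operatorname{Star}_\Lambda(b)\cap\bigcap_i\operatorname{lk}(a_i)$; triangle- and square-freeness then force $\Lambda_0$ to be a single vertex whenever the two product regions are not adjacent, giving the required cyclic coset $gZ_{hv'}$. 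So the right reference fills the hole you flagged; the rest of your outline is sound.
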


\begin{proof} Let $\ov X$ be the \emph{extension graph} from \cite{kim_koberda}, whose vertices are conjugates of standard generators, and where $a^g$ and $b^h$ are adjacent if and only if they commute. $G_\Lambda$ acts on $\ov X$ by conjugation, and we check that this action admits blowup materials.
\par\medskip
\textbf{\eqref{squid_material:graph}} $\ov X$ is triangle- and square-free, as so is $\Lambda$, and no connected component of $\ov X$ is a point. We also stress that, by e.g. \cite[Theorem 15]{kim_koberda_embeddability}, $[a^g,b^h]=1$ if and only if there exists $x\in G_\Lambda$ such that $a^x=a^g$ and $b^xb^h$, so the $G$-action is cofinite. Moreover, by e.g. \cite[Lemma 26.(8)]{kim_koberda_embeddability}, $\ov X$ is coloured by declaring $a^g$ and $b^h$ to have the same colour if and only if $a=b$.

\par\medskip
\textbf{\eqref{squid_material:extensions}} The stabiliser of $a$ is $C(a)=\langle a\rangle \times F$, where $F$ is the free group generated by $\link_{\Lambda}(a)$. In particular, $F$ is non-elementarily hyperbolic if $|\link_{\Lambda}(a)|$. Moreover, the conjugation by $a$ fixes $b$ for every $b\in\link_{\Lambda}(a)$.

\par\medskip
\textbf{\eqref{squid_material:edge_stab}} If $b\in\link_{\Lambda}(a)$ then $C(a)\cap C(b)=\langle a,b\rangle$, as $\Lambda$ is triangle-free, and $\langle b\rangle$ is quasiconvex in the free group $C(a)/\langle a\rangle$.

\par\medskip
\textbf{\eqref{squid_material:big_papa}} The cone-off graph of $G_\Lambda$ with respect to the family $\{C(a)\}_{a\in \Lambda^{(0)}}$ is clearly quasi-isometric to the extension graph, and the latter is hyperbolic by \cite{kim_koberda_embeddability}.

\par\medskip
\textbf{\eqref{squid_material:quasimorphisms}} Since $C(a)=\langle a\rangle \times F$, the projection $\phi\colon C(a)\to\langle a\rangle$ is a (quasi)morphism which is trivial on the link of $a$ and unbounded on $\langle a\rangle$.

\par\medskip
\textbf{\eqref{squid_material:gates}} Metrise $G$ by identifying with the universal cover of the Salvetti complex. By \cite[Proposition 8.3]{HHS_I}, for every $a\in \Lambda$ there exists a coarsely Lipschitz, coarse retraction $\gate_a\colon G_\Lambda\to C(a)$. Moreover, for every $g=a_1\ldots a_k$, where each $a_i$ is a standard generator, the Claim in the same proof describes $\gate_a(g C(b))$ as a $C(a)$-translate of the parabolic subgroup $G_{\Lambda_0}$, where 
$$\Lambda_0=\operatorname{Star}_{\Lambda} (a)\operatorname{Star}_{\Lambda}(b)\bigcap_{i=1}^k\link(a_i).$$ 
We now check that $\gate_a(g C(b))$ satisfies all requirements from Definition~\ref{defn:squid_material}.\eqref{squid_material:gates}.

\begin{itemize}
    \item If $\dist_{\ov X}(a, b^g)= 1$, then up to the $G_\Lambda$ action we can assume that $g=1$. Then $\gate_a(C(b))=C(a)\cap C(b)=\langle a,b\rangle \le C(b)$. 
    \item If $\dist_{\ov X}(a, b^g)\ge 2$ and $a\neq b$ then $\Lambda_0$ can only contain a single vertex $c$, as $\Lambda$ is triangle- and square-free. This means that $\gate_a(g C(b))$ is a translate of $\langle c\rangle$.
    \item Finally, if $\dist_{\ov X}(a, b^g)\ge 2$ and $a=b$, then $g\not\in C(a)$, so if we write $g=a_1\ldots a_k$ here must be some $a_i\not\in \operatorname{Star}_{\Lambda}(a)$. In particular 
    $\Lambda_0\subseteq \operatorname{Star}_{\Lambda}(a)\cap \link_\Lambda(a_i)$ can only contain a single vertex, and we conclude as above. 
\end{itemize}
One can now invoke Theorem~\ref{thm:squidification} to produce the required short HHG structure.
\end{proof}

\begin{rem}
    The above example is not ``new'', as it is possible to get the same short HHG structure ``from scratch'' by means of \cite[Theorem 3.15]{converse}. However, such an argument requires some subtleties which are not in the scope of this paper, so we preferred to derive this example from Theorem~\ref{thm:squidification}, as yet another showcase of its power.
\end{rem}

\subsection{Certain relative hyperbolic groups}\label{sec:hyp_rel_zbyhyp}
Let $G$ be a group which is hyperbolic relative to $\Z$-central extensions of hyperbolic groups. In the spirit of Section~\ref{sec:short_to_squid}, we now prove that $G$ is a short HHG, by extracting blowup materials from the relative hyperbolic structure. This will be a relevant tool in the companion paper, but its proof is presented here as it is similar to many arguments throughout this article.

It should be noted that, in view of \cite[Corollary 4.3]{HRSS_3manifold} and \cite[Theorem 9.1]{HHS_II}, such a $G$ already possesses a HHG structure, but our result upgrades it to a \emph{combinatorial} one, of which we can easily describe the underlying graph and the links of its simplices.  

\begin{prop}\label{prop:hyp_rel_Z-central_is_short}
    Let $G$ be hyperbolic relative to $\Z$-central extensions of hyperbolic groups. Then $G$ is a colourable short HHG, and in particular a combinatorial HHG.
\end{prop}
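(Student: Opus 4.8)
The plan is to verify the squid materials axioms of Definition~\ref{defn:squid_material} for $G$, using the peripheral structure, and then invoke Theorem~\ref{thm:squidification}. Let $\{P_1,\ldots, P_k\}$ be a set of representatives of the conjugacy classes of peripheral subgroups, each fitting into a central extension $0\to Z_i\to P_i\to Q_i\to 0$ with $Q_i$ hyperbolic and $Z_i$ infinite cyclic (the case of finite $Z_i$ is degenerate and, by the remark after Theorem~\ref{thm:squidification}, can be absorbed — but since a central $\Z$-extension of a hyperbolic group with finite kernel is itself virtually hyperbolic, one should check this does not create peripherals that are hyperbolic; if a peripheral $P_i$ is already hyperbolic one can simply drop it from the peripheral structure and keep $G$ relatively hyperbolic with respect to the remaining ones). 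The support graph $\ov X$ will be the graph whose vertices are the cosets $gP_i$, $g\in G$, with no edges at all (a disjoint union of points) — wait, that violates Axiom~\eqref{squid_material:graph}, which forbids isolated points. So instead, mimicking the graph manifold/Artin constructions, I would take $\ov X$ to be a suitable $G$-invariant connected graph built from the coned-off Cayley graph: vertices are the cosets $gP_i$, and two cosets are declared adjacent according to a $G$-invariant rule making $\ov X$ triangle- and square-free with no isolated points — in the relatively hyperbolic setting the cleanest choice is actually to let $\ov X$ be a single star or a tree-like structure, but since there is no canonical incidence between peripheral cosets, the honest fix is: $\ov X$ is the disjoint union over $i$ of, for each coset $gP_i$, an \emph{edge} $\{u_{gP_i}, w_{gP_i}\}$, with $\Stab{G}{u_{gP_i}}=\Stab{G}{w_{gP_i}}=gP_ig^{-1}$, $Z_{u_{gP_i}}=gZ_ig^{-1}$ and $Z_{w_{gP_i}}$ trivial. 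This is precisely the device used in Proposition~\ref{prop:add_loxo}, and it gives a triangle- and square-free graph with no isolated points, on which $G$ acts cocompactly with the "edge" orbits in bijection with the peripheral classes.

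With this choice, the remaining axioms are checked as follows. For~\eqref{squid_material:extensions}: the stabiliser of $u_{gP_i}$ is $gP_ig^{-1}$, which is the central extension $0\to gZ_ig^{-1}\to gP_ig^{-1}\to Q_i\to 0$ with $Q_i$ hyperbolic, and $gZ_ig^{-1}$ is central hence acts trivially on $\link_{\ov X}(u_{gP_i})=\{w_{gP_i}\}$; for $w_{gP_i}$ the cyclic direction is trivial and the stabiliser $gP_ig^{-1}$ is not hyperbolic in general — so this choice fails Axiom~\eqref{squid_material:extensions} for the $w$-vertices. The correct remedy, again following Proposition~\ref{prop:add_loxo}, is to quotient: one cannot take $\Stab{G}{w}=gP_ig^{-1}$ unless that group is hyperbolic. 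Thus the genuinely clean construction takes $\ov X$ to have, for each coset $gP_i$, a \emph{single} vertex $v_{gP_i}$ with $\Stab{G}{v_{gP_i}}=gP_ig^{-1}$ and $Z_{v_{gP_i}}=gZ_ig^{-1}$, and then one must connect these vertices in a $G$-invariant, triangle- and square-free way with no isolated points. Since distinct peripheral cosets in a relatively hyperbolic group intersect in finite (hence, after passing to a torsion-free-ish setting, trivial) subgroups, there is no natural adjacency, so one introduces auxiliary leaf vertices: for each $gP_i$ attach a new leaf $\ell_{gP_i}$ adjacent only to $v_{gP_i}$, with trivial stabiliser's... no — leaves need stabilisers too. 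The honest resolution, matching the statement "$G$ is a colourable short HHG", is: $\ov X$ is the disjoint union, over cosets $gP_i$, of edges $\{v_{gP_i}, v'_{gP_i}\}$ where $v_{gP_i}$ carries the data $(\Stab{G}{v_{gP_i}}=gP_ig^{-1}, Z=gZ_ig^{-1})$ and $v'_{gP_i}$ carries $(\Stab{G}{v'_{gP_i}} = gZ_ig^{-1}, Z=gZ_ig^{-1})$ — but $gZ_ig^{-1}/gZ_ig^{-1}$ is trivial hence hyperbolic, and $gZ_ig^{-1}$ acts trivially on the link, so Axiom~\eqref{squid_material:extensions} holds; Axiom~\eqref{squid_material:edge_stab} holds since $gP_ig^{-1}\cap gZ_ig^{-1}=gZ_ig^{-1}=\langle Z_v,Z_{v'}\rangle$ with finite (indeed trivial) index, and $\p_v(Z_{v'})=\p_v(gZ_ig^{-1})$ is trivial hence quasiconvex in $Q_i$.

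I would then finish as follows. Axiom~\eqref{squid_material:big_papa}: the cone-off of $G$ with respect to $\{\Stab{G}{v_i}\}$ over orbit representatives includes coning off all peripheral cosets, which yields a hyperbolic space by Bowditch's characterisation of relative hyperbolicity (the "coned-off Cayley graph" $\widehat G$ is hyperbolic by definition of relative hyperbolicity, e.g.~\cite{Bowditch_relhyp}); coning off the extra $Z_i$-cosets and the trivial-stabiliser orbits changes nothing coarsely. Axiom~\eqref{squid_material:quasimorphisms}: for each $i$ with $Z_i$ infinite, apply Lemma~\ref{lem:quasimorph_from_central_ext} to the central extension $0\to Z_i\to P_i\to Q_i\to 0$ with the empty collection $\{C_\alpha\}$ (there are no adjacent infinite cyclic directions other than $Z_i$ itself, since the link of $v_{P_i}$ is the single vertex $v'_{P_i}$ whose direction is $Z_i$, on which we need the quasimorphism to be \emph{unbounded}, not trivial — and indeed $Z_{v'}=Z_v$ so this is automatic); this produces a homogeneous quasimorphism $\phi_i\colon E_i\to\R$ unbounded on $Z_i\cap E_i$, with $E_i$ a finite-index subgroup of $P_i$ containing $Z_i\cap E_i$ centrally (one can take $E_i=P_i$ since $Z_i$ is already central). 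Axiom~\eqref{squid_material:gates}: since $G$ is hyperbolic relative to the $P_i$, each $gP_ig^{-1}$ is relatively quasiconvex, hence admits a coarsely Lipschitz coarse retraction (closest-point projection) $\gate_{v_i}\colon G\to gP_ig^{-1}$ by standard relative hyperbolicity; for distinct peripheral cosets, the Bounded Coset Penetration / bounded intersection properties of relatively hyperbolic groups (e.g.~\cite{drutu-sapir, Hruska_Wise}) give that $\gate_{v_i}(P_u)$ is uniformly bounded when $\dist_{\ov X}(v_i,u)\ge 2$, which is even stronger than required. Colourability: the support graph is a disjoint union of edges $\{v_{gP_i},v'_{gP_i}\}$, so a two-colouring by "$v$ versus $v'$" is $G$-invariant. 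Applying Theorem~\ref{thm:squidification} then gives the short HHG structure, and since Theorem~\ref{thm:hhs_links} endows it with a combinatorial HHG structure, $G$ is a combinatorial HHG. \textbf{The main obstacle} is getting the support graph right: a relatively hyperbolic group carries no canonical incidence between peripheral cosets, so one must artificially turn each coset into an edge (or star) with an auxiliary vertex carrying the cyclic direction as its own stabiliser, and then carefully re-verify Axioms~\eqref{squid_material:extensions} and~\eqref{squid_material:edge_stab} for these degenerate vertices — together with checking that no peripheral is secretly hyperbolic (in which case it should be removed from the peripheral structure before starting), which is where the hypothesis that the $Z_i$ are genuinely infinite order and non-central-trivial gets used.
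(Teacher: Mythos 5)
Your overall strategy — verify the squid materials axioms and invoke Theorem~\ref{thm:squidification} — is the right one, and it is what the paper does. Several of your intermediate observations are also correct: the support graph cannot be a disjoint union of isolated points; you cannot simply use two vertices per coset both stabilised by $gP_ig^{-1}$ with one cyclic direction trivial, because then the quotient for the trivial-direction vertex is $P_i$ itself, which is not hyperbolic; and coning off subgroups that are already contained in the peripherals does not change the cone-off's quasi-isometry type.

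However, your final construction has a fatal flaw at Axiom~\eqref{squid_material:quasimorphisms}. You take, for each coset $gP_i$, an edge $\{v_{gP_i},v'_{gP_i}\}$ with $Z_{v'_{gP_i}}=gZ_ig^{-1}=Z_{v_{gP_i}}$, and you assert that since $Z_{v'}=Z_v$ there is ``nothing to vanish on.'' But the axiom literally requires $\phi_{v}$ to be \emph{unbounded} on $Z_v\cap E_v$ \emph{and trivial} on $Z_w\cap E_v$ for \emph{every} $w\in\link_{\ov X}(v)$; there is no exemption for the case $Z_w=Z_v$. With your choice, $w=v'$ and $Z_{v'}\cap E_v=Z_v\cap E_v$, so you are demanding a single quasimorphism be both unbounded and identically zero on the same subgroup. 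This is not an oversight one can patch by rereading the lemma on quasimorphisms from central extensions with an empty $\{C_\alpha\}$; the constraint comes from the axiom itself, and your configuration violates it. (There is also a secondary inconsistency: if the $v'$-vertices are indexed by cosets $gP_i$, their stabiliser is $gP_ig^{-1}$, not $gZ_ig^{-1}$; if you instead index them by cosets of $Z_i$ so that the stabiliser is right, each component of $\ov X$ becomes an infinite star rather than a single edge, which changes the shape of your graph but does not repair the quasimorphism problem, since all legs of the star still carry the same cyclic direction $Z_v$.)

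The missing ingredient is precisely what the paper does: instead of using $Z_P$ itself for the auxiliary vertices, one picks $k\in H=P/Z_P$ of infinite order (when $H$ is infinite), lets $K$ be the maximal virtually cyclic subgroup of $H$ containing $k$, and sets $R_P=\p_P^{-1}(K)$. The support graph then has, for each coset $gP$, one vertex with stabiliser $gPg^{-1}$ together with a star of vertices indexed by cosets of $R_P$ inside $gP$. The cyclic direction of an $R_P$-vertex is $\langle r\rangle$ for a suitable $r\in\p_P^{-1}(\langle k\rangle)\setminus Z_P$, obtained via Lemma~\ref{lem:straightening_central_direction}. The crucial point is that $\langle r\rangle$ and $Z_P$ are \emph{independent}: one maps to a loxodromic element of $H$, the other to the identity, so $Z_P\cap\langle r\rangle=\{1\}$. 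Now Lemma~\ref{lem:quasimorph_from_central_ext} can produce a quasimorphism unbounded on $Z_P$ and vanishing on (the conjugates of) $\langle r\rangle$, since these are genuinely different cyclic subgroups. Dually, the quotient $R_P/\langle r\rangle$ is virtually cyclic (hence hyperbolic), $Z_P$ injects into it as a quasiconvex subgroup, and there is a quasimorphism on $R_P$ unbounded on $\langle r\rangle$ and trivial on $Z_P$. The choice of $K$ as \emph{maximal} virtually cyclic is also what makes the gate arguments work: for $h\in P\setminus R_P$, the cosets $K$ and $\p_P(h)K$ have uniformly bounded projections to one another by the WPD/Lemma 6.7 of \cite{DGO} argument, so $\gate_{R_P}(hR_P)$ collapses to a $Z_P$-coset as Axiom~\eqref{squid_material:gates} demands. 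Your version, with no genuine second direction inside the peripherals, has none of this structure to exploit.
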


\begin{proof}
Let $\mathcal{P}$ be a collection of representatives of the peripheral subgroups, under the $G$-action by conjugation. By assumption, every $P\in \mathcal{P}$ is a central extension
$$\begin{tikzcd}
    0\ar{r}&Z_P\ar{r}&P\ar{r}{\p_P}&H\ar{r}&0,
\end{tikzcd}$$
where $Z_P$ is isomorphic to $\Z$ and $H$ is hyperbolic. If $H$ is infinite, let $k\in H$ be an element of infinite order (which exists as every torsion subgroup of a hyperbolic group is finite, see e.g. \cite[Corollary 8.36]{GroupesHyperboliques}), and let $K$ be the maximal virtually cyclic subgroup of $H$ containing $k$. If instead $H$ is finite we set $K=H$. In both cases, let $R_P=\p_P^{-1}(K)\le P$.

Now build a simplicial graph $\ov X$ as follows. The vertex set of $\ov X$ is
$$\bigsqcup_{P\in \mathcal P} \left(G/P\sqcup G/R_P\right).$$
For every $P\in \mathcal P$ and every $h\in G$, we declare $hP$ and $hR_P$ to be adjacent. We now check that the $G$-action on $\ov X$ admits blowup materials.

\par\medskip
\textbf{\eqref{squid_material:graph}:} By construction, the connected components of $\ov X$ are $G$-translates of the following star, where $P\in \mathcal P$:
$$\{P\}\star\{hR_p\}_{h\in P}.$$
In particular $\ov X$ is a forest, on which $G$ acts cocompactly. We can colour $\ov X$ by declaring every $hP$ to be black and every $hR_P$ to be white.

\par\medskip
\textbf{\eqref{squid_material:extensions}:} Notice that $\Stab{G}{P}=P$, which is a $\Z$-central extension of a hyperbolic group by assumption, and multiplication by $Z_P$ on the left fixes $hR_P$ whenever $h\in P$, because $Z_p$ is central in $P$ and contained in $R_P$. Moreover, $\Stab{G}{R_P}=R_P$, which is either virtually cyclic or virtually $\Z^2$. In the former case, we choose the cyclic direction $Z_{R_P}$ for ${R_P}$ to be trivial. In the latter case, by Lemma~\ref{lem:straightening_central_direction} there exists $r=\in \p_P^{-1}(\langle k\rangle)-Z_P$ such that $Z_{R_P}\coloneq \langle r\rangle$ is normal in ${R_P}$. Notice that the quotient ${R_P}/Z_{R_P}$ is virtually cyclic. 

\par\medskip
\textbf{\eqref{squid_material:edge_stab}:} $\Stab{G}{P}\cap \Stab{G}{{R_P}}={R_P}$, which virtually coincides with $\langle Z_P, Z_{R_P}\rangle$. Furthermore, $\p_P(Z_{R_P})$ is either trivial or (a subgroup of) $\langle k\rangle$, and in the latter case it is quasiconvex in $H$ as $k$ has infinite order. Similarly, if $\p_{R_P}\colon {R_P}\to {R_P}/Z_{R_P}$ is the quotient projection, then $\p_{R_P}(Z_P)$ has finite index in ${R_P}/Z_{R_P}$, and is therefore quasiconvex.

\par\medskip
\textbf{\eqref{squid_material:big_papa}:} Since $G$ is hyperbolic relative to $\mathcal P$, it is also weakly hyperbolic relative to $\mathcal P$, and therefore also to $\bigcup_{P\in \mathcal P}\{P,{R_P}\}$ as ${R_P}\le P$.

\par\medskip
\textbf{\eqref{squid_material:quasimorphisms}:} Since we chose $Z_{R_P}$ to have no hidden symmetries, Corollary~\ref{cor:finding_quasimorphisms} from the Appendix produces a homogeneous quasimorphism $\phi_P\colon P\to \mathbb{R}$ which is unbounded on $Z_P$ and trivial on every conjugate of $Z_{R_P}$.

Similarly, whenever $Z_{R_P}$ is non-trivial, let $E_{R_P}$ be a normal subgroup of ${R_P}$ centralising  $Z_{R_P}$ and isomorphic to $\Z^2$. 
Then it is easy to find a homomorphism $\phi_{R_P}\colon E_{R_P}\to \mathbb{R}$ which is unbounded on $Z_{R_P}\cap E_{R_P}$ and trivial on $Z_P\cap E_{R_P}$.

\par\medskip
\textbf{\eqref{squid_material:gates}:} $G$ is hyperbolic relative to $\mathcal P$, so by e.g. \cite[Theorem 2.14]{sisto_treegraded} for every $P\in \mathcal P$ there exists a coarsely Lipschitz, coarse retraction $\gate_{P}\colon G\to P$ which is uniformly bounded on $hP'$ whenever $h\in G$, $P'\in \mathcal P$, and $P\neq hP'$. 

Regarding the gate on $E_{R_P}$, we first recall that, by inspection of the proof of \cite[Corollary 4.3]{HRSS_3manifold}, each $P$ is quasi-isometric to a product $L\times H$, where $L$ is the quasiline coming from the quasimorphism $\phi_P$, as in Lemma~\ref{lem:abo}, while the projection on the second factor is the quotient projection $\p_P\colon P\to H$. Now, $E_{R_P}$ contains $Z_P\cap E_{R_P}$, on which $\phi_P$ is unbounded; hence the projection of $E_{R_P}$ to $L$ is coarsely dense. Moreover, $\p_P(E_{R_P})$, which has finite index in $\p_P({R_P})$, is quasiconvex in $H$. Thus there exists a coarsely Lipschitz, coarse retraction $\rho_{R_P}\colon P\to E_{R_P}$, which is just the product of the coarse retractions in the factors, and let $\gate_{R_P}\coloneq \rho_{R_P}\circ \gate_P$. 

We now check all properties of gates.
\begin{itemize}
    \item Starting with $\gate_P$, we know that $\gate_P$ coarsely coincides with the identity on $P$, and therefore on every $h{R_P}$ whenever $h\in P$. Furthermore, if $hP'\neq P$ for some $h\in G$ and $P'\in \mathcal P$, then $\gate_P(hP')$ is uniformly bounded, and \emph{a fortiori} so is $\gate_P(h{R_{P'}})\subseteq \gate_P(hP')$.
    \item Moving to $\gate_{R_P}$, we first notice that $\gate_{R_P}(P)\subseteq {R_P}\subseteq P$. If $hP'\neq P$ we already know that $\gate_P(hP')$ is uniformly bounded, and therefore so is $\gate_{R_P}(hP')=\rho_{R_P}\circ \gate_P(hP')$. Thus we only need to show that, if $h\in P-{R_P}$, then $\gate_{R_P}(h{R_P})$ coarsely coincides with $hZ_P$. To this purpose, first notice that, as ${R_P}=\p_P^{-1}(K)$, we have that $\p_P(h)\not \in K$. Then, as in the proof of Proposition~\ref{prop:add_partial_loxo}, \cite[Lemma 6.7]{DGO} implies that the projection of $\p_P(h) K$ onto $K$ is uniformly bounded. In other words, the second coordinate of $\gate_{R_P}(h{R_P})$ is uniformly bounded, so $\gate_{R_P}(h{R_P})$ must coarsely coincide with $hZ_P$.\qedhere
\end{itemize}
\end{proof}

\section{Application to coarse median structures}\label{sec:mcg_coarsemedian}
Let $(\cuco Z,\dist)$ be a metric space. Recall that a \emph{coarse median} $\mu\colon \cuco Z^3\to \cuco Z$ for $\cuco Z$ is a ternary operation satisfying the following:
\begin{itemize}
  \item \emph{Localisation}: $\mu(a,a,b)=a$ for all $a,b\in \cuco Z$;
  \item \emph{Symmetry}: $\mu(a_1,a_2,a_3)=\mu( a_{\sigma(1)},a_{\sigma(2)},a_{\sigma(3)})$ for all $a_1,a_2,a_3\in \cuco Z$ and permutation $\sigma$ of $\{1,2,3\}$;
 \item \emph{Affine control}: There exists a constant $R>0$ such that, for all $a,a',b,c\in \cuco Z$, we have
      $$d(\mu(a,b,c), \mu(a',b,c)) \leqslant Rd(a,a')+R;$$
  \item \emph{Coarse 4-point condition}: There exists a constant $\kappa>0$ such that for any $a,b,c,d\in \cuco Z$, we have
$$
\dist\left(\mu\left(\mu(a,b,c),b,d\right), \mu\left(a,b, \mu(c,b,d)\right)\right)\le \kappa.
$$
 \end{itemize}

We call two coarse median $\mu_1,\mu_2\colon \cuco Z^3\to \cuco Z$  \emph{equivalent} if $$\sup_{x,y,z\in \cuco Z} \dist_{\cuco Z}(\mu_1(x,y,z),\mu_2(x,y,z))<+\infty.$$
An equivalence class of coarse medians is called a \emph{coarse median structure}. 

By e.g. \cite[Theorem 7.3]{HHS_II} (which in turn builds on an observation of Bowditch \cite{bowditch_mcg}), a hierarchically hyperbolic space $(\cuco Z,\frakS)$ admits a coarse median, obtained as follows: for every $x,y,z\in \cuco Z$ and every $U\in\frakS$, the $U$-coordinate of $\mu(x,y,z)$ is the coarse centre of the triangle with vertices $\pi_U(x), \pi_U(y),\pi_U(z)$ in the hyperbolic space $\C U$. If moreover $(G,\frakS)$ is a HHG, $\mu$ is coarsely $G$-equivariant, by how $G$ acts on the coordinate spaces. The final goal of this paper is to show the following, which is Theorem~\ref{thmintro_coarsemedian} from the Introduction.

\begin{thm}\label{thm:coarse_median}
    Let $(G, \ov X, \W)$ be a short HHG, and suppose that for some vertex $v\in \ov X$ we have that $Z_v$ is infinite and $H_v$ is non-elementary. Then there are uncountably many short HHG structures on $G$ which give rise to pairwise non-equivalent, coarsely $G$-equivariant coarse medians.
\end{thm}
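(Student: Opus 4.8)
The strategy is to use the flexibility encoded in Proposition~\ref{prop:short_is_squid} together with Lemma~\ref{lem:abo}: the coordinate space $\C\ell_v$ is (up to $\Stab{G}{v}$-equivariant quasi-isometry) the Cayley graph $L_v=\Cay{\Stab{G}{v}}{\lambda_v}$, and $\lambda_v$ depends on the choice of the homogeneous quasimorphism $\phi_v$ on a finite-index subgroup $E_v$. The first step is therefore to produce uncountably many homogeneous quasimorphisms on $E_v$ that are all admissible as squid materials, i.e.\ unbounded on $Z_v\cap E_v$ and trivial on $Z_w\cap E_v$ for every $w\in\link_{\ov X}(v)$, and which are ``pairwise far apart'' in a suitable sense. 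Since $H_v$ is non-elementary hyperbolic, the space of homogeneous quasimorphisms $Q(H_v)$ is infinite-dimensional (Epstein--Fujiwara, Bestvina--Fujiwara), hence so is $Q(E_v)$; pulling back via $\p_v$ and adding the Busemann quasimorphism $\phi_v^0$ of the $Z_v$-direction, we obtain, for each real parameter $s$ (or each element of an uncountable set), a quasimorphism $\phi_v^s=\phi_v^0+s\cdot\psi$ where $\psi=\p_v^*(\chi)$ for a fixed nonzero $\chi\in Q(H_v)$ that vanishes on all the $\p_v(Z_w)$ (such $\chi$ exists by Lemma~\ref{lem:quasimorph_from_central_ext} applied to some auxiliary loxodromic, plus the fact that $Q$ modulo the pullbacks of these finitely many cyclic subgroups is still infinite-dimensional). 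Each $\phi_v^s$ yields, via Proposition~\ref{prop:short_is_squid} and Theorem~\ref{thm:squidification}, a short HHG structure $(G,\ov X)_s$ with the same support graph but with $\C\ell_v$ replaced by $L_v^s=\Cay{\Stab{G}{v}}{\tau_{\{\phi_v^{s},C_s\}}}$.

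\textbf{From the structures to the medians.} Each structure $(G,\ov X)_s$ carries its HHS coarse median $\mu_s$. The second step is to show that for $s\neq s'$ the medians $\mu_s$ and $\mu_{s'}$ are not equivalent. The key point is that all the structures agree (up to uniform quasi-isometry and with uniform HHS data) on every coordinate space except $\C\ell_v$ and its $G$-translates: indeed the domains, nesting, orthogonality, and the other coordinate spaces are literally the same, only the metric on $\C\ell_v$ changes, and the product region $P_{\ell_v}$ always coarsely coincides with $\Stab{G}{v}$. Thus $\mu_s$ and $\mu_{s'}$ are equivalent if and only if their $\ell_v$-coordinates stay uniformly close, i.e.\ if the two coarse-median (= Gromov-product / tripod-centre) operations on the two quasilines $L_v^s$ and $L_v^{s'}$, both realised on the set $\Stab{G}{v}$, differ by a uniformly bounded amount. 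On a quasiline the median of a triple is coarsely the ``middle'' point, which with respect to the quasimorphism coordinate is the median of the three real numbers $\phi_v(x),\phi_v(y),\phi_v(z)$ (up to the quasi-isometry $\phi_v\colon L_v^s\to\R$ of Lemma~\ref{lem:abo}). So $\mu_s$ and $\mu_{s'}$ are equivalent iff the maps $g\mapsto\phi_v^{s}(g)$ and $g\mapsto\phi_v^{s'}(g)$ induce the same median operation on $\Stab{G}{v}$ up to bounded error; since $\phi_v^{s}-\phi_v^{s'}=(s-s')\psi$ with $\psi$ an unbounded homogeneous quasimorphism (unbounded because $\chi$ is: $\p_v(g_i)$ acts loxodromically, hence $\chi$ is nonzero there), an elementary computation with triples on which $\psi$ grows linearly while $\phi_v^0$ stays bounded — e.g.\ powers of a preimage of a loxodromic in $H_v$ — shows the two median operations diverge. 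One has to be a little careful that the uniform quasi-isometry constants relating $L_v^s$ to the genuine $\C\ell_v$ of the structure are controlled, but they are, since they only depend on the defect and a lower bound on the values of $\phi_v^s$, which we may normalise.

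\textbf{The uncountable family and equivariance.} For the final count: parametrise $s$ over $\R$ (or any uncountable set of reals), obtaining structures $(G,\ov X)_s$ for $s$ in that set; by the previous paragraph the associated medians $\mu_s$ are pairwise inequivalent, and each $\mu_s$ is coarsely $G$-equivariant because it comes from a genuine HHG structure (the $G$-action on $(G,\ov X)_s$ is by automorphisms, hence acts by uniform quasi-isometries commuting with the coordinate projections up to bounded error, so it coarsely preserves $\mu_s$). This gives a continuum of coarsely $G$-equivariant coarse median structures. \textbf{Main obstacle.} The delicate point I expect is the \emph{quantitative} comparison in the second step: one must verify that the only way the $\mu_s$ can fail to be distinguished is through the $\ell_v$-coordinate (this uses that product regions are cobounded, Lemma~\ref{lem:cobounded_stab_for_short}, and that changing $\C\ell_v$ does not secretly change the large-scale geometry seen by other coordinates), and then produce explicit triples witnessing the divergence of the median operations on the quasiline — controlling all quasi-isometry and defect constants uniformly in $s$ requires choosing the $\phi_v^s$ and the cut-off constants $C_s$ with care, e.g.\ by first fixing $\psi$ with defect $1$ and letting $s$ range over a bounded interval, or renormalising each $\phi_v^s$ to have defect $\le 1$.
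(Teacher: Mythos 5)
Your proposal is correct and follows essentially the same route as the paper: both proofs start from the squid materials supplied by Proposition~\ref{prop:short_is_squid}, both perturb the quasimorphism $\phi_v$ by a one-parameter family $\phi_v+\lambda\,\psi\circ\p_v$ with $\psi$ a homogeneous quasimorphism on $H_v$ vanishing on the adjacent $\p_v(Z_w)$, both re-run Theorem~\ref{thm:squidification} to obtain the new structures, and both distinguish the resulting medians through the $\ell_v$-coordinate via the ``median of three reals'' description of the coarse median on a quasiline coming from Lemma~\ref{lem:abo}. The only small discrepancies are cosmetic: the paper isolates the existence of $\psi$ as a self-contained Claim (proved via the abundance of non-commensurable loxodromics and independence) rather than invoking Lemma~\ref{lem:quasimorph_from_central_ext}, which in fact produces a quasimorphism on a central extension rather than on $H_v$ itself, so your citation there is slightly misapplied even though the underlying existence statement is standard; and the paper exhibits the explicit triples $(1,z^k,g^l)$ rather than gesturing at "triples on which $\psi$ grows while $\phi_v^0$ stays bounded," which is the cleaner way to carry out the comparison you sketch.
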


\begin{proof}
    By Proposition \ref{prop:short_is_squid} we have that $(G, \ov X, \W)$ admits blowup materials. To modify the short HHG structure we will modify the quasimorphism $\phi_v\colon E_v\to \mathbb{R}$ associated to $v$ as in the statement (we can assume that $v$ is the chosen representative in its orbit). Let $H'_v=\mathfrak p_v(E_v)$; this is a hyperbolic group. We can consider all cyclic subgroups of the form $\mathfrak p_v(Z_w\cap E_v)$ for $w\in \link_{\ov X}(v)$, of which there are finitely many conjugacy classes. 

    \begin{claim}
        Let $H$ be a non-elementary hyperbolic group, and let $h_1,\dots,h_n\in H$. Then there exists an unbounded homogeneous quasimorphism $\psi\colon H\to \mathbb R$ such that $\psi(h_i)=0$ for all $i$.
    \end{claim}

    \begin{claimproof}
        Without loss of generality, we can assume that each $h_i$ has infinite order, and that the collection $\langle h_1\rangle,\dots,\langle h_n\rangle$ is independent (indeed, if $\langle h_i\rangle\cup \langle gh_jg^{-1}\rangle\neq \{0\}$ for some $g\in H$ and some $i<j$, then every homogeneous quasimorphism which is trivial on $h_i$ must also be trivial on $h_j$). Now, in a non-elementarily hyperbolic group there are infinitely many commensurability classes of infinite order elements (see e.g. \cite[Lemmas 3.4 and 3.8]{olshanskii_residualizing_hom}, which prove a stronger result); thus we can find $h_0\in H$ such that the collection $\langle h_0\rangle,\dots,\langle h_n\rangle$ is again independent. Then, using independence, one can find a homogeneous quasimorphism $\psi\colon H\to \mathbb{R}$ which is non-trivial on $h_0$ and vanishes on $h_i$ for every $i\ge 1$ (for example one can combine \cite[Theorem 4.2]{Hull_Osin} with \cite[Corollary 6.6 and Theorem 6.8]{DGO}).
    \end{claimproof}

The Claim gives a homogeneous quasimorphism $\psi:H'_v\to \mathbb R$ with $\psi|_{\mathfrak p_v(Z_w\cap E_v)}=0$ for all $w\in \link_{\ov X}(v)$, and we set $\phi^\lambda_v=\phi_v+\lambda \psi\circ \mathfrak p_v$, which is again a homogeneous quasimorphism on $E_v$. If we now take the blowup materials for $G$ and replace $\phi_v$ with $\phi^\lambda_v$, without altering the other data (i.e. the support graph, the extensions, and the gate maps), we get new blowup materials, as Definition~\ref{defn:squid_material}.\eqref{squid_material:quasimorphisms} still holds by construction; thus we can invoke Theorem~\ref{thm:squidification} to produce a family of combinatorial HHG structures $(X,\W_\lambda)$ for $G$. We denote the quasiline associated to $\phi^\lambda_v$ by $L^{\lambda}_v$, and by $\mu_\lambda$ the coarse median arising from the HHG structure.
\par\medskip

We are left to show that, as we let $\lambda$ vary, the coarse medians $\mu_\lambda$ are all non-equivalent.  Fix $\lambda_1\neq \lambda_2$. Since $\psi$ is unbounded, there exists $g\in E_v$ with $\psi(\mathfrak p_v(g))\neq 0$. Let $z\in Z_v\cap E_v$ be any non-trivial element, and consider triples of the form $1,z^k, g^l$, for some integers $k$ and $l$. Let $m_i=\mu_{\lambda_i}(1,z^k, g^l)$, for $i=1,2$. 
 
We want to show that there exists $j\in \{1,2\}$ such that $\dist_{L^{\lambda_j}_v}(m_1, m_2)$ can be made arbitrarily large, by choosing appropriate values of $k$ and $l$. This will imply that $m_1$ and $m_2$ can be made arbitrarily far in $G$, because the projection from $\W_{\lambda_j}$ to  ${L^{\lambda_j}_v}$ is coarsely Lipschitz and $G$ acts geometrically on $\W_{\lambda_j}$.

To determine the projection of $m_i$ to the quasiline $L^{\lambda_j}_v$, we have to determine which of the numbers $\phi^{\lambda_j}_v(1),\phi^{\lambda_j}_v(z^k), \phi^{\lambda_j}_v(g^l)$ lies between the other two, because then the projection of $m_i$ will coarsely coincide with that of the corresponding $x_i\in\{1,z^k, g^l\}$. 
Since $\phi^{\lambda_1}_v(g)\neq \phi^{\lambda_2}_v(g)$, we can choose values of $k$ and $l$ such that $x_1\neq x_2$; thus, without loss of generality, $x_1$ is either $1$ or $z^k$, and in particular $\phi^{\lambda_1}_v(x_1)=\phi^{\lambda_2}_v(x_1)=\phi_v(x_1)$. Furthermore, the difference
$$|\phi^{\lambda_1}_v(x_1)-\phi^{\lambda_2}_v(x_2)|=|\phi^{\lambda_2}_v(x_1)-\phi^{\lambda_2}_v(x_2)|$$
can be made arbitrarily large. This proves that $\dist_{L^{\lambda_2}_v}(m_1, m_2)$ can be made arbitrarily large, as required. 
\end{proof}

By inspection of the short HHG structures in our example Sections~\ref{sec:example} and~\ref{sec:new_examples}, Theorem~\ref{thm:coarse_median} implies the following:
\begin{cor}\label{cor:coarsemedian_for_all}
    Let $G$ be either:
    \begin{itemize}
        \item the mapping class group of a sphere with five punctures;
        \item a RAAG on a connected, triangle- and square-free graph with at least three vertices;
        \item an Artin group of large and hyperbolic type, whose defining graph is not discrete;
        \item the fundamental group of an admissible graph of groups;
        \item an extension of a Veech group, in the sense of Subsection~\ref{subsec:example_veech};
        \item hyperbolic relative to $\Z$-central extensions of hyperbolic groups, of which at least one is non-elementary.
    \end{itemize}
    Then $G$ admits a continuum of coarsely $G$-equivariant coarse median structures.
\end{cor}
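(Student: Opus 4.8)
The plan is to deduce Corollary~\ref{cor:coarsemedian_for_all} directly from Theorem~\ref{thm:coarse_median} by verifying, case by case, that each listed group $G$ admits a short HHG structure $(G,\ov X)$ with a vertex $v\in \ov X^{(0)}$ such that $Z_v$ is infinite and $H_v$ is non-elementarily hyperbolic. Once such a vertex is exhibited, Theorem~\ref{thm:coarse_median} immediately produces uncountably many pairwise non-equivalent, coarsely $G$-equivariant coarse median structures, which is exactly the statement. So the entire proof is a bookkeeping exercise: for each item, point to the relevant subsection where the short structure was constructed, and read off the hypothesis $Z_v$ infinite and $H_v$ non-elementary from the description given there.

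Concretely, I would proceed as follows. For the five-punctured sphere mapping class group, invoke Subsection~\ref{subsec:mcg}: a curve $v$ has $\Stab{G}{v}\cong \MCG(Y_v)$ with cyclic direction $Z_v=\langle T_v\rangle$ infinite (generated by the Dehn twist), and $H_v=\MCG(Y_v)/\langle T_v\rangle$ is virtually the mapping class group of a four-punctured sphere, hence non-elementarily hyperbolic. For a RAAG on a connected, triangle- and square-free graph $\Lambda$ with at least three vertices, use Proposition~\ref{prop:raag_short}: a standard generator $a$ with $|\link_\Lambda(a)|\ge 2$ has $\Stab{G}{a}=\langle a\rangle\times F$ with $F$ free non-abelian, so $Z_a=\langle a\rangle$ is infinite and $H_a=F$ is non-elementary; such an $a$ exists because $\Lambda$ is connected with at least three vertices and square-free (if every vertex had valence one, $\Lambda$ would be a single edge or disconnected). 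For Artin groups of large and hyperbolic type with non-discrete defining graph, use Subsection~\ref{subsec:example_artin}: as noted there, as soon as some component of $\Gamma$ contains an edge there is a vertex $H\in\mathcal H$ of the commutation graph with $N(H)/H$ non-elementarily hyperbolic, and $H$ (a conjugate of a standard generator or of a dihedral centre) is infinite cyclic. For admissible graphs of groups, use Subsection~\ref{subsec:example_pi1}: every vertex group $G_v$ has infinite cyclic centre $Z_v$ and non-elementarily hyperbolic quotient by definition of admissibility. For extensions of Veech groups, use Subsection~\ref{subsec:example_veech}: the vertex groups have the same description as in the graph-manifold case. For $G$ hyperbolic relative to $\Z$-central extensions of hyperbolic groups, at least one of which is non-elementary, use Proposition~\ref{prop:hyp_rel_Z-central_is_short}: the support graph $\ov X$ has a vertex $P\in\mathcal P$ with $\Stab{G}{P}=P$, cyclic direction $Z_P\cong\Z$ (infinite by hypothesis) and $H_v=H$ non-elementary for the chosen peripheral.

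With the vertex $v$ located in each case, one simply applies Theorem~\ref{thm:coarse_median} to the short HHG structure $(G,\ov X)$ just named, and the corollary follows. I do not expect any genuine obstacle here; the only mild subtlety is to make sure in the RAAG and large-type Artin cases that the stated hypotheses on the defining graph do force the existence of an edge (respectively a vertex of valence $\ge 2$) producing a non-elementary $H_v$, which is immediate from connectivity and the "at least three vertices"/"not discrete" assumptions. I would therefore phrase the write-up tersely:

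\begin{proof}[Proof of Corollary~\ref{cor:coarsemedian_for_all}]
In each case we exhibit a short HHG structure $(G,\ov X)$ with a vertex $v\in\ov X^{(0)}$ such that $Z_v$ is infinite and $H_v$ is non-elementarily hyperbolic; the conclusion then follows from Theorem~\ref{thm:coarse_median}.

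If $G=\MCG(S)$ with $S$ a five-punctured sphere, take the structure of Subsection~\ref{subsec:mcg} and let $v$ be any curve: then $Z_v=\langle T_v\rangle$ is infinite cyclic and $H_v=\MCG(Y_v)/\langle T_v\rangle$ is a finite-index subgroup of the mapping class group of a four-punctured sphere, hence non-elementarily hyperbolic.

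If $G=G_\Lambda$ is a RAAG on a connected, triangle- and square-free graph $\Lambda$ with at least three vertices, take the structure of Proposition~\ref{prop:raag_short}. Since $\Lambda$ is connected with at least three vertices, some vertex $a$ has $|\link_\Lambda(a)|\ge 2$; then $\Stab{G}{a}=\langle a\rangle\times F$ with $F$ the free group on $\link_\Lambda(a)$, so $Z_a=\langle a\rangle$ is infinite and $H_a=F$ is non-elementary.

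If $G=A_\Gamma$ is an Artin group of large and hyperbolic type with $\Gamma$ not discrete, take the structure of Subsection~\ref{subsec:example_artin}. As $\Gamma$ has an edge, there is a vertex of the commutation graph $Y$ of the form $N(H)$ with $H$ infinite cyclic and $N(H)/H$ non-elementarily hyperbolic.

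If $G=\pi_1(\mathcal G)$ for an admissible graph of groups $\mathcal G$, take the structure of Subsection~\ref{subsec:example_pi1}: by admissibility every vertex group $G_v$ has infinite cyclic centre $Z_v$ and non-elementarily hyperbolic quotient $H_v$.

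If $G$ is an extension of a Veech group as in Subsection~\ref{subsec:example_veech}, take the structure constructed there; the vertex groups have the same description as in the admissible graph of groups case, so again there is a vertex with $Z_v$ infinite and $H_v$ non-elementary.

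Finally, if $G$ is hyperbolic relative to $\Z$-central extensions of hyperbolic groups, one of which, say $P$, is non-elementary, take the structure of Proposition~\ref{prop:hyp_rel_Z-central_is_short} and let $v$ be the vertex corresponding to the conjugacy class of $P$: then $Z_v=Z_P\cong\Z$ is infinite and $H_v=H$ is non-elementarily hyperbolic.
\end{proof}
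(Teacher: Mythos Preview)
Your proposal is correct and follows exactly the approach the paper takes: the paper's proof is the single sentence ``By inspection of the short HHG structures in our example Sections~\ref{sec:example} and~\ref{sec:new_examples}, Theorem~\ref{thm:coarse_median} implies the following,'' and you have simply carried out that inspection explicitly, case by case, identifying in each instance a vertex $v$ with $Z_v$ infinite and $H_v$ non-elementary. Your write-up is more detailed than the paper's but logically identical.
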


\appendix
\section{Quasicocycles from cyclic extension}\label{sec:quasicocycles}
In this appendix we prove that, given a $\Z$-central extension $0\to \Z\to G\to H\to 1$ of a hyperbolic group and a suitable collection of elements, one can produce a quasimorphism $G\to \R$ which is unbounded on $\Z$ but trivial on the collection. This is relevant for short HHG as, if one wants to introduce a new cyclic direction $\langle g\rangle$ which fixes some vertex $v$ of the support graph $ \ov X$, then one must find a quasimorphism on (a finite-index subgroup of) $\Stab{G}{v}$ which vanishes on $g$, in order to produce new blowup materials. In the process, we also identify a gap in a proof from \cite{ELTAG_HHS}, and explain how to circumvent it.

\begin{defn}
    Let $G$ be a group acting on $\Z$ by automorphisms, and let $\sigma\colon G\to \Aut{\Z}$ be the action. A \emph{1-quasicocycle} is a map $f\colon G\to \Z$ such that there exists a constant $D$, called the \emph{defect} of $f$, such that, for every $g,h\in G$,
    $$|f(g)+\sigma(g)(f(h))-f(gh)|\le D.$$
\end{defn}

We also recall the following definition from the main body of the paper:
\begin{defn}[No hidden symmetries]
    Let $0\to \Z\to G\xrightarrow[]{\pi} H\to 1$ be a group extension, with $H$ hyperbolic, and let $C\le G$ be a cyclic subgroup. We say that $C$ has \emph{no hidden symmetries} if $\pi(C)$ is infinite and $C$ contains a finite-index subgroup which is normal in $\pi^{-1}(K_H(\pi(C)))$. We say that an element $g\in G$ has no hidden symmetries if $\langle g\rangle$ has no hidden symmetries.
\end{defn}

\begin{lemma}\label{lem:finding_quasicocycles}
    Let $0\to \Z\to G\xrightarrow[]{\pi} H\to 1$ be a group extension, with $H$ hyperbolic. Let $\{C_i\}_{i=1,\ldots,n}$ be a finite collection of infinite cyclic subgroups of $G$ without hidden symmetries, whose projections $\{\pi(C_i)\}_{i=1,\ldots,n}$ form an independent collection. Then there exists a 1-quasicocycle $\phi\colon G\to \Z$, with respect to the action by conjugation, which is the identity on $\Z$ and is bounded on $C_i $ for every $i=1,\ldots,n$.
\end{lemma}

\begin{proof} 
We first recall a few facts about extensions. For every $g\in G$ and $z\in \Z$ let $z^g=gzg^{-1}$; this $G$-action descends to $H$, and with a little abuse of notation we set $z^\pi(g)=gzg^{-1}$. Next, the extension is represented by a \emph{2-cocycle} $\omega\colon H\times H\to \Z$, that is, a map which satisfies the cocycle relation
$$\omega(h_0,h_1h_2) = \omega(h_0, h_1)^{h_2} + \omega(h_0h_1, h_2) - \omega(h_1, h_2).$$
Up to isomorphism of short exact sequences, we can write $G$ as the group with underlying set $\Z\times H$, and with operation given by
$$(k_1,h_1)(k_2,h_2)=(k_1+k_2^{h_1}+\omega(h_1,h_2), h_1h_2).$$
Furthermore, since $H$ is hyperbolic, by \cite[Theorem 3.1]{neumannreeves} the cocycle can be assumed to be \emph{bounded}, meaning that the image of $\omega$ is finite. Hence, the map $\phi_0\colon G\to \Z$ given by projecting onto the first factor is a 1-quasicocycle which is the identity on $\Z=\Z\times\{1\}$.

Our new goal is to modify $\phi_0$ to get another 1-quasicocycle $\phi$ which is again the identity on $\Z$ and is bounded on $C_i$ for every $i=1,\ldots,n$. We proceed by induction on $n$, the base case $n=0$ being trivial. Hence suppose there exists a 1-quasicocycle $\phi'$ which is the identity on $\Z$ and is bounded on every $C_i$ for $i\le n-1$. Let $C=C_n$, and let $K$ be the maximal virtually cyclic subgroup of $H$ containing $\pi(C)$. As a finite-index subgroup $K'$ of $\pi(C)$ is normal in $K$, there exists a natural action $\sigma\colon K\to \Aut{\Z}=\{\pm \Id\}$ by automorphism, mapping $k\in K$ to $\Id$ if and only if it commutes with $K'$. There are three cases to consider, according to whether $\sigma$ coincides with the action induced by $H$. 
\par\medskip
    \textbf{Case 1:} Suppose first that there exists $k\in K$ such that $\sigma(k)=\Id$, but $z^k=-z$ for every $z\in \Z$, and we want to prove that $\phi'$ is already bounded on $C$. Let $h\in \pi^{-1}(k)$. Since $C$ has no hidden symmetries, there exists $g\in C$ such that $\langle g\rangle$ is normalised by $h$, and indeed $g$ must commute with $h$ since $k$ commutes with $K'$. Given any $n\in \Z$, we use the notation $O(1)$ to denote a quantity which is bounded independently on $n$. Then we have that
    $$\phi'(g^n)=\phi'(hg^nh^{-1})=\phi'(h) -\phi'(g^n)-\phi(h^{-1})+O(1)=-\phi'(g^n)+O(1).$$
    This shows that $\phi'$ is bounded on $\langle g\rangle$, and therefore on its finite-index supergroup $C$.
    \par\medskip
    \textbf{Case 2:} Similarly, suppose that, for some $k\in K$, $\sigma(k)=-\Id$, but $k$ centralises $\Z$. As above, we can find $g\in C$ which anticommutes with some $h\in \pi^{-1}(k)$; up to passing to the square power of $g$, we can assume that $g$ centralises $\Z$. Then again 
    $$\phi'(g^n)=\phi'(hg^{-n}h^{-1})=\phi'(h) +\phi'(g^{-n})+\phi(h^{-1})+O(1)=\phi'(g^{-n})+O(1).$$
    Furthermore, since $g$ commutes with $\Z$,
    $$\phi'(1)=\phi'(g^{n}g^{-n})=\phi'(g^{n})+\phi'(g^{-n})+O(1),$$
    and the two properties combine to give that $\phi'(\langle g\rangle)$ is bounded.
    \par\medskip
    \textbf{Case 3:} We can therefore assume that $\sigma$ coincides with the action induced by $H$. Let $\psi_0\colon K\to \R$ be the 1-quasicocycle associated to the extension $K'\to K\to K/K'$, which is the identity on $K'$. By \cite[Theorem 3.1]{Hull_Osin}, we can extend $\psi_0$ to a quasicocycle $\psi\colon H\to \R$ which is at finite distance from the identity on $K'$ while it is bounded on $\pi(C_i)$ for all $1\le i\le n-1$. Up to rescaling $\psi$, assume that $\psi(\pi(g))=1$, where $g\in C$ commutes with $\Z$ and is normalised by $\pi^{-1}(K)$.  Then let $\theta=\psi\circ \pi\colon G\to \R$, and set 
    $$\phi=\phi'-\phi'(g)\theta.$$
    By construction, $\phi$ is still the identity on $\Z$, and it is bounded on $C_i$ for every $i\le n-1$. Moreover, since $\theta(g)=1$, $\phi$ is now bounded on $C$ as well, as required.
\end{proof}

\begin{cor}\label{cor:finding_quasimorphisms}
    Let $0\to \Z\to G\xrightarrow[]{\pi} H\to 1$ be a group extension, with $H$ hyperbolic. Let $\{C_i\}_{i=1,\ldots, n}$ be a finite collection of infinite cyclic subgroups of $G$ without hidden symmetries, whose projections $\{\pi(C_i)\}_{i=1,\ldots, n}$ form an independent collection. Let $E$ be the centraliser of $\Z$ in $G$. Then there exists a homogeneous quasimorphism $\psi\colon E\to \Z$ which is the identity on $\Z$ and is trivial on $gC_i g^{-1}\cap E$ for every ${i=1,\ldots, n}$ and $g\in G$.
\end{cor}

\begin{proof}
    Let $\phi\colon G\to \Z$ be the 1-quasicocycle from Lemma~\ref{lem:finding_quasicocycles}, which is the identity on $\Z$ and is bounded on every $C_i$. Furthermore, for every ${i=1,\ldots, n}$ and $g\in G$, $\phi$ is also bounded on $gC_i g^{-1}$, because
    $$\sup_{c\in C_i}|\phi(gcg^{-1})|\le 2D+|\phi(g)|+|\phi(g^{-1})|+ \sup_{c\in C_i}|\phi(c)|.$$
    Now let $\phi|_E$ be the restriction of $\phi$ to $E$, which is a quasimorphism because $E$ acts trivially on $\Z$ by conjugation. Then the homogeneous quasimorphism associated to $\phi_E$ is the identity on $\Z$ and trivial on every $gC_i g^{-1}\cap E$, as required.
\end{proof}

\begin{rem}[Having no hidden symmetries is necessary]\label{rem:error_in_ELTAG}
    In \cite[Lemma 4.4]{ELTAG_HHS}, Hagen, Martin, and Sisto claimed that, given a central extension of a hyperbolic group $0\to \Z\to G\to H\to 1$ and a family of cyclic subgroups $C_1,\ldots,C_n$ whose projections to $H$ are independent, there exists a homogeneous quasimorphism which is unbounded on $\Z$ and trivial on every $C_i$. However, this is not true even if there is a single cyclic subgroup, as the following counterexample shows. Let $G=\langle a,t\mid tat^{-1}=a^{-1}\rangle$ be the Klein bottle group, which is a central extension $0\to \langle t^2\rangle \to G\to G/\langle t^2\rangle\to 1$. Since $G$ is an index-two overgroup of $\Z^2$, it is amenable, so every homogeneous quasimorphism is actually a homomorphism (see e.g. \cite[Corollary 3.8]{frigerio}). Now let $g=at^2$, and notice that $G=\langle g,t\mid tgt^{-1}=g^{-1}t^4\rangle$. From this presentation we see that any homomorphism $G\to \Z$ which is trivial on $C=\langle g\rangle$ must also be trivial on $\langle t^2\rangle$. 

    The problem in the above example is that $g$ has a hidden symmetry. Indeed, if one further assumes that all $C_i$ have no hidden symmetries, then the proof of \cite[Lemma 4.4]{ELTAG_HHS} can be carried on with minor corrections, which is exactly what we did in the slightly more general Lemma~\ref{lem:finding_quasicocycles}. 
    
    We stress that \cite[Lemma 4.4]{ELTAG_HHS} is only ever used in the proof of \cite[Lemma 4.5]{ELTAG_HHS}, where it can be replaced by our Corollary~\ref{cor:finding_quasimorphisms}. Indeed, every central extension appearing in that paper is of either of the following two forms:
    \begin{itemize}
        \item $\Z\times F$, where $F$ is a free group. Since every virtually cyclic subgroup of a free group is cyclic, every cyclic subgroup of $\Z\times F$ has no hidden symmetries, as argued in Remark~\ref{rem:Z=K_central}.
        \item a Dihedral Artin group $D$. In this case, the cyclic subgroups are conjugates of the standard generators $a$ and $b$, and we claim that these elements have no hidden symmetries. Indeed, if $h\in D$, $n\in \mathbb{N}-\{0\}$, and $z$ in the centre are such that $ha^nh^{-1}=a^{\pm n}z$, then, arguing as in the proof of  \cite[Lemma 2.8]{ELTAG_HHS}, we get that $z=0$, because otherwise the syllabic length of the powers of $a^{\pm n}z$ would diverge while the syllabic length of the powers of $ha^nh^{-1}$ is constant. 
    \end{itemize}
    Hence all results from \cite{ELTAG_HHS} still hold.
\end{rem}

\bibliography{biblio}
\bibliographystyle{alpha}

\end{document}